\titleformat{\section}[block]{\Large\center\sc}{\arabic{section}}{0.5em}{}[] 
\theoremstyle{plain}
\newtheorem{theorem}{Theorem}[section]
\newtheorem{lemma}[theorem]{Lemma}
\newtheorem{proposition}[theorem]{Proposition}
\theoremstyle{definition}
\newtheorem{definition}[theorem]{Definition}
\theoremstyle{remark}
\newtheorem{remark}[theorem]{Remark}
\newtheorem*{remark*}{Remark}
\newtheorem{claim}[theorem]{Claim}
\let\oldsection\section
\renewcommand\section{\setcounter{equation}{0}\oldsection}
\def\be{\begin{equation}}
\def\ee{\end{equation}}
\def\bes{\begin{equation*}}
\def\ees{\end{equation*}}
\def\bali{\begin{aligned}}
\def\eali{\end{aligned}}
\newcommand{\pf}{\noindent {\bf Proof. \hspace{2mm}}}
\def\al{\alpha}
\def\e{\epsilon}
\def\la{\lambda}
\def\t{\tilde}
\def\wt{\widetilde}
\def\th{\theta}
\def\dl{\delta}
\def\lt{\left}
\def\rt{\right}
\def\ls{\lesssim}
\def\i{\infty}
\def\p{\partial}
\def\f{\frac}
\def\o{\omega}
\def\s{\sqrt}
\def\q{\quad}
\def\qq{\qquad}
\def\nn{\nonumber}
\def\angt{\langle t\rangle}
\def\bN{\mathbb N}
\def\bR{\mathbb R}
\def\mH{\mathcal{H}}
\def\mG{\mathcal{G}}
\def\mK{\mathcal{K}}
\def\fH{\frak{H}}
\def\bl{\boldsymbol}
\begin{document}

\title{\normalsize\bf LONG-TIME EXISTENCE OF GEVREY-2 SOLUTIONS TO THE 3D PRANDTL BOUNDARY LAYER EQUATIONS}

\author{\normalsize\sc Xinghong Pan and Chao-Jiang Xu}

\date{}

\maketitle

\begin{abstract}  For the three dimensional Prandtl boundary layer equations, we will show that for arbitrary $M$ and sufficiently small $\epsilon$, the lifespan of the Gevrey-2 solution is at least of size $\epsilon^{-M}$ if the initial data lies in suitable Gevrey-2 spaces with size of $\epsilon$.

\medskip

{\sc Keywords:} long-time existence, tangentially Gevrey-2 solutions, Prandtl equations

{\sc Mathematical Subject Classification 2020:} 35Q35, 76D03.

\end{abstract}


\section{Introduction}

\q\ The purpose of this paper is to show the long time behavior of small Gevrey-2 solutions to the three dimensional Prandtl boundary layer equations in the domain $\{t>0,(x, y, z) \in \bR^3, z>0\}$. The equations read as follows,

\be\label{3dprandtl}
\lt\{
\bali
&\partial_{t} {u}+\left({u} \partial_{x}+{v} \partial_{y}+{w} \partial_{z}\right) {u}+\partial_{x} p=\partial_{z}^{2} {u}, \\
&\partial_{t} {v}+\left({u} \partial_{x}+{v} \partial_{y}+{w} \partial_{z}\right) {v}+\partial_{y} p=\partial_{z}^{2} {v}, \\
&\partial_{x} {u}+\partial_{y} {v}+\partial_{z} { w}=0,\\
&({u}, {v}, {w})\big|_{z=0}=0, \quad \lim_{z \rightarrow+\infty}({u}, {v})=(U(t, x, y), V(t, x, y)),\\
&({u}, {v})\big|_{t=0}=(u_0,v_0),
\eali
\rt.
\ee
where $(U(t, x, y), V(t, x, y))$ and $p(t, x, y)$ are the tangential velocity fields and pressure of the Euler flow, satisfying
\be\label{3deuler}
\left\{
\bali
&\partial_{t} U+U \partial_{x} U+V \partial_{y} U+\partial_{x} p=0, \\
&\partial_{t} V+U \partial_{x} V+V \partial_{y} V+\partial_{y} p=0.
\eali
\right.
\ee

The Prandtl equations are a degenerate Navier-Stokes equations, which was proposed by Prandtl \cite{Prandtl:1904MATHCONGRESS} in 1904 to describe the boundary layer phenomenon. Physically, the Prandtl equations bear underlying instabilities, such as the phenomenon of separation, which is related to the appearance of reverse flow in boundary layer. See \cite{Grenier:2000CPAM, GGN:2016DUKE, GN:2019ANNPDE,DM:2019PMI,SWZ:2021ADVANCES} and references therein for some instability and separation phenomenon for the boundary layer equations. Reader can see \cite{OS:1999AMMC} and references therein for more introductions on the boundary layer theory and check \cite{GN:2011CPAM} for some recent development on this topic.

Compared with the Navier-Stokes equations, the main difference of the Prandtl equations is that there is no time evolution for the vertical velocity $w$, which can only be recovered from the incompressibility condition \eqref{3dprandtl}$_3$. Also since the equations of the tangential velocity have no tangential diffusion,  $w\p_zu$ and $w\p_z v$ in the advection term will cause one order tangential derivative loss when we perform finite-order energy estimates. To show local in time well-posedness of the Prandtl equations in Sobolev spaces is even an uneasy thing.

 In two dimensional case, the first rigorous mathematical proof of the local existence in H\"older spaces dates back to Oleinik and Samokhin \cite{OS:1999AMMC}, where the so-called Crocco transform was introduced and the local well-posedness was given under a monotonic assumption in the vertical variable on the initial data. Recently, the local well-posedness result was revisited in \cite{AWXY:2015JAMS} and \cite{MW:2015CPAM} by using a direct weighted energy estimates, where a nice change of variable was introduced to overcome the one order derivative loss problem. The global weak solutions under an additional favorable sign condition on the pressure $p$ was given in Xin and Zhang \cite{XZ:2004ADVANCES}. If the initial data is a small $\e$ perturbation around the monotonic shear flow in Sobolev spaces, Xu and Zhang \cite{XZ:2017JDE} proved that the lifespan of the solutions is of size $\ln \f{1}{\e}$.

 Without  monotonicity assumption on the initial data, the related results is much recent. See E and Engquist \cite{EE:1997CPAM} for a globally  ill-posedness result and Gerard-Varet and Dormy \cite{GVD:2010JAMS} for a locally ill-posedness result  in Sobolev spaces around non-monotonic outflow. (cf. \cite{GN:2011CPAM,GVN:2012ASYMA,LY:2017JMPA} for some improvement). The result in \cite{GVD:2010JAMS} indicates that local existence in time is possible only in smooth Gevrey regularity class.  The first result in this direction is due to Sammartino and Caflisch \cite{SC:1998CMP}, where the local well-posedness in analytical setting (corresponding to Gevrey-1 class) was established by using the abstract Cauchy-Kowalewski theorem. Later, the analyticity on the normal variable was removed in \cite{LCS:2003SIAM}, and Kukavica and Vicol in \cite{KV:2013CMS} gave an energy-based proof. When the data is an $\e$ size in analytical spaces, authors in \cite{ZZ:2016JFA} showed that the lifespan of the analytical solution is of size $\e^{-4/3}$. This result was extended to an almost global-in-time existence in Ignatova and Vicol \cite{IV:2016ARMA}, and global-in-time existence in Paicu and Zhang \cite{PZ:2021ARMA}. To extend the analyticity results to more generalized Gevrey class is uneasy. Under the assumption that the data has only a non-degenerate critical point in the vertical variable for each fixed tangential variable, the local well-posedness of the two dimensional Prandtl equations  in Gevrey-7/4 class was proved in G\'{e}rard-Varet and Masmoudi \cite{GVM:2015ASENS}. See \cite{LY:2020JEMS} for extensions to Gevrey-2 class, where the exponent $2$ is optimal in view of the instability mechanism indicated in \cite{GVD:2010JAMS}.  More recently, the single non-degenerate critical point assumption was removed in Dietert and G\'{e}rard-varet \cite{DGV:2019ANNPDE}. Most recently, global existence of Gevrey-2 small solutions was shown in Wang, Wang and Zhang \cite{WWZ:2021ARXIV} for the two dimensional Prandtl equations.

 In three dimensional case, there are little well-posedness results for the Prandtl equations in Sobolev spaces except  Liu, Wang and Yang \cite{LWY:2017ADVANCES} where the local well-posedness result was given under some flow-structure constraints in addition to the monotonic assumption. Without monotonic assumptions, the above analytical well-posedness results are both valid for the two and three dimensional case except for the global existence of 3D analytic solutions. By introducing a tangentially polynomial weight to the energy functional, the global existence of small analytical solutions for the three dimensional axially symmetric Prandtl equations was given in Pan and Xu \cite{PX:2022ARXIV}. To relax the analyticity to more generalized Gevrey smoothness is not easy especially for the three dimensional Prandtl equations. As far as the authors know, only most recently, the local well-posedness result in Gevrey-2 spaces for the three dimensional Prandtl equations was solved in \cite{LMY:2022CPAM} by introducing some new cancellations. Such techeniques are used in \cite{LiXY:2022} to prove the global well-posedness of Gevrey-2 solutions for a Prandtl model derived from MHD in the Prandtl-Hartmann regime.

 Until now, as far as the authors' knowledge, there isn't any result concerning on the long time behavior of solutions for the three dimensional Prandtl equations in generalized Gevrey spaces rather than the analytical setting. This is our preliminary interest of this paper.  The main purpose of this paper is to show that for arbitrary $M$ and sufficiently small $\epsilon$, if we consider the outflow $(U,V)$ in \eqref{3deuler} is zero, then the lifespan of the Gevrey-2 solution to the three dimensional Prandtl equations \eqref{3dprandtl} is of size $\epsilon^{-M}$ if the initial data lie in suitable Gevrey-2 spaces with size of $\epsilon$. Also in one of our forthcoming papers, we will show the global existence of Gevrey-2 solutions for the 3D axially symmetric Prandtl equations.

When the outflow $(U,V)\equiv 0$, the Prandtl equations \eqref{3dprandtl} degenerate to
\be\label{3dprandtl1}
\lt\{
\bali
&\partial_{t} {u}+\left({u} \partial_{x}+{v} \partial_{y}+{w} \partial_{z}\right) {u}=\partial_{z}^{2} {u}, \\
&\partial_{t} {v}+\left({u} \partial_{x}+{v} \partial_{y}+{w} \partial_{z}\right) {v}=\partial_{z}^{2} {v}, \\
&\partial_{x} {u}+\partial_{y} {v}+\partial_{z} { w}=0,\\
&({u}, {v}, {w})\big|_{z=0}=0, \quad \lim_{z \rightarrow+\infty}({u}, {v})=(0, 0),\\
&(u,v)\big|_{t=0}=(u_0,v_0).
\eali
\rt.
\ee

Before stating the main result of this paper, we need to introduce some notations especially for the Gevrey-2 space in which the solution $(u,v)$ lie. We use $\|f(t)\|_{L^p}$ for $1\leq p\leq +\i$ to denote the usual spacial $L^p$ norm in $\bR^3_+$ and $\|f(t)\|_{L^p(\o)}$ to denote the weighted $L^p$ norm for $1\leq p\leq+\i$ with the weight $\o$ as follows.
\bes
 \|f(t)\|^p_{L^p(\o)}:=\int_{\bR^3_+} |f(t,x,y,z)|^p\omega dxdydz.
\ees
\begin{definition}\label{defgsigma}
 For $\nu\in (0,1]$, denote $\th_{\nu}(t,z):=\exp\lt\{\nu\f{z^2}{8\angt}\rt\}$. Let $\al=(\al_1,\al_2)\in\bN^2$ be the two dimensional multi-index and denote $\p^\al_h=\p^{\al_1}_x\p^{\al_2}_y$. For a function $f(t,x,y,z)$, which are smooth in the tangential variables $(x,y)$, define the weighted Gevrey-$\sigma$ norm $\|\cdot\|_{G^\sigma_{\tau,\nu}}$ by
 \be\label{Gsigma}
\|f\|^2_{G^\sigma_{\tau,\nu}}:=\sum_{j\in\bN}\f{\tau^{2(j+1)}}{(j!)^{2\sigma}}\sup_{|\al|=j}\lt\|\th_{\nu}\p^\al_h f\rt\|^2_{L^2}=\sum_{j\in\bN}\f{\tau^{2(j+1)}}{(j!)^{2\sigma}}\sup_{|\al|=j}\lt\|\p^\al_h f\rt\|^2_{L^2(\th_{2\nu})}.
\ee
\end{definition}

We will look for solutions of \eqref{3dprandtl} in the $G^\sigma_{\tau,\nu}$ spaces defined above.  We need the initial data satisfying the following compatibility conditions at $z=0$.
\be\label{compa}
\p^{2k}_z(u_0,v_0)\big|_{z=0}=0,\q \text{for}\q k=0,1.
\ee

The main result of this paper is the following.
\begin{theorem}\label{thmain0}
  For any fixed $M\geq 2$, $\tau_0> 0$, suppose that $\p^k_z(u_0,v_0)$ belong to $G^2_{2\tau_0,1}$ for $k=0,1,2,3$, and satisfy the compatibility condition \eqref{compa}. Then there exist three constants $c_{\tau_0,M}$, $C_{\tau_0,M}$ and $\e_0$, such that for any $0<\e\leq \e_0$, if
\be\label{initialdata}
\sum^3_{k=0}\|\p^k_z(u_0,v_0)\|_{G^2_{2\tau_0,1}}\leq \e,
\ee
then for any $t\in(0,c_{\tau_0,M}\e^{-M}]$, system \eqref{3dprandtl1} has a unique smooth solution satisfying,
\be\label{solutionesti}
\sum^3_{k=0}\angt^{\f{k-1}{2}}\|\p^k_z(u,v)(t)\|_{G^2_{\f{1}{2}\tau_0,\f{1}{2}}}\leq  C_{\tau_0,M}\angt^{-\f{10M-1}{8M}}.
\ee
Here, $c_{\tau_0,M}$ and $C_{\tau_0,M}$ are two constants depending only on $\tau_0$ and $M$, which are relatively small and large when $M$ approaches infinity.
\end{theorem}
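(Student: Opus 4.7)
The plan is to run a continuity argument on $[0,c_{\tau_0,M}\e^{-M}]$ for a carefully designed time-weighted Gevrey-2 energy, in which the one-order tangential derivative loss induced by $w\p_z(u,v)$ is neutralized via the cancellation structure of \cite{LMY:2022CPAM} together with a time-decreasing Gevrey radius. The Gaussian weight $\th_\nu$ coupled to the vertical diffusion $\p^2_z$ supplies an $\angt^{-1}$-type coercive dissipation which produces the polynomial decay rate in \eqref{solutionesti}.

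\emph{Energy functional.} Let $\tau(t)$ decrease from $2\tau_0$ to at least $\f12\tau_0$ and $\nu(t)$ decrease from $1$ to $\f12$, with ODEs fixed in the bootstrap. Define
$$\mathcal E(t)=\sum_{k=0}^{3}\angt^{k-1}\|\p^k_z(u,v)(t)\|^2_{G^2_{\tau(t),\nu(t)}}.$$
Testing the equations \eqref{3dprandtl1} (after applying $\p^\al_h\p^k_z$) against $\th_{2\nu}\p^\al_h\p^k_z(u,v)$ and summing over $\al,k$ with Gevrey-2 weights $\tau^{2(j+1)}/(j!)^4\cdot\angt^{k-1}$ produces the vertical diffusion $\mathcal D_z$, a $z^2$-weighted term $\mathcal D_\th$ from $\p_t\th_{2\nu}$, a Gevrey radius dissipation $\mathcal D_\tau=(-\dot\tau)\sum_{k,j}\f{2(j+1)\tau^{2j+1}}{(j!)^4}\angt^{k-1}\sup_{|\al|=j}\|\p^\al_h\p^k_z(u,v)\|^2_{L^2(\th_{2\nu})}$, and a nonlinear remainder $\mathcal N$. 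A Hardy-type inequality in $z$ combines a fraction of $\mathcal D_z$ with $\mathcal D_\th$ and the $\p_t\angt^{k-1}$ term into a coercive $\angt^{-1}\mathcal E$.

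\emph{Cancellation of the derivative loss.} The only nonlinear term losing a full tangential derivative is $\p^\al_h(w\,\p^{k+1}_z(u,v))$, since $w=-\int^z_0(\p_xu+\p_yv)\,dz'$ carries one extra tangential derivative compared to $(u,v)$. Adapting the construction of \cite{LMY:2022CPAM}, I would introduce good unknowns schematically of the form $\p^\al_h\p^k_z u-(\p^k_z u)\,\p^\al_h\Phi_u$ and its $v$-analogue, where $\Phi_u,\Phi_v$ are $z$-primitives of tangential derivatives of $(u,v)$ chosen via the incompressibility $\p_xu+\p_yv+\p_zw=0$, so that the worst-order contributions from the $u$- and $v$-equations cancel symmetrically. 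The surviving part of $\mathcal N$ then loses at most half a tangential derivative. Summing over $|\al|=j$ with the Gevrey-2 weights and bounding the ensuing convolutions via the Gevrey-2 combinatorial identity, one obtains $\mathcal N\ls \mathcal E^{1/2}(\mathcal D_z+\mathcal D_\tau)/\tau^{1/2}+\angt^{-1}\mathcal E^{3/2}/\tau$.

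\emph{Bootstrap and lifespan.} The differential inequality $\f{d}{dt}\mathcal E+c\angt^{-1}\mathcal E+\mathcal D_z+\mathcal D_\tau\ls \mathcal N$ is closed under the a~priori ansatz $\mathcal E(t)\leq C_{\tau_0,M}^2\e^2\angt^{-(10M-1)/(4M)}$, $\tau(t)\geq\f12\tau_0$, $\nu(t)\geq\f12$ on $[0,T^*)$ with $T^*=c_{\tau_0,M}\e^{-M}$: smallness of $\sqrt{\mathcal E/\tau}$ absorbs the first bad term into $\mathcal D_z+\mathcal D_\tau$, while the coercive $\angt^{-1}\mathcal E$ dominates the second, yielding a strictly improved ansatz. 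The ODEs for $(\tau,\nu)$ are then tuned so that the total radius and weight drops over $[0,T^*]$ stay below $\tau_0/2$ and $1/2$ respectively; the precise trade-off between the bootstrap decay exponent $(10M-1)/(4M)$ and the lifespan bound $T^*\leq c_{\tau_0,M}\e^{-M}$ is exactly where the parameter $M$ enters, via a choice $c_{\tau_0,M}\to 0$ and $C_{\tau_0,M}\to\infty$ as $M\to\infty$. Distributing the pre-factor $\angt^{k-1}$ in $\mathcal E$ finally yields the per-$k$ decay rates in \eqref{solutionesti}. The main obstacle is the rigorous implementation of the good-unknown cancellation at every vertical order $k=0,1,2,3$ compatibly with the Gaussian weight $\th_{2\nu}$ and the compatibility condition \eqref{compa}: unlike the analytic case \cite{ZZ:2016JFA,IV:2016ARMA} where the $j!$-factorial leaves ample slack, the Gevrey-2 weight $(j!)^2$ is only just sufficient when paired with $\mathcal D_\tau$, so every commutator arising from the substitution must be estimated sharply via the Gevrey-2 convolution structure.
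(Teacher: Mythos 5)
The paper proves Theorem~\ref{thmain0} by verifying the hypotheses of Theorem~\ref{thmain} with $\dl=(2M)^{-1}$ and translating the conclusion; you instead sketch a direct energy/bootstrap argument, which therefore has to reproduce the content of Theorem~\ref{thmain}. Two essential ingredients of that argument are missing or mis-specified.

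\emph{The cancellation mechanism.} You set $\Phi_u,\Phi_v$ to be ``$z$-primitives of tangential derivatives of $(u,v)$.'' That is precisely the Ignatova--Vicol / Zhang--Zhang substitution, which absorbs a \emph{full} tangential derivative loss into the shrinking analytic radius; it does not close in Gevrey-2, where the radius dissipation only gains half a tangential derivative per step. The whole point of \cite{DGV:2019ANNPDE} and \cite{LMY:2022CPAM}, and of Section~\ref{sec3.1} of this paper, is that the auxiliary functions $\mH,\wt{\mH}$ are \emph{not} primitives of $\p_h(u,v)$; they solve the driven linear parabolic problems \eqref{auxih}--\eqref{auxith}, and it is only after applying the transport--diffusion operator a \emph{second} time (as in \eqref{mHgevrey}) that one gets a system with an effective half-derivative loss, so that $\mH\sim\p_h(u,v)$ and $\mG\sim\p^{-1/2}_h\mH$. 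Moreover, in three space dimensions the cross-coupling between the $u$- and $v$-equations forces the introduction of the four additional auxiliary unknowns $\mG,\wt\mG,\mK,\wt\mK$ (Section~\ref{secmg}), a genuinely 3D enrichment not present in the 2D argument; your sketch does not address this at all, and the ``symmetric cancellation between the $u$- and $v$-equations'' you invoke is not the cancellation that actually occurs.

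\emph{The decay mechanism.} Your ansatz asks $\mathcal E(t)\leq C^2\e^2\angt^{-(10M-1)/(4M)}$, i.e.\ $\|(u,v)\|^2_{L^2(\th_{2\nu})}\ls\angt^{-(6M-1)/(4M)}\approx\angt^{-3/2}$ for large $M$. A direct energy estimate on the $(u,v)$ equation with the weight $\th_{2\nu}$, $\nu\leq 1$, gives (as in \eqref{hj5}) a coercive term of size only about $\f{1}{4\angt}\|\cdot\|^2_{L^2(\th_2)}$ after combining the Poincar\'e inequality \eqref{poincare} with the $\p_t\th_2$-contribution, which at best yields $\angt^{-1/2}$, not $\angt^{-3/2}$. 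The faster decay is obtained only through the good unknown $g=\p_zu+\f{z}{2\angt}u$ (and its analogues $\t g$ and $\fH,\wt\fH$): the $g$-equation \eqref{3dgun1} carries an explicit $\f{1}{\angt}g$ damping which boosts the coercivity to $\approx\f{5}{4\angt}\|g_j\|^2$, and $u$ is then recovered from $g$ through the ODE \eqref{phig}, costing $\angt^{1/2}$. The resulting bootstrap is necessarily two-tiered: the high-order tangential Gevrey-2 norm of $(u,v)$ decays only like $\angt^{-(1-\dl)/4}$ as in \eqref{uvesti}, while the low-order norm decays like $\angt^{-(3-\dl)/4}$ via $\bl{g},\bl{\fH}$ as in \eqref{gesti} and Lemma~\ref{llowpoint}. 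A single energy $\mathcal E$ with one uniform decay rate, as you have written, cannot self-improve: the higher-order part of the ansatz would be violated immediately, while the lower-order part cannot be justified without the good unknowns.

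In short, the overall scaffolding (shrinking radius plus Gaussian-weight dissipation plus bootstrap) is the right shape, but both the cancellation structure and the decay-extraction step are wrong as sketched; without the auxiliary PDE quantities $\bl\mH,\bl\mG$ and the damped good unknowns $\bl g,\bl\fH$, the argument does not close.
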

Two remarks follow.
\begin{itemize}
\item Actually, the result in Theorem \ref{thmain0} can also be easily applied to the Gevrey-$\sigma$ space $G^\sigma_{\tau,\nu}$ with $\sigma\in [1,2]$. The proof is essentially the same with the case $\sigma=2$. For simplicity, we only show the optimal case $\sigma=2$ and leave the details to the interested reader.
\item Here in the three dimensional case, we can not obtain a global existence result in Gevrey-2 spaces as shown in Wang, Wang and Zhang \cite{WWZ:2021ARXIV} for the two dimensional Prandtl equations. The reason is that in three dimensional case, the decay rate of the lower order Gevrey-2 norms of the tangential velocities is almost power $-3/4$, which is $1/2$-order slower than that of the two dimensional case.  This slower decay is not enough to ensure the global existence.
\end{itemize}

Proof of Theorem \ref{thmain0} is a direct consequence of Theorem \ref{thmain} stated in Section \ref{secmain}, which is a more precise and detailed version of our result. We will spare some time to give the proof of Theorem \ref{thmain0} when we finish the statement of Theorem \ref{thmain}. \qed

\section{Notations and detailed statement of the main theorem} \label{secmain}

\subsection{Notations}

For $0<\kappa$ and a $t$-dependent function $\tau=\tau(t)>0$,  define
\bes
M_{j,\kappa}:= \f{\tau^{j+1}(j+1)^\kappa}{(j!)^2}.
\ees
 For a function $f$ and $j\in\bN$, define
\bes
f_{j,\kappa,x}:=M_{j,\kappa} \p^j_x f, \q {f}_{j,\kappa,y}:=M_{j,\kappa} \p^j_y f, \q \text{and}\q f_{\al,\kappa}:=M_{|\al|,\kappa} \p^\al_h f.
\ees

Let $\th_{\nu}(t,z)$ be the weighted function in Definition \ref{defgsigma} and we simply denote $\th_1$ by $\th$. It is easy to see that for $\al,\,\beta\in\bR$, $\th_{\al+\beta}=\th_\al\cdot \th_\beta$. Now for $\nu\in(0,1]$, define another weighted Gevrey-2 norm $\|\cdot\|_{X_{\tau,\kappa,\nu}}$ as following.
\be\label{Gevnorm2}
\|f\|^2_{X_{\tau,\kappa,\nu}}=\sum_{j\in\bN}\sup_{|\al|=j}\|f_{\al,\kappa}\th_{\nu}\|^2_{L^2}
:=\sum_{j\in\bN}\sup_{|\al|=j}\|f_{\al,\kappa}\|^2_{L^2(\th_{2\nu})}.
\ee
When $\nu=1$, we abbreviate \eqref{Gevnorm2} as
\bes
\|f\|^2_{X_{\tau,\kappa}}:=\sum_{j\in\bN}\sup_{|\al|=j}\|f_{\al,\kappa}\|^2_{L^2(\th_2)}=\|f\|^2_{X_{\tau,\kappa,1}}.
\ees

Here we remark that the replaced Gevrey-2 norm $\|\cdot\|_{X_{\tau,\kappa,\nu}}$ is more suitable than the Gevrey-2 norm $\|\cdot\|_{G^2_{\tau,\nu}}$ for our later energy estimates and for proof of the theorem. Actually, it is easy to see that $\|\cdot\|_{G^2_{\tau,\nu}}=\|\cdot\|_{X_{\tau,0,\nu}}$.

By using Fourier transform on the tangential variables $x$ and $y$, we can see that
\bes
\f{1}{2}\lt(\|f_{j,\kappa,x}\|^2_{L^2(\th_{2\nu})}+\|f_{j,\kappa,y}\|^2_{L^2(\th_{2\nu})}\rt)\leq \sup_{|\al|=j}\|f_{\al,\kappa}\|^2_{L^2(\th_{2\nu})}\leq \|f_{j,\kappa,x}\|^2_{L^2(\th_{2\nu})}+\|f_{j,\kappa,y}\|^2_{L^2(\th_{2\nu})}.
\ees
So the $\|\cdot\|_{X_{\tau,\kappa,\nu}}$ norm defined in \eqref{Gevnorm2} is equivalent to the following
\bes
\|f\|^2_{\widetilde{X}_{\tau,\kappa,\nu}}:=\sum_{j\in\bN}\lt(\|f_{j,\kappa,x}\|^2_{L^2(\th_{2\nu})}+\|f_{j,\kappa,y}\|^2_{L^2(\th_{2\nu})}\rt).
\ees

The equivalent norms $\|f\|^2_{{X}_{\tau,\kappa}}$ and $\|f\|^2_{\widetilde{X}_{\tau,\kappa}}$ will be used alternatively throughout the rest of this paper.

Next, we will give the Gevrey radius $\tau(t)$ in the definition of the Gevrey-2 norm in \eqref{Gevnorm2}, which has a positive lower bound in our constrained time interval.

{\noindent\bf Choosing of the Gevrey radius $\tau$}.

For any fixed $\dl\in (0,\f{1}{100}]$ and $\tau_0>0$, we choose
\be\label{gevreyradius}
\tau(t):=\tau_0-\la\dl^{-1}\s{\e}\tau_0\lt(\angt^{\dl}-1\rt),
\ee
where $\la$ is a large constant, independent of $\e$ and will be determined later. For sufficiently small $\e$, we assume that $\la\dl^{-1}\s{\e}<1/2$ and set
\be\label{timelifespan}
 t \leq \lt(\f{1}{2\la\dl^{-1}\s{\e}}\rt)^{\f{1}{\dl}}:=T_0.
\ee
Under the constraint \eqref{timelifespan}, we can obtain
\be\label{timelifespan0}
\angt^\dl\leq \lt(1+\f{1}{2\la\dl^{-1}\s{\e}}\rt)\leq \f{1}{\la\dl^{-1}\s{\e}},
\ee
which indicate that
\be\label{radiequi}
\f{1}{2}\tau_0\leq \tau(t)\leq \tau_0.
\ee
Taking $t$ derivative of \eqref{gevreyradius} shows that
\bes
{\tau}'(t)=-\la\s{\e} \tau_0 \angt^{\dl-1}.
\ees
Denote $\la\s{\e}\eta(t):=-\f{\tau'(t)}{\tau(t)}$, then we have
\be\label{radigain}
 \angt^{\dl-1} \leq \eta(t)\leq 2\angt^{\dl-1}.
\ee
Next, we will restrict $t$ to be any time according to \eqref{timelifespan}.
\begin{remark}
 The constant $\la$ will be chosen to be dependent on $\dl$ and $\tau_0$ and will  approach to infinity as $\dl\rightarrow 0$. So there exists a constant $c_\dl$ such that
 \bes
 T_0=c_\dl \e^{-\f{1}{2\dl}}.
 \ees
\end{remark}

Throughout the paper, $C_{a,b,c,...}$ denotes a positive constant depending on $a,\,b,\, c,\,...$ which may be different from line to line. Dependence on the initial Gevrey radius $\tau_0$ is default, we will denote $C_{\tau_0}$ by $C$ for simplicity. We also apply $A\lesssim_{a,b,c,\cdots} B$ to denote $A\leq C_{a,b,c,...}B$. For a two dimensional multi-index $\al=(\al_1,\al_2)\in\bN^2$, we write $\p^\al_{h}=\p^{\al_1}_{x}\p^{\al_2}_{y}$ and
$\p^k_h=\{\p^\al_h\big | |\al|=k\}$. For a norm $\|\cdot\|$, we use $\|(f,g,\cdots)\|$ to denote $\|f\|+\|g\|+\cdots$. For a function $f(t,x,y,z)$ and $1\leq p,q \leq +\i$,  define
\bes
\|f\|_{L^p_hL^q_z}:=\lt(\int^{+\i}_0 \lt(\int_{\bR^2} |f|^p dxdy\rt)^{q/p}dz \rt)^{1/q}.
\ees
If $p=q$, we simply write it as $\|f\|_{L^p}$ and besides, if $p=q=2$, we will simply denote it as $\|f\|$. We use $[A, B]=AB-BA$ to denote the commutator of $A$ and $B$. $\langle\cdot , \cdot\rangle_{\o}$ denote weighted $L^2$ inner product with respect to spacial variables, which means for $f$ and $g$
\[\langle f , g\rangle_{\o}:=\int_{\bR^3_+} f(x,y,z) g(x,y,z)\o dxdydz.
\]

\subsection{Detailed statement of the main theorem}

Before presenting the more precise and detailed version of the main theorem, we need to introduce two good unknowns $(g,\t{g})$, which are set to control the lower order Gevrey-2 norms of $(u,v)$.  Define
\bes
g:=\p_z u+\f{z}{2\langle t\rangle} u, \q \t{g}:=\p_z v+\f{z}{2\langle t\rangle} v,
\ees
then we have the following theorem.
\begin{theorem}\label{thmain}

For any fixed $\tau_0>0$, $\dl\in(0,\f{1}{100}]$, there exist constants  $c_{\dl}$, $C_{\dl}$ and $\e_0$, such that for any $\e\leq \e_0$, if
\begin{align}
&\|(u,v)(0)\|_{X_{\tau_0, 13+\f{2}{\dl}}}\leq \e,\label{conditionu}\\
&\|(g,\t{g})(0)\|_{X_{\tau_0,12}}+\s{\dl}\|\p_z(g,\t{g})(0)\|_{X_{\tau_0,10}}+\dl\|\p^2_z(g,\t{g})(0)\|_{X_{\tau_0,8}}\leq \e,\label{conditiong}
\end{align}
then system \eqref{3dprandtl1} have a solution $(u,v,w)$ satisfying for any $t\in \lt[0,c_{\dl}\e^{-\f{1}{2\dl}}\rt]$, such that
\begin{align}
&\angt^{\f{1-\dl}{4}}\|(u,v)(t)\|_{X_{\tau,12+\f{2}{\dl}}}\leq C_{\dl}\e,\label{uvesti}\\
&\|(g,\t{g})(t)\|_{X_{\tau,12}}+\s{\dl}\angt^{1/2}\|\p_z (g,\t{g})(t)\|_{X_{\tau,10}}+\dl\angt\|\p^2_z(g,\t{g})(t)\|_{X_{\tau,8}}\leq C_{\dl}\e \angt^{-\f{5-\dl}{4}}.\label{gesti}
\end{align}
Here $\e_0$, $c_{\dl}$ and $C_{\dl}$ are three constants, depending on $\tau_0$ and $\dl$. We write $c_{\dl}$ and $C_{\dl}$ with $\dl$ subscript to emphasize its dependence on $\dl$.

\end{theorem}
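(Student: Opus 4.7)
The proof proceeds by a continuity (bootstrap) argument on the interval $[0,T_0]$ with $T_0=c_\dl\e^{-1/(2\dl)}$ from \eqref{timelifespan}. Assume that on $[0,T]\subset[0,T_0]$ the bounds \eqref{uvesti}--\eqref{gesti} hold with $C_\dl$ replaced by $2C_\dl$; the plan is to recover them with constant $C_\dl$, so that by continuity $T=T_0$. Three ingredients will drive the analysis: the weighted tangential energy identity in the norms $\|\cdot\|_{X_{\tau,\kappa,\nu}}$; the Gevrey-radius dissipation term $\la\s{\e}\,\eta(t)\sum_\al (|\al|+1)\|f_{\al,\kappa}\|^2_{L^2(\th_{2\nu})}$ arising from $\tau'(t)<0$, which absorbs the one-tangential-derivative loss inherent to \eqref{3dprandtl1}; and the good unknowns $(g,\t g)$ of \cite{LMY:2022CPAM,LiXY:2022} which remove the worst nonlocal structure from the vertical advection.

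First I would derive the evolution equations for $(g,\t g)$. Using $\p_z w=-(\p_x u+\p_y v)$ a direct computation yields
\[
\p_t g + (u\p_x+v\p_y+w\p_z)g - \p_z^2 g = -(\p_x u)\,g-(\p_y v)\,g + \mathcal{R}(u,v,g),
\]
where $\mathcal{R}$ collects lower-order terms involving $\f{z}{2\angt}$ and $\p_t\angt^{-1}$. The crucial cancellation is that $g$ is designed precisely so that the dangerous commutator piece $\p_z u\cdot \p_x w$ (which would be a one-derivative-loss term in the $\p_z u$ equation) is rewritten as the quadratic expression $(\p_x u)\,g$ in the $g$-equation, free of higher tangential derivatives. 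This is the 3D analogue of the cancellation in \cite{LMY:2022CPAM} and is what makes $(g,\t g)$ decay strictly faster than $(u,v)$.

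The heart of the proof is the weighted tangentially-Gevrey energy estimate. For each multi-index $\al$, I would apply $\p^\al_h$ to the equation, multiply by $M_{|\al|,\kappa}^2\,\th_{2\nu}\,\p^\al_h f$ and integrate. The time derivative of $\|f\|^2_{X_{\tau,\kappa,\nu}}$ produces: the parabolic dissipation $\|\p_z f_{\al,\kappa}\|^2_{L^2(\th_{2\nu})}$ together with a controlled correction coming from $\p_t\th_{2\nu}$ versus the $z$-part of integrating $\p_z^2$ against $\th_{2\nu}$; the Gevrey-gain $2\la\s{\e}\eta(t)(|\al|+1)\|f_{\al,\kappa}\|^2_{L^2(\th_{2\nu})}$ from $\tau'(t)$; the tangential convection, handled by standard Gevrey product inequalities split into high-low and low-high frequency interactions; and the vertical convection $w\p_z f$, which for $f=u,v$ is rewritten via $\p_z u = g-\f{z}{2\angt}u$ so that the faster-decaying $(g,\t g)$ is fed in and the remainder is absorbed by the weight. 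For higher $z$-derivative bounds on $\p_z g$ and $\p_z^2 g$, the same scheme applies and one gains a factor $\angt^{-1/2}$ per vertical derivative from the interplay of $\p_z^2$ with $\th_\nu$, giving exactly the decay rates $\angt^{-1/2}$ and $\angt^{-1}$ of \eqref{gesti}.

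The principal obstacle is the coupled bookkeeping of the one-derivative loss and the two distinct decay rates in \eqref{uvesti}--\eqref{gesti}. Each worst nonlinear term produces a factor $(|\al|+1)$ that must be compensated either by the Gevrey-radius gain $\la\s{\e}\eta(t)(|\al|+1)\sim\la\s{\e}\angt^{\dl-1}(|\al|+1)$ or by the $2/\dl$ spread between the weight exponents of the $(u,v)$ norm ($\kappa=12+2/\dl$) and the $(g,\t g)$ norm ($\kappa=12$), which after a Young-type inequality in $j$ redistributes $(|\al|+1)^{2/\dl}$ between the two energies. Balancing the powers of $\angt$ on both sides of the coupled system and showing that every bad contribution has the form $C_\dl\e\cdot(\text{available dissipation})+\la\s{\e}\eta(t)\cdot(\text{norm})$ will close the estimates provided $\la=\la(\dl,\tau_0)$ is large, $\e\le\e_0(\dl,\tau_0)$ is small and $C_\dl$ is large. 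Theorem \ref{thmain0} then follows from \eqref{uvesti}--\eqref{gesti} by choosing $\dl=1/(2M)$, which converts the lifespan $\e^{-1/(2\dl)}$ into $\e^{-M}$ and the decay $\angt^{-(5-\dl)/4}$ into $\angt^{-(10M-1)/(8M)}$.
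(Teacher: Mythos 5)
Your proposal has a genuine gap. The central claim --- that the Gevrey-radius dissipation $\la\s{\e}\,\eta(t)(|\al|+1)\|f_{\al,\kappa}\|^2$ together with the good unknowns $(g,\t g)$ suffices to absorb the one-order tangential derivative loss --- is incorrect for the Gevrey-$2$ index, and the resulting scheme would not close. In the $X_{\tau,\kappa}$ norm the weights satisfy $M_{j}\approx (j+1)^2\,M_{j+1}$ (because of the $(j!)^2$ in the denominator), so applying $\p_h$ once costs a factor $(j+1)^2$, while the radius gain produced by $\tau'(t)<0$ only contributes a factor $(j+1)$. In the analytic case ($\sigma=1$) these two coincide and your counting works, but in Gevrey-$2$ there is a residual deficit of $(j+1)$, i.e.\ the dissipation covers only a \emph{half} tangential derivative. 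Moreover, the good unknown $g=\p_zu+\f{z}{2\angt}u$ is not built to remove the derivative-loss term at all: its evolution equation \eqref{3dgun1} still contains $-w\p_z g$ and hence still loses one tangential derivative. The role of $g$ (as in Ignatova--Vicol and Paicu--Zhang) is the boundary condition $\p_z g|_{z=0}=0$, which via the weighted Poincar\'e inequality \eqref{poincare} produces the extra $\f{1}{\angt}$ spectral gap and hence the faster time decay in \eqref{gesti}; it has nothing to do with eliminating $w\p_z$.

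What the paper actually does --- and what is missing from your proposal --- is to introduce the Li--Masmoudi--Yang auxiliary functions $\mH,\wt\mH$ solving the linear parabolic problems \eqref{auxih}--\eqref{auxith}, and the four further auxiliary functions $\mG,\wt\mG,\mK,\wt\mK$ (combinations of $\p_h(u,v)$ with $\p_z(u,v)\int_z^\i(\mH\text{ or }\wt\mH)\,d\bar z$). The equation \eqref{auxih1} for $\mH$ produces $\s{\e}\angt^{\dl-1}(\p_x\mG+\p_y\mK)$ on the right-hand side, and chaining it with the $\mG$-equation \eqref{mg3} effectively splits the one-derivative loss into two half-derivative steps, each of which \emph{is} controllable by the Gevrey-radius dissipation. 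To close the $u$-equation itself the paper further replaces $u_j$ by the transported quantity $\psi_j$ of \eqref{defpsi}, whose equation \eqref{uauxi} has no derivative loss. Finally, the analogue $\fH, \wt\fH$ of the good unknown is used for the $\mH$-system to obtain the faster decay of $\bl{\mH}$. None of this machinery appears in the proposal. The bootstrap framework, the choice $\la=\la(\dl,\tau_0)$ and the interplay between the exponent spread $2/\dl$ and the lifespan $\e^{-1/(2\dl)}$ are consistent with the paper, but without $\mH,\wt\mH,\bl\mG$ the key high-order nonlinear commutators cannot be absorbed and the a priori estimate does not close.
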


\begin{remark}
 This result shows that for any $M>1$, by choosing sufficiently small $\dl$, the lifespan of the solution can be size of $\e^{-M}$ if the initial data is of size $\e$. Here, The constant $c_{\dl}$ is small while $C_{\dl}$ is large with respect to $\dl$. Actually in our proof, we will see that
\bes
\lim_{\dl\rightarrow 0} c_{\dl}=0,\q \lim_{\dl\rightarrow 0} C_{\dl}=+\i.
\ees
This is the main obstacle which prevent us to obtain the almost global existence of Gevrey-2 solutions.
\end{remark} \qed
\begin{remark}
For the proof of Theorem \ref{thmain}, by using the local-wellposedness in \cite{LMY:2022CPAM} and continuity argument, we only need to show a closed a priori estimate in Gevrey-2 spaces, which will be achieved in Section \ref{secproof}.
\end{remark} \qed

{\noindent\bf Proof of Theorem \ref{thmain0} based on Theorem \ref{thmain}}

First we verify initial conditions \eqref{conditionu} and \eqref{conditiong} in Theorem \ref{thmain} based on the assumption \eqref{initialdata} in Theorem \ref{thmain0}.

Let us choose $\dl=(2M)^{-1}$ in Theorem \ref{thmain}. Then
\begin{align}
&\|(u_0,v_0)\|_{X_{\tau_0, 13+\f{2}{\dl}}}=\lt(\sum_{j\in\bN}\sup_{|\al|=j} M^2_{j,13+4M}\|e^{\f{z^2}{8}}\p^\al_h(u_0,v_0)\|^2_{L^2(\bR^3_+)}\rt)^{1/2}\nn\\
=& \lt[\sum_{j\in\bN}\lt(\f{\tau_0^{(j+1)}(j+1)^{13+4M}}{(j!)^2}\sup_{|\al|=j}\|e^{\f{z^2}{8}}\p^\al_h(u_0,v_0)\|_{L^2(\bR^3_+)}\rt)^2\rt]^{1/2} \text{ using \eqref{Gsigma}}\nn\\
\leq& \lt[\sum_{j\in\bN}\lt(\f{(j+1)^{13+4M}}{2^{j+1}}\rt)^2\rt]^{1/2}G^2_{2\tau_0,1} \leq C_M \e. \text{ using \eqref{initialdata}}\nn
\end{align}
Moreover, by using inequalities \eqref{poincare} and \eqref{poincare1} in Lemma \ref{poincare}, we obtain that by using \eqref{initialdata}
\begin{align}
&\|(g,\t{g})(0)\|_{X_{\tau_0,12}}+(2M)^{-1/2}\|\p_z(g,\t{g})(0)\|_{X_{(\tau_0,10}}+(2M)^{-1}\|\p^2_z(g,\t{g})(0)\|_{X_{\tau_0,8}}\nn\\
\leq& C\lt(\|\p_z(u_0,v_0)\|_{X_{\tau_0,12}}+(2M)^{-1/2}\|\p^2_z(u_0,v_0)\|_{X_{\tau_0,10}}+(2M)^{-1}\|\p^3_z(u_0,v_0)\|_{X_{\tau_0,8}}\rt)\nn\\
\leq & C_M\sum^3_{k=1}\|\p^k_z(u_0,v_0)\|_{G^2_{2\tau_0,1}}\leq C_M\e. \nn
\end{align}
We have shown that \eqref{conditionu} and \eqref{conditiong} are guarantied by replacing $\e$ with $\t{\e}:=C_M\e$. Also from the proof of \eqref{egu1p}, \eqref{ug2}, \eqref{ug3} and \eqref{ug4} in Appendix, we see that
\begin{align}
&\sum^3_{k=0}\angt^{-\f{k-1}{2}}\|\p^k_z(u,v)(t)\|^2_{G^2_{\f{1}{2}\tau_0,\f{1}{2}}}=\sum^3_{k=0}\angt^{\f{k-1}{2}}\|\p^k_z(u,v)(t)\|^2_{X_{\f{1}{2}\tau_0,0,\f{1}{2}}}\nn\\
\leq& C_{\tau_0,M}\lt(\|(g,\t{g})(t)\|_{X_{\tau,12}}+\angt^{1/2}\|\p_z (g,\t{g})(t)\|_{X_{\tau,10}}+\angt\|\p^2_z(g,\t{g})(t)\|_{X_{\tau,8}}\rt). \label{estiequiv}
\end{align}
Then using the result of Theorem \ref{thmain}, we see that there exists three constants $c_{\tau_0,M}$ and $C_{\tau_0,M}$ and $\e_0$, such that,
for any $t\in(0,c_{\tau_0,M}\e^{-M}]$, system \eqref{3dprandtl1} has a unique smooth solution satisfying
\begin{align}
&\sum^3_{k=0}\angt^{-\f{k-1}{2}}\|\p^k_z(u,v)(t)\|^2_{G^2_{\f{1}{2}\tau_0,\f{1}{2}}}\leq C_{\tau_0,M}\t{\e} \angt^{-\f{5-(2M)^{-1}}{4}}=C_{\tau_0,M}{\e} \angt^{-\f{10M-1}{8M}},\label{estiequiv1}
\end{align}
which is \eqref{solutionesti}. At the last line of \eqref{estiequiv1}, we have used \eqref{estiequiv} and \eqref{gesti}. \qed

\section{Closed a priori estimates: proof to Theorem \ref{thmain}}\label{secproof}

\q\ In this section, we will give a closed a priori estimate in Gevrey-2 spaces, which indicates the validity of Theorem \ref{thmain} by combining the local well-posedness results and continuity argument. Our strategy is the following. First, in Section \ref{sec3.1}, we will introduce some auxiliary functions, which originate from the ones in \cite{DGV:2019ANNPDE} and \cite{LMY:2022CPAM}, where similar auxiliary functions are introduced to obtain local well-posedness of Gevrey-2 solutions for the two and three dimensional Prandtl equations. However, here we need some modifications so that they can be applied to obtain the long time behavior of the solution. Then in Section \ref{sec3.2}, we will introduce some linearly good unknowns, which are some linear combinations of the unknowns, auxiliary functions and their derivatives. They are set to achieve fast decay of lower order Gevrey-2 norms for the unknowns and auxiliary functions. In Section \ref{sec3.3}, we make a priori assumptions on the linearly good unknowns and based on the a priori assumptions, we will give a series of a priori estimates for the unknowns, the  auxiliary functions  and the linearly good unknowns in Section \ref{sec3.4}. At last, in Section \ref{sec3.5}, by applying the a priori estimates in Section \ref{sec3.4}, we can achieve closed energy estimates in Gevrey-2 spaces in the time interval $[0,T_0]$.

\subsection{Introduction of auxiliary functions}\label{sec3.1}

First we introduce the following two auxiliary functions $\mH$ and $\wt{\mathcal{H}}$  by

\be\label{auxih}
\lt\{
\begin{aligned}
&\lt[\partial_{t} +\left({u} \p_x+v \partial_y+w\p_z \right)-\partial_{z}^{2} \rt]\int^{+\i}_z \mathcal{H} d\bar{z}=\s{\e} \angt^{\dl-1} \p_x w,\\
& \mathcal{H}|_{t=0}=0,\q \p_z \mathcal{H}|_{z=0}=0,\q \mathcal{H}|_{z\rightarrow+\i}=0.
\end{aligned}
\rt.
\ee
\be\label{auxith}
\lt\{
\bali
&\lt[\partial_{t} +\left({u} \p_x+v \partial_y+w\p_z \right)-\partial_{z}^{2} \rt]\int^{+\i}_z \mathcal{\wt{H}}d\bar{z}=\s{\e} \angt^{\dl-1} \p_y w,\\
& \mathcal{\wt{H}}|_{t=0}=0,\q \p_z \mathcal{\wt{H}}|_{z=0}=0,\q \mathcal{\wt{H}}|_{z\rightarrow+\i}=0.
\eali
\rt.
\ee

The existence of $\mH$ and $\wt{\mH}$ follows the standard linear parabolic theory.  This two auxiliary functions are inspired by Dietert and G\'{e}rard-Varet \cite{DGV:2019ANNPDE}  and  Li-Masmoudi-Yang \cite{LMY:2022CPAM} where similar auxiliary functions are constructed to prove the local well-posedness of the 2D and 3D Prandtl equations in Gevrey-2 spaces. The main differences are the following.

1. In \eqref{auxih} and \eqref{auxith}, we define the auxiliary functions by $\int^{+\i}_z \mathcal{H} d\bar{z}$ and $\int^{+\i}_z \wt{\mathcal{H}} d\bar{z}$ instead of $\int^{z}_0 \mathcal{H} d\bar{z}$ and $\int^{z}_0 \wt{\mathcal{H}} d\bar{z}$ respectively is to ensure that $\mH$ and $\wt{\mH}$ decay fast enough at $z$ infinity.

2. The time-dependent coefficient on the righthand of \eqref{auxih} and \eqref{auxith} $\s{\e} \angt^{\dl-1}$ is specially designed to match with $\eta(t)$ in \eqref{radigain}, which can ensure closing of Gevrey-2 energy defined for $\mH$ and $\wt{\mH}$.
\begin{remark}
Here we remark that
\be\label{antimh}
\int^\i_{0}\mH d\bar{z}=0.
\ee
Actually by letting $z=0$ in \eqref{auxih} and using the boundary condition of $\p_z \mH$ and $w$ on $z=0$, we can achieve that
\bes
\lt[\partial_{t} +\left({u} \p_x+v \partial_y \right)\rt]\int^{+\i}_0 \mathcal{H} d\bar{z}=0.
\ees
Combining the fact that $\mH|_{t=0}=0$, we can obtain \eqref{antimh}  from the above transport equation.
\end{remark}
However, as shown in  Li-Masmoudi-Yang in \cite{LMY:2022CPAM}, the above two auxiliary functions are not enough to show the well posedness of the 3D Prandtl equations in the Gevrey-2 space. More auxiliary functions are needed to seek for new cancellations to overcome the one order derivative loss problem for the 3D much more complicated couple system. We introduce the following other four auxiliary functions.
\bes
\lt\{
\bali
&\mG:=\p_x u+\f{\angt^{1-\dl}}{\s{\e} } \p_z u\int^{+\i}_z \mathcal{H}d\bar{z},\q \wt{\mG}:=\p_y u+\f{\angt^{1-\dl}}{\s{\e} } \p_z u\int^{+\i}_z \wt{\mathcal{H}}d\bar{z},\\
&\mK:=\p_x v+\f{\angt^{1-\dl}}{\s{\e} } \p_z v\int^{+\i}_z \mathcal{H} d\bar{z},\q \wt{\mK}:=\p_y v+\f{\angt^{1-\dl}}{\s{\e} } \p_z v\int^{+\i}_z \wt{\mathcal{H}}d\bar{z},
\eali
\rt.
\ees

These auxiliary functions are initiated by  Li, Masmoudi and Yang in \cite{LMY:2022CPAM}, where similar four auxiliary functions are introduced to seek for new cancellations and local well-posedness in Gevrey-2 energy spaces  are achieved by combining the aforementioned auxiliary functions $\mH$ and $\wt{\mH}$. Also to obtain the long time behavior of the solution, we make some modification for the original auxiliary functions in \cite{LMY:2022CPAM}.  These four auxiliary functions will help achieve the $\f{1}{2}-$order derivative loss of $\mH$ and $\wt{\mH}$ and enable us to close the long-time Gevrey-2 energy of $\mH$ and $\wt{\mH}$. Then the long-time Gevrey-2 energy for the solution $(u,v)$ will be obtained with the help of $\mH$ and $\wt{\mH}$. Here we make a brief introduction for the idea of proof.

Take \eqref{auxih} for example. Applying $-\p_z$ to \eqref{auxih}, we can obtain, as shown in \eqref{auxih1}, that
\begin{align}
&\lt[\partial_{t} +\left({u} \p_x+v \partial_y+w\p_z \right)-\partial_{z}^{2} \rt] \mathcal{H}\nn\\
=&\s{\e} \angt^{\dl-1}   \lt(\p_x\mathcal{G}+\p_y\mathcal{K}\rt)-(\p_z\p_xu+\p_z\p_yv)\int^{+\i}_z \mathcal{H}d\bar{z}+(\p_xu+\p_yv)\mathcal{H}\label{preh1}\\
:=&\s{\e} \angt^{\dl-1}  \lt(\p_x\mathcal{G}+\p_y\mathcal{K}\rt)+\text{l.o.t.}.\nn
\end{align}
Here from the equation of $\mH$ and previous results in \cite{DGV:2019ANNPDE}, we consider $\mH$ have the same order as $\p_h (u,v)$, so the term l.o.t doesn't have derivative loss. However, if we view $\p_x\mG$  separately as two terms, then there will be one order derivative loss for $\p^2_x u$ in $\p_x\mG$. By computation, we can see that

\bes
\bali
&\lt[\partial_{t} +\left({u}\partial_{x}+v \partial_{y}+w\p_z\right) -\partial_{z}^{2} \rt]\p_x\mathcal{G}\\
=&-\lt[\p_xu\p_x\mG+\p_xv\p_y\mG+\p_x w\p_z \mG\rt]\\
&+\p_x\lt\{-\lt[(\p_xu)^2+\p_xv\p_yu\rt]+\f{\p_yv\p_zu-\p_yu\p_zv}{\s{\e}\angt^{\dl-1}}\int^\i_z\mathcal{H}d\bar{z}+\f{2\p^2_zu}
{\s{\e}\angt^{\dl-1}}\mathcal{H}\rt\}\\
:=&\text{terms involving }\p^2_h(u,v)+ \text{l.o.t.}.
\eali
\ees
The same is for $\p_y \mK$.  Then inserting this into \eqref{preh1}, we can see that

\be\label{mHgevrey}
\lt[\partial_{t} +\left({u} \p_x+v \partial_y+w\p_z \right)-\partial_{z}^{2} \rt]^2\mathcal{H}=\text{terms involving } \p^2_h(u,v)+\text{l.o.t.}.
\ee
Here $\p^2_h(u,v)$ have the same order with $\p_h \mH$. The above equation for $\mH$ indicates that we can perform energy estimates in Gevrey-$\sigma$ spaces with $\sigma\in[1,2]$ for $\mH$ as indicated in the toy model displayed in Li, Masmoudi and Yang \cite{LMY:2022CPAM}. While $\mG$ has the same order as $\p^{-1/2}_{h}\mH$. So if we define the energy functional of $\mH,\, \wt{\mH}$ as $\|(\mH,\wt{\mH})\|_{X_{\tau,\kappa}}$, then correspondingly, we need to define the
energy functionals of $\mG,\, \wt{\mG},\,\mK,\, \wt{\mH} $ as $\|(\mG,\, \wt{\mG},\,\mK,\, \wt{\mH})\|_{X_{\tau,\kappa+1}}$ and also  the energy functionals of $u,\, v$ as $\|(u,\, v)\|_{X_{\tau,\kappa+2}}$.

When we obtain from \eqref{mHgevrey} the Gevrey-2 energy norm estimate, from \eqref{uauxi}, we see that
\bes
\bali
&\lt[\partial_{t} +\left({u}\partial_{x}+v \partial_{y}+w\p_z\right) -\partial_{z}^{2} \rt]\lt(\p^j_xu+\f{\p_zu}{\s{\e}\angt^{\dl-1}} \int^\i_z \p^{j-1}_x\mathcal{H}d\bar{z}\rt)=\text{l.o.t.}.
\eali
\ees
No derivative loss is for the equation. After performing Gevrey-2 norm estimate for the above equation and then combining the Gevrey-2 energy estimates of $\mH$ and $\wt{\mH}$,  the Gevrey-2 norm estimate of $(u,v)$ follows.

In order to obtain much long lifespan of the solutions, the equations of $(u,v)$ and $(\mH,\wt{\mH})$ are not enough to obtain much faster time decay estimate. Next, we will introduce the following four linearly good unknowns to catch much faster decay to the lower order Gevrey-2 energy of the $(u,v)$ and $(\mH,\wt{\mH})$.

\subsection{The linearly good unknowns}\label{sec3.2}

Inspired by the good unknown in \cite{IV:2016ARMA} and \cite{PZ:2021ARMA}, we define
\be\label{defng}
\lt\{
\bali
&g:=\p_z u+\f{z}{2\langle t\rangle} u, \q \t{g}:=\p_z v+\f{z}{2\langle t\rangle} v,\\
& \fH:=\mH-\f{z}{2\angt}\int^\i_z\mH d\bar{z},\q \wt{\fH}:=\wt{\mH}-\f{z}{2\angt}\int^\i_z\wt{\mH} d\bar{z}.
\eali
\rt.
\ee
These four linearly good unknowns are set to dig out the sufficiently fast decay rate for the lower order Gevrey-2 norms of the solutions $(u,v)$ and $(\mH,\t{\mH})$, which ensure closing of energy estimates for all the quantities mentioned above in our constrained time $t\in (0,T_0]$. As shown in \eqref{gesti}, we see that the lower order Gevrey-2 norm of $(g,\t{g})$ has a decay rate of almost $-5/4$ order with respect to time, which will induce almost
$-3/4$ order decay rate of the lower order Gevrey-2 norm of $(u,v)$, see \eqref{solutiongevrey} in Lemma \ref{llowpoint}. Based on the almost
$-3/4$ order decay of $(u,v)$, we can see that $(\fH, \t{\fH})$ have the same almost $-3/4$ order decay for the lower order Gevrey-2 norm, which indicates almost $-3/4$ order decay of the lower order Gevrey-2 norm for the auxiliary functions $(\mH,\wt{\mH})$. See also \eqref{solutiongevrey} in Lemma \ref{llowpoint}.

%

\subsection{A prior assumptions}\label{sec3.3}

\q\ Later for simplification of notations, we denote
\bes
\bl{u}=(u,v),\q \bl{\mH}=(\mH,\wt{\mH}),\q \bl{\mG}=(\mG,\wt{\mG},\mK,\wt{\mK}),\q \bl{g}=(g,\t{g}),\q \bl{\fH}=(\fH,\wt{\fH}).
\ees

We will first make a priori assumptions for the good unknowns as follows. We assume that there exists some constant $C_\ast$, (to be determined later), such that
\be\label{gassump}
\bali
&\|\bl{g}(t)\|_{X_{\tau,12}}+\s{\dl}\angt^{1/2}\|\p_z \bl{g}(t)\|_{X_{\tau,10}}+\dl\angt\|\p^2_z\bl{g}(t)\|_{X_{\tau,8}}\leq C_\ast\e \angt^{-\f{5-\dl}{4}},\\
&\|\bl{\fH}(t)\|_{X_{\tau,9,7/8}}+\angt^{1/2}\|\p_z\bl{\fH}(t)\|_{X_{\tau,7,7/8}}\leq C_\ast\e \angt^{-\f{3-\dl}{4}}.
\eali
\ee

Under the a prior assumption \eqref{gassump}, we first have the following a priori estimates based on the relations between $\bl{u}$ and $\bl{g}$, and between $\bl{\mH}$ and $\bl{\fH}$ respectively.

\begin{lemma}\label{llowpoint}

Under the assumption \eqref{gassump}, we have the following a priori estimates. For any $0\leq\nu<1$,
{\small
\be\label{solutiongevrey}
\bali
&\angt^{-1/2}\lt\|\bl{u}\rt\|_{X_{\tau,12,\nu}}+\lt\|\p_z \bl{u}\rt\|_{X_{\tau,12,\nu}}+\s{\dl} \angt^{\f{1}{2}}\lt\|\p^2_z\bl{u}\rt\|_{X_{\tau,10,\nu}}
+\dl \angt\lt\|\p_z^3\bl{u}\rt\|_{X_{\tau,8,\nu}}\ls_\nu C_\ast\e \angt^{-\f{5-\dl}{4}},\\
&\lt\|\bl{\mH}\rt\|_{X_{\tau,9,3/4}}+\angt^{1/2}\lt\| \p_z\bl{\mH}\rt\|_{X_{\tau,7,3/4}}\ls C_\ast\e \angt^{-\f{3-\dl}{4}},\\
&\lt\|\bl{\mG}\rt\|_{X_{\tau,9,3/4}}+\s{\dl}\angt^{1/2}\lt\|\p_z\bl{\mG}\rt\|_{X_{\tau,7,3/4}}\ls C_\ast\e \angt^{-\f{3-\dl}{4}}.
\eali
\ee
}
Sobolev embedding will imply the following finite order $L^\i$ a priori estimates. For $k\leq 50$, any $0\leq\nu<1$,
\be\label{lowpoint}
\bali
&\angt^{-\f{1}{4}}\lt\|\th_{\nu} \p^k_h \bl{u}\rt\|_{L^\i}+\s{\dl}\angt^{\f{1}{4}}\lt\|\th_{\nu} \p^k_h \p_z \bl{u}\rt\|_{L^\i}+\dl\angt^{\f{3}{4}}\lt\|\th_{\nu} \p^k_h \p^2_z \bl{u}\rt\|_{L^\i}\ls C_\ast\e \angt^{-\f{5-\dl}{4}},\\
&\angt^{-\f{3}{4}}\lt\|\th_{\nu} \p^k_h {w}\rt\|_{L^\i} \ls C_\ast\e\angt^{-\f{5-\dl}{4}}.
\eali
\ee

\end{lemma}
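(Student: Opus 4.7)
The plan is to leverage the explicit algebraic/differential relations between the good unknowns $\bl{g},\bl{\fH}$ and the original quantities $\bl{u},\bl{\mH}$, combined with the weighted Poincaré-type inequalities referenced in the paper (those labelled \eqref{poincare}, \eqref{poincare1}), to pass from the a priori hypotheses in \eqref{gassump} to the conclusions in \eqref{solutiongevrey}. The $L^\infty$ estimates in \eqref{lowpoint} then follow from tangential Sobolev embedding applied to \eqref{solutiongevrey}.

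For the $\bl{u}$-bounds, first I would solve the first-order ODE $\p_z u+\tfrac{z}{2\angt}u = g$ with $u|_{z=0}=0$ by using the integrating factor $\exp\{z^2/(4\angt)\}$, obtaining $u(z)=\exp\{-z^2/(4\angt)\}\int_0^z\exp\{\bar z^2/(4\angt)\}\,g\,d\bar z$. Applying $\p_h^\al$ (since the relation is linear and $z$-local in $\al$), multiplying by $M_{|\al|,12}$ and the weight $\th_\nu$, and using Cauchy-Schwarz yields $\|\bl{u}\|_{X_{\tau,12,\nu}}\lesssim_\nu \angt^{1/2}\|\bl{g}\|_{X_{\tau,12}}$, which gives the first bound in \eqref{solutiongevrey}. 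The key here is that the difference between weights $\th_{2\nu}$ with $\nu<1$ and $\th_2$ produces a convergent Gaussian factor yielding the $\angt^{1/2}$. Then $\p_z u=g-\tfrac{z}{2\angt}u$ is estimated directly; differentiating once more, $\p_z^2 u=\p_z g-\tfrac{1}{\angt}u-\tfrac{z}{2\angt}\p_z u$, which combined with the improved decay of $\|\p_z\bl{g}\|_{X_{\tau,10}}$ (which is $\sqrt\dl^{\,-1}\angt^{-1/2}$ better than $\bl{g}$) gives the $\p_z^2\bl{u}$ bound. Analogously $\p_z^3 u$ is obtained from $\p_z^2 g$ and lower-order terms.

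For the $\bl{\mH}$ estimates, the definition $\fH=\mH-\tfrac{z}{2\angt}\int_z^\infty\mH\,d\bar z$ can be viewed as an ODE for $F(z):=\int_z^\infty\mH\,d\bar z$, namely $F'+\tfrac{z}{2\angt}F=-\fH$; solving with $F(\infty)=0$ and differentiating gives $\mH = \fH+\tfrac{z}{2\angt}\exp\{-z^2/(4\angt)\}\int_z^\infty\exp\{\bar z^2/(4\angt)\}\,\fH\,d\bar z$. A Cauchy-Schwarz with a small Gaussian reserve explains precisely why the weight parameter drops from $\nu=7/8$ (on $\fH$) to $\nu=3/4$ (on $\mH$) without touching the Gevrey index $\kappa=9$, and gives $\|\bl{\mH}\|_{X_{\tau,9,3/4}}\lesssim \|\bl{\fH}\|_{X_{\tau,9,7/8}}$. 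The same device handles $\p_z\bl{\mH}$. For $\bl{\mG}$, I would use its definition $\mG=\p_xu+\tfrac{\angt^{1-\dl}}{\sqrt\e}\p_zu\int_z^\infty\mH\,d\bar z$: the first term is controlled by the $\bl{u}$ estimate at $X_{\tau,9,3/4}$ (with one fewer Gevrey index than $\bl{u}$), and the product term is estimated by placing the $\p_zu$ factor in $L^\infty$ via Sobolev embedding in the tangential directions and the antiderivative of $\mH$ in $L^2$ with an appropriate Gaussian weight. The prefactor $\angt^{1-\dl}/\sqrt\e$ is absorbed because $\p_z u$ has fast decay ($\angt^{-(5-\dl)/4}$) and $\|\bl{\mH}\|$ contributes $C_\ast\e\angt^{-(3-\dl)/4}$; matching powers yields the stated $\bl{\mG}$ bound at rate $\angt^{-(3-\dl)/4}$.

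Finally, the $L^\infty$ estimates in \eqref{lowpoint} come from tangential Sobolev embedding $H^2_{x,y}\hookrightarrow L^\infty_{x,y}$ combined with the 1D trace inequality $\|f\|_{L^\infty_z}\lesssim\|f\|_{L^2_z}^{1/2}\|\p_z f\|_{L^2_z}^{1/2}$, which together cost at most two tangential derivatives and one vertical derivative. Since the assumption $k\le 50$ leaves ample room below the maximal Gevrey index sums, these Sobolev losses are absorbable. The estimate on $w$ comes from $w(z)=-\int_0^z(\p_xu+\p_yv)\,d\bar z$, combined with the $\bl{u}$ bound (noting the loss of the Gaussian weight under integration in $z$, which costs a factor $\angt^{1/2}$, explaining the $\angt^{-1/2}$ difference between the prefactor on $w$ and on $\bl{u}$). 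The main bookkeeping obstacle is keeping track of simultaneously: the Gevrey index $\kappa$ (which drops by appropriate amounts at each step so that the weights $M_{j,\kappa}$ in the sums remain compatible), the weight $\nu$ (which is consumed by the Gaussian reserve arguments), and the decay rates in $\angt$ together with the correct powers of $\dl$ and $\sqrt\e$; all the individual steps are standard but must be orchestrated consistently.
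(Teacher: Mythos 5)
Your overall strategy is the same as the paper's: invert the first-order ODE relations $\p_z u+\tfrac{z}{2\angt}u=g$ and $\p_z\int_z^\infty\mH+\tfrac{z}{2\angt}\int_z^\infty\mH=-\fH$ to transfer decay from the good unknowns to $(u,\mH)$, use a product estimate for $\mG$, and conclude the $L^\infty$ bounds by tangential Sobolev embedding plus a weighted $z$-trace inequality. Most of the detailed bookkeeping you describe is consistent with the paper.

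However, your inversion formula for $\mH$ is flawed. You write
\[
\mH = \fH+\frac{z}{2\angt}\exp\Bigl\{-\tfrac{z^2}{4\angt}\Bigr\}\int_z^\infty \exp\Bigl\{\tfrac{\bar z^2}{4\angt}\Bigr\}\,\fH\,d\bar z,
\]
i.e.\ you pick the particular solution by integrating from $z$ to $\infty$. But the hypothesis only controls $\|\th_{7/8}\p_h^\alpha\fH\|_{L^2}$, which means the integrand $\exp\{\bar z^2/(4\angt)\}\fH = \th_2\fH$ gains a net Gaussian growth of $\th_2\th_{-7/8}=\th_{9/8}$ over the $L^2$ control, so the improper integral is not absolutely convergent in general. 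Any attempt to justify convergence would require a priori decay of $\mH$ faster than the weight being proved, which is circular. The paper avoids this by integrating from $0$ to $z$, which requires the nontrivial identity $\int_0^\infty\mH\,d\bar z=0$ as the ODE initial condition; that identity is derived in equation \eqref{antimh} from the transport equation satisfied by $\int_0^\infty\mH$ together with zero initial data. You never invoke \eqref{antimh}, and without it the $F(0)=0$ boundary condition is unavailable and your representation is not well-defined. This is a genuine missing ingredient.

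A secondary, more minor issue concerns $\bl{\mG}$: saying you ``place $\p_zu$ in $L^\infty$'' does not by itself control a Gevrey norm of a product, because one must distribute tangential derivatives via the Leibniz formula and close the sum over multi-indices (the discrete Young inequality, as encoded in Lemma~\ref{lemproduct}, specifically \eqref{product2}). The paper's proof uses precisely that tool for the terms $\p_zu\int_z^\infty\mH$ and its $z$-derivative. Your version can be rescued, but only if read as shorthand for the low-frequency/high-frequency Leibniz split, which you should make explicit.
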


Proof of this Lemma will be presented in Appendix. \qed

Based on the a priori assumptions in \eqref{gassump} and the a priori estimates in Lemma \ref{llowpoint}, we can derive a series of estimates as follows, which is based on performing weighted energy estimates to equations of auxiliary functions, the unknowns and the good unknowns.

\subsection{A priori estimates of auxiliary functions, the unknowns and the good unknowns}\label{sec3.4}

\q\ For simplification of notations, let $\kappa=10+\f{2}{\dl}$ in the following. For auxiliary functions $\bl{\mH}$, we have the following estimate.
\begin{proposition}[Gevrey-2 estimates of $\bl{\mH}$]\label{pfirstauxi}

For any fixed $\tau_0>0$, $\dl\in(0,\f{1}{100}]$, under the assumption of \eqref{gassump}, for sufficiently small $\e$, there exits a constant $C_\dl$ such that for any $t\in (0,T_0]$, we have the following estimate
\begin{align}
&\angt^{\f{1-\dl}{2}} \lt\|\bl{\mH}(t)\rt\|^2_{X_{\tau,\kappa}}+\dl\int^{T_0}_0\angt^{\f{1-\dl}{2}}\lt\|\p_z\bl{\mH}(t)\rt\|^2_{X_{\tau,\kappa}}dt+\la\s{\e} \int^{T_0}_0\angt^{\f{1-\dl}{2}}{\eta}(t) \lt\|\bl{\mH}(t)\rt\|^2_{X_{\tau,\kappa+1/2}}dt\nn\\
\leq  & C_\dl\f{C^2_\ast}{\la}\s{\e} \int^{T_0}_0 \angt^{-\f{1-\dl}{2}} \lt(\lt\|\bl{\mH}(t)\rt\|^2_{X_{\tau,\kappa+1/2}}+\lt\|\bl{u}(t)\rt\|^2_{X_{\tau,\kappa+5/2}} \rt)dt\label{mhest}\\
 &+C_\dl\f{C^2_\ast}{\la}\s{\e} \int^{T_0}_0 \angt^{\f{1-\dl}{2}} \lt(\lt\|\p_z\bl{\mH}(t)\rt\|^2_{X_{\tau,\kappa}}+\lt\|\p_z\bl{u}(t)\rt\|^2_{X_{\tau,\kappa+2}} \rt)dt\nn\\
 &+C_\dl \s{\e} \int^{T_0}_0\angt^{\f{1-\dl}{2}}{\eta}(t) \lt\|\bl{\mG}(t)\rt\|^2_{X_{\tau,\kappa+3/2}}dt.\nn
\end{align}
\end{proposition}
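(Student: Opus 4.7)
My plan is to apply $-\p_z$ to \eqref{auxih}, which yields the pointwise evolution \eqref{preh1} for $\mH$ (and symmetrically for $\wt{\mH}$), and then run a weighted $L^2(\th_2)$ energy estimate on each tangentially-differentiated copy $\p^\al_h\mH$ weighted by $M_{j,\kappa}$. Concretely, for each $\al$ with $|\al|=j$ I would apply $\p^\al_h$ to \eqref{preh1}, pair against $M_{j,\kappa}^2\,\p^\al_h\mH\,\th_2$ in $L^2(\bR^3_+)$, integrate by parts in $z$ using $\p_z\mH|_{z=0}=0$ and $\p_z\th_2=\f{z}{2\angt}\th_2$, take $\sup_{|\al|=j}$ and sum in $j$. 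The zero initial datum $\mH|_{t=0}=0$ rules out any boundary contribution at $t=0$.

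\textbf{Gevrey gain and time weight.} Since $\tau'=-\la\s{\e}\,\eta(t)\,\tau$, one has $\f{d}{dt}M_{j,\kappa}^2=-2(j+1)\la\s{\e}\,\eta(t)\,M_{j,\kappa}^2$; using the identity $(j+1)M_{j,\kappa}^2 \sim M_{j,\kappa+1/2}^2$ and summing in $j$, this precisely contributes the Gevrey-gain term $\la\s{\e}\,\eta(t)\,\|\bl{\mH}\|^2_{X_{\tau,\kappa+1/2}}$ on the LHS of \eqref{mhest}, while the $-\p_z^2$ term supplies the dissipation $\|\p_z\bl{\mH}\|^2_{X_{\tau,\kappa}}$ (modulo a controllable weight-boundary remainder coming from $\p_z\th_2$). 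After multiplying by $\angt^{(1-\dl)/2}$ and integrating over $[0,T_0]$, I keep only a $\dl$-fraction of the dissipation on the LHS, reserving the rest to absorb the vertical derivatives generated by the RHS; the contribution of $\f{d}{dt}\angt^{(1-\dl)/2}$ is strictly lower order and is absorbed via \eqref{gassump}.

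\textbf{Right-hand side.} For the principal forcing $\s{\e}\,\angt^{\dl-1}(\p_x\mG+\p_y\mK)$, I would integrate by parts in the horizontal variable to shift $\p_x$ (resp.\ $\p_y$) onto $M_{j,\kappa}^2\,\p^\al_h\mH\,\th_2$, turning the pairing into one between $M_{j,\kappa}\p^\al_h\bl{\mG}$ and $M_{j,\kappa}\p_x\p^\al_h\bl{\mH}$. The algebraic ratio $M_{j,\kappa}/M_{j+1,\kappa+1/2}\sim (j+1)^{3/2}/\tau$ is matched exactly by the upgrade $\kappa\mapsto\kappa+3/2$ carried by $\bl{\mG}$ in \eqref{mhest} (since $M_{j,\kappa+3/2}^2=(j+1)^3 M_{j,\kappa}^2$); a weighted Young inequality calibrated so as to absorb half of $\la\s{\e}\,\eta(t)\,\|\bl{\mH}\|^2_{X_{\tau,\kappa+1/2}}$ from the LHS then delivers the last line of \eqref{mhest}. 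The genuine bilinear remainders $(\p_xu+\p_yv)\mH$ and $(\p_z\p_xu+\p_z\p_yv)\int_z^{+\i}\mH\,d\bar z$ are handled by the Gevrey tame-product estimates from the appendix combined with the pointwise bounds \eqref{lowpoint}, yielding the $\f{C_\ast^2}{\la}\s{\e}$-type contributions on the first two lines of \eqref{mhest} after a further Gevrey-gain absorption.

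\textbf{Main obstacle.} The most delicate terms are the transport commutators $[\p^\al_h,\,u\p_x+v\p_y+w\p_z]\mH$, of which the $w\p_z$-commutator is worst because $w=-\int_0^z(\p_xu+\p_yv)\,d\bar z$ carries one extra horizontal derivative relative to $\bl{u}$. My plan is to expand via Leibniz and split each resulting term into a ``low-coefficient/high-$\mH$'' piece (where the extra derivative sits on $\mH$ and is absorbed by the Gevrey gain) and a ``high-coefficient/low-$\mH$'' piece (where the smallness and decay of $\bl{u}$ and $\p_z\bl{u}$ from Lemma \ref{llowpoint}, combined with Hardy-type inequalities for the vertical integral, tame the coefficient). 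The true bookkeeping challenge is to verify, for every single commutator, that the $(j+1)^{1/2}$-horizontal-derivative loss is offset by exactly the right Gevrey-index upgrade on the RHS of \eqref{mhest} --- $+5/2$ for $\bl{u}$, $+2$ for $\p_z\bl{u}$, $+1/2$ for $\bl{\mH}$, and $+3/2$ for $\bl{\mG}$ --- and that every Young-inequality absorption leaves behind a uniformly small residual bounded by $\f{C_\ast^2}{\la}\s{\e}$. This is what ultimately closes the estimate.
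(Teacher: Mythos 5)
Your strategy reproduces the paper's: apply $-\p_z$ to \eqref{auxih}, take $M_j\p^j_h$, run a weighted $L^2(\th_2)$ energy estimate, extract the Gevrey gain $\la\s\e\,\eta(t)(j+1)\|\mH_j\|^2$ from $\tau'(t)$ and the dissipation from $-\p_z^2$, transfer derivatives off the forcing $\s\e\angt^{\dl-1}(\p_x\mG_j+\p_y\mK_j)$ so that the $\mH$-part is absorbed by the Gevrey gain and the $\mG$-part lands in $\|\bl\mG\|^2_{X_{\tau,\kappa+3/2}}$, and close the commutators by the $k\le[(j+1)/2]$ Leibniz split with the product estimates of Lemma \ref{lemproduct} (these live in Section 2, not the appendix) and the a priori bounds of Lemma \ref{llowpoint} --- exactly the content of Lemmas \ref{llinearmH}--\ref{lnonlinearmH}. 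The only genuine deviation is your treatment of the lower-order remainder $H$: the paper notices the exact identity $H=-\p_z\bigl[(\p_xu+\p_yv)\int_z^{+\i}\mH\,d\bar z\bigr]$ and integrates by parts in $z$ so the shifted $\p_z$ is paid for by the $\dl$-fraction of dissipation, while you propose a direct bilinear estimate on the two pieces of $H$; both variants close, since the right-hand side of \eqref{mhest} already contains both $\|\p_z\bl u\|^2_{X_{\tau,\kappa+2}}$ and $\|\bl\mH\|^2_{X_{\tau,\kappa+1/2}}$ to receive the respective contributions, so the proposal is correct and essentially the same proof.
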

For auxiliary functions $\bl{\mG}$, we have the following estimate.
\begin{proposition}[Gevrey-2 estimates of $\bl{\mG}$ ]\label{psecondauxi}

For any fixed $\tau_0>0$, $\dl\in(0,\f{1}{100}]$, under the assumption of \eqref{gassump}, for sufficiently small $\e$, there exits a constant $C_\dl$ such that for any $t\in (0,T_0]$, we have the following estimate
\begin{align}
&\angt^{\f{1-\dl}{2}} \lt\|\bl{\mG}(t)\rt\|^2_{X_{\tau,\kappa+1}}+\dl\int^{T_0}_0\angt^{\f{1-\dl}{2}}\lt\|\p_z\bl{\mG}(t)\rt\|^2_{X_{\tau,\kappa+1}}dt+\la\s{\e} \int^{T_0}_0\angt^{\f{1-\dl}{2}}{\eta}(t) \lt\|\bl{\mG}(t)\rt\|^2_{X_{\tau,\kappa+3/2}}dt\nn\\
\leq  &C_\dl\|\bl{u}(0)\|_{X_{\tau,\kappa+3}}+C_\dl\f{C^2_\ast}{\la}\s{\e} \int^{T_0}_0 \angt^{-\f{1-\dl}{2}} \lt(\lt\|\bl{\mH}(t)\rt\|^2_{X_{\tau,\kappa+1/2}}+\lt\|\bl{\mG}(t)\rt\|^2_{X_{\tau,\kappa+3/2}}+\lt\|\bl{u}(t)\rt\|^2_{X_{\tau,\kappa+5/2}} \rt)dt\label{mgest}\\
 &+C_\dl\f{C^2_\ast}{\la}\s{\e} \int^{T_0}_0 \angt^{\f{1-\dl}{2}} \lt(\lt\|\p_z\bl{\mH}(t)\rt\|^2_{X_{\tau,\kappa}}+\lt\|\p_z\bl{\mG}(t)\rt\|^2_{X_{\tau,\kappa+1}}+\lt\|\p_z\bl{u}(t)\rt\|^2_{X_{\tau,\kappa+2}} \rt)dt.\nn
\end{align}
\end{proposition}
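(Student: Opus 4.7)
The plan is to adapt the energy estimate used for $\bl{\mH}$ in Proposition \ref{pfirstauxi} one level higher in the Gevrey hierarchy, so as to estimate $\bl{\mG}=(\mG,\wt{\mG},\mK,\wt{\mK})$ in the $X_{\tau,\kappa+1}$ norm with the same prefactor $\angt^{(1-\dl)/2}$. It suffices to treat $\mG$ explicitly, since $\wt{\mG},\mK,\wt{\mK}$ arise by the symmetries $\p_x\leftrightarrow\p_y$ and $u\leftrightarrow v$. Starting from $\mG=\p_x u+\f{\angt^{1-\dl}}{\s{\e}}\p_z u\int^{+\i}_z \mH\,d\bar z$ and applying the convection-diffusion operator $L:=\p_t+u\p_x+v\p_y+w\p_z-\p_z^2$, use $Lu=0$, the defining equation of $\mH$, and standard commutator identities to obtain $L\mG=F$, where the cancellation designed into $\mG$ eliminates the would-be top-order term $\p_x^2 u$; every term in $F$ is of the form $\s{\e}\angt^{\dl-1}\times(\text{lower order})$, a product of first-order tangential derivatives of $(u,v)$, or a term with a factor of $\mH$ or $\int^{+\i}_z\mH\,d\bar z$.

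\textbf{Energy identity.} Apply $M_{|\al|,\kappa+1}\p^\al_h$ to $L\mG=F$, pair with $M_{|\al|,\kappa+1}\p^\al_h \mG$ in $L^2(\th_2)$, take the supremum over $|\al|=j$, and sum in $j\in\bN$. Incompressibility kills the transport contribution; integration by parts on $-\p_z^2$ yields the dissipation $\dl\,\angt^{(1-\dl)/2}\|\p_z\mG\|^2_{X_{\tau,\kappa+1}}$ after a portion of the raw $\|\p_z\mG\|^2$ is reserved to absorb the weight commutator with $\th_2$; the identity $\tau'(t)=-\la\s{\e}\eta(t)\tau(t)$ produces the radius-gain term $\la\s{\e}\angt^{(1-\dl)/2}\eta(t)\|\mG\|^2_{X_{\tau,\kappa+3/2}}$ through the half-derivative upgrade built into the Gevrey index; and the time prefactor $\angt^{(1-\dl)/2}$ contributes $\f{d}{dt}\bigl(\angt^{(1-\dl)/2}\|\mG\|^2_{X_{\tau,\kappa+1}}\bigr)$ plus a harmless zeroth-order remainder that is absorbed into the dissipation.

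\textbf{Nonlinear terms.} For each contribution in $M_{|\al|,\kappa+1}\p^\al_h F$ I distribute the $\p^\al_h$ derivatives by Leibniz and apply a high-low dichotomy: if at most half the derivatives land on one factor, bound that factor in $L^\i$ via Lemma \ref{llowpoint}; otherwise exchange the roles. The terms containing $\int^{+\i}_z\mH\,d\bar z$ are handled by a weighted Hardy inequality in $z$ together with the rapid decay of $\mH$ at $z=+\i$. The pointwise bounds from Lemma \ref{llowpoint} supply the small factors $C_\ast\e\angt^{-(5-\dl)/4}$ or $C_\ast\e\angt^{-(3-\dl)/4}$, and Cauchy--Schwarz then places every term into one of the three admissible shapes of the right-hand side of \eqref{mgest}: either $\f{C^2_\ast}{\la}\s{\e}\angt^{-(1-\dl)/2}$ multiplied by a $\p_z$-free $X_{\tau,\cdot}$-norm, or $\f{C^2_\ast}{\la}\s{\e}\angt^{(1-\dl)/2}$ multiplied by a $\p_z$-carrying $X_{\tau,\cdot}$-norm, or a term absorbable into the dissipation $\dl\|\p_z\mG\|^2_{X_{\tau,\kappa+1}}$ on the left.

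\textbf{Main obstacle.} The principal technical difficulty is that $F$ contains contributions like $\p_x\p_z u\cdot\int^{+\i}_z\mH\,d\bar z$ and $\p_x u\cdot\mH$, whose $X_{\tau,\kappa+1}$-bound apparently calls for $X_{\tau,\kappa+3/2}$-control of $\bl{u}$ or $\bl{\mH}$, that is, for half of a tangential derivative more than is directly available. This is exactly what the radius-gain term $\la\s{\e}\eta\|\mG\|^2_{X_{\tau,\kappa+3/2}}$ from $\tau'$ is meant to compensate: Cauchy--Schwarz splits each such bad product into $\la\s{\e}\eta\|\mG\|^2_{X_{\tau,\kappa+3/2}}$ (absorbed on the left once $\la$ is chosen large enough) plus $\f{C^2_\ast}{\la}\s{\e}\angt^{-(1-\dl)/2}\|\bl{u}\|^2_{X_{\tau,\kappa+5/2}}$ (admissible on the right). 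Integrating in time over $[0,T_0]$ and using $\mG|_{t=0}=\p_x u|_{t=0}$ (since $\mH|_{t=0}=0$), so that $\|\mG(0)\|_{X_{\tau_0,\kappa+1}}\leq C\|\bl{u}(0)\|_{X_{\tau_0,\kappa+3}}$, produces the initial-data term $C_\dl\|\bl{u}(0)\|_{X_{\tau,\kappa+3}}$ in \eqref{mgest} and closes the estimate.
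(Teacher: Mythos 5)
Your overall architecture does mirror the paper's: derive a one-derivative-loss–free source equation for $\mG$ from the cancellation of $\p_x w\,\p_z u$ (your phrase ``would-be top-order term $\p_x^2 u$'' is only roughly this, since the cancelled term is $\p_x w\,\p_z u$ with the extra tangential weight hidden inside $\p_x w$), perform the weighted Gevrey energy estimate with the radius-gain term $\la\s\e\,\eta\,\|\mG\|^2_{X_{\tau,\kappa+3/2}}$, and reduce the initial-data contribution via $\mG(0)=\p_x u(0)$. The energy-identity paragraph and the initial-data reduction are fine.

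The gap is in the ``Main obstacle'' account and the nonlinear-term scheme. The actual source equation (the paper's \eqref{mg3}) is $L\mG=K^1+K^2+K^3$ with $K^1=-\big[(\p_xu)^2+\p_xv\p_yu\big]$, $K^2=\f{\angt^{1-\dl}}{\s\e}[\p_yv\p_zu-\p_yu\p_zv]\int_z^\infty\mH\,d\bar z$, and $K^3=\f{2\angt^{1-\dl}}{\s\e}\p_z^2u\cdot\mH$; the terms you name ($\p_x\p_z u\cdot\int_z^\infty\mH\,d\bar z$ and $\p_x u\cdot\mH$) do not occur. The genuinely delicate term is $K^3$, which carries the large prefactor $\angt^{1-\dl}\e^{-1/2}$ (not the small $\s\e\,\angt^{\dl-1}$ you suggest), and there your high-low Leibniz dichotomy does not close: in the regime where most tangential derivatives land on $\p_z^2u$, a direct product estimate produces $\|\p_z^2\bl u\|_{X_{\tau,\kappa+1}}$, which is controlled neither by Lemma \ref{llowpoint} nor by the right-hand side of \eqref{mgest} (only $\|\p_z\bl u\|_{X_{\tau,\kappa+2}}$ appears). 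The paper avoids this by splitting $K^3_j=K^3_{j,\text{low}}+K^3_{j,\text{other}}$ (the $k=0$ Leibniz summand and the rest), treating $K^3_{j,\text{low}}$ with the $L^\infty$-bound on $\p_z^2u$ and the $\eta\|\mG\|^2_{X_{\tau,\kappa+3/2}}$ radius-gain, and \emph{integrating by parts in $z$} in the pairing $\langle K^3_{j,\text{other}},\mG_j\rangle_{\th_2}$ so that $\p_z^2u_k\,\mH_{j-k}$ becomes $\p_zu_k\,\p_z\mH_{j-k}$ (and a $\p_z\mG_j$ pairing). Only after that move do the available norms $\|\p_z\bl u\|_{X_{\tau,\kappa+2}}$, $\|\p_z\bl\mH\|_{X_{\tau,\kappa}}$ and the dissipation $\dl\|\p_z\mG\|^2_{X_{\tau,\kappa+1}}$ suffice. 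This integration by parts is a necessary ingredient that your proposal does not contain.
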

For the unknown functions $\bl{u}$, we have the following estimate.
\begin{proposition}[Gevrey-2 estimates of $\bl{u}$]\label{punknown}
For any fixed $\tau_0>0$, $\dl\in(0,\f{1}{100}]$, under the assumption of \eqref{gassump}, for sufficiently small $\e$, there exits a constant $C_\dl$ such that for any $t\in (0,T_0]$, we have the following estimate
\begin{align}
&\angt^{\f{1-\dl}{2}} \lt\|\bl{u}(t)\rt\|^2_{X_{\tau,\kappa+2}}+\dl\int^{T_0}_0\angt^{\f{1-\dl}{2}}\lt\|\p_z\bl{u}(t)\rt\|^2_{X_{\tau,\kappa+2}}dt+\la\s{\e} \int^{T_0}_0\angt^{\f{1-\dl}{2}}{\eta}(t) \lt\|\bl{u}(t)\rt\|^2_{X_{\tau,\kappa+5/2}}dt\nn\\
\leq  &C_\dl\|\bl{u}(0)\|_{X_{\tau,\kappa+2}}+C_\dl C^2_\ast\|\bl{\mH}\|^2_{X_{\tau,\kappa}}+C_\dl\f{C^2_\ast}{\la}\s{\e} \int^{T_0}_0 \angt^{-\f{1-\dl}{2}} \lt(\lt\|\bl{u}(t)\rt\|^2_{X_{\tau,\kappa+5/2}}+\lt\|\bl{\mH}(t)\rt\|^2_{X_{\tau,\kappa+1/2}}\rt)dt\label{uesti}\\
     &+C_\dl\f{C^2_\ast}{\la}\s{\e} \int^{T_0}_0 \angt^{\f{1-\dl}{2}} \lt(\lt\|\p_z\bl{u}(t)\rt\|^2_{X_{\tau,\kappa+2}}+\lt\|\p_z\bl{\mH}(t)\rt\|^2_{X_{\tau,\kappa}}\rt)dt.\nn
\end{align}
\end{proposition}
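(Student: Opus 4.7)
The plan is to run a weighted $L^2$ energy estimate, summed in the Gevrey-$2$ fashion, on a suitably modified \emph{good unknown} version of each tangential derivative $\partial_h^\alpha u$ and $\partial_h^\alpha v$. Following the idea displayed after \eqref{mHgevrey} in Section~\ref{sec3.1}, for $|\alpha|=j\ge 1$ I would introduce
\begin{equation*}
U_\alpha:=\p_h^\alpha u+\f{\angt^{1-\dl}}{\s\e}\,\p_z u\int_z^{+\i}\!\p_h^{\alpha-e_i}\mH^{(i)}\,d\bar z,\qquad V_\alpha:=\p_h^\alpha v+\f{\angt^{1-\dl}}{\s\e}\,\p_z v\int_z^{+\i}\!\p_h^{\alpha-e_i}\mH^{(i)}\,d\bar z,
\end{equation*}
where $e_i$ is the unit multi-index in the first non-zero direction of $\alpha$ and $\mH^{(i)}\in\{\mH,\wt{\mH}\}$ is chosen accordingly. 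The definitions \eqref{auxih}–\eqref{auxith} of $\bl{\mH}$ are precisely tailored so that when one applies $\p_t+u\p_x+v\p_y+w\p_z-\p_z^2$ to $U_\alpha$, the top-order contribution $\p_h^\alpha w\cdot\p_z u$ that arises from $[\p_h^\alpha,w\p_z]u$ is cancelled by the $\s\e\,\angt^{\dl-1}\p_h^{\alpha-e_i}\p_i w$ produced by the $\mH^{(i)}$-forcing; what remains are only terms of tangential order $\le|\alpha|$ in $\bl u$ and tangential order $\le|\alpha|-1$ in $(\bl{\mH},\p_z\bl{\mH})$, with no loss of derivatives.

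I would then take the $L^2(\th_2)$ inner product of the resulting equations for $(U_\alpha,V_\alpha)$ against $M_{j,\kappa+2}^2(U_\alpha,V_\alpha)$, multiply by the time weight $\angt^{(1-\dl)/2}$, and sum over $|\alpha|=j$, $j\in\bN$. Four kinds of contributions arise. First, the $\tau$-derivative in $M_{j,\kappa+2}$ together with $\tau'/\tau=-\la\s\e\,\eta(t)$ furnishes the dissipative gain $\la\s\e\,\eta(t)\|\bl u\|_{X_{\tau,\kappa+5/2}}^2$ on the left. Second, the $\p_z^2$ term, after integration by parts against $\th_2$ and after pairing with the time derivative of $\th_2$ (itself handled by the standard $z^2/\angt^2$ absorption), produces the parabolic dissipation $\dl\,\angt^{(1-\dl)/2}\|\p_z\bl u\|_{X_{\tau,\kappa+2}}^2$; boundary contributions at $z=0$ are controlled via the compatibility condition \eqref{compa}. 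Third, the convective commutators $[\p_h^\alpha,u\p_x+v\p_y]\bl u$ together with the residual non-cancelling pieces of $[\p_h^\alpha,w\p_z]\bl u$ are estimated via the Gevrey–Leibniz product inequality combined with the pointwise bounds \eqref{lowpoint} and the a priori Gevrey norms in \eqref{solutiongevrey}. Fourth, converting back from $U_\alpha$ to $\p_h^\alpha u$ produces the term $\ls\|\p_z\bl u\|_{L^\i}^2\cdot\|\int_z^\i \p_h^{\alpha-e_i}\bl{\mH}d\bar z\|_{L^2(\th_2)}^2$ which, after summation in $\alpha$ and an application of the vertical weighted Poincaré-type estimate, yields the source $C_\dl C_\ast^2\|\bl{\mH}\|_{X_{\tau,\kappa}}^2$ on the right-hand side of \eqref{uesti}.

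The main obstacle will be the careful bookkeeping of the commutator $[\p_h^\alpha,w\p_z]u$. One has to track, term-by-term, that the top-order piece $\p_h^\alpha w\cdot\p_z u$ is exactly the one cancelled in the $U_\alpha$-derivation, while the subcritical commutator pieces $\binom{\alpha}{\beta}\p_h^\beta w\,\p_z\p_h^{\alpha-\beta}u$ with $1\le|\beta|\le|\alpha|-1$ must be bounded, through the Gevrey convolution estimate $\sum_{|\beta|\le|\alpha|}\binom{|\alpha|}{|\beta|}M_{|\alpha-\beta|,a}M_{|\beta|,b}\ls M_{|\alpha|,a+b+1}$ together with Lemma~\ref{llowpoint}, by the subcritical quantity
\begin{equation*}
\f{C_\ast^2}{\la}\s\e\Bigl(\angt^{-(1-\dl)/2}\bigl(\|\bl u\|_{X_{\tau,\kappa+5/2}}^2+\|\bl{\mH}\|_{X_{\tau,\kappa+1/2}}^2\bigr)+\angt^{(1-\dl)/2}\bigl(\|\p_z\bl u\|_{X_{\tau,\kappa+2}}^2+\|\p_z\bl{\mH}\|_{X_{\tau,\kappa}}^2\bigr)\Bigr),
\end{equation*}
the analogous estimate being used for the similarly-structured commutators involving $\bl{\mH}$ through the correction term in $U_\alpha$. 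The smallness prefactor $1/\la$ is what enables these terms to be absorbed by the $\la$-dissipation on the left once $\la$ is chosen large depending on $\dl$ and $\tau_0$, and this delivers the closed estimate \eqref{uesti}.
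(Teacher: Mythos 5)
Your proposal follows the paper's route almost exactly: the paper introduces the same kind of good unknown, $\psi_j := u_j+\tfrac{\angt^{1-\dl}\p_z u}{\s{\e}}M_j\int_z^{\i}\p_x^{j-1}\mH\,d\bar z$ (working with pure $\p_x^j$ and $\p_y^j$ derivatives via the equivalent $\widetilde X$-norm, which your $U_\al$ construction generalizes to mixed multi-indices in the obvious way), derives the cancelled equation \eqref{uauxi}, runs the weighted $L^2(\th_2)$ energy estimate (Lemma~\ref{lpsi}), bounds the commutator and source terms via the Gevrey product estimates together with the a priori bounds \eqref{solutiongevrey}–\eqref{lowpoint} (Lemma~\ref{lpsinonlinear}), and then recovers $\bl u$ from $\psi$ through \eqref{equiv1}–\eqref{equiv2}, which is exactly where the $C_\dl C_\ast^2\|\bl{\mH}\|^2_{X_{\tau,\kappa}}$ source comes from. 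Your four-contribution accounting and the $\la$-absorption argument coincide with the paper's; the only cosmetic slip is attributing the vanishing boundary terms to the compatibility condition \eqref{compa} rather than to $u|_{z=0}=0$ and $\int_0^\i\mH\,d\bar z=0$ (cf.\ \eqref{antimh}).
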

Next, we will give the Gevrey-2 estimates of the  good unknowns $\bl{g}$ and $\bl{\fH}$. Denote
\bes
\kappa_0=12,\ \kappa_1=10,\ \kappa_2=8,\text{ and }, \kappa_3=9,\ \kappa_4=7.
\ees
For $\bl{g}$, we have the following estimate.
\begin{proposition}[Gevrey-2 estimates of $\bl{g}$]\label{pgoodunknown}

For any fixed $\tau_0>0$, $\dl\in(0,\f{1}{100}]$, under the assumption of \eqref{gassump}, for sufficiently small $\e$, there exits a constant $C_\dl$ such that for any $t\in (0,T_0]$, we have the following estimate.
\begin{itemize}
\item[i.] For the good unknown: $\bl{g}$,
\begin{align}
&\angt^{\f{5-\dl}{2}} \lt\| \bl{g} (t)\rt\|^2_{X_{\tau,\kappa_0}}+\dl\int^{T_0}_0\angt^{\f{5-\dl}{2}}\lt\|\p_z\bl{g}(t)\rt\|^2_{X_{\tau,\kappa_0}}dt+\la\s{\e} \int^{T_0}_0\angt^{\f{5-\dl}{2}}{\eta}(t) \lt\|\bl{g}(t)\rt\|^2_{X_{\tau,\kappa_0+1/2}}dt\nn\\
\leq & C_\dl \lt\| \bl{g} (0)\rt\|^2_{X_{\tau_0,\kappa_0}}+C_\dl \f{C^2_\ast}{\la} \e^{3/2}\int^{T_0}_0 \angt^{\f{1-\dl}{2}}\lt\|\p_z\bl{u}(t)\rt\|^2_{X_{\tau,\kappa+2}}dt\label{goodun1}\\
&  +C_\dl \f{C^2_\ast}{\la} \e^{3/2}\int^{T_0}_0 \lt(\angt^{\f{3+\dl}{2}}\lt\|\bl{g}(t)\rt\|^2_{X_{\tau,\kappa_0+1/2}}+\angt^{\f{5-\dl}{2}}\lt\|\p_z\bl{g}(t)\rt\|^2_{X_{\tau,\kappa_0}}\rt)dt.\nn
\end{align}
\item[ii.] For the first order $z$-derivative of the good unknown: $\p_z\bl{g}$,
\begin{align}
&\angt^{\f{7-\dl}{2}} \lt\| \p_z\bl{g}(t)\rt\|^2_{X_{\tau,\kappa_1}}+\dl\int^{T_0}_0\angt^{\f{7-\dl}{2}}\lt\|\p^2_z\bl{g}(t)\rt\|^2_{X_{\tau,\kappa_1}}dt+\la\s{\e} \int^{T_0}_0\angt^{\f{7-\dl}{2}}{\eta}(t) \lt\|\p_z\bl{g}(t)\rt\|^2_{X_{\tau,\kappa_1+1/2}}dt\nn\\
\leq & C_\dl \lt\| \p_z\bl{g}(0)\rt\|^2_{X_{\tau,\kappa_1}}+\int^{T_0}_0 \angt^{\f{5-\dl}{2}}\lt\|\p_z\bl{g}(t)\rt\|^2_{X_{\tau,\kappa_1}}dt\label{goodun2}\\
      &+C_\dl\f{C^2_\ast}{\la} \e^{3/2}\int^{T_0}_0 \lt(\angt^{\f{3+\dl}{2}}\lt\|\bl{g}(t)\rt\|^2_{X_{\tau,\kappa_0+1/2}}+\angt^{\f{5-\dl}{2}}\lt\|\p_z\bl{g}(t)\rt\|^2_{X_{\tau,\kappa_0}}\rt)dt.\nn
\end{align}
\item[iii.] For the second order $z$-derivative of the good unknown: $\p^2_z\bl{g}$,
\begin{align}
&\angt^{\f{9-\dl}{2}} \lt\| \p^2_z\bl{g}(t)\rt\|^2_{X_{\tau,\kappa_2}}+\dl\int^T_0\angt^{\f{9-\dl}{2}}\lt\|\p^3_z\bl{g}(t)\rt\|^2_{X_{\tau,\kappa_2}}dt+\la\s{\e} \int^T_0\angt^{\f{9-\dl}{2}}{\eta}(t) \lt\|\p^2_z\bl{g}(t)\rt\|^2_{X_{\tau,\kappa_3+1/2}}dt\nn\\
\leq &C_\dl \lt\| \p^2_z\bl{g}(0)\rt\|^2_{X_{\tau_0,\kappa_2}}+\int^{T_0}_0 \angt^{\f{7-\dl}{2}}\lt\|\p^2_z\bl{g}(t)\rt\|^2_{X_{\tau,\kappa_2}}dt.\label{goodun3}\\
    &+C_\dl\f{C^2_\ast}{\la} \e^{3/2}\int^{T_0}_0 \lt(\angt^{\f{3+\dl}{2}}\lt\|\bl{g}(t)\rt\|^2_{X_{\tau,\kappa_0+1/2}}+\angt^{\f{5-\dl}{2}}\lt\|\p_z\bl{g}(t)\rt\|^2_{X_{\tau,\kappa_0}}++\angt^{\f{7-\dl}{2}}\lt\|\p^2_z\bl{g}(t)\rt\|^2_{X_{\tau,\kappa_1}}\rt)dt.\nn
\end{align}
\end{itemize}
\end{proposition}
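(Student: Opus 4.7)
The plan is to establish (i), (ii), (iii) by deriving evolution equations for $\bl g$, $\p_z\bl g$, $\p_z^2\bl g$ respectively, then running weighted Gevrey-$2$ energy estimates with enhanced time weights $\angt^{(5-\dl)/2}$, $\angt^{(7-\dl)/2}$, $\angt^{(9-\dl)/2}$ that match the decay rates in \eqref{gassump}. First I would differentiate the $u$-equation in $z$, substitute $\p_z w = -\p_x u - \p_y v$, and add $\f{z}{2\angt}$ times the $u$-equation. The coefficient $\f{z}{2\angt}$ in $g = \p_z u + \f{z}{2\angt}u$ is chosen precisely so that the Gaussian weight $\th$ interacts cleanly with the heat part, producing an equation of the form
\[
(\p_t + u\p_x + v\p_y + w\p_z - \p_z^2)g + \tfrac{1}{2\angt}g = \mathcal{R}(\bl u, w),
\]
where $\mathcal R$ is a bilinear combination of $\p_h\bl u$ and $\p_z\bl u$ with no derivative loss and no standalone $w$ factor; the analogous equation holds for $\t g$. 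The boundary flux $g\,\p_z g|_{z=0}$ vanishes: $g|_{z=0}=\p_z u|_{z=0}$, and evaluating the $u$-equation at $z=0$ using $u|_{z=0}=0$ gives $\p_z^2 u|_{z=0}=0$, so $\p_z g|_{z=0}=0$ as well.

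For (i), I would then apply $\p_h^\al$ to the $g$-equation, multiply by $M_{|\al|,\kappa_0}$, pair with $g_{\al,\kappa_0}\th_2$, integrate over $\bR^3_+$, take $\sup_{|\al|=j}$, sum over $j$, multiply by $\angt^{(5-\dl)/2}$, and integrate in $t\in[0,T_0]$. The four principal contributions on the left are: the time derivative of the weighted energy; the gain $\la\s\e\,\eta(t)\|\bl g\|^2_{X_{\tau,\kappa_0+1/2}}$ coming from $\p_t M_{j,\kappa_0}$ via $\tau'(t)$; the dissipation $\|\p_z \bl g\|^2_{X_{\tau,\kappa_0}}$ from $-\p_z^2$ after integration by parts (boundary term vanishes, Gaussian cross-term absorbed by a Poincar\'e-type estimate for $\th$); and the damping term $\f{1}{2\angt}\bl g$ which combines with $\f{d}{dt}\angt^{(5-\dl)/2}$ to keep the weighted energy controlled. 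The forcing $\mathcal R$ and the commutator $[M_{|\al|,\kappa_0}\p_h^\al, u\p_x + v\p_y + w\p_z]\bl g$ are estimated using the a priori bounds in Lemma \ref{llowpoint}: each coefficient involving $\bl u$, $\p_z\bl u$, or $w$ delivers a factor $C_\ast\e\angt^{-(5-\dl)/4}$ (resp.\ its $\p_z$-variant), and Cauchy--Schwarz against the radius-gain yields the prefactor $\f{C_\ast^2}{\la}\s\e$ on the right-hand side of \eqref{goodun1}.

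For (ii) and (iii), I would further $z$-differentiate the $g$-equation once and twice, then run the analogous energy estimate at regularity levels $\kappa_1$ and $\kappa_2$. Each vertical derivative spawns new commutators such as $[\p_z, w\p_z] = -(\p_x u + \p_y v)\p_z$ and cross terms $\p_z u\cdot\p_x(\cdot)$, which are controlled by the stronger decay of $\p_z\bl u$ and $\p_z^2\bl u$ in \eqref{solutiongevrey}. They also generate, by interpolation $\|\p_z\cdot\|^2_{L^2(\th_2)}\ls\|\cdot\|_{L^2(\th_2)}\|\p_z^2\cdot\|_{L^2(\th_2)}$, the lower-regularity remainder terms $\int\angt^{(5-\dl)/2}\|\p_z \bl g\|^2_{X_{\tau,\kappa_1}}$ and $\int\angt^{(7-\dl)/2}\|\p_z^2 \bl g\|^2_{X_{\tau,\kappa_2}}$ that appear on the right of \eqref{goodun2} and \eqref{goodun3}; these are compatible with the heavier weights $\angt^{(7-\dl)/2}$ and $\angt^{(9-\dl)/2}$. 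The boundary fluxes for $\p_z g$ and $\p_z^2 g$ vanish by iterating $\p_z^{2k}u|_{z=0}=0$, which follows from compatibility \eqref{compa} and successive $z$-derivatives of the $u$-equation evaluated at $z=0$.

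The main obstacle will be the commutator $[M_{|\al|,\kappa_0}\p_h^\al, w\p_z]\bl g$: since $w = -\int_0^z(\p_x u + \p_y v)\,d\bar z$ costs one tangential derivative over $\bl u$, the worst interactions in the Leibniz expansion concentrate at $|\beta|\sim |\al|/2$. The Gevrey-$2$ factorial weight in $M_{j,\kappa_0}$, combined with the extra half-order of tangential regularity available in the radius-gain norm $X_{\tau,\kappa_0+1/2}$, is precisely what allows this borderline loss to be absorbed, via a combinatorial bound of the shape $\binom{|\al|}{|\beta|}/(|\al|!)^2\ls ((|\beta|!)(|\al-\beta|!))^{-3/2}(1+|\beta|)^{-3/2}$ together with the $L^\infty$ control of low-frequency pieces from \eqref{lowpoint}. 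Once this commutator estimate is in hand, the closure of \eqref{goodun1}--\eqref{goodun3} follows from Young's inequality and choosing $\la$ sufficiently large in terms of $\dl$ to beat the $C_\ast^2/\la$ prefactor.
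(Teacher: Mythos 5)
Your outline of the structure is in the right direction: derive an equation for $g=\partial_z u+\tfrac{z}{2\angt}u$ by adding $\tfrac{z}{2\angt}$ times the $u$-equation to $\partial_z$ of it, note the clean damping $\tfrac{1}{\angt}g$ and the vanishing boundary flux, then run the weighted energy estimate with the time weight $\angt^{(5-\delta)/2}$ matching the decay in \eqref{gassump}. That much tracks Section~7 of the paper. Two details are off but not fatal: the right-hand side of the $g$-equation does contain a standalone $w$-factor, namely $\tfrac{wu}{2\angt}$ (the paper's $R^4$), though it is handled by incompressibility and the Poincar\'e inequality; and the equation also keeps the full transport $-(u\partial_x+v\partial_y+w\partial_z)g$ on the right, rather than being a genuine bilinear form in $\partial_h\bl{u},\partial_z\bl{u}$ only.

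There is, however, a genuine gap in how you propose to close the nonlinear estimate. The borderline term is $-w\partial_z g$, where $w=-\int_0^z(\partial_xu+\partial_yv)\,d\bar z$ costs one full tangential derivative over $\bl{u}$; after the Leibniz expansion the top commutator contributes $\partial_h^j w\cdot\partial_z g$, which carries $j+1$ tangential derivatives on $\bl u$. You claim this is absorbed by ``the extra half-order of tangential regularity available in the radius-gain norm $X_{\tau,\kappa_0+1/2}$'' together with a combinatorial bound on $\binom{|\alpha|}{|\beta|}/(|\alpha|!)^2$. This cannot work: a half-order gain does not absorb a full-order loss, and the combinatorial inequality you write is not the mechanism the paper uses. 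What the paper actually does is interpolate the excess tangential derivative away with a Gagliardo--Nirenberg-type estimate in tangential regularity (Claim~\ref{claim}, inequality \eqref{gnineq1}),
\[
\|\partial_h f\|^2_{X_{\tau,\tilde\kappa,\nu}}\lesssim_m\|f\|^{2/m}_{X_{\tau,\tilde\kappa+2m,\nu}}\,\|f\|^{2-2/m}_{X_{\tau,\tilde\kappa,\nu}},
\]
choosing $m=[1/\delta]$ precisely so that $\kappa_0-1/2+2m\le\kappa+2$; a Young inequality then redistributes both the regularity and the time weight, and this is the only way the term
\[
C_\delta\frac{C_\ast^2}{\lambda}\e^{3/2}\int_0^{T_0}\angt^{\frac{1-\delta}{2}}\|\partial_z\bl u(t)\|^2_{X_{\tau,\kappa+2}}\,dt
\]
appears on the right-hand side of \eqref{goodun1}. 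Your proposal never explains that term, because your absorption mechanism would not produce it. Without identifying this interpolation step (and the concomitant role of the large $\kappa=10+2/\delta$), the estimate for $R^1,R^2,R^3$ cannot be closed, and neither can the analogous commutator estimates in parts~(ii) and~(iii).
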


For $\bl{\fH}$, we have the following estimate.
\begin{proposition}[Gevrey-2 estimates of $\bl{\fH}$]\label{pgoodunknown1}

For any fixed $\tau_0>0$, $\dl\in(0,\f{1}{100}]$, under the assumption of \eqref{gassump}, for sufficiently small $\e$, there exits a constant $C_\dl$ such that for any $t\in (0,T_0]$, we have the following estimate.
\begin{itemize}
\item[iv.] For the good unknown: $\bl{\fH}$,
\begin{align}
&\angt^{\f{3-\dl}{2}} \lt\| \bl{\fH} (t)\rt\|^2_{X_{\tau,\kappa_3,7/8}}+\int^{T_0}_0\angt^{\f{3-\dl}{2}}\lt\|\p_z\bl{\fH}(t)\rt\|^2_{X_{\tau,\kappa_3,7/8}}dt+\la\s{\e} \int^{T_0}_0\angt^{\f{3-\dl}{2}}{\eta}(t) \lt\|\bl{\fH}(t)\rt\|^2_{X_{\tau,\kappa_3+1/2,7/8}}dt\nn\\
\leq  &C_\dl \f{C^2_\ast}{\la} \e^{3/2}\int^{T_0}_0 \lt(\angt^{\f{1+\dl}{2}}\lt\|\bl{\fH}(t)\rt\|^2_{X_{\tau,\kappa_3+1/2,7/8}}+\angt^{\f{3-\dl}{2}}\lt\|\p_z\bl{\fH}(t)\rt\|^2_{X_{\tau,\kappa_3,7/8}}
+\angt^{-\f{1-\dl}{2}}\lt\|\bl{\mH}(t)\rt\|^2_{X_{\tau,\kappa}}\rt)dt\label{goodun4}\\
        &+\f{\s{\e}}{\la}\int^{T_0}_0 \angt^{\f{3+\dl}{2}}\lt\|\bl{g}(t)\rt\|^2_{X_{\tau,\kappa_0+1/2}}dt.\nn
\end{align}
\item[v.] For the first order $z$-derivative of the good unknown: $\p_z\bl{\fH}$,
\begin{align}
&\angt^{\f{5-\dl}{2}} \lt\| \p_z\bl{\fH}(t)\rt\|^2_{X_{\tau,\kappa_4,7/8}}+\int^{T_0}_0\angt^{\f{5-\dl}{2}}\lt\|\p^2_z\bl{\fH}(t)\rt\|^2_{X_{\tau,\kappa_4,7/8}}dt+\la\s{\e} \int^{T_0}_0\angt^{\f{5-\dl}{2}}{\eta}(t) \lt\|\p_z\bl{\fH}(t)\rt\|^2_{X_{\tau,\kappa_4+1/2,7/8}}dt\nn\\
\leq  &C_\dl \f{C^2_\ast}{\la} \e^{3/2}\int^{T_0}_0 \lt(\angt^{\f{1+\dl}{2}}\lt\|\bl{\fH}(t)\rt\|^2_{X_{\tau,\kappa_3+1/2,7/8}}+\angt^{\f{3-\dl}{2}}\lt\|\p_z\bl{\fH}(t)\rt\|^2_{X_{\tau,\kappa_3,7/8}}\rt)dt\label{goodun5}\\
        &+\f{\s{\e}}{\la}\int^{T_0}_0 \angt^{\f{3+\dl}{2}}\lt\|\bl{g}(t)\rt\|^2_{X_{\tau,\kappa_0+1/2}}dt+\int^{T_0}_0\angt^{\f{3-\dl}{2}}\lt\|\p_z\bl{\fH}(t)\rt\|^2_{X_{\tau,\kappa_4,7/8}}dt.\nn
\end{align}
\end{itemize}
\end{proposition}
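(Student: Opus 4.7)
The strategy is to derive an evolution equation for the good unknowns $\fH,\wt{\fH}$ in which a damping term $\f{1}{\angt}\fH$ appears explicitly, and then to run the same type of weighted Gevrey-2 energy estimate as in Propositions \ref{pfirstauxi}--\ref{pgoodunknown} but with two modifications: a reduced Gaussian exponent $\nu = 7/8$ in the weight $\th_{7/8}$ (to absorb the extra $z/\angt$ factors generated by the good-unknown conjugation) and a polynomial time weight $\angt^{(3-\dl)/2}$ whose exponent is strictly less than $2$ (so that the damping beats it).

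The first step is algebraic. Setting $\Psi:= -\int_z^{\i}\mH\,d\bar z$, we have $\p_z\Psi = \mH$ and $\fH = \p_z\Psi + \f{z}{2\angt}\Psi$. The equation \eqref{auxih} reads $(\p_t + u\p_x + v\p_y + w\p_z - \p_z^2)\Psi = -\s\e\,\angt^{\dl-1}\p_x w$. Applying the operator $\p_z + \f{z}{2\angt}$ to both sides and commuting it with $\p_t$, $\p_z^2$ and the transport operator (the time-dependence of $\f{z}{2\angt}$ and incompressibility $\p_z w = -\p_xu-\p_y v$ combine to produce the crucial $-\f{1}{\angt}\Psi$ correction) one finds
\begin{equation*}
\bigl[\p_t + u\p_x + v\p_y + w\p_z - \p_z^2\bigr]\fH + \f{1}{\angt}\fH \;=\; \mathcal{R},
\end{equation*}
where $\mathcal R$ collects the forcing $-\s\e\,\angt^{\dl-1}\p_x\!\bigl(\p_z w + \f{z}{2\angt}w\bigr)$ together with the quadratic commutators $\p_z u\,\p_x\!\!\int_z^{\i}\!\mH + \p_z v\,\p_y\!\!\int_z^{\i}\!\mH - (\p_xu+\p_yv)\mH + \f{w}{2\angt}\Psi$; the equation for $\wt\fH$ is identical with $(\mH,\p_x)$ replaced by $(\wt\mH,\p_y)$. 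The coefficient $\f{1}{\angt}$ in front of $\fH$ is the Ignatova--Vicol good-unknown mechanism, and it is the single source of all enhanced decay in Theorem \ref{thmain}.

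For each $\al\in\bN^2$ I then apply $M_{|\al|,\kappa_3}\p^\al_h$, test the resulting identity against $\fH_{\al,\kappa_3}\th_{7/8}^2$ in $L^2(\bR^3_+)$, and take the supremum over $|\al|=j$ before summing in $j$. The time derivative produces $\tfrac12\tfrac{d}{dt}\|\fH\|^2_{X_{\tau,\kappa_3,7/8}}$ plus the favourable $-\tfrac{7}{32\angt^2}\int z^2|\fH_{\al,\kappa_3}|^2\th_{7/4}$ coming from $\p_t\th_{7/8}$ and the Gevrey-radius loss $-\la\s\e\,\eta(t)\|\fH\|^2_{X_{\tau,\kappa_3+\tfrac12,7/8}}$; the diffusion gives $\|\p_z\fH\|^2_{X_{\tau,\kappa_3,7/8}}$ modulo $z/\angt$-terms absorbed by the Gaussian weight — this is where the slack $\nu=\f78<1$ is used, via a Hardy-type bound $\|\tfrac{z}{\angt}\th_{7/8}\cdot\!\cdot\|_{L^2}\le C\|\p_z(\th_{7/8}\cdot\!\cdot)\|_{L^2}$ — and the damping contributes exactly $\f{1}{\angt}\|\fH\|^2_{X_{\tau,\kappa_3,7/8}}$. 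Multiplying by $\angt^{(3-\dl)/2}$ and integrating in time, the key identity is
\begin{equation*}
\f{d}{dt}\!\bigl(\angt^{(3-\dl)/2}\|\fH\|^2\bigr) + \Bigl(\f{2}{\angt}-\f{3-\dl}{2\angt}\Bigr)\angt^{(3-\dl)/2}\|\fH\|^2 + \text{(dissipation)} + \text{(radius gain)} = \text{RHS},
\end{equation*}
and the parenthesis is $\ge 0$ since $\f{3-\dl}{2}<2$; this yields the three quantities on the left of \eqref{goodun4}. The convective commutators $[\p^\al_h,\,u\p_x+v\p_y+w\p_z]\fH$ are bilinear and bounded by placing one factor in $L^\i$ via Lemma \ref{llowpoint} (using $k\le 50$) and the other in $X_{\tau,\cdot,7/8}$; after Cauchy--Schwarz, the identity $\la\s\e\,\eta(t)\sim\angt^{\dl-1}$ converts products of two Gevrey factors into the integrals on the right of \eqref{goodun4} with the small prefactor $\tfrac{C_\ast^2\s\e}{\la}$. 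The forcing term is rewritten using $w=-\int_0^z(\p_xu+\p_yv)d\bar z$ and the identity $g = \p_zu+\f{z}{2\angt}u$; this expresses $\p_z w + \f{z}{2\angt}w$ linearly in $\bl g$ and yields the $\f{\s\e}{\la}\int\angt^{(3+\dl)/2}\|\bl g\|^2_{X_{\tau,\kappa_0+1/2}}$ term.

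Item (v) follows by applying $\p_z$ once more to the $\fH$-equation: the structure is preserved, the only new terms are the commutator $[\p_z,w\p_z]\fH = -(\p_xu+\p_yv)\p_z\fH$ (absorbed pointwise by Lemma \ref{llowpoint}) and $\f{1}{\angt}\p_z\fH$ (already dissipative); the same energy step runs in $X_{\tau,\kappa_4,7/8}$ against the time weight $\angt^{(5-\dl)/2}$, and the positivity condition now reads $\f{5-\dl}{2}<3$, still satisfied. The hard part of the whole argument is, in my view, the delicate bookkeeping of the $z/\angt$-type factors produced by the good-unknown conjugation: each of them must be distributed between the vertical dissipation (through the weighted Hardy bound available because $\f78<1$) and the nonlinear a-priori estimates in Lemma \ref{llowpoint}, and it is this double-purpose use of the Gaussian weight that forces the precise exponents $7/8$ and $(3-\dl)/2$ in \eqref{goodun4}--\eqref{goodun5}.
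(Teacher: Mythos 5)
Your overall blueprint matches the paper's: derive the $\fH$-equation with the Ignatova--Vicol damping $\f{1}{\angt}\fH$ by conjugating the $\int_z^\i\mH$-equation with $\p_z+\f{z}{2\angt}$; test against $\fH_{\al,\kappa_3}\th_{7/8}^2$; combine the $\p_t\th_{7/8}$, $\p_z\th_{7/8}$ commutators with the weighted Poincar\'e inequality \eqref{poincare} to extract a net coefficient $\f{2}{\angt}\|\fH\|^2$; multiply by $\angt^{(3-\dl)/2}$ with $\f{3-\dl}{2}<2$; trade the tangential commutator loss against the Gevrey radius gain $\la\s{\e}\eta(t)$ and Lemma \ref{llowpoint}; and bound the forcing in terms of $\bl{g}$ by incompressibility. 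Keeping the convection on the left of the equation instead of the paper's right-hand side $(Q^1,Q^2,Q^3)$ is cosmetic, and the slips in your $\mathcal{R}$ (the sign of $(\p_xu+\p_yv)\mH$, the numerical factor in the $\p_t\th_{7/8}$ term) do not affect the strategy.

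There is, however, a genuine error in your treatment of item (v). Applying $\p_z$ does not increase the available damping: testing against $\p_z\fH_{\al,\kappa_4}\th_{7/8}^2$ and running the identical Poincar\'e step again produces only $\f{2}{\angt}\|\p_z\fH\|^2$. Against the time weight $\angt^{(5-\dl)/2}$ the balance is $\f{2}{\angt}-\f{5-\dl}{2\angt}=-\f{1-\dl}{2\angt}<0$, \emph{not} positive as your ``$\f{5-\dl}{2}<3$, still satisfied'' asserts. The estimate for $\p_z\fH$ therefore does not close on its own; one must carry the sign-unfavorable residual
\[
\int_0^{T_0}\angt^{\f{3-\dl}{2}}\lt\|\p_z\bl{\fH}(t)\rt\|^2_{X_{\tau,\kappa_4,7/8}}\,dt
\]
on the right-hand side --- exactly the last term of \eqref{goodun5} --- and absorb it in the closing step of Section \ref{sec3.5} into the $\p_z\fH$ dissipation of item (iv), which is possible because $\kappa_4<\kappa_3$. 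Your proposal omits this, so as written item (v) would fail, and it also leaves unexplained why the stated inequality \eqref{goodun5} contains that extra integral. The same carry-then-absorb mechanism is behind the residuals $\int_0^{T_0}\angt^{\f{5-\dl}{2}}\|\p_z\bl{g}\|^2_{X_{\tau,\kappa_1}}\,dt$ in \eqref{goodun2} and $\int_0^{T_0}\angt^{\f{7-\dl}{2}}\|\p^2_z\bl{g}\|^2_{X_{\tau,\kappa_2}}\,dt$ in \eqref{goodun3}, so it is structural to the whole cascade of $z$-derivative estimates, not a side issue.
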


\subsection{End proof of Theorem \ref{thmain}}\label{sec3.5}

\q\ Based on the a priori estimates from Proposition \ref{pfirstauxi} to Proposition \ref{pgoodunknown1}, we can derive the validity of Theorem \ref{thmain}. Since the local well-posedness of Gevrey-2 solutions has already been shown in Li-Masnoudi-Yang \cite{LMY:2022CPAM}, by continuity argument, we only need to show that under the a priori assumption \eqref{gassump}, by choosing suitably large $C_\ast$, we can obtain that
\be\label{gassumpf}
\bali
&\|\bl{g}(t)\|_{X_{\tau,12}}+\s{\dl}\angt^{1/2}\|\p_z \bl{g}(t)\|_{X_{\tau,10}}+\dl\angt\|\p^2_z\bl{g}(t)\|_{X_{\tau,8}}\leq \f{1}{2}C_\ast\e \angt^{-\f{5-\dl}{4}},\\
&\|\bl{\fH}(t)\|_{X_{\tau,9,7/8}}+\angt^{1/2}\|\p_z\bl{\fH}(t)\|_{X_{\tau,7,7/8}}\leq \f{1}{2}C_\ast\e \angt^{-\f{3-\dl}{4}}.
\eali
\ee
And to prove the validity of \eqref{uvesti} and \eqref{gesti}.

Set
\bes
\la=C^4_\ast.
\ees
For $C_\ast$ sufficiently large, adding \eqref{mhest} in Proposition \ref{pfirstauxi} and \eqref{mgest} in Proposition \ref{psecondauxi} together, we can obtain that  that
\begin{align}
&\angt^{\f{1-\dl}{2}} \lt\|\bl{\mH}(t)\rt\|^2_{X_{\tau,\kappa}}+\dl\int^{T_0}_0\angt^{\f{1-\dl}{2}}\lt\|\p_z\bl{\mH}(t)\rt\|^2_{X_{\tau,\kappa}}dt+C^4_\ast\s{\e} \int^{T_0}_0\angt^{\f{1-\dl}{2}}{\eta}(t) \lt\|\bl{\mH}(t)\rt\|^2_{X_{\tau,\kappa+1/2}}dt\nn\\
\ls_\dl  & \lt\|u(0)\rt\|^2_{X_{\tau_0,\kappa+3}}+C^{-2}_\ast\s{\e} \int^{T_0}_0 \lt(\angt^{-\f{1-\dl}{2}}\lt\|\bl{u}(t)\rt\|^2_{X_{\tau,\kappa+5/2}}+\angt^{\f{1-\dl}{2}}\lt\|\p_z\bl{u}(t)\rt\|^2_{X_{\tau,\kappa+2}}\rt)dt.\nn
\end{align}
For $C_\ast$ sufficiently large, combining the above inequality with \eqref{uesti} in Proposition \ref{punknown}, we can obtain that
\begin{align}
&\angt^{\f{1-\dl}{2}} \lt(\lt\|\bl{\mH}(t)\rt\|^2_{X_{\tau,\kappa}}+\lt\|\bl{u}(t)\rt\|^2_{X_{\tau,\kappa+2}}\rt)+\dl\int^{T_0}_0\angt^{\f{1-\dl}{2}}\lt(\lt\|\p_z\bl{\mH}(t)\rt\|^2_{X_{\tau,\kappa}}+\lt\|\p_z\bl{u}(t)\rt\|^2_{X_{\tau,\kappa+2}}\rt)dt\nn\\
&+C^4_\ast\s{\e} \int^{T_0}_0\angt^{\f{1-\dl}{2}}{\eta}(t) \lt(\lt\|\bl{\mH}(t)\rt\|^2_{X_{\tau,\kappa+1/2}}+\lt\|\bl{u}(t)\rt\|^2_{X_{\tau,\kappa+5/2}}\rt)dt\leq C_\dl \lt\|\bl{u}(0)\rt\|^2_{X_{\tau_0,\kappa+3}}\leq C_\dl\e^2. \label{combesti}
\end{align}
From the above inequality, estimate \eqref{uvesti} is proven.

Now multiplying a small constant $c\dl$ to \eqref{goodun2} and a much smaller constant $c\dl^2$ to \eqref{goodun3}, and adding the resulted equations to \eqref{goodun1}, by letting $\e$ be sufficiently small, we can achieve that
\begin{align}
&\angt^{\f{5-\dl}{2}} \lt\| \bl{g} (t)\rt\|^2_{X_{\tau,\kappa_0}}+\dl\angt^{\f{7-\dl}{2}} \lt\| \p_z\bl{g} (t)\rt\|^2_{X_{\tau,\kappa_1}}+\dl^2\angt^{\f{9-\dl}{2}} \lt\| \p^2_z\bl{g} (t)\rt\|^2_{X_{\tau,\kappa_2}}\nn\\
&C^4_\ast\s{\e} \int^{T_0}_0\angt^{\f{5-\dl}{2}}{\eta}(t) \lt\|\bl{g}(t)\rt\|^2_{X_{\tau,\kappa_0+1/2}}dt\leq C_\dl \e^{3/2}\int^{T_0}_0 \angt^{\f{1-\dl}{2}}\lt\|\p_z\bl{u}(t)\rt\|^2_{X_{\tau,\kappa+2}}dt. \label{combesti1}
\end{align}
Inserting \eqref{combesti} into \eqref{combesti1}, for sufficiently small $\e$, we can achieve that for some constant $C_\dl$,
\begin{align}
&\angt^{\f{5-\dl}{2}} \lt\| \bl{g} (t)\rt\|^2_{X_{\tau,\kappa_0}}+\dl\angt^{\f{7-\dl}{2}} \lt\| \p_z\bl{g} (t)\rt\|^2_{X_{\tau,\kappa_1}}+\dl^2\angt^{\f{9-\dl}{2}} \lt\| \p^2_z\bl{g} (t)\rt\|^2_{X_{\tau,\kappa_2}}\nn\\
&+\la\s{\e} \int^{T_0}_0\angt^{\f{5-\dl}{2}}{\eta}(t) \lt\|\bl{g}(t)\rt\|^2_{X_{\tau,\kappa_0+1/2}}dt\leq  C_\dl\e^2. \label{combesti2}
\end{align}
Then we can obtain \eqref{gesti} and the first one of \eqref{gassumpf} by letting $C_\ast$ large enough such that $16C_\dl\leq C^2_\ast$.

At last, from \eqref{goodun4} and \eqref{goodun5} in Proposition \ref{pgoodunknown1}, by letting $\e$ is sufficiently small, we can obtain that
\begin{align}
&\angt^{\f{3-\dl}{2}} \lt\| \bl{\fH} (t)\rt\|^2_{X_{\tau,\kappa_3,7/8}}+\angt^{\f{5-\dl}{2}} \lt\| \p_z\bl{\fH} (t)\rt\|^2_{X_{\tau,\kappa_4,7/8}}\nn\\
\leq  &C_\dl \e^{3/2}\int^{T_0}_0 \angt^{-\f{1-\dl}{2}}\lt\|\bl{\mH}(t)\rt\|^2_{X_{\tau,\kappa}}dt+C^{-4}_\ast\s{\e}\int^{T_0}_0 \angt^{\f{3+\dl}{2}}\lt\|\bl{g}(t)\rt\|^2_{X_{\tau,\kappa_0+1/2}}dt.\nn
\end{align}
By using \eqref{combesti} and \eqref{combesti2}, and remembering \eqref{radigain}, we can obtain there exists a constant $C_\dl$ such that
\begin{align}
&\angt^{\f{3-\dl}{2}} \lt\| \bl{\fH} (t)\rt\|^2_{X_{\tau,\kappa_3,7/8}}+\angt^{\f{5-\dl}{2}} \lt\| \p_z\bl{\fH} (t)\rt\|^2_{X_{\tau,\kappa_4,7/8}}\leq  C_\dl \e^2, \nn
\end{align}
which shows the second one of \eqref{gassumpf} by letting $C_\ast$ large enough.  \qed

We still need to show the proof of Proposition \ref{pfirstauxi} to Proposition \ref{pgoodunknown1}. Before that, we give two useful lemmas, which will be frequently used in later estimates.

\subsection{Preliminary lemmas}

The first one is a weighted Poinc\'{a}re inequality. We state as follows.
\begin{lemma}\label{lpoincare}
Let $f$ be a function belonging to $H^1$ in $z$ variable, which decays to zero sufficiently fast as $z\rightarrow +\i$. Then for $0\leq\nu\leq1$ we have
\be\label{poincare}
 \f{\nu}{2\angt}\|f\|^2_{L^2(\th_{2\nu})}\leq\|\p_zf\|^2_{L^2(\th_{2\nu})},
\ee
and
\be\label{poincare1}
\f{\nu}{4\angt}\|f\|^2_{L^2(\th_{2\nu})}+\f{\nu^2}{16}\lt\|\f{z}{\angt} f\rt\|^2_{L^2(\th_{2\nu})}\leq \| \p_zf\|^2_{L^2(\th_{2\nu})}.
\ee
\end{lemma}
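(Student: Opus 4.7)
The plan is to derive both estimates from a single integration-by-parts identity built on the algebraic relation
\bes
\p_z\th_{2\nu}=\f{\nu z}{2\angt}\th_{2\nu},\q\text{equivalently}\q \th_{2\nu}=\p_z(z\,\th_{2\nu})-\f{\nu z^2}{2\angt}\th_{2\nu},
\ees
which follows directly from $\th_{2\nu}(t,z)=\exp\{\nu z^2/(4\angt)\}$. Multiplying this identity by $f^2$, integrating over $\bR^3_+$, and integrating by parts once in $z$, the boundary trace $[f^2 z\,\th_{2\nu}]_{z=0}$ vanishes thanks to the prefactor $z$ (so no pointwise hypothesis on $f|_{z=0}$ is needed), while the trace at $z=+\i$ vanishes by the decay assumption on $f$. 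What remains is the identity
\bes
\|f\|^2_{L^2(\th_{2\nu})}+\f{\nu}{2\angt}\|zf\|^2_{L^2(\th_{2\nu})}=-2\int_{\bR^3_+}f\,\p_zf\cdot z\,\th_{2\nu}\,dxdydz,
\ees
whose right-hand side is bounded in absolute value by $2\int_{\bR^3_+}|f|\,|\p_zf|\,z\,\th_{2\nu}\,dxdydz$.

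To close the estimate I would apply Young's inequality to the right-hand side in the form $2|zf\,\p_zf|\leq \alpha(zf)^2+\alpha^{-1}|\p_zf|^2$, with the free parameter $\alpha$ tuned to the target inequality. Taking $\alpha=\f{\nu}{2\angt}$ exactly cancels the $\|zf\|^2_{L^2(\th_{2\nu})}$ contribution on the left and produces $\f{\nu}{2\angt}\|f\|^2_{L^2(\th_{2\nu})}\leq\|\p_zf\|^2_{L^2(\th_{2\nu})}$, which is \eqref{poincare}. Taking instead the smaller $\alpha=\f{\nu}{4\angt}$ absorbs only half of that term, so a quarter-weighted copy survives on the left with coefficient giving $\f{\nu^2}{16\angt^2}\|zf\|^2_{L^2(\th_{2\nu})}=\f{\nu^2}{16}\lt\|\f{z}{\angt}f\rt\|^2_{L^2(\th_{2\nu})}$, and dividing through by $\f{4\angt}{\nu}$ then yields \eqref{poincare1}.

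I do not expect any genuine obstacle: this is a standard Hardy-type manipulation adapted to the Gaussian weight $\th_{2\nu}$. The only items requiring care are the sign bookkeeping at the integration by parts and the observation that two different choices of $\alpha$ in Young's inequality are needed to produce the two distinct constants $\f{\nu}{2\angt}$ and $\f{\nu}{4\angt}$ recorded in the statement; since the weight is time-dependent through $\angt$ but no $t$-derivative is taken here, no additional complication from the time variable arises.
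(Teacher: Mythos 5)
Your proposal is correct and reproduces the paper's argument: both routes reduce to the same integration-by-parts identity $\|f\|^2_{L^2(\th_{2\nu})}+\f{\nu}{2\angt}\|zf\|^2_{L^2(\th_{2\nu})}=-2\int_{\bR^3_+}zf\,\p_zf\,\th_{2\nu}$, followed by Young's inequality with the two choices $\alpha=\f{\nu}{2\angt}$ (giving \eqref{poincare}) and $\alpha=\f{\nu}{4\angt}$ (giving \eqref{poincare1}). The only cosmetic difference is that you package the identity by differentiating $z\th_{2\nu}$ whereas the paper writes $\int f^2\th_{2\nu}\,dz=-\int z\,\p_z(f^2\th_{2\nu})\,dz$; algebraically these are the same step.
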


Proof of this lemma is similar with \cite[Lemma 2.5]{WWZ:2021ARXIV}, where the case $\nu=1$ is handled. The essential idea is to performing integration by parts on $z$ variable. Here we make some details for completeness.

\begin{proof}
Since when $\nu=0$, \eqref{poincare} and \eqref{poincare1} are obvious, we will assume that $0< \nu\leq 1$ in the following. Using integration by parts on $z$, we have
\begin{align}
&\int^\i_0 f^2(x,y,z)e^{\f{\nu z^2}{4\angt}}dz\nn\\
=&-\int^\i_0 z\p_z\lt(f^2(x,y,z)e^{\f{\nu z^2}{4\angt}}\rt)dz\nn\\
=&-2\int^\i_0 zf(x,y,z)\p_zf(x,y,z)e^{\f{\nu z^2}{4\angt}}dz-\f{\nu}{2\angt}\int^\i_0 z^2f^2(x,y,z)e^{\f{\nu z^2}{4\angt}}dz.\nn
\end{align}
Then by using Cauchy inequality to the first term of the right hand of the above equality, we can obtain that
\begin{align}
&\int^\i_0 f^2(x,y,z)e^{\f{\nu z^2}{4\angt}}dz+\f{\nu}{2\angt}\int^\i_0 z^2f^2(x,y,z)e^{\f{\nu z^2}{4\angt}}dz\nn\\
\leq &\f{\nu}{2\angt}\int^\i_0 z^2f^2(x,y,z)e^{\f{\nu z^2}{4\angt}}dz+\f{2\angt}{\nu}\int^\i_0 (\p_zf)^2(x,y,z)e^{\f{\nu z^2}{4\angt}}dz,\nn
\end{align}
and
\begin{align}
&\int^\i_0 f^2(x,y,z)e^{\f{\nu z^2}{4\angt}}dz+\f{\nu}{2\angt}\int^\i_0 z^2f^2(x,y,z)e^{\f{\nu z^2}{4\angt}}dz\nn\\
\leq &\f{\nu}{4\angt}\int^\i_0 z^2f^2(x,y,z)e^{\f{\nu z^2}{4\angt}}dz+\f{4\angt}{\nu}\int^\i_0 (\p_zf)^2(x,y,z)e^{\f{\nu z^2}{4\angt}}dz.\nn
\end{align}
Then after absorbing the first term of the righthand of the above two inequalities, we can have
\begin{align}
\f{\nu}{2\angt}\int^\i_0 f^2(x,y,z)e^{\f{\nu z^2}{4\angt}}dz\leq\int^\i_0 (\p_zf)^2(x,y,z)e^{\f{\nu z^2}{4\angt}}dz, \label{ppoin1}
\end{align}
and
\begin{align}
&\f{\nu}{4\angt}\int^\i_0 f^2(x,y,z)e^{\f{\nu z^2}{4\angt}}dz+\lt(\f{\nu}{4\angt}\rt)^2\int^\i_0 z^2f^2(x,y,z)e^{\f{\nu z^2}{4\angt}}dz\nn\\
\leq &\int^\i_0 (\p_zf)^2(x,y,z)e^{\f{\nu z^2}{4\angt}}dz.  \label{ppoin2}
\end{align}
After integrating \eqref{ppoin1} and \eqref{ppoin2} on $x,y$, we can obtain \eqref{poincare1}.
\end{proof}

Next, we give a lemma to show  product estimates of the Gevrey-2 norm.
\begin{lemma}\label{lemproduct}
Let $\kappa>0$ and $0<\nu\leq 1$. For smooth functions $f$ and $g$, which decay fast enough at $z$ infinity, we have the following product estimates.
\begin{align}
&\|fg\|^2_{X_{\tau,\kappa,\nu}}\ls_{\nu,\kappa} \angt^{1/2}\lt(\|f\|^2_{X_{\tau,5,\f{\nu+1}{4}}}\|\p_z g\|^2_{X_{\tau,\kappa,\f{\nu+1}{4}}}+\|\p_z f\|^2_{X_{\tau,\kappa,\f{\nu+1}{4}}}\|g\|^2_{X_{\tau,5,\f{\nu+1}{4}}}\rt),\label{product1}\\
&\|fg\|^2_{X_{\tau,\kappa,\nu}}\ls_{\nu,\kappa}  \angt^{1/2}\lt(\|f\|^2_{X_{\tau,5,\f{\nu+1}{4}}}\|\p_z g\|^2_{X_{\tau,\kappa,\f{\nu+1}{4}}}+\| f\|^2_{X_{\tau,\kappa,\f{\nu+1}{4}}}\|\p_zg\|^2_{X_{\tau,5,\f{\nu+1}{4}}}\rt),\label{product2}\\
&\|fg\|^2_{X_{\tau,\kappa,\nu}}\ls_{\nu,\kappa} \angt^{1/2}\lt(\|\p_zf\|^2_{X_{\tau,5,\f{\nu+1}{4}}}\| g\|^2_{X_{\tau,\kappa,\f{\nu+1}{4}}}+\|f\|^2_{X_{\tau,\kappa,\f{\nu+1}{4}}}\|\p_zg\|^2_{X_{\tau,5,\f{\nu+1}{4}}}\rt).\label{product3}
\end{align}
\end{lemma}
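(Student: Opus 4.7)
The plan is to prove all three product inequalities by a single Leibniz-plus-Sobolev scheme; the three forms reflect three natural placements of the $\p_z$ derivative. First, I would expand $\p^\al_h(fg)=\sum_{\beta\le\al}\binom{\al}{\beta}\p^\beta_h f\cdot\p^{\al-\beta}_h g$ and split the Leibniz sum at $|\beta|\le|\al|/2$ (so $f$ carries few tangential derivatives) versus $|\beta|>|\al|/2$. In each piece I would decompose the weight as $\th_\nu=\th_{\nu/2}\cdot\th_{\nu/2}$ and apply the Holder inequality
\[
\|FG\th_\nu\|^2_{L^2_{xyz}}\le \|F\th_{\nu/2}\|^2_{L^\infty_{xy}L^2_z}\cdot\|G\th_{\nu/2}\|^2_{L^2_{xy}L^\infty_z},
\]
or its twin with the two mixed norms exchanged. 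Since $\nu/2\le(\nu+1)/4$ for $\nu\le 1$, every $\|\cdot\|_{L^2(\th_\nu)}$ norm is dominated by the corresponding $\|\cdot\|_{L^2(\th_{(\nu+1)/2})}$ norm, so any subsequent bound automatically upgrades to the weight $\f{\nu+1}{4}$ appearing on the right-hand side of the lemma.

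Next, I would treat the two mixed norms separately. The $L^\infty_{xy}$ factor is handled by the two-dimensional Sobolev embedding $\|h\|^2_{L^\infty_{xy}}\ls\sum_{|\gamma|\le 2}\|\p^\gamma_h h\|^2_{L^2_{xy}}$, which costs two extra tangential derivatives. The $L^\infty_z$ factor is handled by the weighted one-dimensional inequality
\[
\|h\th_{\nu/2}\|^2_{L^\infty_z}\le 2\,\|h\|_{L^2_z(\th_\nu)}\,\|\p_z h\|_{L^2_z(\th_\nu)},
\]
derived in the style of Lemma \ref{lpoincare} by writing $|h\th_{\nu/2}|^2(z)=-2\int^\infty_z h\p_{\bar z}h\,\th^2_{\nu/2}\,d\bar z-\int^\infty_z h^2\p_{\bar z}\th^2_{\nu/2}\,d\bar z$ and discarding the second, nonpositive term. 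Combining with the Poincar\'e inequality \eqref{poincare} to trade $\|h\|_{L^2_z}$ for $\angt^{1/2}\|\p_z h\|_{L^2_z}$ yields $\|h\th_{\nu/2}\|^2_{L^\infty_z}\ls\angt^{1/2}\|\p_z h\|^2_{L^2_z(\th_\nu)}$. This is the origin of the factor $\angt^{1/2}$ on the right-hand side, and it also dictates on which factor the $\p_z$ falls. The three inequalities \eqref{product1}--\eqref{product3} correspond to three natural placements of $\p_z$: on the higher-order factor, always on $g$, or on the lower-order factor; each choice selects a different assignment of the mixed-norm roles to $f$ and $g$.

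Then comes the Gevrey-$2$ combinatorics. Multiplying a generic Leibniz term by $M_{j,\kappa}$ and inserting the factor $M_{|\beta|,5}\,M_{|\al-\beta|,\kappa}$ as a normalizer, the key estimate is
\[
\binom{\al}{\beta}\,\frac{M_{j,\kappa}}{M_{|\beta|,5}\,M_{|\al-\beta|,\kappa}}\ls_{\kappa,\tau_0}(|\beta|+1)^{-5}\qquad\text{when }|\beta|\le j/2,
\]
with the symmetric version (roles of $5$ and $\kappa$ exchanged) when $|\beta|>j/2$. Indeed, the identity $\binom{j}{|\beta|}\,|\beta|!\,(j-|\beta|)!=j!$ gives $\frac{(|\beta|!\,(j-|\beta|)!)^2}{(j!)^2}=\binom{j}{|\beta|}^{-2}$, and combining with $\binom{\al}{\beta}\le\binom{j}{|\beta|}$ and $(|\al-\beta|+1)\ge(j+1)/2$ in the first regime produces the bound. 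Since $\sum_\beta(|\beta|+1)^{-5}<\infty$, a Cauchy--Schwarz on the Leibniz sum reduces everything to suprema that are already built into the definition \eqref{Gevnorm2} of $\|\cdot\|_{X_{\tau,\cdot,\cdot}}$; squaring, taking $\sup_{|\al|=j}$, and summing in $j$ then yields \eqref{product1}--\eqref{product3}.

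The main technical obstacle is the bookkeeping of the Gevrey weights: one must verify that the two extra tangential derivatives consumed by the 2D Sobolev embedding are harmlessly absorbed by the polynomial margin $(|\beta|+1)^5$ produced by the subscript $5$, and that the $\beta$-summation is uniform in $j$. A secondary, essentially routine point is to check case by case that each of the three Holder splits sends the normal derivative to the factor prescribed by each of \eqref{product1}, \eqref{product2}, \eqref{product3}; once the combinatorial inequality above is in hand, this reduces to choosing which of the two factors one converts to $L^\infty_z$ via the weighted Sobolev estimate.
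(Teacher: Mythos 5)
Your scheme is essentially the paper's for \eqref{product1}: the Leibniz split at $|\beta|\le j/2$, the mixed-norm H\"older split $L^\infty_{xy}L^2_z\times L^2_{xy}L^\infty_z$, the 2D Sobolev embedding on one factor and the weighted Agmon/FTC bound on the other (yours via $\|h\th_{\nu/2}\|^2_{L^\infty_z}\le 2\|h\|_{L^2_z(\th_\nu)}\|\p_z h\|_{L^2_z(\th_\nu)}$ plus Poincar\'e, the paper's \eqref{sob2} via FTC directly; both yield $\angt^{1/2}$ with a compatible weight), and discrete Young are exactly the paper's steps. However, there is a gap in your treatment of \eqref{product3} (and the second half of \eqref{product2}). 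You claim the three inequalities follow by ``choosing which of the two factors one converts to $L^\infty_z$'' under the mixed split. Track the combinatorics for \eqref{product3} in the regime $|\beta|\le j/2$: the Leibniz margin $(|\beta|+1)^{-\kappa}$ sits on the low-order factor $f$, and you want the right-hand side to read $\|\p_z f\|_{X_{\tau,5,\cdot}}\|g\|_{X_{\tau,\kappa,\cdot}}$, i.e.\ the normal derivative on $f$. Under the mixed split you then must place $f$ in $L^2_{xy}L^\infty_z$, so $g$ lands in $L^\infty_{xy}L^2_z$ and the 2D Sobolev embedding falls on $g_{\al-\beta,\kappa}$, producing a factor $\thickapprox(|\al-\beta|+1)^4$. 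That factor sits on the high-order factor, against which there is no available polynomial margin: after discrete Young you obtain $\|g\|_{X_{\tau,\kappa+4,\cdot}}$, not $\|g\|_{X_{\tau,\kappa,\cdot}}$. (For $\beta=0$ in particular, $\binom{j}{0}^{-1}=1$ gives you no help from the binomial coefficient.) The same failure occurs in \eqref{product2} for $|\beta|>j/2$, where $\p_z$ must land on the low-order factor $g$.

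The missing ingredient is the fully anisotropic H\"older $\|FG\th_\nu\|_{L^2_{xyz}}\le\|F\th_{\nu/2}\|_{L^\infty_{xyz}}\|G\th_{\nu/2}\|_{L^2_{xyz}}$, under which \emph{both} the 2D Sobolev embedding and the 1D weighted $L^\infty_z$ estimate are applied to the \emph{same} (low-order) factor, while the high-order factor is left untouched in $L^2_{xyz}$. This is precisely what the paper's estimate \eqref{sobolev3}, $\|\th_{\nu/2}f_{k,\kappa}\|_{L^\infty_hL^\infty_z}\ls\angt^{1/4}\sum_{i=k}^{k+2}(k+1)^4\|\th_{(\nu+1)/4}\p_zf_{i,\kappa}\|_{L^2}$, is supplied for: the consumed factor $(k+1)^4$ then cancels against the margin $(k+1)^{-\kappa}$ on that same factor, producing $(k+1)^{-1}\|\p_z f_{k,5}\|$, which is square-summable, and the high-order factor contributes plain $\|g_{j-k,\kappa}\|_{L^2}$. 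Adding this third H\"older option to your scheme closes the gap; everything else in your proposal (the weight bookkeeping $\nu/2\le(\nu+1)/4$, the insertion of $M_{|\beta|,5}$ and the bound $\ls(|\beta|+1)^{-5}$, and the discrete Young summation in $j$) is correct and matches the paper.
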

\begin{proof}
We only present the proof of \eqref{product1}, since the other two are simiar. For simplicty, we write $f_{k,\kappa}$ to denote $f_{k,x,\kappa}$ or $f_{k,y,\kappa}$ and $\p^k_h$ to denote $\p^k_x$ or $\p^k_y$ if no confusion is caused. First, by using Leibniz formula, we see that

\begin{align}
(fg)_{j,\kappa}=&M_{j,\kappa}\p^j_h(fg)\nn\\
  =& \sum_{0\leq k\leq j }\f{M_{j,\kappa}}{M_{k,\kappa} M_{j-k,\kappa}}\lt(j\atop k\rt)|f_{k,\kappa}||g_{j-k,\kappa}|\nn\\
     =&\sum_{0\leq k\leq j }\f{1}{\tau}\lt(\f{j+1}{(k+1)(j-k+1)}\rt)^\kappa\lt(j\atop k\rt)^{-1}|f_{k,\kappa}||g_{j-k,\kappa}|.\nn
\end{align}
Then by using \eqref{radiequi}, we can obtain that
{\small
\be\label{keydiff}
\bali
|(fg)_{j,\kappa}|\ls &\sum^{[(j+1)/2]}_{k=0} (k+1)^{-\kappa} |f_{k,\kappa}| |g_{j-k,\kappa}|+\sum^{j}_{k=[(j+1)/2]+1}  (j-k+1)^{-\kappa} |f_{k,\kappa}| |g_{j-k,\kappa}|.
\eali
\ee
}
Using Minkowski inequality and H\"{o}lder inequality, we have
{\small
\bes
\bali
\|(fg)_{j,\kappa}\|_{L^2(\th_{2\nu})} \leq &\sum^{[(j+1)/2]}_{k=0} \|(k+1)^{-\kappa}f_{k,\kappa}g_{j-k,\kappa}\|_{L^2(\th_{2\nu})}+\sum^{j}_{k=[(j+1)/2]+1}  \|(j-k+1)^{-\kappa}f_{k,\kappa}g_{j-k,\kappa}\|_{L^2(\th_{2\nu})}\\
\leq&\sum^{[(j+1)/2]}_{k=0} \|(k+1)^{-\kappa} f_{k,\kappa}\|_{L^\i_h L^2_z(\th_\nu)}\|\th_{\f{\nu}{2}}g_{j-k,\kappa}\|_{L^2_hL^\i_z}\\
   &+\sum^{j}_{k=[(j+1)/2]+1} \|f_{k,\kappa}\th_{\f{\nu}{2}}\|_{L^2_hL^\i_z}\|(j-k+1)^{-\kappa}g_{j-k,\kappa}\|_{L^\i_hL^2_z(\th_{\nu})}.
\eali
\ees
}
Then using the following discrete Young's convolution inequality
\be\label{disyoung}
\sum^\i_{j=0}(\sum^j_{k=0}a_kb_{j-k})^2\leq \lt(\sum^{\i}_{k=0} a_k\rt)^2\lt(\sum^{\i}_{k=0} b^2_k\rt),
\ee
we can obtain that
{\small
\begin{align}
\|fg\|^2_{X_{\tau,\kappa,\nu}}=&\sum_{j\in\bN}\|(fg)_{j,\kappa}\|^2_{L^2(\th_{2\nu})}\nn\\
\leq&\lt(\sum^{\i}_{k=0}(k+1)^{-\kappa}\|f_{k,\kappa}\|_{L^\i_hL^2(\th_\nu)}\rt)^2
\lt(\sum^{\i}_{k=0}\|\th_{\nu/2}g_{k,\kappa}\|^2_{L^2_hL^\i_z}\rt)\label{prod1}\\
  &+\lt(\sum^{\i}_{k=0}\|\th_{\nu/2}f_{k,\kappa}\|^2_{L^2_hL^\i_z}\rt)
\lt(\sum^{\i}_{k=1}(k+1)^{-\kappa}\|g_{k,\kappa}\|_{L^\i_hL^2_v(\th_{\nu})}\rt)^2.\nn
\end{align}
}

Before continuing estimates, we give three weighted Sobolev embedding inequalities which will be frequently used later on. For any $f$, decaying fast enough at $z$ infinity, and $0\leq \nu<1$, by using the Sobolev embedding in $(x,y)$ variables, first we have
\be\label{sob1}
\bali
\lt\| f_{k,\kappa}\rt\|_{L^\i_hL^2_z(\th_\nu)}\leq \sum^{2}_{i=0}\lt\|\p^i_hf_{k,\kappa}\rt\|_{L^2_hL^2_z(\th_\nu)}\ls \sum^{k+2}_{i=k} (k+1)^4 \|f_{i,\kappa}\|_{L^2(\th_\nu)}.
\eali
\ee
Then using the fundamental formula of calculus, H\"{o}lder inequality and property of $\th$, we can obtain that
\be\label{sob2}
\bali
&\lt\|\th_{\nu/2}f_{k,\kappa}\rt\|_{L^2_hL^\i_z}\\
=&\lt\|\th_{\nu/2}\int^\i_z \p_zf_{k,\kappa}d \bar{z}\rt\|_{L^2_hL^\i_z}\leq\lt\|\int^\i_z \th_{\f{\nu-1}{4}}\th_{\f{\nu+1}{4}}\p_zf_{k,\kappa}d \bar{z}\rt\|_{L^2_hL^\i_z}\ls_\nu \angt^{1/4}\lt\|\th_{\f{\nu+1}{4}}\p_z f_{k,\kappa}\rt\|_{L^2}.
\eali
\ee
Combining the above two, we can achieve that
\be\label{sobolev3}
\bali
\lt\|\th_{\nu/2}f_{k,\kappa}\rt\|_{L^\i_hL^\i_z}\ls_\nu& \angt^{1/4}\lt\|\th_{\f{\nu+1}{4}}\p_z f_{k,\kappa}\rt\|_{L^\i_hL^2_z}\ls\angt^{1/4}\sum^{k+2}_{i=k} (k+1)^4 \|\th_{\f{\nu+1}{4}}\p_z f_{i,\kappa}\|_{L^2}.\nn
\eali
\ee
Inserting \eqref{sob1} and \eqref{sob2} into \eqref{prod1} and by using discrete Cauchy inequality, we can obtain that

{\small
\begin{align}
\|fg\|^2_{X_{\tau,\kappa,\nu}}\ls &\angt^{1/2}\lt(\sum^{\i}_{k=0}(k+1)^{-\kappa+4}\| f_{k,\kappa}\|_{L^2(\th_\nu)}\rt)^2
\sum^{\i}_{k=0}\|\p_zg_{k}\|^2_{L^2(\th_{\f{\nu+1}{2}})}\nn\\
  &+\angt^{1/2}\sum^{\i}_{k=0}\|\p_z f_{k,\kappa}\|^2_{L^2(\th_{\f{\nu+1}{2}})}
\lt(\sum^{\i}_{k=0}(k+1)^{-\kappa+4}\|g_{k,\kappa}\|_{L^2(\th_\nu)}\rt)^2dt\nn\\
\ls &\angt^{1/2}\sum^{\i}_{k=0}(k+1)^{-2\kappa+10}\| f_{k,\kappa}\|^2_{L^2(\th_{\f{\nu+1}{2}})}
\sum^{\i}_{k=0}\|\p_zg_{k}\|^2_{L^2(\th_{\f{\nu+1}{2}})}\nn\\
  &+\angt^{1/2}\sum^{\i}_{k=0}\|\p_z f_{k,\kappa}\|^2_{L^2(\th_{\f{\nu+1}{2}})}
\sum^{\i}_{k=0}(k+1)^{-2\kappa+10}\|g_{k,\kappa}\|^2_{L^2(\th_{\f{\nu+1}{2}})}dt,\nn
\end{align}
}
which is \eqref{product1}.

\end{proof}

\section{Estimates of auxiliary functions $\bl{\mH}$}\label{secmh}

In this section, we give the proof of Proposition \ref{pfirstauxi}. We only proceed with the estimate for $\mH$, while the estimate for $\wt{\mH}$ follows the same line. First we derive the equation for $\mH$.
\subsection{Derivation of the equation of $\mH$ and its linear estimate}

 Applying $-\p_z$ to \eqref{auxih} and using the incompressibility, we can have
\begin{align}
&\lt[\partial_{t} +\left({u} \p_x+v \partial_y+w\p_z \right)-\partial_{z}^{2} \rt] \mathcal{H}\nn\\
=&\s{\e} \angt^{\dl-1}  (\p^2_xu+\p_x\p_y v)+\p_z u\p_x\int^{+\i}_z \mathcal{H}d\bar{z}+\p_z v\p_y\int^{+\i}_z \mathcal{H}d\bar{z}-\p_zw \mathcal{H}\nn\\
=&\s{\e} \angt^{\dl-1}   \p_x(\p_x u+\f{\angt^{1-\dl}}{\s{\e} } \p_z u\int^{+\i}_z \mathcal{H}d\bar{z})-\p_z\p_xu\int^{+\i}_z \mathcal{H}d\bar{z}\nn\\
&+\s{\e} \angt^{\dl-1}  \p_y(\p_x v+\f{ \angt^{1-\dl}}{\s{\e}} \p_z v\int^{+\i}_z \mathcal{H}d\bar{z})-\p_z\p_yv\int^{+\i}_z \mathcal{H}d\bar{z} \label{auxih1}\\
&+(\p_xu+\p_yv)\mathcal{H}\nn\\
=&\s{\e} \angt^{\dl-1}   \lt(\p_x\mathcal{G}+\p_y\mathcal{K}\rt)-(\p_z\p_xu+\p_z\p_yv)\int^{+\i}_z \mathcal{H}d\bar{z}+(\p_xu+\p_yv)\mathcal{H}\nn\\
:=&\s{\e} \angt^{\dl-1}  \lt(\p_x\mathcal{G}+\p_y\mathcal{K}\rt)+H,\nn
\end{align}
where $H$ is defined as
\bes
H:=-(\p_z\p_xu+\p_z\p_yv)\int^{+\i}_z \mathcal{H}d\bar{z}+(\p_xu+\p_yv)\mathcal{H}.
\ees

From Section \ref{secmh} to Section \ref{secu}, we set $\kappa=10+\f{2}{\dl}$ and $M_{j,\kappa}$ is abbreviated to $M_j$. Also for a function $f$, $f_j$ denotes $f_{j,x,\kappa}$, $f_{j,y,\kappa}$ or $f_{\al,\kappa}$ for $|\al|=j$ if no confusion is caused.

From the equation of $\mH$ in \eqref{auxih1}, we first have the following linear estimate.

\begin{lemma}\label{llinearmH}
Under the assumption of Proposition \ref{pfirstauxi}, for sufficiently small $\e$, we have the following estimate
\begin{align}
&\angt^{\f{1-\dl}{2}}\|\mathcal{H}(t)\|^2_{X_{\tau,\kappa}}+\dl\int^{T_0}_0\angt^{\f{1-\dl}{2}}\|\p_z \mathcal{H}(t)\|^2_{X_{\tau,\kappa}}dt+\la\s{\e} \int^{T_0}_0\angt^{\f{1-\dl}{2}}{\eta}(t)\|\mathcal{H}(t)\|^2_{X_{\tau,\kappa+1/2}}dt\nn\\
 \leq &C_\dl\s{\e} \int^{T_0}_0\angt^{-\f{1-\dl}{2}}\lt\|\bl{\mG}\rt\|^2_{X_{\tau,\kappa+3/2}}dt+C_\dl C_\ast\e\int^{T_0}_0\angt^{\f{1-\dl}{2}}{\eta}(t)\|\mathcal{H}(t)\|^2_{X_{\tau,\kappa+1/2}}dt\nn\\
    &+\f{C_\dl}{\la\s{\e}}\int^{T_0}_0\angt^{\f{3-3\dl}{2}}\sum^\i_{j=0}(j+1)^{-1}\|M_j[u\p_x+v\p_y+w\p_z, \p^j_x]\mathcal{H}\|^2_{L^2(\th_2)}dt\label{auxilh6}\\
    &+\int^{T_0}_0\angt^{\f{1-\dl}{2}}\sum^\i_{j=0}|\lt\langle H_{j},{\mathcal{H}_{j}}\rt\rangle_{\th_2}|dt.\nn
\end{align}
\end{lemma}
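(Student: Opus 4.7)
The plan is to perform a weighted Gevrey-2 energy estimate for equation \eqref{auxih1} by applying $M_{j,\kappa}\p^\alpha_h$ with $|\alpha|=j$, pairing with $\mathcal{H}_j = M_{j,\kappa}\p^\alpha_h\mathcal{H}$ in $L^2(\th_2)$, then summing in $j$ and integrating against the time weight $\angt^{(1-\dl)/2}$. The time derivatives on the three $t$-dependent objects---the coefficient $M_{j,\kappa}$ (through $\tau(t)$), the weight $\th_2 = \exp(z^2/(4\angt))$, and the outer factor $\angt^{(1-\dl)/2}$---generate the three LHS quantities of \eqref{auxilh6}: the identity $\p_t M_{j,\kappa}/M_{j,\kappa} = -(j+1)\la\s{\e}\eta(t)$ produces the Gevrey dissipation with the exact index shift $\kappa\to\kappa+1/2$, the derivative $\p_t\th_2 = -\f{z^2}{4\angt^2}\th_2$ contributes a sign-definite term that cooperates with the viscous dissipation, and $\p_t\angt^{(1-\dl)/2}$ supplies the weighted top norm.

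For the diffusion term $-\p_z^2\mathcal{H}$, integration by parts in $z$ together with the boundary condition $\p_z\mH|_{z=0}=0$ gives $\|\p_z\mathcal{H}_j\|^2_{L^2(\th_2)}$ plus a cross term $-\int \mathcal{H}_j\p_z\mathcal{H}_j\cdot\f{z}{2\angt}\th_2$; a Cauchy--Young split, combined with the good sign from $\p_t\th_2$, recovers a clean quantity of size $\|\p_z\mathcal{H}_j\|^2_{L^2(\th_2)}$, of which I retain only a $\dl$-fraction on the LHS (the second LHS term of \eqref{auxilh6}), reserving the rest to absorb cross terms. The transport contribution $\langle(u\p_x+v\p_y+w\p_z)\mathcal{H}_j,\mathcal{H}_j\rangle_{\th_2}$, after symmetrization using $\p_xu+\p_yv+\p_zw=0$ and $w|_{z=0}=0$, collapses to $-\tfrac12\int w\mathcal{H}_j^2\cdot\f{z}{2\angt}\th_2$, which via the $L^\infty$ decay estimates on $w$ from \eqref{lowpoint} produces the $C_\dl C_\ast\e$ prefactor on the right.

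For the source $\s{\e}\angt^{\dl-1}(\p_x\mG+\p_y\mK)$ the key algebraic observation is
\[
 M_{j,\kappa}\p^{j+1}_h\mG \;\simeq\; \f{(j+1)^{1/2}}{\tau}\,M_{j+1,\kappa+3/2}\,\p^{j+1}_h\mG,
\]
which shifts the extra tangential derivative and the index increase $\kappa\to\kappa+3/2$ onto the $\mG$-side. Combined with $\angt^{\dl-1}\leq \eta(t)$ from \eqref{radigain} and Young's inequality with weight $\mu=1/\la$, the source splits into a small part $\mu\la\s{\e}\eta(t)\|\mathcal{H}\|^2_{X_{\tau,\kappa+1/2}}$ absorbed on the LHS and a residual $\f{1}{\la}\s{\e}\angt^{-(1-\dl)/2}\|\bl{\mG}\|^2_{X_{\tau,\kappa+3/2}}$, giving the first RHS term of \eqref{auxilh6}. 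The transport commutator $M_{j,\kappa}[u\p_x+v\p_y+w\p_z,\p^\alpha_h]\mH$ is handled by Young's inequality with weight $\la\s{\e}\eta(t)(j+1)$; pairing the prefactor $\angt^{(1-\dl)/2}/(\la\s{\e}\eta(t))$ with $\eta(t)\sim\angt^{\dl-1}$ produces exactly the $\angt^{(3-3\dl)/2}/(\la\s{\e})$ weight on the squared commutator in the third RHS term. The low-order remainder $H$ is kept on the RHS without further manipulation, yielding the final term.

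The delicate point is matching $\s{\e}$, $\la\s{\e}\eta(t)$, and $\angt^{(1-\dl)/2}$ simultaneously so that the $(\p_x\mG,\p_y\mK)$ source enters the RHS with the small factor $1/\la$---essential for closing the coupled system later at $\la=C_\ast^4$---while the convection commutator inherits only a bare $\s{\e}$ (with no gain in $\la$). Retaining merely a $\dl$-fraction of the viscous dissipation on the LHS, rather than its full strength, is precisely what frees the portion needed to absorb the weight-drift contributions from $\p_t\th_2$ and from the $w\p_z$ convection against the $\th_2$-weight.
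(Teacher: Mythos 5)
Your proposal follows essentially the same route as the paper: apply $M_{j,\kappa}\p^\alpha_h$ to \eqref{auxih1}, pair with $\mH_j\th_2$, exploit $\p_tM_{j,\kappa}/M_{j,\kappa}=-(j+1)\la\s{\e}\eta(t)$ for the Gevrey dissipation, use the weighted Poincar\'{e} inequality to convert part of $\|\p_z\mH_j\|^2_{L^2(\th_2)}$ into the $\langle t\rangle^{-1}$-term that cancels the derivative of $\angt^{(1-\dl)/2}$, symmetrize the transport using $\p_xu+\p_yv+\p_zw=0$ and the $L^\infty$ bounds from \eqref{lowpoint} for the resulting $\langle\tfrac{z}{4\angt}w,\mH_j^2\rangle_{\th_2}$, and close the $\p_h\bl\mG$ and commutator sources by Cauchy against the Gevrey dissipation, matching exactly the $\angt^{(3-3\dl)/2}/(\la\s{\e})$ weight. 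Two small descriptive slips, neither fatal: the cross term from the diffusion IBP has a $+$ rather than a $-$ sign (the paper handles it by exact further IBP rather than Cauchy–Young, but both yield the same $\dl$-split of dissipation); and your claimed $1/\la$ prefactor on the residual $\|\bl\mG\|^2_{X_{\tau,\kappa+3/2}}$ term is stronger than what \eqref{auxilh6} actually asserts (the paper has no $1/\la$ there, deriving the needed $\la$-smallness later by comparing against the $\la$-weighted LHS of Proposition \ref{psecondauxi}), but proving a sharper bound is harmless.
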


\pf Applying $M_j\p^j_x$ to \eqref{auxih1} implies that
\be\label{auxih2}
\bali
&\partial_{t} \mathcal{H}_{j}+\la\s{\e}\eta(t)(j+1)\mathcal{H}_{j}+\left({u}\partial_{x}+v \partial_{y}+w\p_z\right) {\mathcal{H}_{j}}-\partial_{z}^{2} \mathcal{H}_{j}\\
=&M_j[u\p_x+v\p_y+w\p_z, \p^j_x]\mathcal{H}+\s{\e} \angt^{\dl-1}  \lt(\p_x\mathcal{G}_{j}+\p_y\mathcal{K}_{j}\rt)+H_{j}.
\eali
\ee
Then multiplying \eqref{auxih2} by $\mathcal{H}_{j}\th_2$ and integrating the resulted equation in $\bR^3_+$ indicate that

\be\label{auxilh3}
\bali
&\lt\langle \lt[\partial_{t} +\la\s{\e}{\eta}(t)(j+1)+\left({u}\partial_{x}+v \partial_{y}+w\p_z\right) -\partial_{z}^{2} \rt] {\mathcal{H}_{j}}, {\mathcal{H}_{j}}(t)\rt\rangle_{\th_2}\\
=&\lt\langle H_{j},{\mathcal{H}_{j}}\rt\rangle_{\th_2}+\s{\e} \angt^{\dl-1} \lt\langle  \lt(\p_x\mathcal{G}_{j}+\p_y\mathcal{K}_{j}\rt),{\mathcal{H}_{j}}\rt\rangle_{\th_2}\\
&+\lt\langle M_j[u\p_x+v\p_y+w\p_z, \p^j_x]\mathcal{H},\mathcal{H}_{j}\rt\rangle_{\th_2}.
\eali
\ee

For $\th_2:=e^{\f{z^2}{4\angt}}$, we have
\bes
\bali
-\f{\p_t\th_2}{\th_2}=\f{z^2}{4\angt},\q -\f{\p_z\th_2}{\th_2}=-\f{z}{2\angt},\q -\f{\p^2_z\th_2}{\th_2}=-\f{1}{2\angt}-\f{z^2}{4\angt}.
\eali
\ees
Integration by parts indicate that the left hand of \eqref{auxilh3} satisfies
\begin{align}
&\lt\langle \lt[\partial_{t} +\la\s{\e}{\eta}(t)(j+1)+\left({u}\partial_{x}+v \partial_{y}+w\p_z\right) -\partial_{z}^{2} \rt]\mathcal{H}_{j}, \mathcal{H}_{j}(t)\rt\rangle_{\th_2}\nn\\
=&\f{1}{2}\f{d}{dt}\|{\mathcal{H}_{j}}(t)\|^2_{L^2(\th_2)}+\|\p_z \mathcal{H}_{j}(t)\|^2_{L^2(\th_2)}-\f{1}{4\angt}\| \mathcal{H}_{j}(t)\|^2_{L^2(\th_2)}\label{hj5}\\
 &+(j+1)\la\s{\e}{\eta}(t)\|{\mathcal{H}_{j,x}}(t)\|^2_{L^2(\th_2)}-\langle \f{z}{4\angt}w,{\mathcal{H}^2_{j}}\rangle_{\th_2}.\nn
\end{align}
From \eqref{poincare} in Lemma \ref{lpoincare}, we have
\bes
\|\p_z \mathcal{H}_{j}(t)\|^2_{L^2(\th_2)}\geq \f{1}{2\angt}\| \mathcal{H}_{j}(t)\|^2_{L^2(\th_2)}.
\ees
Inserting the above inequality and \eqref{hj5} into \eqref{auxilh3} implies that
{\small
\be\label{auxilh4}
\bali
&\f{d}{dt}\|\mathcal{H}_{j}(t)\|^2_{L^2(\th_2)}+\dl\|\p_z \mathcal{H}_{j}(t)\|^2_{L^2(\th_2)}+\f{1-\dl}{2\angt}\| \mathcal{H}_{j}(t)\|^2_{L^2(\th_2)}+2(j+1)\la\s{\e}{\eta}(t)\|\mathcal{H}_{j}(t)\|^2_{L^2(\th^2)}\\
\leq &2\s{\e} \angt^{\dl-1} \lt\langle \lt(\p_x\mathcal{G}_{j}+\p_y\mathcal{K}_{j}\rt) ,{\mathcal{H}_{j}}\rt\rangle_{\th^2}+\langle \f{z}{2\angt}w,\mathcal{H}^2_{j}\rangle_{\th_2}\\
 &+2\lt\langle M_j[u\p_x+v\p_y+w\p_z, \p^j_x]\mathcal{H},\mathcal{H}_{j,x}\rt\rangle_{\th^2}+2\lt\langle H_{j},{\mathcal{H}_{j}}\rt\rangle_{\th_2}.
\eali
\ee
}
Multiplying \eqref{auxilh4} by $\angt^{\f{1-\dl}{2}}$ and then integrating from $0$ to $t$ for any $t\in (0,T_0]$, we can achieve that
{\small
\be\label{auxilh5}
\bali
&\angt^{\f{1-\dl}{2}}\|\mathcal{H}_{j}(t)\|^2_{L^2(\th_2)}+\dl\int^{T_0}_0\angt^{\f{1-\dl}{2}}\|\p_z \mathcal{H}_{j}(t)\|^2_{L^2(\th_2)}dt+2\la\s{\e}(j+1) \int^{T_0}_0\angt^{\f{1-\dl}{2}}{\eta}(t)\|\mathcal{H}_{j}(t)\|^2_{L^2(\th_2)}dt\\
\leq& 2\s{\e}\int^{T_0}_0\angt^{-\f{1-\dl}{2}}|\lt\langle \p_h\bl{\mathcal{G}}_{j},{\mathcal{H}_{j}}\rt\rangle_{\th_2}|dt+\int^{T_0}_0\angt^{\f{1-\dl}{2}}\langle \f{z}{2\angt}w,\mathcal{H}^2_{j}\rangle_{\th_2}dt\\
  &+2\int^{T_0}_0\angt^{\f{1-\dl}{2}}|\lt\langle M_j[u\p_x+v\p_y+w\p_z, \p^j_x]\mathcal{H},\mathcal{H}_{j}\rt\rangle_{\th_2}|dt+2\int^{T_0}_0\angt^{\f{1-\dl}{2}}|\lt\langle H_{j},{\mathcal{H}_{j}}\rt\rangle_{\th_2}|dt.
\eali
\ee
}
Using Cauchy inequality, we can get
{\small
\bes
\bali
&\text{The first and third term of the right hand of \eqref{auxilh5}}\\
\leq & \int^{T_0}_0\f{\angt^{\f{1-\dl}{2}}}{(j+1)\la\s{\e}{\eta}(t)}\lt(2\|M_j[u\p_x+v\p_y+w\p_z, \p^j_x]\mathcal{H}\|^2_{L^2(\th_2)}\rt)dt\\
     &+\s{\e}\int^{T_0}_0 (j+1)^{-1}\angt^{-\f{1-\dl}{2}}\lt\|\p_h\bl{\mathcal{G}}_{j}\rt\|^2_{L^2(\th_2)}dt
     +(j+1)\la\s{\e}\int^{T_0}_0\angt^{\f{1-\dl}{2}}{\eta}(t)\|\mathcal{H}_{j}(t)\|^2_{L^2(\th_2)}dt.
\eali
\ees
}
 Noting that $\p_h\bl{\mathcal{G}}_{j}\thickapprox_\dl (j+1)^2\bl{\mathcal{G}}_{j+1}$ and the a priori estimate in \eqref{lowpoint}, then inserting the above inequality into \eqref{auxilh5} and summing the resulted equation over $j\in\bN$, we can obtain
{\small
\begin{align}
&\angt^{\f{1-\dl}{2}}\|\mathcal{H}(t)\|^2_{X_{\tau,\kappa}}+\dl\int^{T_0}_0\angt^{\f{1-\dl}{2}}\|\p_z \mathcal{H}(t)\|^2_{X_{\tau,\kappa}}dt+\la\s{\e} \int^{T_0}_0\angt^{\f{1-\dl}{2}}{\eta}(t)\|\mathcal{H}(t)\|^2_{X_{\tau,\kappa+1/2}}dt\nn\\
\leq &C_\dl\s{\e}\int^{T_0}_0\angt^{-\f{1-\dl}{2}}\lt\|\bl{\mathcal{G}}\rt\|^2_{X_{\tau,\kappa+3/2}}dt+C_\dl C_\ast\e\int^{T_0}_0\angt^{-\f{1-\dl}{2}}\|\mathcal{H}(t)\|^2_{X_{\tau,\kappa}}dt\nn\\
    &+\f{C_\dl}{\la\s{\e}}\int^{T_0}_0\angt^{\f{3-3\dl}{2}}\sum^\i_{j=0}(j+1)^{-1}\|M_j[u\p_x+v\p_y+w\p_z, \p^j_x]\mathcal{H}\|^2_{L^2(\th_2)}dt\nn\\
    &+\int^{T_0}_0\angt^{\f{1-\dl}{2}}\sum^\i_{j=0}\lt|\lt\langle H_{j},{\mathcal{H}_{j}}\rt\rangle_{\th_2}\rt|dt,\nn
\end{align}
which is \eqref{auxilh6}. \qed

\subsection{Estimates of nonlinear terms for $\mH$}

Now we go to estimate the nonlinear terms on the righthand of \eqref{auxilh6}, we have the following Lemma.
\begin{lemma}\label{lnonlinearmH}
Under the assumption in \eqref{gassump}, for sufficiently small $\e$, we have the following estimate
\begin{align}
&\f{1}{\la\s{\e}} \int^{T_0}_0\angt^{\f{3-3\dl}{2}}\sum^{\i}_{j=0}(j+1)^{-1}\|M_j[u\p_x+v\p_y+w\p_z, \p^j_x]\mathcal{H}\|^2_{L^2(\th^2)}dt\nn\\
\ls_\dl&  \f{C^2_\ast}{\la}\s{\e} \int^{T_0}_0  \angt^{-\f{1-\dl}{2}}\lt(\lt\|\bl{\mH}\rt\|^2_{X_{\tau,\kappa+1/2}}+\lt\|\bl{u}\rt\|^2_{X_{\tau,\kappa+5/2}}\rt)dt+\f{C^2_\ast}{\la}\s{\e} \int^{T_0}_0  \angt^{\f{1-\dl}{2}}\lt(\lt\|\p_z\bl{\mH}\rt\|^2_{X_{\tau,\kappa}}+\lt\|\p_z\bl{u}\rt\|^2_{X_{\tau,\kappa+2}}\rt)dt,\label{nonmH}
\end{align}
and
\be\label{nonmH1}
\bali
&\int^{T_0}_0\angt^{\f{1-\dl}{2}}\sum^{\i}_{j=0}\lt|\lt\langle H_{j}, \mathcal{H}_j \rt\rangle_{\th_2}\rt|dt \leq \f{\dl}{2}\int^{T_0}_0\angt^{\f{1-\dl}{2}}\|\p_z\mH\|^2_{X_{\tau,\kappa}}dt+C_\dl\f{C^2_\ast}{\la}\s{\e}\int^{T_0}_0\angt^{-\f{1-\dl}{2}} \|\mH\|^2_{X_{\tau,\kappa}} dt.
\eali
\ee
\end{lemma}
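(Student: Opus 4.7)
\textbf{Proof plan for Lemma \ref{lnonlinearmH}.}

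\textbf{Strategy.} Both estimates (\ref{nonmH}) and (\ref{nonmH1}) reduce to weighted Gevrey-product estimates via Leibniz's rule, followed by applications of Lemma \ref{lemproduct} combined with the a priori $L^\infty$ and Gevrey bounds from Lemma \ref{llowpoint}. The essential gain is the factor $(j+1)\lambda\sqrt{\varepsilon}\eta(t)$ coming from the $\tau'(t)$-dissipation in the linear estimate, which buys a half-derivative when one divides by it.

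\textbf{Step 1: Commutator expansion for (\ref{nonmH}).} Applying Leibniz's rule to $[u\partial_x+v\partial_y+w\partial_z,\,\partial_x^j]\mathcal{H}$ produces three families of terms,
\begin{equation*}
\sum_{k=1}^{j}\binom{j}{k}\bigl(\partial_x^k u\cdot\partial_x^{j-k+1}\mathcal{H}+\partial_x^k v\cdot\partial_x^{j-k}\partial_y\mathcal{H}+\partial_x^k w\cdot\partial_x^{j-k}\partial_z\mathcal{H}\bigr).
\end{equation*}
After multiplying by $M_j$ and reorganizing the $M$-coefficients through the identity $\frac{M_j}{M_kM_{j-k}}\binom{j}{k}=\tau^{-1}\bigl(\tfrac{j+1}{(k+1)(j-k+1)}\bigr)^\kappa\binom{j}{k}^{-1}$ already used in (\ref{keydiff}), each commutator piece becomes a convolution in $k$. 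Split at $k=[(j+1)/2]$: on the low-$k$ side, place $L^\infty$ on $\bl{u}_k$ or $w_k$ (using Sobolev embedding and Lemma \ref{llowpoint}); on the high-$k$ side, place $L^\infty$ on the $\mathcal{H}$-factor. After applying the discrete Young inequality (\ref{disyoung}) and summing in $j$, one obtains schematically
\begin{equation*}
\sum_{j}\tfrac{1}{j+1}\|M_j[\cdot,\partial_x^j]\mathcal{H}\|^2_{L^2(\theta_2)}\lesssim C_\ast^2\varepsilon\langle t\rangle^{-(5-\delta)/2}\bigl(\|\bl{u}\|^2_{X_{\tau,\kappa+5/2}}+\|\bl{\mathcal{H}}\|^2_{X_{\tau,\kappa+1/2}}\bigr)+\text{(lower order)},
\end{equation*}
where the $w\partial_z\mathcal{H}$ contribution is handled by writing $w=-\int_0^z(\partial_x u+\partial_y v)d\bar z$ and using the $L^\infty$ bound (\ref{lowpoint}) on $w$, together with the weighted Poincar\'e inequality (\ref{poincare}) to convert $\|\partial_z\mathcal{H}\|_{L^2(\theta_2)}$ losses into $\|\mathcal{H}\|_{L^2(\theta_2)}$ at the cost of $\langle t\rangle^{-1/2}$. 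Multiplying by $\langle t\rangle^{(3-3\delta)/2}/(\lambda\sqrt{\varepsilon})$, integrating in $t$, and using $\eta(t)^{-1}\lesssim\langle t\rangle^{1-\delta}$ produces (\ref{nonmH}).

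\textbf{Step 2: Estimate (\ref{nonmH1}).} The term $H$ has two pieces. For $(\partial_x u+\partial_y v)\mathcal{H}$, apply the product inequality (\ref{product1}) in Lemma \ref{lemproduct}, distributing a $\partial_z$ on $\mathcal{H}$ and using the a priori bound $\|\partial_h\bl{u}\|_{X_{\tau,5,\ldots}}\lesssim C_\ast\varepsilon\langle t\rangle^{(\delta-1)/4}$ from (\ref{solutiongevrey}); Cauchy--Schwarz with splitting $ab\le\tfrac{\delta}{2}a^2+C_\delta b^2$ sends $\|\partial_z\mathcal{H}\|^2_{X_{\tau,\kappa}}$ into the dissipation side, leaving the remainder bounded by $C_\delta C_\ast^2\varepsilon\lambda^{-1}\sqrt{\varepsilon}\langle t\rangle^{-(1-\delta)/2}\|\mathcal{H}\|^2_{X_{\tau,\kappa}}$ after using $\lambda=C_\ast^4$. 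For $-(\partial_z\partial_x u+\partial_z\partial_y v)\int_z^\infty\mathcal{H}d\bar z$, control $\int_z^\infty\mathcal{H}d\bar z$ by $\langle t\rangle^{1/4}\|\partial_z\mathcal{H}\|_{L^2}$-type Hardy bounds (analogous to (\ref{sob2})) so that a derivative $\partial_z$ is effectively moved onto $\mathcal{H}$; then use $\sqrt{\delta}\langle t\rangle^{1/2}\|\partial_z^2\bl{u}\|$ controlled by Lemma \ref{llowpoint}, pair with $\mathcal{H}_j$ in $L^2(\theta_2)$, and again Cauchy--split to absorb a fraction into $\|\partial_z\mathcal{H}\|^2_{X_{\tau,\kappa}}$.

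\textbf{Main obstacle.} The only delicate point is the $w\partial_z\mathcal{H}$ contribution in the commutator: $w$ has the worst temporal growth ($\langle t\rangle^{3/4}$ in $L^\infty$) and couples to a vertical derivative of $\mathcal{H}$ for which we have only the diffusion-type control $\int\langle t\rangle^{(1-\delta)/2}\|\partial_z\mathcal{H}\|^2_{X_{\tau,\kappa}}$ available on the right side of (\ref{nonmH}). The trick is to use $\partial_z w=-(\partial_x u+\partial_y v)$ to integrate by parts in $z$ on the inner product (before Cauchy--Schwarz), trading the $\partial_z\mathcal{H}$ against $\mathcal{H}_j$ paired with $\partial_h\bl{u}$, and then splitting the $\sqrt{\varepsilon}\eta(t)(j+1)$ weight to exactly match the scale of $\|\bl{u}\|_{X_{\tau,\kappa+5/2}}$. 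Once this is balanced, the remaining terms are routine applications of Lemma \ref{lemproduct} and (\ref{lowpoint}).
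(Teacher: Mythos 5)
Your Step 1 for estimate \eqref{nonmH} is essentially correct and matches the paper: decompose the commutator into the three Leibniz families, split the $k$-sum at $[(j+1)/2]$, run the discrete Young convolution, use $\p_z w=-(\p_x u+\p_y v)$ for the vertical-advection piece, and feed in the $L^\infty$/Gevrey bounds from Lemma \ref{llowpoint}; the $\eta^{-1}\lesssim\angt^{1-\delta}$ weight-bookkeeping at the end is also right.

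Step 2 and your ``main obstacle'' paragraph, however, miss the structural fact on which the paper's treatment of the $H_j$ term rests. Note that
\[
H=-(\p_z\p_x u+\p_z\p_y v)\int_z^\infty\!\mathcal{H}\,d\bar z+(\p_x u+\p_y v)\mathcal{H}
=-\,\p_z\!\left[(\p_x u+\p_y v)\int_z^\infty\!\mathcal{H}\,d\bar z\right],
\]
so $H_j$ is an \emph{exact $z$-derivative} and the correct move is a single integration by parts in $\langle H_j,\mathcal{H}_j\rangle_{\th_2}$, producing $\langle M_j\p_x^j[(\p_x u+\p_y v)\int_z^\infty\mathcal{H}], \p_z\mathcal{H}_j+\frac{z}{2\angt}\mathcal{H}_j\rangle_{\th_2}$; after \eqref{poincare1} absorbs the $\frac{z}{2\angt}$ piece, one Cauchy--split and a single application of \eqref{product2} to the one product $(\p_x u+\p_y v)\int_z^\infty\mathcal{H}$ finish the job. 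Your proposal instead treats the two pieces of $H$ separately and in the process makes two errors: (i) controlling $\int_z^\infty\mathcal{H}\,d\bar z$ via \eqref{sob2} brings out $\mathcal{H}$, not $\p_z\mathcal{H}$ as you wrote, so no derivative gets ``moved onto $\mathcal{H}$''; and (ii) the second piece only contains $\p_z\p_h\bl{u}$, never $\p_z^2\bl{u}$, so the $\sqrt{\delta}\angt^{1/2}\|\p_z^2\bl{u}\|$ bound you invoke from Lemma \ref{llowpoint} is not applicable. Moreover, treating $(\p_x u+\p_y v)\mathcal{H}$ by a direct product estimate inevitably produces terms $\|\bl{u}\|^2_{X_{\tau,\kappa+2}}$ or $\|\p_z\bl{u}\|^2_{X_{\tau,\kappa+2}}$ that you do not account for; in the paper these survive and are absorbed into the companion terms $\|\bl{u}\|^2_{X_{\tau,\kappa+5/2}}$. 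Finally, your ``main obstacle'' proposes integrating by parts on the $w\p_z\mathcal{H}$ piece of the \emph{commutator} $I^3_j$, but the paper bounds $I^3_j$ with no integration by parts at all — it simply replaces $\|\p_z w\|_{X}$ by $\|\p_h\bl{u}\|_{X}$ through incompressibility inside the product estimate \eqref{termi3first} and keeps $\p_z\mathcal{H}$ on the right, so the byproduct $\|\p_z\mathcal{H}\|^2_{X_{\tau,\kappa}}$ appears explicitly on the RHS of \eqref{nonmH}. The integration by parts you reserve for the commutator is really needed, but for $H_j$.
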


Combining estimates in Lemma \ref{llinearmH} and Lemma \ref{lnonlinearmH}, we finish the proof of Proposition \ref{pfirstauxi}. Now we give the proof of Lemma \ref{lnonlinearmH}.

\pf Recall that
\bes
\bali
&M_j[u\p_x+v\p_y+w\p_z, \p^j_x]\mathcal{H}\\
=&-M_j\sum^j_{k=1} \lt(j\atop k\rt) \lt(\p^k_x u\p^{j-k+1}_x\mathcal{H}+\p^k_x v\p^{j-k}_x\p_y\mathcal{H}+\p^k_x w\p^{j-k}_x\p_z\mathcal{H}\rt)\\
:=& I^1_{j}+I^2_{j}+I^3_{j}.
\eali
\ees

We will estimate $I^i$ $(i=1,2,3)$ term by term.

{\noindent\bf Estimate of term $I^1$ and $I^2$.}

Since $I_1$ and $I_2$ share the same estimate, we only care about term $I_1$ and handling of term $I_2$ follows the same line. For term $I^1_{j}$, noting that when $1\leq k\leq \lt[\f{j+1}{2}\rt]\leq j$, we have
\bes
\lt(j\atop k\rt)^{-1}\leq (j+1)^{-1}.
\ees
Then similar as derivation of \eqref{keydiff}, we have

\be\label{termi1}
\bali
|I^1_{j}|\leq &\sum^{[(j+1)/2]}_{k=1} (k+1)^{-\kappa} |u_{k}| (j-k+1)^{-1}|\p_x\mathcal{H}_{j-k}|+\sum^{j}_{k=[(j+1)/2]+1}  (j-k+1)^{-\kappa} |u_{k}| |\p_x\mathcal{H}_{j-k}|.
\eali
\ee
By using \eqref{termi1}, similar derivation as \eqref{product2} in Lemma \ref{lemproduct}, we can obtain that
\begin{align}
&\sum_{j\in\bN}(j+1)^{-1}\|I^1_{j}(t)\|^2_{L^2(\th_2)}dt\nn\\
\ls& \angt^{1/2}\lt(\|\p_z u\|^2_{X_{\tau,5,1/2}}\|\p_x\mathcal{H}\|^2_{X_{\tau,\kappa-3/2,1/2}}+\|\p_z u\|^2_{X_{\tau,\kappa-1/2,1/2}}\|\p_x\mathcal{H}\|^2_{X_{\tau,5,1/2}}\rt) \label{termi1first}\\
\ls& \angt^{1/2}\lt(\|\p_z u\|^2_{X_{\tau,5,1/2}}\|\mathcal{H}\|^2_{X_{\tau,\kappa+1/2,1/2}}+\|\p_z u\|^2_{X_{\tau,\kappa+2,1/2}}\|\mathcal{H}\|^2_{X_{\tau,7,1/2}}\rt).\nn
\end{align}
Using a priori estimates in \eqref{solutiongevrey}}, we can obtain that
\be\label{termi1second}
\bali
&\f{1}{\la\s{\e}}\int^{T_0}_0\angt^{\f{3-3\dl}{2}}\sum_{j\in\bN}(j+1)^{-1}\|(I^1_{j},I^2_{j})(t)\|^2_{L^2(\th_2)}dt\\
\leq&\f{C^2_\ast\e^2}{\la\s{\e}} \int^{T_0}_0\lt(\angt^{-\f{1-\dl}{2}}\|\mH\|^2_{X_{\tau,\kappa+1/2}}+\angt^{\f{1-\dl}{2}}\|\p_zu\|^2_{X_{\tau,\kappa+2}}\rt)dt\\
\leq&\f{C^2_\ast}{\la}\s{\e} \int^{T_0}_0\lt(\angt^{-\f{1-\dl}{2}}\|\mH\|^2_{X_{\tau,\kappa+1/2}}+\angt^{\f{1-\dl}{2}}\|\p_zu\|^2_{X_{\tau,\kappa+2}}\rt)dt.
\eali
\ee

{\noindent\bf Estimate of term $I^3$.}

For term $I^3$, we have
\bes
\bali
|I^3_j|\ls \sum^{[(j+1)/2]}_{k=1} (k+1)^{-\kappa} (j-k+1)^{-1}|w_{k}| |\p_z\mathcal{H}_{j-k}|+\sum^{j}_{k=[(j+1)/2]+1}  (j-k+1)^{-\kappa} |w_{k}| |\p_z\mathcal{H}_{j-k}|.
\eali
\ees
Using the above inequality, similar as \eqref{termi1first} and using the incompressibility, we can obtain
\begin{align}
&\sum_{j\in\bN}(j+1)^{-1}\|I^3_{j}(t)\|^2_{L^2(\th_2)}dt\nn\\
\ls& \angt^{1/2}\lt(\|\p_z w\|^2_{X_{\tau,5,1/2}}\|\p_z\mathcal{H}\|^2_{X_{\tau,\kappa-3/2,1/2}}+\|\p_z w\|^2_{X_{\tau,\kappa-1/2,1/2}}\|\p_z\mathcal{H}\|^2_{X_{\tau,5,1/2}}\rt) \label{termi3first}\\
\ls& \angt^{1/2}\lt(\|(u,v)\|^2_{X_{\tau,7,1/2}}\|\p_z\mathcal{H}\|^2_{X_{\tau,\kappa}}+\|(u,v)\|^2_{X_{\tau,\kappa+5/2}}\|\p_z\mathcal{H}\|^2_{X_{\tau,5,1/2}}\rt).\nn
\end{align}
Then using the a priori estimates in \eqref{solutiongevrey} and smallness of $\e$, we can obtain that
\begin{align}
&\f{1}{\la\s{\e}}\int^{T_0}_0\angt^{\f{3-3\dl}{2}}\sum^{\i}_{j=0}(j+1)^{-1}\|I^3_j(t)\|^2_{L^2(\th^2)}dt\nn\\
\ls& \f{C^2_\ast}{\la} \s{\e}\int^{T_0}_0 \lt(\angt^{\f{1-\dl}{2}}\|\p_z\mH\|^2_{X_{\tau,\kappa}}+\angt^{-\f{1-\dl}{2}}\|(u,v)\|^2_{X_{\tau,\kappa+5/2}}\rt) dt.\label{termi3second}
\end{align}
Combining estimates in \eqref{termi1second} and \eqref{termi3second}, we can obtain \eqref{nonmH}.

{\noindent\bf Estimate of term involving  $H_{j}$.}

Next we estimate term involving  $H_{j}$. Recall
\bes
\bali
H_{j}=&M_j\p^j_x\lt(-(\p_z\p_xu+\p_z\p_yv)\int^{+\i}_z \mathcal{H}d\bar{z}+(\p_xu+\p_yv)\mathcal{H}\rt).
\eali
\ees
First, by integrating by parts on $z$ and using H\"{o}lder inequality, we can have
\begin{align}
&\lt|\lt\langle H_{j}, \mathcal{H}_j \rt\rangle_{\th_2}\rt|\nn\\
= &\lt|\lt\langle M_j \p^j_x\lt[ (\p_x u+\p_yv)\int^\i_z\mH d\bar{z}\rt] ,  \p_z\mathcal{H}_j+\mH_j \f{z}{2\angt}  \rt\rangle_{\th_2}\rt|\nn\\
\ls & \lt\|M_j \p^j_x\lt[(\p_x u+\p_yv)\int^\i_z\mH d\bar{z}\rt] \rt\|_{L^2(\th^2)}\|\p_z H_j\|_{L^2(\th_2)}.\nn
\end{align}
At the last line, we have used the fact in \eqref{poincare1}. Then using Cauchy inequality, we have
\bes
\bali
&\int^{T_0}_0\angt^{\f{1-\dl}{2}}\sum^{\i}_{j=0}\lt|\lt\langle H_{j}, \mathcal{H}_j \rt\rangle_{\th_2}\rt|dt\\
  \leq &\f{\dl}{2}\int^{T_0}_0\angt^{\f{1-\dl}{2}}\|\p_z\mH\|^2_{X_{\tau,\kappa}}dt+C_\dl\int^{T_0}_0\angt^{\f{1-\dl}{2}}\lt\|(\p_x u+\p_yv)\int^\i_z\mH d\bar{z} \rt\|^2_{X_{\tau,\kappa}} dt.
\eali
\ees
By applying \eqref{product2} in Lemma \ref{lemproduct} and using the a priori estimate in \eqref{solutiongevrey}, we obtain that
\begin{align}
&\int^{T_0}_0\angt^{\f{1-\dl}{2}}\sum^{\i}_{j=0}\lt|\lt\langle H_{j}, \mathcal{H}_j \rt\rangle_{\th_2}\rt|dt\nn\\
  \leq &\f{\dl}{2}\int^{T_0}_0\angt^{\f{1-\dl}{2}}\|\p_z\mH\|^2_{X_{\tau,\kappa}}dt+C_\dl \int^{T_0}_0\angt^{\f{2-\dl}{2}}\lt(\|\bl{u}\|^2_{X_{\tau,7,1/2}} \|\mH\|^2_{X_{\tau,\kappa}} +\|\bl{u}\|^2_{X_{\tau,\kappa+2}} \|\mH\|^2_{X_{\tau,5,1/2}}\rt) dt\nn\\
  \leq &\f{\dl}{2}\int^{T_0}_0\angt^{\f{1-\dl}{2}}\|\p_z\mH\|^2_{X_{\tau,\kappa}}dt+C_\dl C^2_\ast\e^2\int^{T_0}_0 \lt(\angt^{-\f{1}{2}}\|\mH\|^2_{X_{\tau,\kappa}}+\angt^{-\f{1-2\dl}{2}}\|\bl{u}\|^2_{X_{\tau,\kappa+2}}\rt)dt\nn\\
  \leq &\f{\dl}{2}\int^{T_0}_0\angt^{\f{1-\dl}{2}}\|\p_z\mH\|^2_{X_{\tau,\kappa}}dt+C_\dl \f{C^2_\ast}{\la}\s{\e}\int^{T_0}_0 \angt^{-\f{1-\dl}{2}}\lt(\|\mH\|^2_{X_{\tau,\kappa+1/2}}+\|\bl{u}\|^2_{X_{\tau,\kappa+5/2}}\rt)dt,\nn
\end{align}
which is \eqref{nonmH1}. \qed

\section{Estimates of auxiliary functions $\bl{\mG}$ }\label{secmg}

We only estimate $\mG$ since the other three follow the same line.

\subsection{Derivation of the equation of $\mG$ and its linear estimate}

 From the first equation of \eqref{3dprandtl}, we can obtain that
\be\label{mg1}
\bali
&\partial_{t} \p_x u+\left({u}\partial_{x}+v \partial_{y}+w\p_z\right) \p_x u-\partial_{z}^{2} \p_x u=-\p_x w\p_z u- (\p_x u)^2-\p_x v\p_y u.
\eali
\ee
Multiplying $\p_z u$ to the first equation of \eqref{auxih} indicates that
\be\label{mgfirst}
\bali
&\lt[\partial_{t} +\left({u}\partial_{x}+v \partial_{y}+w\p_z\right) -\partial_{z}^{2} \rt]\lt(\p_zu \int^\i_z \mathcal{H}d\bar{z}\rt)\\
=&\s{\e}\angt^{\dl-1}\p_x w\p_z u+\lt[\p_yv\p_zu-\p_yu\p_zv\rt]\int^\i_z \mathcal{H}d\bar{z}+2(\p^2_zu) \mathcal{H}.
\eali
\ee
Then by multiplying $\angt^{1-\dl}\e^{-1/2}$ to \eqref{mgfirst}, we can obtain that
\be\label{mg2}
\bali
&\lt[\partial_{t} +\left({u}\partial_{x}+v \partial_{y}+w\p_z\right) -\partial_{z}^{2} \rt]\lt(\f{\angt^{1-\dl}\p_zu}{\s{\e}} \int^\i_z \mathcal{H}d\bar{z}\rt)-\p_t\lt( \f{\angt^{1-\dl}}{\s{\e}}\rt) \lt(\p_z u \int^\i_z \mathcal{H}d\bar{z}\rt)\\
=&\p_x w\p_z u+\f{\angt^{1-\dl}\lt[\p_yv\p_zu-\p_yu\p_zv\rt]}{\s{\e}}\int^\i_z \mathcal{H}d\bar{z}+\f{2\angt^{1-\dl}(\p^2_zu)}{\s{\e}}\mathcal{H}.
\eali
\ee
Then adding \eqref{mg1} and \eqref{mg2} together implies that
\be\label{mg3}
\bali
&\lt[\partial_{t} +\left({u}\partial_{x}+v \partial_{y}+w\p_z\right) -\partial_{z}^{2} \rt]\mathcal{G}\\
=&-\lt[(\p_xu)^2+\p_xv\p_yu\rt]+\f{\angt^{1-\dl}\lt[\p_yv\p_zu-\p_yu\p_zv\rt]}{\s{\e}}\int^\i_z \mathcal{H}d\bar{z}+\f{2\angt^{1-\dl}(\p^2_zu)}{\s{\e}}\mathcal{H}\\
:=& K^1+K^2+K^3.
\eali
\ee
\begin{lemma}\label{lmg}
Under the assumption in \eqref{gassump}, for sufficiently small $\e$, we have the following estimate
{\small
\begin{align}
&\angt^{\f{1-\dl}{2}}\|\mathcal{G}(t)\|^2_{X_{\tau,\kappa+1}}+\dl\int^{T_0}_0\angt^{\f{1-\dl}{2}}\|\p_z \mathcal{G}(t)\|^2_{X_{\tau,\kappa+1}}dt+\la \s{\e} \int^{T_0}_0\angt^{\f{1-\dl}{2}}{\eta}(t)\|\mathcal{G}(t)\|^2_{_{X_{\tau,\kappa+3/2}}}dt\nn\\
\ls_\dl& \|\mathcal{G}(0)\|^2_{X_{\tau_0,\kappa+1}}+C_\ast\e \int^{T_0}_0\angt^{\f{1-\dl}{2}}{\eta}(t)\|\mathcal{G}(t)\|^2_{_{X_{\tau,\kappa+3/2}}}dt\nn\\
       &+\f{1}{\la\s{\e}}\int^{T_0}_0\angt^{\f{3-3\dl}{2}}\sum^\i_{j=0}(j+1)\|M_j[u\p_x+v\p_y+w\p_z, \p^j_x]\mathcal{G}\|^2_{L^2(\th_2)}dt\label{mglinear}\\
   &+\f{1}{\la\s{\e}}\int^{T_0}_0\angt^{\f{3-3\dl}{2}}\|(K^1,K^2)\|^2_{X_{\tau,\kappa+1/2}}dt+\int^{T_0}_0\angt^{\f{1-\dl}{2}}\sum^\i_{j=0}(j+1)^2\lt|\lt\langle K^3_{j}, \mG_{j} \rt\rangle_{\th_2}\rt|dt.\nn
\end{align}
}
\end{lemma}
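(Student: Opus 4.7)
\textbf{Proof plan for Lemma \ref{lmg}.} The plan is to mirror the energy estimate strategy used for $\mH$ in Lemma \ref{llinearmH}, now applied to the transport--diffusion equation \eqref{mg3} for $\mG$. Since the target norm of $\mG$ sits at level $\kappa+1$ rather than $\kappa$, every Gevrey weight carries an extra factor of $(j+1)$, and this explains why the commutator appears with weight $(j+1)$ (rather than $(j+1)^{-1}$ as in \eqref{auxilh6}) and why the dissipative gain from $\eta(t)$ controls the $X_{\tau,\kappa+3/2}$ norm on the left-hand side of \eqref{mglinear}.

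First I would apply $M_j\p^j_x$ (or $M_j\p^\al_h$ for $|\al|=j$) to \eqref{mg3}, producing an equation for $\mG_j$ with the Gevrey-radius gain $\la\s{\e}\eta(t)(j+1)\mG_j$ on the left-hand side (coming from $-\tau'/\tau$), and with the commutator $M_j[u\p_x+v\p_y+w\p_z,\p^j_x]\mG$ together with $M_j\p^j_x K^i$ ($i=1,2,3$) as sources. Then I would pair this equation in $L^2(\th_2)$ with $\mG_j$, integrate by parts in $x,y,z$, and exploit the identities for $\p_t\th_2,\p_z\th_2,\p_z^2\th_2$ as in the derivation of \eqref{hj5}. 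Using the weighted Poincar\'e inequality \eqref{poincare} with $\nu=1$ gives the pointwise-in-$j$ differential inequality
\begin{equation*}
\f{d}{dt}\|\mG_j\|^2_{L^2(\th_2)}+\dl\|\p_z\mG_j\|^2_{L^2(\th_2)}+\f{1-\dl}{2\angt}\|\mG_j\|^2_{L^2(\th_2)}+2(j+1)\la\s{\e}\eta(t)\|\mG_j\|^2_{L^2(\th_2)}\leq \text{RHS},
\end{equation*}
where the error term $\langle \f{z}{2\angt}w,\mG_j^2\rangle_{\th_2}$ is absorbed using the $L^\i$ bound for $w$ from \eqref{lowpoint} when $\e$ is small enough (this is where the factor $C_\ast\e$ in front of the $\|\mG\|_{X_{\tau,\kappa+3/2}}^2$ integral arises).

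Next I would multiply by $\angt^{(1-\dl)/2}$, integrate in time from $0$ to $t\le T_0$, and sum in $j$ after multiplying through by $(j+1)^2$ so that $\sum_j (j+1)^2\|\mG_j\|_{L^2(\th_2)}^2\thickapprox\|\mG\|_{X_{\tau,\kappa+1}}^2$; correspondingly the $\eta$-gain becomes $\la\s{\e}\eta(t)\|\mG\|_{X_{\tau,\kappa+3/2}}^2$. The commutator and the lower-order sources $K^1,K^2$ are then dispatched by a Cauchy splitting against the absorbing factor $\tfrac{(j+1)\la\s\e\eta(t)}{2}\|\mG_j\|_{L^2(\th_2)}^2$: this produces exactly the $(j+1)$-weighted commutator term and the $\|(K^1,K^2)\|_{X_{\tau,\kappa+1/2}}^2$ term with prefactor $\f{1}{\la\s\e}\angt^{(3-3\dl)/2}$, which is the shape demanded by \eqref{mglinear}. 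The remaining source $K^3$ is kept in its raw pairing form $\sum (j+1)^2|\langle K_j^3,\mG_j\rangle_{\th_2}|$, because its structure $\f{\angt^{1-\dl}}{\s\e}(\p_z^2 u)\mH$ does not admit a clean $L^2$ bound directly and instead must be handled by transferring one $z$-derivative onto $\mG_j\th_2$.

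I expect the main obstacle to be the subsequent estimation of the $K^3$ pairing (to be carried out in a companion lemma, parallel to \eqref{nonmH1} for $H$): the factor $\s\e^{-1}\angt^{1-\dl}$ blows up as $\e\to 0$ and as $t\to T_0$, so bounding it requires integration by parts on $z$ to distribute one derivative from $\p_z^2 u$ onto the test function $\mG_j\th_2$, turning it into $\p_z\mG_j$ (absorbed by the $\dl$ dissipation) plus a $z/\angt$-weighted term (absorbed via Poincar\'e \eqref{poincare1}), while using the sharp a priori bounds $\s\dl\angt^{1/2}\|\p_z\bl g\|$ and $\|\mH\|_{X_{\tau,\kappa}}$ from \eqref{gassump}--\eqref{solutiongevrey} together with the product inequalities \eqref{product1}--\eqref{product3}. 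Careful bookkeeping of the $\s\e$ and $\angt^{\dl}$ factors there is what ensures that everything closes within the smallness budget $C_\ast\e/\la$ needed to run the continuity argument in Section \ref{sec3.5}.
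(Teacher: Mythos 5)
Your plan reproduces the paper's own proof: apply $M_j\p^j_x$ to \eqref{mg3}, perform the weighted $L^2(\th_2)$ energy estimate exactly as in \eqref{auxilh5} using \eqref{poincare}, absorb the $\langle \f{z}{2\angt}w,\mG_j^2\rangle_{\th_2}$ error with the $L^\i$ bound from \eqref{lowpoint}, multiply by $(j+1)^2$ to put the sum at the $\kappa+1$ level, Cauchy-split the commutator and $K^1,K^2$ against the $\la\s{\e}\eta(t)(j+1)$ dissipation, and keep the $K^3$ pairing intact for later integration by parts in $z$. Same approach, same structure, same reason for treating $K^3$ differently.
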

\pf Applying $M_j\p^j_x$ to \eqref{mg3} and still denoting $\mG_{j,x}$ by $\mG_{j}$, we can obtain that
\bes
\bali
&\partial_{t} \mathcal{G}_{j}+\la\s{\e}\eta(t)(j+1)\mathcal{G}_{j}+\left({u}\partial_{x}+v \partial_{y}+w\p_z\right) {\mathcal{G}_{j}}-\partial_{z}^{2} \mathcal{G}_{j}\\
=&M_j[u\p_x+v\p_y+w\p_z, \p^j_x]\mathcal{G}+K^1_j+K^2_j+K^3_j.
\eali
\ees

Performing energy estimates similar as \eqref{auxilh5} and using the a priori estimates in \eqref{lowpoint}, we can obtain that
\bes
\bali
&\angt^{\f{1-\dl}{2}}(j+1)^2\|\mathcal{G}_{j}(t)\|^2_{L^2(\th_2)}+\dl(j+1)^2\int^{T_0}_0\angt^{\f{1-\dl}{2}}\|\p_z \mathcal{G}_{j}(t)\|^2_{L^2(\th_2)}dt\\
&+\la\s{\e}(j+1)^3 \int^{T_0}_0\angt^{\f{1-\dl}{2}}{\eta}(t)\|\mathcal{G}_{j}(t)\|^2_{L^2(\th_2)}dt\\
\ls_\dl & (j+1)^2\|\mathcal{G}_{j}(0)\|^2_{L^2(\th_2)}+  C_\ast\e(j+1)^2 \int^{T_0}_0\angt^{-\f{1-\dl}{2}}{\eta}(t)\|\mathcal{G}_{j}(t)\|^2_{L^2(\th_2)}dt\\
   &+\f{1}{\la\s{\e}}\int^{T_0}_0\angt^{\f{3-3\dl}{2}}(j+1)\lt(\|(K^1_{j},K^2_j)(t)\|^2_{L^2(\th_2)}+\|M_j[u\p_x+v\p_y+w\p_z, \p^j_x]\mathcal{G}\|^2_{L^2(\th_2)}\rt)dt\\
   &+\int^{T_0}_0\angt^{\f{1-\dl}{2}}(j+1)^2\lt|\lt\langle K^3_{j}, \mG_{j} \rt\rangle_{L^2(\th_2)}\rt|dt.
\eali
\ees
Summing the above inequality over $j\in \bN$ indicates \eqref{mglinear}. \qed

\subsection{Estimates of nonlinear terms for $\mG$}

\begin{lemma}\label{lmgnonlinear}
Under the assumption in \eqref{gassump}, for sufficiently small $\e$, we have the following estimate
{\small
\begin{align}
&\f{1}{\la\s{\e}}\int^{T_0}_0\angt^{\f{3-3\dl}{2}}\sum^\i_{j=1}(j+1)\|M_j[u\p_x+v\p_y+w\p_z, \p^j_x]\mathcal{G}\|^2_{L^2(\th_2)}dt\nn\\
 &+\f{1}{\la\s{\e}}\int^{T_0}_0\angt^{\f{3-3\dl}{2}}\|(K^1,K^2)\|^2_{X_{\tau,\kappa+1/2}}dt+\int^{T_0}_0\angt^{\f{1-\dl}{2}}\sum^\i_{j=0}(j+1)^2\lt|\lt\langle K^3_{j}, \mG_{j} \rt\rangle_{\th_2}\rt|dt\nn\\
\ls_\dl&\f{C^2_\ast}{\la}\s{\e} \int^{T_0}_0 \angt^{-\f{1-\dl}{2}} \lt(\lt\|\bl{\mH}(t)\rt\|^2_{X_{\tau,\kappa+1/2}}+\lt\|\bl{\mG}(t)\rt\|^2_{X_{\tau,\kappa+3/2}}+\lt\|\bl{u}(t)\rt\|^2_{X_{\tau,\kappa+5/2}} \rt)dt\label{mgnonlinear}\\
 &+\f{C^2_\ast}{\la}\s{\e} \int^{T_0}_0 \angt^{\f{1-\dl}{2}} \lt(\lt\|\p_z\bl{\mH}(t)\rt\|^2_{X_{\tau,\kappa}}+\lt\|\p_z\bl{\mG}(t)\rt\|^2_{X_{\tau,\kappa+1}}+\lt\|\p_z\bl{u}(t)\rt\|^2_{X_{\tau,\kappa+2}} \rt)dt.\nn
\end{align}
}
\end{lemma}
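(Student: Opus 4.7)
The plan is to follow the same four-step pattern established in the proof of Lemma \ref{lnonlinearmH}: split the right-hand side of \eqref{mg3} into the commutator piece, $K^1$, $K^2$, and $K^3$, and estimate each contribution using the product inequalities of Lemma \ref{lemproduct} combined with the a priori bounds \eqref{solutiongevrey}--\eqref{lowpoint}. In every case the cost of absorbing each piece should fall into the target integrand
\[
\frac{C_\dl C^2_\ast}{\la}\s\e\Bigl(\angt^{-(1-\dl)/2}\bigl(\|\bl{\mH}\|^2_{X_{\tau,\kappa+1/2}}+\|\bl{\mG}\|^2_{X_{\tau,\kappa+3/2}}+\|\bl u\|^2_{X_{\tau,\kappa+5/2}}\bigr)+\angt^{(1-\dl)/2}\bigl(\|\p_z\bl{\mH}\|^2_{X_{\tau,\kappa}}+\|\p_z\bl{\mG}\|^2_{X_{\tau,\kappa+1}}+\|\p_z\bl u\|^2_{X_{\tau,\kappa+2}}\bigr)\Bigr),
\]
the factor $C_\ast^2$ always arising from pairing the amplitude $C_\ast\e$ in the a priori low-order bounds against the $1/(\la\s\e)$ prefactor of the commutator and $K^3$ terms.

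For the commutator, I repeat verbatim the structure that produced \eqref{nonmH}: write $M_j[u\p_x+v\p_y+w\p_z,\p^j_x]\mG$ as three sums, use $\binom{j}{k}^{-1}\le(j+1)^{-1}$ on the low-$k$ half of each, and apply the product estimates \eqref{product2}--\eqref{product3}. Using the incompressibility $\p_z w=-(\p_x u+\p_y v)$ to rewrite the $w\p_z$-piece and inserting the a priori $L^\i$ control on $\p_z\bl u$ and $\bl u$ from \eqref{lowpoint} absorbs the weight $\angt^{(3-3\dl)/2}/(\la\s\e)$, leaving the higher-order factor as exactly $\|\bl\mG\|_{X_{\tau,\kappa+3/2}}$, $\|\p_z\bl\mG\|_{X_{\tau,\kappa+1}}$, or $\|\p_z\bl u\|_{X_{\tau,\kappa+2}}$. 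For $K^1=-(\p_xu)^2-\p_xv\,\p_yu$, apply \eqref{product1} directly; the small factor is $\bl u$ at low Gevrey level, whose decay from \eqref{solutiongevrey} easily dominates the explicit weight. The $K^2$ piece carries an extra $\angt^{1-\dl}/\s\e$ but also the factor $\int_z^{+\i}\mH\,d\bar z$, which by the Poincar\'e inequality \eqref{poincare} and the bound $\|\bl\mH\|_{X_{\tau,9,3/4}}\ls C_\ast\e\angt^{-(3-\dl)/4}$ from \eqref{solutiongevrey} gives a gain of $\angt^{1/2-(3-\dl)/4}$ that precisely offsets the prefactor, after which the computation becomes identical to that of $K^1$.

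The main obstacle is $K^3=2\angt^{1-\dl}\e^{-1/2}(\p^2_z u)\mH$, since $\p^2_z u$ appears at its highest admissible order and the factor $\angt^{2(1-\dl)}/\e$ coming from $(\angt^{1-\dl}\e^{-1/2})^2$ leaves almost no room. To compensate, rather than placing $K^3$ into an $X$-norm crudely, I would split the inner product by Cauchy--Schwarz with weight $\la\s\e\,\eta(t)(j+1)^3$:
\[
(j+1)^2|\langle K^3_j,\mG_j\rangle_{\th_2}|\le\tfrac12\la\s\e\,\eta(t)(j+1)^3\|\mG_j\|^2_{L^2(\th_2)}+\tfrac{C(j+1)}{\la\s\e\,\eta(t)}\|K^3_j\|^2_{L^2(\th_2)}.
\]
The first piece is absorbed by the dissipation term $\la\s\e\int_0^{T_0}\angt^{(1-\dl)/2}\eta(t)\|\bl\mG\|^2_{X_{\tau,\kappa+3/2}}\,dt$ already present on the left of \eqref{mglinear}. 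For the remainder, summation in $j$ and the bound $\eta(t)^{-1}\le\angt^{1-\dl}$ from \eqref{radigain} produce the weight $\angt^{(3-3\dl)/2}/(\la\s\e)$ acting on $\|K^3\|^2_{X_{\tau,\kappa+1/2}}$. Applying \eqref{product1} to $\p^2_zu\cdot\mH$ and inserting the fastest-decay a priori control $\dl\angt\|\p^2_z\bl u\|_{X_{\tau,8}}\ls C_\ast\e\angt^{-(5-\dl)/4}$ from \eqref{solutiongevrey}, paired with $\|\bl\mH\|$ and $\|\p_z\bl\mH\|$ at the appropriate Gevrey levels, reduces the remainder to the two integrands asserted in \eqref{mgnonlinear}. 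Collecting the four contributions completes the proof.
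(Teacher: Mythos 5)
Your treatment of the commutator, $K^1$, and $K^2$ follows the paper's approach (it mirrors the $\mathcal{H}$-estimates of Lemma \ref{lnonlinearmH}, with the incompressibility trick for the $w\p_z$-piece), and these outlines are sound.

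The $K^3$ step, however, has a genuine gap. After your Cauchy--Schwarz split, you need to bound $\displaystyle\frac{1}{\la\s\e}\int_0^{T_0}\angt^{\frac{3-3\dl}{2}}\|K^3\|^2_{X_{\tau,\kappa+1/2}}\,dt$, and you propose to do so by applying \eqref{product1} to $\p_z^2u\cdot\mathcal{H}$. But every one of the product estimates \eqref{product1}--\eqref{product3}, applied to a product with $\p^2_z u$ in one slot and $\mathcal{H}$ in the other at level $\kappa+1/2$, necessarily outputs either $\|\p_z^2u\|_{X_{\tau,\kappa+1/2}}$, $\|\p_z^3u\|_{X_{\tau,\kappa+1/2}}$, or $\|\p_z\mathcal{H}\|_{X_{\tau,\kappa+1/2}}$ as the high-Gevrey factor. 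None of these appears on the right of \eqref{mgnonlinear}, and none is available: the a priori bounds \eqref{gassump}/\eqref{solutiongevrey} only give $\p_z^2\bl{u}$, $\p_z^3\bl{u}$ at the low Gevrey levels $10$ and $8$ (with decaying time-weights), while the target right-hand side contains only $\|\p_z\bl{u}\|_{X_{\tau,\kappa+2}}$ and $\|\p_z\bl{\mathcal{H}}\|_{X_{\tau,\kappa}}$. So the crude absorption of the whole $K^3$ into an $X_{\tau,\kappa+1/2}$-norm leaves an uncontrolled half-order loss and cannot close.

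This is precisely why the linear estimate \eqref{mglinear} deliberately retains the pairing $\sum_j (j+1)^2|\langle K^3_j,\mathcal{G}_j\rangle_{\th_2}|$ for $K^3$ rather than passing to $\|K^3\|_{X_{\tau,\kappa+1/2}}$ as it does for $K^1,K^2$. The paper then splits the Leibniz sum in $K^3_j$ into a ``low'' piece $K^3_{j,\operatorname{low}}$ (where $\p_z^2u$ receives no tangential derivatives, so all $j$ tangential derivatives hit $\mathcal H$) and the ``other'' piece (at least one tangential derivative on $\p_z^2u$). The low piece is absorbed via the pointwise $L^\infty$ bound on $\p_z^2u$ from \eqref{lowpoint} together with a Cauchy inequality against the dissipation $\la\s\e\eta\|\mathcal G\|^2_{X_{\tau,\kappa+3/2}}$ — this is the only place your Cauchy--Schwarz idea is actually correct. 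The ``other'' piece is handled by \emph{integrating by parts in $z$} in the pairing $\langle K^3_{j,\operatorname{other}},\mathcal G_j\rangle_{\th_2}$, converting $\p_z^2u_k\,\mathcal{H}_{j-k}$ into terms containing only $\p_z u_k\,\p_z\mathcal{H}_{j-k}$ and $\p_z u_k\,\mathcal{H}_{j-k}\bigl(\p_z\mathcal{G}_j+\tfrac{z}{2\langle t\rangle}\mathcal{G}_j\bigr)$; this drops the highest $z$-derivative on $u$ to $\p_z u$, which \emph{is} controlled at the high Gevrey level via $\|\p_z\bl u\|_{X_{\tau,\kappa+2}}$. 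The low/other split and the integration by parts in $z$ are the essential steps your proposal is missing.
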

Combining estimates in Lemma \ref{lmg} and Lemma \ref{lmgnonlinear}, we finish the proof of Proposition \ref{psecondauxi}. Now we give the proof of Lemma \ref{lmgnonlinear}.

\pf First, using Leibniz formula, we see that
\bes
\bali
&M_j[u\p_x+v\p_y+w\p_z, \p^j_x]\mathcal{H}\\
=&-M_j\sum^j_{k=1} \lt(j\atop k\rt) \lt(\p^k_x u\p^{j-k+1}_x\mathcal{G}+\p^k_x v\p^{j-k}_x\p_y\mathcal{G}+\p^k_x w\p^{j-k}_x\p_z\mathcal{G}\rt)\\
:=& L^1_{j}+L^2_{j}+L^3_{j}.
\eali
\ees
Similar as \eqref{termi1second}, using \eqref{gassump} and the a priori estimates in Lemma \ref{llowpoint}, for sufficiently small $\e$, we have
\begin{align}
&\f{1}{\la\s{\e}}\int^{T_0}_0\angt^{\f{3-3\dl}{2}}\sum_{j\in\bN}(j+1)\|(L^1_{j},L^2_{j})(t)\|^2_{L^2(\th_2)}dt\nn\\
\ls_\dl &\f{1}{\la\s{\e}} \int^{T_0}_0\angt^{\f{4-3\dl}{2}}\lt(\|\p_z u\|^2_{X_{\tau,5,1/2}}\|\p_x\mathcal{G}\|^2_{X_{\tau,\kappa-1/2,1/2}} +\| \p_zu\|^2_{X_{\tau,\kappa+1/2,1/2}}\|\p_x\mathcal{G}\|^2_{X_{\tau,5,1/2}}\rt)dt\label{terml1f}\\
 \ls_\dl& \f{C^2_\ast}{\la}\e^{3/2}\int^{T_0}_0\lt(\angt^{-\f{1-\dl}{2}}\|\mathcal{G}\|^2_{X_{\tau,\kappa+3/2}}+\angt^{\f{1-\dl}{2}}\|\p_zu\|^2_{X_{\tau,\kappa+2}}\rt)dt\nn\\
\ls_\dl & \f{C^2_\ast}{\la}\s{\e}\int^{T_0}_0\lt(\angt^{-\f{1-\dl}{2}}\|\mathcal{G}\|^2_{X_{\tau,\kappa+3/2}}+\angt^{\f{1-\dl}{2}}\|\p_zu\|^2_{X_{\tau,\kappa+2}}\rt)dt.\nn
\end{align}
Similar same as \eqref{termi3first}, by using the incompressibility, we can obtain that
\begin{align}
&\sum_{j\in\bN}(j+1)\|L^3_{j}(t)\|^2_{L^2(\th_2)}dt\nn\\
\ls& \angt^{1/2}\lt(\|\p_z w\|^2_{X_{\tau,5,1/2}}\|\p_z\mathcal{G}\|^2_{X_{\tau,\kappa-1/2,1/2}}+\|\p_z w\|^2_{X_{\tau,\kappa+1/2,1/2}}\|\p_z\mathcal{G}\|^2_{X_{\tau,5,1/2}}\rt) \label{terml3first}\\
\ls& \angt^{1/2}\lt(\|\bl{u}\|^2_{X_{\tau,7,1/2}}\|\p_z\mathcal{G}\|^2_{X_{\tau,\kappa+1}}+\|\bl{u}\|^2_{X_{\tau,\kappa+5/2}}\|\p_z\mathcal{G}\|^2_{X_{\tau,5,1/2}}\rt).\nn
\end{align}
Then using \eqref{terml3first} and the a priori estimates in Lemma \ref{llowpoint}, for sufficiently small $\e$, we see that
\begin{align}
&\f{1}{\la \s{\e}}\int^{T_0}_0\angt^{\f{3-3\dl}{2}}\sum_{j\in\bN}(j+1)\|L^3_j(t)\|^2_{L^2(\th_2)}dt\nn\\
\leq&\f{1}{\la\s{\e}} \int^{T_0}_0\angt^{\f{4-3\dl}{2}}\lt(\|\bl{u}\|^2_{X_{\tau,7,1/2}}\|\p_z\mathcal{G}\|^2_{X_{\tau,\kappa+1}}+\|\bl{u}\|^2_{X_{\tau,\kappa+5/2}}\|\p_z\mathcal{G}\|^2_{X_{\tau,5,1/2}}\rt)dt\label{terml3second}\\
\ls& \f{C^2_\ast}{\la}\s{\e}\int^{T_0}_0\lt(\angt^{\f{1-\dl}{2}}\|\p_z\mathcal{G}\|^2_{X_{\tau,\kappa+1}}+\angt^{-\f{1-\dl}{2}}\|\bl{u}\|^2_{X_{\tau,\kappa+5/2}}\rt)dt.\nn
\end{align}

\noindent{\bf Estimates of $K^1_{j}$.}

Remembering the representation of $K^1$ in \eqref{mg3} and using the product estimate in \eqref{product3} and a priori estimates \eqref{solutiongevrey} in Lemma \ref{llowpoint}, we have
\begin{align}
&\f{1}{\la\s{\e}}\int^{T_0}_0\angt^{\f{3-3\dl}{2}}\|K^1_{j}(t)\|^2_{X_{\tau,\kappa+1/2}}dt= \f{1}{\la\s{\e}}\int^{T_0}_0\angt^{\f{3-3\dl}{2}}\|\p_h\bl{u}\p_h\bl{u}\|^2_{X_{\tau,\kappa+1/2}}dt\nn\\
\ls_\dl& \f{1}{\la\s{\e}}\int^{T_0}_0\angt^{\f{4-3\dl}{2}}\|\p_z\p_h\bl{u}\|^2_{X_{\tau,5,1/2}}\|\p_h\bl{u}\|^2_{X_{\tau,\kappa+1/2,1/2}}dt
\ls_\dl\f{1}{\la\s{\e}}\int^{T_0}_0\angt^{\f{4-3\dl}{2}}\|\p_z\bl{u}\|^2_{X_{\tau,7,1/2}}\|\bl{u}\|^2_{X_{\tau,\kappa+5/2}}dt\label{termk1}\\
\ls_\dl &\f{C^2_\ast}{\la}\s{\e}\int^{T_0}_0\angt^{-\f{1-\dl}{2}}\|\bl{u}\|^2_{X_{\tau,\kappa+5/2}}dt.\nn
\end{align}

\noindent{\bf Estimates of $K^2_{j}$.}

Remembering the representation of $K^2$ in \eqref{mg3} and using the product estimate in \eqref{product2} and \eqref{product3}, we have
{\small
\begin{align}
&\|K^2_{j}(t)\|^2_{X_{\tau,\kappa+1/2}}= \angt^{2-2\dl}\e^{-1}\lt\|\p_h\bl{u}\p_z\bl{u}\int^\i_z\mH d\bar{z}\rt\|^2_{X_{\tau,\kappa+1/2}}dt\nn\\
\ls_\dl & \angt^{5/2-2\dl}\e^{-1}\lt(\lt\|\p_z\lt(\p_z\bl{u}\int^\i_z\mH d\bar{z}\rt)\rt\|^2_{X_{\tau,5,1/2}}\|\p_h\bl{u}\|^2_{X_{\tau,\kappa+1/2,1/2}}+\|\p_z \p_h\bl{u} \|^2_{X_{\tau,5,1/2}}\|\p_z\bl{u}\int^\i_z\mH\|^2_{X_{\tau,\kappa+1/2,1/2}}\rt)\nn\\
\ls & \angt^{3-2\dl}\e^{-1}\lt(\|\p^2_z\bl{u}\|^2_{X_{\tau,5,3/8}}\|\mH\|^2_{X_{\tau,5,3/8}}\|\bl{u}\|^2_{X_{\tau,\kappa+5/2,1/2}}\rt.\nn\\
    & +\|\p_z\bl{u}\|^2_{X_{\tau,7,3/4}}\lt(\|\p_z\bl{u}\|^2_{X_{\tau,5,3/8}}\|\mH\|^2_{X_{\tau,\kappa+1/2,3/8}}
    +\|\mH\|^2_{X_{\tau,5,3/8}}\|\p_z\bl{u}\|^2_{X_{\tau,\kappa+1/2,3/8}}\rt).\nn
\end{align}
}
Then using the a priori estimates in \eqref{solutiongevrey}, for sufficiently small $\e$, we can obtain that
\begin{align}
&\f{1}{\la\s{\e}}\int^{T_0}_0\angt^{\f{3-3\dl}{2}}\sum^{\i}_{j=0}(j+1)\|K^2_{j}(t)\|^2_{L^2(\th^2)}dt\nn\\
\ls_\dl & \f{C^2_\ast}{\la}\s{\e}\int^{T_0}_0\lt(\angt^{-\f{1-\dl}{2}}\lt(\|\bl{\mH}\|^2_{X_{\tau,\kappa+1/2}}+\|\bl{u}\|^2_{X_{\tau,\kappa+5/2}}\rt)
+\angt^{\f{1-\dl}{2}}\|\p_z\bl{u}\|^2_{X_{\tau,\kappa+1/2}}\rt)dt.\label{termk2f}
\end{align}

\noindent{\bf Estimates of $K^3_{j}$}

First we claim that for $0<\nu\leq 1$ and $\kappa>0$,
\be\label{product4}
\sum^\i_{j=0}\|(fg)_{j,\kappa}\|^2_{L^2_hL^1_z(\th_{2\nu})}\ls \|f\|^2_{X_{\tau,5,\nu}}\|g\|^2_{X_{\tau,\kappa,\nu}}+\|f\|^2_{X_{\tau,\kappa,\nu}}\|g\|^2_{X_{\tau,5,\nu}}.
\ee
The proof of \eqref{product4} is essentially the same as that in Lemma \ref{lemproduct}, here we omit the details.

For term $K^3$, we decompose it as the following.
\be
\bali
K^3_{j}=& 2\f{\angt^{1-\dl}}{\s{\e}}\p^2_z u \mathcal{H}_{j}+2\f{\angt^{1-\dl}}{\s{\e}}\sum_{1\leq k\leq j }\f{M_j}{M_k M_{j-k}}\lt(j\atop k\rt)\p^2_z u_{k}\mathcal{H}_{j-k}\nn\\
      :=&K^3_{j,\operatorname{low}}+K^3_{j,\operatorname{other}}.\nn
\eali
\ee
Then by using Cauchy inequality and the a priori estimates in Lemma \ref{llowpoint}, we have
\begin{align}
&\int^{T_0}_0 \angt^{\f{1-\dl}{2}}\sum^\i_{j=0}(j+1)^2\lt|\langle K^3_{j,\operatorname{low}}, \mathcal{G}_j\rangle_{\th_2}\rt|dt\nn\\
\leq &\f{C_\dl}{\la\e^{3/2}}\int^{T_0}_0 \angt^{\f{7-7\dl}{2}}\| \p^2_z u\|^2_{L^\i}\lt\|\mathcal{H}\rt\|^2_{X_{\tau,\kappa+1/2}}dt+\f{\la\s{\e}}{2}\int^{T_0}_0 \angt^{-\f{1-\dl}{2}}\lt\| \mathcal{G}\rt\|^2_{X_{\tau,\kappa+3/2}}dt\label{termk3f}\\
\leq &C_\dl\f{ C^2_\ast}{\la}\s{\e}\int^{T_0}_0 \angt^{-\f{1-\dl}{2}}\lt\|\mathcal{H}\rt\|^2_{X_{\tau,\kappa+1/2}}dt+\f{\la\s{\e}}{2}\int^{T_0}_0 \angt^{-\f{1-\dl}{2}}\lt\| \mathcal{G}\rt\|^2_{X_{\tau,\kappa+3/2}}dt.\nn
\end{align}
By using integration by parts on $z$, we have that
\begin{align}
&|\langle K^3_{j,\operatorname{other}}, \mathcal{G}_j\rangle_{\th_2}|\nn\\
\leq &2\f{\angt^{1-\dl}}{\s{\e}}\lt|\lt\langle\sum_{1\leq k\leq j }\f{M_j}{M_k M_{j-k}}\lt(j\atop k\rt)\p_z u_{k}\p_z\mathcal{H}_{j-k},\mathcal{G}_j \rt\rangle_{\th_2}\rt| \nn\\
     &+2\f{\angt^{1-\dl}}{\s{\e}}\lt|\lt\langle\sum_{1\leq k\leq j }\f{M_j}{M_k M_{j-k}}\lt(j\atop k\rt)\p_z u_{k}\mathcal{H}_{j-k},\p_z\mathcal{G}_j+\f{z}{2\angt}\mathcal{G}_j \rt\rangle_{\th_2}\rt|.\nn
\end{align}
Using H\"{o}lder inequality and \eqref{sob2}, \eqref{poincare1}, we can obtain that
\begin{align}
&\sum^\i_{j=0}(j+1)^2|\langle K_{3,j,\operatorname{other}}, \mathcal{G}_j\rangle_{\th^2}|\nn\\
\ls &2\f{\angt^{1-\dl}}{\s{\e}}\sum^\i_{j=0}(j+1)^2\lt\|\sum_{1\leq k\leq j }\f{M_j}{M_k M_{j-k}}\lt(j\atop k\rt)\p_z u_{k}\p_z\mathcal{H}_{j-k}\rt\|_{L^2_hL^1_z(\th_{3/2})}\angt^{1/4}\| \p_z\mG_j\|_{L^2(\th)}\label{k3four}\\
 &+2\f{\angt^{1-\dl}}{\s{\e}}\sum^\i_{j=0}(j+1)^2\lt\|\sum_{1\leq k\leq j }\f{M_j}{M_k M_{j-k}}\lt(j\atop k\rt)\p_z u_{k}\mathcal{H}_{j-k}\rt\|_{L^2(\th_{2})}\| \p_z\mG_j\|_{L^2_z(\th_{2})}.\nn
\end{align}
The same as before, it is easy to see that
\begin{align}
\lt|\sum_{1\leq k\leq j }\f{M_j}{M_k M_{j-k}}\lt(j\atop k\rt)\p_z u_{k}\p_z\mathcal{H}_{j-k}\rt|\ls &\sum^{[(j+1)/2]}_{k=1} (k+1)^{-\kappa}|\p_z u_{k}|(j-k+1)^{-1}\lt|\p_z\mathcal{H}_{j-k}\rt|\nn\\
&+\sum^{j}_{k=[(j+1)/2]+1}  (j-k+1)^{-\kappa} |\p_z u_{k}|\lt|\p_z\mathcal{H}_{j-k}\rt|.\nn
\end{align}
Using Minkowski inequality, Sobolev embedding, discrete young inequality in \eqref{disyoung}, and a priori estimates in \eqref{solutiongevrey} and \eqref{lowpoint}, we have
\begin{align}
&\sum^\i_{j=0}(j+1)^2\lt\|\sum_{1\leq k\leq j }\f{M_j}{M_k M_{j-k}}\lt(j\atop k\rt)\p_z u_{k}\p_z\mathcal{H}_{j-k}\rt\|^2_{L^2_hL^1_z(\th_{3/2})}\nn\\
   \ls&\| \p_z u_{k}\|^2_{X_{\tau,5,3/4}}\lt\|\p_z\mathcal{H}\rt\|^2_{X_{\tau,\kappa,3/4}}+ \|\p_z u\|^2_{X_{\tau,\kappa+1,3/4}}\|\p_z\mathcal{H}\|^2_{X_{\tau,5,3/4}}\label{termk3f}\\
\ls& C^2_\ast\e^2\angt^{-\f{5-3\dl}{2}} \lt(\lt\|\p_z\mathcal{H}\rt\|^2_{X_{\tau,\kappa}} +\|\p_z u\|^2_{X_{\tau,\kappa+2}}\rt),\nn
\end{align}
and similar as the proof of Lemma \ref{lemproduct}, we have
{\small
\be\label{termk3second}
\bali
&\sum^\i_{j=0}(j+1)^2\lt\|\sum_{1\leq k\leq j }\f{M_j}{M_k M_{j-k}}\lt(j\atop k\rt)\p_z u_{k}\mathcal{H}_{j-k}\rt\|^2_{L^2(\th_{3/2})}\\
   \ls&\angt^{1/2}\| \p_z u_{k}\|^2_{X_{\tau,5,7/16}}\lt\|\p_z\mathcal{H}\rt\|^2_{X_{\tau,\kappa,7/16}}+\angt^{1/2} \|\p_z u\|^2_{X_{\tau,\kappa+1,7/16}}\|\p_z\mathcal{H}\|^2_{X_{\tau,5,7/16}}\\
   \ls& C^2_\ast\e^2\angt^{-\f{4-3\dl}{2}}\lt(\lt\|\p_z\mathcal{H}\rt\|^2_{X_{\tau,\kappa}}+  \|\p_z u\|^2_{X_{\tau,\kappa+2}}\rt).
\eali
\ee
}
By using Young inequality to \eqref{k3four} and  inserting \eqref{termk3f} and \eqref{termk3second} into  the resulted inequality, we can obtain that
{\small
\begin{align}
&\int^{T_0}_0 \angt^{\f{1-\dl}{2}}\sum^\i_{j=0}(j+1)^2|\langle K^3_{j,\operatorname{other}}, \mathcal{G}_j\rangle_{\th^2}|dt\nn\\
\leq &\f{\dl}{4}\int^{T_0}_0 \angt^{\f{1-\dl}{2}}\| \p_z\mG\|^2_{X_{\tau,\kappa+1}}dt\nn\\
&+\f{C_\dl}{\e}\int^{T_0}_0\angt^{\f{6-5\dl}{2}} \sum^\i_{j=0}(j+1)^2\lt\|\sum_{1\leq k\leq j }\f{M_j}{M_k M_{j-k}}\lt(j\atop k\rt)\p_z u_{k}\p_z\mathcal{H}_{j-k}\rt\|^2_{L^2_hL^1_z(\th^{3/2})}dt \label{termk3five}\\
 &+\f{C_\dl}{\e}\int^{T_0}_0 \angt^{\f{5-5\dl}{2}}\sum^\i_{j=0}(j+1)^2\lt\|\sum_{1\leq k\leq j }\f{M_j}{M_k M_{j-k}}\lt(j\atop k\rt)\p_z u_{k}\mathcal{H}_{j-k}\rt\|^2_{L^2(\th_{2})}dt\nn\\
\leq &\f{\dl}{4}\int^{T_0}_0 \angt^{\f{1-\dl}{2}}\| \p_z\mG\|_{X_{\tau,\kappa+1}}dt+C_\dl {C^2_\ast\e}\int^{T_0}_0 \angt^{\f{1-\dl}{2}}\lt(\lt\|\p_z\mathcal{H}\rt\|^2_{X_{\tau,\kappa}}+\|\p_z u\|^2_{X_{\tau,\kappa+2}}\rt)dt.\nn
\end{align}
}

Combining the estimates in \eqref{terml1f}, \eqref{terml3second}, \eqref{termk1}, \eqref{termk2f} and \eqref{termk3five}, we can obtain \eqref{mgnonlinear}. \qed


\section{Estimates of the unknowns $\bl{u}$}\label{secu}

We only derive estimates of $u$ since the estimate of $v$ follows the same line. Due to the fact that the equation of ${u}_j$ have one order derivative lose, direct Gevrey-2 energy estimates on the equations of ${u}_j$ do not work. Instead, we will use another alternative quantity $\psi_j$, defined as
\be\label{defpsi}
\psi_j:=u_j+\f{\angt^{1-\dl}\p_zu}{\s{\e}} M_j\int^\i_z \p^{j-1}_x\mathcal{H}d\bar{z},
\ee to perform energy estimates, which has no derivative lose.  Combining estimates of $\mH_{j-1}$ and $\psi_j$, we can achieve estimates of $u_j$. Then Proposition \ref{punknown} follows.

\subsection{The equation of an equivalent quantity and its linear estimate}

First, we derive the equation of $\psi_j$. From the first equation of \eqref{3dprandtl}, we can obtain that
\be\label{u1}
\bali
&\partial_{t} \p^j_x u+\left({u}\partial_{x}+v \partial_{y}+w\p_z\right) \p^j_x u-\partial_{z}^{2} \p^j_x u\\
=&[u\p_x+v\p_y+w\p_z, \p^j_x]u\\
=&-\p^j_x w\p_z u-\sum^j_{k=1} \lt(j\atop k\rt) \lt(\p^k_x u\p^{j-k+1}_x u+\p^k_x v\p^{j-k}_x\p_y u\rt)-\sum^{j-1}_{k=1}\lt(j\atop k\rt)\p^k_x w\p^{j-k}_x\p_zu\\
:=&-\p^j_x w\p_z u+O^1+O^2+O^3.
\eali
\ee
Also by applying $\p_z u\p^{j-1}_x$ to the first equation of \eqref{auxih}, we can obtain that
\bes
\bali
&\lt[\partial_{t} +\left({u}\partial_{x}+v \partial_{y}+w\p_z\right) -\partial_{z}^{2} \rt]\lt(\p_zu \int^\i_z \p^{j-1}_x\mathcal{H}d\bar{z}\rt)\\
=&\s{\e}\angt^{\dl-1}\p^j_x w\p_z u-\p_zu\sum^{j-1}_{k=1} \lt(j-1\atop k\rt) \lt(\p^k_x u\int^\i_z\p^{j-k}_x \mathcal{H}d\bar{z}+\p^k_x v\int^\i_z \p^{j-1-k}_x\p_y\mathcal{H}d\bar{z} \rt)\\
 &+\p_zu\sum^{j-1}_{k=1}\lt(j-1\atop k\rt)\p^k_x w\p^{j-1-k}_x\mathcal{H}\\
 &+\lt[\p_yv\p_zu-\p_yu\p_zv\rt]\int^\i_z\p^{j-1}_x \mathcal{H}d\bar{z}+2(\p^2_zu)\p^{j-1}_x \mathcal{H}.
\eali
\ees
Divided by $\s{\e}\angt^{\dl-1}$ from the above equation, we can obtain that
\begin{align}
&\lt[\partial_{t} +\left({u}\partial_{x}+v \partial_{y}+w\p_z\right) -\partial_{z}^{2} \rt]\lt(\f{\angt^{1-\dl}\p_zu}{\s{\e}} \int^\i_z \p^{j-1}_x\mathcal{H}d\bar{z}\rt)\nn\\
&-\p_t\lt(\f{\angt^{1-\dl}}{\s{\e}}\rt)\lt(\p_z u \int^\i_z \p^{j-1}_x\mathcal{H}d\bar{z}\rt)\nn\\
=&\p^j_x w\p_z u-\f{\angt^{1-\dl}\p_zu}{\s{\e}}\sum^{j-1}_{k=1} \lt(j-1\atop k\rt) \lt(\p^k_x u\int^\i_z\p^{j-k}_x \mathcal{H}d\bar{z}+\p^k_x v\int^\i_z \p^{j-1-k}_x\p_y\mathcal{H}d\bar{z} \rt) \label{u2}\\
 &+\f{\angt^{1-\dl}\p_zu}{\s{\e}}\sum^{j-1}_{k=1}\lt(j-1\atop k\rt)\p^k_x w\p^{j-1-k}_x\mathcal{H}\nn\\
 &+\f{\lt[\p_yv\p_zu-\p_yu\p_zv\rt]}{\s{\e}\angt^{\dl-1}}\int^\i_z\p^{j-1}_x \mathcal{H}d\bar{z}+\f{2(\p^2_zu)}{\s{\e}\angt^{\dl-1}}\p^{j-1}_x \mathcal{H}\nn\\
&:=\p^j_x w\p_z u+\sum^5_{i=1}P^i.\nn
\end{align}

Then add \eqref{u2} and \eqref{u1} together implies that
\be\label{uauxi}
\bali
&\lt[\partial_{t} +\left({u}\partial_{x}+v \partial_{y}+w\p_z\right) -\partial_{z}^{2} \rt]\lt(\p^j_xu+\f{\p_zu}{\s{\e}\angt^{\dl-1}} \int^\i_z \p^{j-1}_x\mathcal{H}d\bar{z}\rt)\\
=&\p_t\lt(\f{1}{\s{\e}\angt^{\dl-1}}\rt) \lt(\p_z u \int^\i_z \p^{j-1}_x\mathcal{H}d\bar{z}\rt)+\sum^3_{i=1}O^i+\sum^5_{i=1}P^i.
\eali
\ee
Remembering the definition of $\psi_j$ in \eqref{defpsi}, by multiplying $M_j$ to \eqref{uauxi}, we can obtain that
\be\label{psifirst}
\bali
&\lt[\partial_{t} +\la\s{\e}(j+1)+\left({u}\partial_{x}+v \partial_{y}+w\p_z\right) -\partial_{z}^{2} \rt]\psi_j\\
=&M_j\p_t\lt(\f{\angt^{1-\dl}}{\s{\e}}\rt) \lt(\p_z u \int^\i_z \p^{j-1}_x\mathcal{H}d\bar{z}\rt)+\sum^3_{i=1}O^i_j+\sum^5_{i=1}P^i_j.
\eali
\ee
where $O^i_j:=M_j O^i$ and $P^i_j:=M_j P^i$.

There is no derivative loss for the equation \eqref{psifirst}. For $\al\geq 0$, denote
\bes
 \|\psi\|^2_{X_{\tau,\kappa+\al}}:=\sum^\i_{j=0}(j+1)^{2\al}\|\psi_j\|^2_{L^2(\th_2)}.
\ees
We have the following linear estimate.

\begin{lemma}\label{lpsi}
Under the assumption in \eqref{gassump}, for sufficiently small $\e$, we have the following estimate
{\small
\begin{align}
&\angt^{\f{1-\dl}{2}}\|\psi(t)\|^2_{X_{\tau,\kappa+2}}+\dl\int^{T_0}_0\angt^{\f{1-\dl}{2}}\|\p_z \psi(t)\|^2_{X_{\tau,\kappa+2}}dt+\la \s{\e} \int^{T_0}_0\angt^{\f{1-\dl}{2}}{\eta}(t)\|\psi(t)\|^2_{_{X_{\tau,\kappa+5/2}}}dt\nn\\
\ls_\dl& \|{u}(0)\|^2_{X_{\tau_0,\kappa+2}}+C_\ast\e \int^{T_0}_0\angt^{\f{1-\dl}{2}}{\eta}(t)\|\psi(t)\|^2_{_{X_{\tau,\kappa+5/2}}}dt\nn\\
       &+\f{1}{\la\s{\e}}\int^{T_0}_0\angt^{\f{3-3\dl}{2}}\sum^\i_{j=0}(j+1)^3\lt\|(\sum^3_{i=1}O^i_j+\sum^5_{i=1}P^i_j)\rt\|^2_{L^2(\th_2)}dt\label{psilinear}\\
   &+\int^{T_0}_0\angt^{\f{1-3\dl}{2}} \sum^\i_{j=0}(j+1)^4\lt\langle M_j\f{\p_zu}{\s{\e}}\int^\i_z \p^{j-1}_x\mathcal{H}d\bar{z}, \psi_j\rt\rangle_{\th_2}dt.\nn
\end{align}
}
\end{lemma}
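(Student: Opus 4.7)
The plan is to mirror the energy argument of Lemma~\ref{llinearmH}, since \eqref{psifirst} is a heat-type equation for $\psi_j$ with damping coefficient $\lambda\sqrt{\epsilon}\eta(t)(j+1)$ coming from the shrinking Gevrey radius and a right-hand side that, by the very design of $\psi_j$, contains no tangential derivative loss. (I read the coefficient displayed in \eqref{psifirst} as $\lambda\sqrt{\epsilon}\eta(t)(j+1)$ to match \eqref{auxih2}.)

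First I would take the $L^2(\theta_2)$ pairing of \eqref{psifirst} with $\psi_j$ and integrate by parts in $z$ exactly as in the derivation of \eqref{hj5}. The weight $\theta_2=\exp\{z^2/(4\langle t\rangle)\}$ produces on the left
\[
\tfrac{1}{2}\tfrac{d}{dt}\|\psi_j\|^2_{L^2(\theta_2)}+\|\partial_z\psi_j\|^2_{L^2(\theta_2)}-\tfrac{1}{4\langle t\rangle}\|\psi_j\|^2_{L^2(\theta_2)}+(j+1)\lambda\sqrt{\epsilon}\eta(t)\|\psi_j\|^2_{L^2(\theta_2)}-\langle\tfrac{zw}{4\langle t\rangle},\psi_j^2\rangle_{\theta_2},
\]
and the Poincaré inequality \eqref{poincare} absorbs $\tfrac{1-\delta}{2\langle t\rangle}\|\psi_j\|^2_{L^2(\theta_2)}$ against part of $\|\partial_z\psi_j\|^2_{L^2(\theta_2)}$. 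The residual $\delta\|\partial_z\psi_j\|^2$ combines with the decay gain $\tfrac{1-\delta}{2\langle t\rangle}\|\psi_j\|^2$, which, after multiplication by $\langle t\rangle^{(1-\delta)/2}$, packages into the total derivative $\tfrac{d}{dt}(\langle t\rangle^{(1-\delta)/2}\|\psi_j\|^2)$ upon integration from $0$ to $t$, producing the first three terms on the left of \eqref{psilinear}.

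For the forcing side of \eqref{psifirst}, I would split off the terms $\sum_{i=1}^{3}O^i_j+\sum_{i=1}^{5}P^i_j$ from the single additional contribution $M_j\partial_t(\langle t\rangle^{1-\delta}/\sqrt{\epsilon})(\partial_z u\int_z^\infty\partial_x^{j-1}\mathcal{H}\,d\bar z)$. For the former group, the standard Cauchy--Schwarz split
\[
|\langle O^i_j+P^i_j,\psi_j\rangle_{\theta_2}|\leq\tfrac{1}{2(j+1)\lambda\sqrt{\epsilon}\eta(t)}\|O^i_j+P^i_j\|^2_{L^2(\theta_2)}+\tfrac{(j+1)\lambda\sqrt{\epsilon}\eta(t)}{2}\|\psi_j\|^2_{L^2(\theta_2)}
\]
absorbs the second piece by the damping term; using $\eta(t)\asymp\langle t\rangle^{\delta-1}$ from \eqref{radigain}, the coefficient in front of $\|O^i_j+P^i_j\|^2$ becomes $\langle t\rangle^{(3-3\delta)/2}/((j+1)\lambda\sqrt{\epsilon})$. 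Summing in $j$ with weight $(j+1)^4$, the natural weight built into $\|\cdot\|_{X_{\tau,\kappa+2}}$, produces the $(j+1)^3$ prefactor in the third line of \eqref{psilinear}. The drift term $\langle\tfrac{zw}{4\langle t\rangle},\psi_j^2\rangle_{\theta_2}$ is dominated using the pointwise bound on $\theta_\nu w$ from \eqref{lowpoint} combined with \eqref{poincare1} to handle the $z/\langle t\rangle$ factor, and after summation contributes the $C_\ast\epsilon\int\eta(t)\|\psi\|^2_{X_{\tau,\kappa+5/2}}$ term on the right of \eqref{psilinear}.

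Finally, the remainder term $M_j\partial_t(\langle t\rangle^{1-\delta}/\sqrt{\epsilon})(\partial_z u\int_z^\infty\partial_x^{j-1}\mathcal{H}\,d\bar z)$ carries the explicit prefactor $(1-\delta)\langle t\rangle^{-\delta}/\sqrt{\epsilon}$, so pairing it with $\psi_j$ and multiplying by the energy weight $\langle t\rangle^{(1-\delta)/2}$ produces exactly the $\langle t\rangle^{(1-3\delta)/2}$ weight of the fourth line of \eqref{psilinear}, which I carry along verbatim and defer to the nonlinear-estimate stage. Summing in $j$ with weight $(j+1)^4$ and using $\psi_j|_{t=0}=u_j|_{t=0}$ (since $\mathcal{H}|_{t=0}=0$) to obtain $\|\psi(0)\|_{X_{\tau_0,\kappa+2}}=\|u(0)\|_{X_{\tau_0,\kappa+2}}$ then delivers \eqref{psilinear}. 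The main technical care lies in the exponent bookkeeping: the Poincaré absorption, the Cauchy--Schwarz split, the damping $\lambda\sqrt{\epsilon}\eta(t)(j+1)$, and the shift between the norm indices $\kappa+2$ and $\kappa+5/2$ must fit together to produce the precise $(j+1)^3$ and $(j+1)^4$ prefactors on the right of \eqref{psilinear}.
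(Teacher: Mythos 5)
Your proposal matches the paper's argument: test \eqref{psifirst} against $\psi_j\theta_2$, integrate by parts as in \eqref{hj5}, use the Poincar\'e inequality \eqref{poincare} to absorb $\tfrac{1-\dl}{2\angt}\|\psi_j\|^2_{L^2(\th_2)}$ into the $z$-dissipation, split the $O^i_j,P^i_j$ forcing via Cauchy--Schwarz against the Gevrey-radius damping $(j+1)\la\s{\e}\eta(t)$, control the drift term $\langle\tfrac{z}{2\angt}w,\psi_j^2\rangle_{\th_2}$ with the pointwise bounds from \eqref{lowpoint}, note that $\psi_j|_{t=0}=u_j|_{t=0}$ since $\mH|_{t=0}=0$, and sum in $j$ with weight $(j+1)^4$; this is exactly the route the paper takes. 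Your exponent bookkeeping ($\eta\asymp\angt^{\dl-1}$ giving $\angt^{(3-3\dl)/2}$, the $(j+1)^3$ and $(j+1)^4$ prefactors, and the $\angt^{(1-3\dl)/2}$ weight from $\p_t\angt^{1-\dl}\lesssim\angt^{-\dl}$) is correct.
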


\pf Performing energy estimates for \eqref{psifirst} similar as \eqref{auxilh5} and using Cauchy inequality, we can have
\begin{align}
&\angt^{\f{1-\dl}{2}}(j+1)^4\|\psi_j(t)\|^2_{L^2(\th_2)}+\dl(j+1)^4\int^{T_0}_0\angt^{\f{1-\dl}{2}}\|\p_z \psi_j(t)\|^2_{L^2(\th_2)}dt\nn\\
&+\la\s{\e}(j+1)^5 \int^{T_0}_0\angt^{\f{1-\dl}{2}}{\eta}(t)\|\psi_j(t)\|^2_{L^2(\th_2)}dt\nn\\
\ls_\dl &(j+1)^4\int^{T_0}_0\angt^{\f{1-\dl}{2}}\langle \f{z}{2\angt}w,\psi^2_j(t)\rangle_{\th^2}dt+\f{1}{\la\s{\e}}\int^{T_0}_0\angt^{\f{3-3\dl}{2}}(j+1)^3\|O^i_j,P^i_j \|^2_{L^2(\th_2)}dt\label{psi1}\\
&+(j+1)^4\int^{T_0}_0\angt^{\f{1-3\dl}{2}} \lt\langle M_j\f{\p_zu}{\s{\e}}\int^\i_z \p^{j-1}_x\mathcal{H}d\bar{z}, \psi_j\rt\rangle_{\th_2}dt.\nn
\end{align}
Then summing the \eqref{psi1} over $j\in\bN$ and using \eqref{lowpoint} to bound the first term of the righthand, we can achieve \eqref{psilinear}. \qed

\subsection{Estimates of nonlinear terms }

\begin{lemma}\label{lpsinonlinear}
Under the assumption in \eqref{gassump}, for sufficiently small $\e$, we have the following estimate
{\small
\begin{align}
&\f{1}{\la\s{\e}}\int^{T_0}_0\angt^{\f{3-3\dl}{2}}\sum^\i_{j=0}(j+1)^3\lt\|(\sum^3_{i=1}O^i_j+\sum^5_{i=1}P^i_j)\rt\|^2_{L^2(\th_2)}dt\nn\\
&+\int^{T_0}_0\angt^{\f{1-3\dl}{2}} \sum^\i_{j=0}(j+1)^4\lt\langle M_j\f{\p_zu}{\s{\e}}\int^\i_z \p^{j-1}_x\mathcal{H}d\bar{z}, \psi_j\rt\rangle_{\th_2}dt\nn\\
\leq &C_\dl\f{C^2_\ast}{\la}\s{\e} \int^{T_0}_0 \angt^{-\f{1-\dl}{2}} \lt(\lt\|\bl{u}(t)\rt\|^2_{X_{\tau,\kappa+5/2}}+\lt\|\bl{\mH}(t)\rt\|^2_{X_{\tau,\kappa+1/2}}\rt)dt\label{psinon}\\
     &+C_\dl \f{C^2_\ast}{\la}\s{\e} \int^{T_0}_0 \angt^{\f{1-\dl}{2}} \lt(\lt\|\p_z\bl{u}(t)\rt\|^2_{X_{\tau,\kappa+2}}+\lt\|\p_z\bl{\mH}(t)\rt\|^2_{X_{\tau,\kappa}}\rt)dt+\f{\la\s{\e}}{3}\int^{T_0}_0\angt^{-\f{1-\dl}{2}}\|\psi\|^2_{X_{\tau,\kappa+5/2}}dt.\nn
\end{align}
}
\end{lemma}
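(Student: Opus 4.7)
\textbf{Proof plan for Lemma \ref{lpsinonlinear}.} The strategy mirrors Lemmas \ref{lnonlinearmH} and \ref{lmgnonlinear}: every term on the left of \eqref{psinon} is either a bilinear commutator or a trilinear coupling, so I distribute derivatives via the product estimates of Lemma \ref{lemproduct} (after splitting into low/high frequency pieces as in \eqref{keydiff}) and control one factor by the a priori estimates \eqref{solutiongevrey}--\eqref{lowpoint}. The prefactor $\f{1}{\la\s{\e}}\angt^{\f{3-3\dl}{2}}$ in front of the commutator/coupling $L^2$ norms is tuned so that, after pulling out $C^2_\ast\e^2$ from the a priori bounds and balancing time weights against the decay rates in \eqref{solutiongevrey}, we arrive at $\f{C^2_\ast}{\la}\s{\e}$ times integrals of the form appearing on the right of \eqref{psinon}.

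For the commutator terms $O^1_j, O^2_j$ I would copy the argument used for $I^1_j, I^2_j$ in \eqref{termi1}--\eqref{termi1second}, with $\mathcal{H}$ replaced by $u$ and the index shift giving a factor $(j+1)^3$ instead of $(j+1)^{-1}$. Applying \eqref{product2} of Lemma \ref{lemproduct} and \eqref{solutiongevrey} yields
\bes
\sum_{j\in\bN}(j+1)^3\|(O^1_j,O^2_j)\|^2_{L^2(\th_2)}\ls \angt^{1/2}\lt(\|\p_z\bl{u}\|^2_{X_{\tau,5,1/2}}\|\bl{u}\|^2_{X_{\tau,\kappa+5/2,1/2}}+\|\p_z\bl{u}\|^2_{X_{\tau,\kappa+2,1/2}}\|\bl{u}\|^2_{X_{\tau,7,1/2}}\rt),
\ees
which, combined with the time weight and \eqref{solutiongevrey}, contributes only to the two integrals on the right of \eqref{psinon}. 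For $O^3_j$ I convert $\p^k_x w$ into $\p^{k-1}_h\p_z\bl{u}$ via incompressibility and proceed exactly as for $I^3$ in \eqref{termi3first}--\eqref{termi3second}. The terms $P^1_j,P^2_j,P^3_j$ are trilinear: $\p_z u$ times $\p_h\bl{u}$ (or $w$) times $\int^\i_z \p^{j-1-k}_h\mathcal{H}\,d\bar{z}$. I apply Lemma \ref{lemproduct} twice, convert the $z$-integral of $\mathcal{H}$ into $\angt^{1/4}\|\p_z\mathcal{H}\|$ through the weighted Sobolev trace \eqref{sob2}, and then use \eqref{solutiongevrey} for the two low-order factors; the extra $\angt^{1-\dl}/\s{\e}$ prefactor is exactly compensated by the decay of $\p_z\bl{u}$ from \eqref{lowpoint}. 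Term $P^4_j$ is treated identically to $K^2$ in \eqref{termk2f}, while $P^5_j$ is the trickiest since it involves $\p^2_z u$; here I imitate the decomposition $K^3=K^3_{\operatorname{low}}+K^3_{\operatorname{other}}$ from \eqref{termk3f}--\eqref{termk3five}, placing $\p^2_z u$ in $L^\i$ via \eqref{lowpoint} (where the gain $\s{\dl}\angt^{-1/4}$ is crucial to compete with $\angt^{1-\dl}/\s{\e}$) and, for the ``other'' part, integrating by parts in $z$ and using the weighted Poincar\'e inequality \eqref{poincare1} to absorb the $z/\angt$ factor against $\p_z\psi_j$.

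For the last term on the left of \eqref{psinon}, I write $M_j\tfrac{\p_z u}{\s{\e}}\int^\i_z\p^{j-1}_x\mathcal{H}\,d\bar{z}$ and pair it with $\psi_j$ in $L^2(\th_2)$ by Cauchy-Schwarz with the weight $\la\s{\e}\eta(t)(j+1)$. This yields one factor $\tfrac{\la\s{\e}}{3}\int\angt^{\f{1-\dl}{2}}\eta(t)\|\psi\|^2_{X_{\tau,\kappa+5/2}}dt$, which, since $\angt^{\f{1-\dl}{2}}\eta(t)\sim\angt^{-\f{1-\dl}{2}}$ by \eqref{radigain}, is precisely the absorbable term on the right of \eqref{psinon}; the remaining factor has the form $\tfrac{1}{\la\e^{3/2}}\int\angt^{\f{3-5\dl}{2}}\eta(t)^{-1}\sum_j(j+1)^3\|M_j\p_z u\int^\i_z\p^{j-1}_x\mathcal{H}d\bar{z}\|^2_{L^2(\th_2)}dt$, to which I apply \eqref{product2} together with \eqref{sob2} and \eqref{solutiongevrey} to bound it by $C_\dl\f{C^2_\ast}{\la}\s{\e}\int\angt^{-\f{1-\dl}{2}}(\|\bl{u}\|^2_{X_{\tau,\kappa+5/2}}+\|\bl{\mH}\|^2_{X_{\tau,\kappa+1/2}})dt$. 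The main obstacle throughout is the bookkeeping: matching the time exponents $\pm\f{1-\dl}{2}$ on each side and verifying that the two $L^\i$ decays $\sqrt{\delta}\angt^{-1/4}$ for $\p_z u$ and $\dl\angt^{-3/4}$ for $\p^2_z u$ in \eqref{lowpoint} are sharp enough to beat the growing prefactor $\angt^{1-\dl}/\s{\e}$ in $P^1,\dots,P^5$; apart from this, each individual estimate reduces to a variant of one already carried out for $\bl\mH$ or $\bl\mG$.
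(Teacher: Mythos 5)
Your overall strategy — distribute tangential derivatives, pull out one $L^\infty$ factor via \eqref{lowpoint}, and use Cauchy--Schwarz to absorb the $\psi$ factor of the pairing term — coincides with the paper's, and the handling of $O^1,O^2,O^3$ and of the $\langle M_j\f{\p_z u}{\s{\e}}\int^\i_z\p^{j-1}_x\mH\,d\bar z,\psi_j\rangle$ term matches the paper line for line. Where you diverge is in the $P^i$. You treat $P^1,\dots,P^3$ as honest trilinear terms and propose to apply Lemma~\ref{lemproduct} twice; the paper instead pulls out the $\p_zu$ prefactor immediately in $L^\infty$ from \eqref{lowpoint} (with exponent $\s{\e}\angt^{-(2+3\dl)/4}$) and then applies a \emph{single} product estimate to the remaining bilinear Leibniz sum, which is lighter but equivalent. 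For $P^4$ the paper likewise pulls out the full coefficient $\big[\p_y v\p_z u-\p_y u\p_z v\big]/(\s{\e}\angt^{\dl-1})$ in $L^\infty$ and controls $\big\|\int^\i_z\mH_{j-1}d\bar z\big\|_{L^2(\th_2)}$ by $\|\mH_{j-1}\|_{L^2(\th_2)}$ via \eqref{poincare}, instead of a triple product as for $K^2$. The one place your plan misreads the structure is $P^5$: you call it the trickiest and propose the $K^3=K^3_{\mathrm{low}}+K^3_{\mathrm{other}}$ decomposition with integration by parts. But in \eqref{u2} the tangential index of $P^5=\tfrac{2\p_z^2 u}{\s{\e}\angt^{\dl-1}}\p_x^{j-1}\mH$ is already fixed — $P_j^5=M_j P^5$ involves no Leibniz sum over $k$, so there is no ``other'' piece to split off. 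In the paper $P^5$ is in fact the \emph{simplest} contribution: one line, namely $\|P_j^5\|_{L^2(\th_2)}\le\|\p_z^2 u/(\s{\e}\angt^{\dl-1})\|_{L^\infty}\|M_j\p_x^{j-1}\mH\|_{L^2(\th_2)}\ls_\dl C_\ast\s{\e}\angt^{-(4+3\dl)/4}(j+1)^{-2}\|\mH_{j-1}\|_{L^2(\th_2)}$, using the gain $\dl\angt^{-3/4}$ (not $\s{\dl}\angt^{-1/4}$, which is for $\p_z u$) from \eqref{lowpoint}. If you try to force the $K^3$-type split you will just find the sum empty and collapse back to this bound, so it is not a fatal gap, but you should simplify that step.
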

\pf
First, using H\"{o}lder inequality, a priori estimates in \eqref{lowpoint} and noting that $(j+1)^2M_j\thickapprox_\dl M_{j-1}$, we have
\begin{align}
&\int^{T_0}_0\angt^{\f{1-3\dl}{2}}\sum^\i_{j=0}(j+1)^4\lt\langle \f{\p_zu}{\s{\e}}\int^\i_z M_j\p^{j-1}_x\mathcal{H}d\bar{z}, \psi_j\rt\rangle_{\th^2}dt\nn\\
\leq &\int^{T_0}_0\angt^{\f{1-3\dl}{2}}\sum^\i_{j=0}(j+1)^2\lt\|\f{\p_zu}{\s{\e}}\rt\|_{L^\i_h L^2_z(\th^2)}\lt\|\int^\i_z \mathcal{H}_{j-1}d\bar{z}\rt\|_{L^2_h L^\i_z}\|\psi_j\|_{L^2(\th_{2})}dt\nn\\
\ls_\dl &\int^{T_0}_0\angt^{\f{3-6\dl}{4}}\sum^\i_{j=0}(j+1)^2\lt\|\f{\p_zu}{\s{\e}}\rt\|_{L^\i_h L^2_z(\th^2)}\lt\| \mathcal{H}_{j-1}\rt\|_{L^2(\th_2)}\|\psi_j\|_{L^2(\th_{2})}dt\label{psi2}\\
\leq & C_\dl \f{ C^2_\ast\s{\e}}{\la}\int^{T_0}_0\angt^{-\f{1-\dl}{2}}\|\mH\|^2_{X_{\tau,\kappa+1/2}}dt+\f{\la\s{\e}}{3}\int^{T_0}_0\angt^{-\f{1-\dl}{2}}\|\psi\|^2_{X_{\tau,\kappa+5/2}}dt.\nn
\end{align}

{\noindent\bf Estimates of $O^1_j,\, O^2_j$.}

Since $O^1_j$ and $O^2_j$ share the same estimate, we only care about $O^1_j$. Noting that
\bes
\bali
|O^1_j|\leq &\sum^{[(j+1)/2]}_{k=1} (k+1)^{-\kappa} (j-k+1)^{-1}|u_{k}| |\p_h u_{j-k}|+\sum^{j}_{k=[(j+1)/2]+1}  (j-k+1)^{-\kappa} |u_{k}| |\p_h u_{j-k}|,
\eali
\ees
then similar as product estimates in \eqref{product1} to \eqref{product3} and using the a priori estimates in Lemma \ref{llowpoint}, we have
\begin{align}
&\f{1}{\la\s{\e}}\int^{T_0}_0\angt^{\f{3-3\dl}{2}}\sum^{\i}_{j=0}(j+1)^3\|(O^1_{j},O^2_j)(t)\|^2_{L^2(\th_2)}dt\nn\\
\ls& \f{1}{\la\s{\e}}\int^{T_0}_0\angt^{\f{4-3\dl}{2}}\lt(\|\p_zu\|^2_{X_{\tau,5,1/2}}\|\p_xu\|^2_{X_{\tau,\kappa+1/2,1/2}}
+\|\p_zu\|^2_{X_{\tau,\kappa+3/2,1/2}}\|\p_xu\|^2_{X_{\tau,5,1/2}}\rt)dt\label{termo1f}\\
\ls_\dl& \f{ C^2_\ast\e^{3/2}}{\la}\int^{T_0}_0\lt(\angt^{-\f{1-\dl}{2}}\|u\|^2_{X_{\tau,\kappa+5/2}}+\angt^{\f{1-\dl}{2}}\|\p_zu\|^2_{X_{\tau,\kappa+2}}\rt)dt.\nn
\end{align}

{\noindent\bf Estimates of $O^3_j$.}

Noting that
{\small
\bes
|O^3|\ls_\dl \sum^{[(j+1)/2]}_{k=1} (k+1)^{-\kappa} (j-k+1)^{-1}|w_{k}| |\p_z u_{j-k}|+\sum^{j-1}_{k=[(j+1)/2]+1}  (k+1)^{-1}(j-k+1)^{-\kappa} |w_{k}| |\p_z u_{j-k}|,
\ees
}
then similar as product estimates in \eqref{product1} to \eqref{product3}, and using incompressibility and the a priori estimates in Lemma \ref{llowpoint}, we have
\begin{align}
&\f{1}{\la\s{\e}}\int^{T_0}_0\angt^{\f{3-3\dl}{2}}\sum^{\i}_{j=0}(j+1)^3\|O^3_{j}(t)\|^2_{L^2(\th_2)}dt\nn\\
\ls& \f{1}{\la\s{\e}}\int^{T_0}_0\angt^{\f{4-3\dl}{2}}\lt(\|\p_zw\|^2_{X_{\tau,5,1/2}}\|\p_zu\|^2_{X_{\tau,\kappa+1/2,1/2}}
+\|\p_zw\|^2_{X_{\tau,\kappa+1/2,1/2}}\|\p_zu\|^2_{X_{\tau,5,1/2}}\rt)dt\nn\\
\ls& \f{1}{\la\s{\e}}\int^{T_0}_0\angt^{\f{4-3\dl}{2}}\lt(\|\bl{u}\|^2_{X_{\tau,7,1/2}}\|\p_zu\|^2_{X_{\tau,\kappa+1/2}}
+\|\bl{u}\|^2_{X_{\tau,\kappa+5/2}}\|\p_zu\|^2_{X_{\tau,5,1/2}}\rt)dt\label{termo3f}\\
\ls_\dl& \f{ C^2_\ast\e^{3/2}}{\la}\int^{T_0}_0\lt(\angt^{-\f{1-\dl}{2}}\|u\|^2_{X_{\tau,\kappa+5/2}}+\angt^{\f{1-\dl}{2}}\|\p_zu\|^2_{X_{\tau,\kappa+2}}\rt)dt.\nn
\end{align}

{\noindent\bf Estimates of $P^1_j,\, P^2_j$.}

Since $P^1_j$ and $P^2_j$ share the same estimate, we only care about $P^1_j$. For term $P^1_j$, using the a priori estimates in \eqref{lowpoint} to obtain that

\begin{align}
|P^1_j|\ls_\dl &C_\ast\s{\e}\angt^{-\f{2+3\dl}{4}}\sum^{[(j+1)/2]}_{k=1} (k+1)^{-\kappa} (j-k+1)^{-3}|u_{k}| \lt|\int^\i_z\p_h\mathcal{H}_{j-k-1}d\bar{z}\rt|\nn\\
 &C_\ast\s{\e}\angt^{-\f{2+3\dl}{4}}\sum^{j-1}_{k=[(j+1)/2]+1}  (j-k+1)^{-\kappa} (k+1)^{-2}|u_{k}| \lt|\int^\i_z\p_h\mathcal{H}_{j-k-1}d\bar{z}\rt|.\nn
\end{align}
Similar as product estimates in \eqref{product1} to \eqref{product3} and using the a priori estimates in Lemma \ref{llowpoint}, we have
\begin{align}
&\f{1}{\la\s{\e}}\int^{T_0}_0\angt^{\f{3-3\dl}{2}}\sum^{\i}_{j=0}(j+1)^3\|(P^1_{j}, P^2_j)(t)\|^2_{L^2(\th^2)}dt\nn\\
\ls_\dl &\f{ C^2_\ast \s{\e}}{\la}\int^{T_0}_0\angt\lt(\|u\|^2_{X_{\tau,5,1/2}}\|\p_h\mH\|^2_{X_{\tau,\kappa-3/2,1/2}}+\|u\|^2_{X_{\tau,\kappa-1/2,1/2}}\|\p_h\mH\|^2_{X_{\tau,5,1/2}}\rt)dt\nn\\
 \ls_\dl&\f{ C^4_\ast\e^{5/2}}{\la}\int^{T_0}_0\angt^{-\f{1-\dl}{2}}\angt^\dl \lt(\|\mH\|^2_{X_{\tau,\kappa+1/2}}+\|u\|^2_{X_{\tau,\kappa+5/2}}\rt)dt\label{termp1f}\\
  \ls_\dl&\f{ C^2_\ast}{\la}\s{\e}\int^{T_0}_0\angt^{-\f{1-\dl}{2}} \lt(\|\mH\|^2_{X_{\tau,\kappa+1/2}}+\|u\|^2_{X_{\tau,\kappa+5/2}}\rt)dt,\nn
\end{align}
where at the last line, we have used \eqref{timelifespan}, which indicates that $\angt^\dl\s{\e}\ls_\dl 1.$

{\noindent\bf Estimates of $P^3_j$.}

For term $P^3_j$, using the a priori estimates in Lemma \eqref{llowpoint} to obtain that
\begin{align}
|P^3_j|\ls_\dl &C_\ast\s{\e}\angt^{-\f{2+3\dl}{4}}\sum^{[(j+1)/2]}_{k=1} (k+1)^{-\kappa} (j-k+1)^{-3}|w_{k}| \lt|\mathcal{H}_{j-k-1}\rt|\nn\\
 &C_\ast\s{\e}\angt^{-\f{2+3\dl}{4}}\sum^{j-1}_{k=[(j+1)/2]+1}  (j-k+1)^{-\kappa} (k+1)^{-2}|w_{k}| \lt|\mathcal{H}_{j-k-1}\rt|.\nn
\end{align}
Similar as product estimates in \eqref{product1} to \eqref{product3}, using incompressibility and the a priori estimates in Lemma \ref{llowpoint}, we have
\begin{align}
&\f{1}{\la\s{\e}}\int^{T_0}_0\angt^{\f{3-3\dl}{2}}\sum^{\i}_{j=0}(j+1)^3\|P^3_j(t)\|^2_{L^2(\th_2)}dt\nn\\
\ls&\f{ C^2_\ast \s{\e} }{\la}\int^{T_0}_0\angt\lt(\|\p_z w\|^2_{X_{\tau,5,1/2}}\|\mH\|^2_{X_{\tau,\kappa-3/2,1/2}}+\|\p_zw\|^2_{X_{\tau,\kappa-1/2,1/2}}\|\mH\|^2_{X_{\tau,5,1/2}}\rt)dt\nn\\
\ls_\dl&\f{ C^2_\ast \s{\e} }{\la}\int^{T_0}_0\angt\lt(\|\bl{u}\|^2_{X_{\tau,7,1/2}}\|\mH\|^2_{X_{\tau,\kappa+1/2}}+\|\bl{u}\|^2_{X_{\tau,\kappa+5/2,1/2}}\|\mH\|^2_{X_{\tau,5,1/2}}\rt)dt\label{termp3f}\\
 \ls&\f{ C^4_\ast \e^{5/2} }{\la}\int^{T_0}_0\angt^{-\f{1-\dl}{2}}\angt^\dl \lt(\|\mH\|^2_{X_{\tau,\kappa+1/2}}+\|\bl{u}\|^2_{X_{\tau,\kappa+5/2}}\rt)dt\nn\\
 \ls&\f{ C^2_\ast }{\la}\s{\e}\int^{T_0}_0\angt^{-\f{1-\dl}{2}} \lt(\|\mH\|^2_{X_{\tau,\kappa+1/2}}+\|\bl{u}\|^2_{X_{\tau,\kappa+5/2}}\rt)dt,\nn
\end{align}
where at the last line of the above inequality, we have used \eqref{timelifespan0}.

{\noindent\bf Estimates of $P^4_j$.}

Using the a prior estimates \eqref{lowpoint} and \eqref{poincare}, we can obtain that
\bes
\bali
&\lt\| P^4_j \rt\|_{L^2(\th^2)}\leq \lt\|\f{\lt[\p_yv\p_zu-\p_yu\p_zv\rt]}{\s{\e}\angt^{\dl-1}}\rt\|_{L^\i}\lt\|M_j\int^\i_z\p^{j-1}_x \mathcal{H}d\bar{z}\rt\|_{L^2(\th_2)}\\
\ls_\dl & C^2_\ast\e^{3/2}\angt^{-\f{2+\dl}{2}}(j+1)^{-2}\lt\| \mathcal{H}_j\rt\|_{L^2(\th_2)}.
\eali
\ees
Then it is easy to see that
\be \label{termp4f}
\bali
&\f{1}{\la\s{\e}}\int^{T_0}_0\angt^{\f{3-3\dl}{2}}\sum^{\i}_{j=0}(j+1)^3\|P^4_{j}(t)\|^2_{L^2(\th_2)}dt\\
\ls_\dl &\f{C^2_\ast}{\la} \s{\e}\int^{T_0}_0\angt^{-\f{1-\dl}{2}}\lt\| \mathcal{H}\rt\|^2_{X_{\tau,\kappa}}dt.
\eali
\ee

{\noindent\bf Estimates of $P^5_j$.}

Using the a prior estimates \eqref{lowpoint}, we can obtain that
\bes
\bali
&\lt\| P^5_j \rt\|_{L^2(\th_2)}\leq \lt\|\f{\p^2_z u}{\s{\e}\angt^{\dl-1}}\rt\|_{L^\i}\lt\|M_j\p^{j-1}_x \mathcal{H}\rt\|_{L^2(\th_2)}\\
\ls_\dl & C_\ast\s{\e}\angt^{-\f{4+3\dl}{4}}(j+1)^{-2}\lt\| \mathcal{H}_{j-1}\rt\|_{L^2(\th_2)}.
\eali
\ees
Then it is easy to see that
\be\label{termp5f}
\bali
&\f{1}{\la\s{\e}}\int^{T_0}_0\angt^{\f{3-3\dl}{2}}\sum^{\i}_{j=0}(j+1)^3\|P^5_{j}(t)\|^2_{L^2(\th^2)}dt\\
\ls_\dl & \f{C^2_\ast}{\la}\s{\e}\int^{T_0}_0\angt^{-\f{1-\dl}{2}}\lt\|\mathcal{H}\rt\|^2_{X_{\tau,\kappa}}dt.
\eali
\ee

Combining estimates in \eqref{psi2}, \eqref{termo1f}, \eqref{termo3f}, \eqref{termp1f}, \eqref{termp3f}, \eqref{termp4f} and \eqref{termp5f}, we obtain \eqref{psinon}. \qed

{\noindent\bf Proof of Proposition \ref{punknown}.}

From Lemma \ref{lpsi} and Lemma \ref{lpsinonlinear}, we have achieved that
\begin{align}
&\angt^{\f{1-\dl}{2}}\|\psi(t)\|^2_{X_{\tau,\kappa+2}}+\dl\int^{T_0}_0\angt^{\f{1-\dl}{2}}\|\p_z \psi(t)\|^2_{X_{\tau,\kappa+2}}dt+\la \s{\e} \int^{T_0}_0\angt^{\f{1-\dl}{2}}{\eta}(t)\|\psi(t)\|^2_{_{X_{\tau,\kappa+5/2}}}dt\nn\\
\ls_\dl& \|{u}(0)\|^2_{X_{\tau_0,\kappa+2}}+\f{C^2_\ast}{\la}\s{\e} \int^{T_0}_0 \angt^{-\f{1-\dl}{2}} \lt(\lt\|\bl{u}(t)\rt\|^2_{X_{\tau,\kappa+5/2}}+\lt\|\bl{\mH}(t)\rt\|^2_{X_{\tau,\kappa+1/2}}\rt)dt\label{equiv0}\\
     &+\f{C^2_\ast}{\la}\s{\e} \int^{T_0}_0 \angt^{\f{1-\dl}{2}} \lt(\lt\|\p_z\bl{u}(t)\rt\|^2_{X_{\tau,\kappa+2}}+\lt\|\p_z\bl{\mH}(t)\rt\|^2_{X_{\tau,\kappa}}\rt)dt.\nn
\end{align}
Besides, from the definition of $\psi_j$ in \eqref{defpsi} and using \eqref{solutiongevrey} in Lemma \ref{solutiongevrey}, we see that, for $\t{\kappa}>0$,
\begin{align}
\|u(t)\|^2_{X_{\tau,\t{\kappa}+2}}\ls& \|\psi(t)\|^2_{X_{\tau,\t{\kappa}+2}}+\angt^{5/2-2\dl}\e^{-1}\|\p_z u\|^2_{L^\i}\|\mH(t)\|^2_{X_{\tau,\t{\kappa}}}\nn\\
                          \ls & \|\psi(t)\|^2_{X_{\tau,\t{\kappa}+2}}+C^2_\ast\e\|\mH(t)\|^2_{X_{\tau,\t{\kappa}}}. \label{equiv1}
\end{align}
Similarly, we can obtain that
\begin{align}
\|\p_zu(t)\|^2_{X_{\tau,\t{\kappa}+2}}\ls_\dl \|\p_z\psi(t)\|^2_{X_{\tau,\t{\kappa}+2}}+C^2_\ast\e\|\p_z\mH(t)\|^2_{X_{\tau,\t{\kappa}}}. \label{equiv2}
\end{align}
Inserting \eqref{equiv1} and \eqref{equiv2} into \eqref{equiv0} and by letting $\e$ is sufficiently small, we can achieve \eqref{uesti} in Proposition \ref{punknown}. \qed

\section{Estimates of the good unknowns $\bl{g}$}\label{secg}

In this section, we focus on Gevrey-2 estimates of the linearly good unknowns $\bl{g}$ and its $z-$ derivatives up to second order. It will induce faster decay rate for low order Gevrey-2 energy of the unknowns $\bl{u}$ as displayed in \eqref{solutiongevrey}.

Below we set
\bes
\kappa_0=12,\q \kappa_1=10,\q \kappa_2=8.
\ees

\subsection{Estimates of $\bl{g}$}

\begin{lemma}\label{lemg}

Under the assumption of \eqref{gassump}, for sufficiently small $\e$, there exits a constant $C_\dl$ such that for any $t\in (0,T_0]$, we have the following estimate.
\begin{align}
&\angt^{\f{5-\dl}{2}} \lt\| \bl{g} (t)\rt\|^2_{X_{\tau,\kappa_0}}+\dl\int^{T_0}_0\angt^{\f{5-\dl}{2}}\lt\|\p_z\bl{g}(t)\rt\|^2_{X_{\tau,\kappa_0}}dt+\la\s{\e} \int^{T_0}_0\angt^{\f{5-\dl}{2}}{\eta}(t) \lt\|\bl{g}(t)\rt\|^2_{X_{\tau,\kappa_0+1/2}}dt\nn\\
\leq & C_\dl \lt\| \bl{g} (0)\rt\|^2_{X_{\tau_0,\kappa_0}}+C_\dl \f{C^2_\ast}{\la} \e^{3/2}\int^{T_0}_0 \angt^{\f{1-\dl}{2}}\lt\|\p_z\bl{u}(t)\rt\|^2_{X_{\tau,\kappa+2}}dt\label{goodg}\\
&  +C_\dl \f{C^2_\ast}{\la} \e^{3/2}\int^{T_0}_0 \lt(\angt^{\f{3+\dl}{2}}\lt\|\bl{g}(t)\rt\|^2_{X_{\tau,\kappa_0+1/2}}+\angt^{\f{5-\dl}{2}}\lt\|\p_z\bl{g}(t)\rt\|^2_{X_{\tau,\kappa_0}}\rt)dt.\nn
\end{align}
\end{lemma}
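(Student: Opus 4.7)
\medskip
\noindent\textbf{Proof proposal.} The plan is to derive a clean evolution equation for the good unknown $\bl g$, then mimic the weighted Gevrey--2 energy scheme already executed for $\bl{\mH}$ and $\psi$ in Sections \ref{secmh}--\ref{secu}, but with the enhanced time weight $\angt^{(5-\dl)/2}$. First I would apply $\p_z+\f{z}{2\angt}$ to the $u$-equation in \eqref{3dprandtl1} and simplify using $\p_z u = g-\f{z}{2\angt}u$ together with the incompressibility $\p_z w=-\p_xu-\p_yv$. The choice of the profile $\f{z}{2\angt}$ is designed to kill the obstruction $-\f{1}{2\angt}\p_z u$ that would otherwise arise from $\p_t g$; what remains is a heat-transport equation of the schematic form
\begin{equation*}
\bigl(\p_t + u\p_x+v\p_y+w\p_z-\p_z^2\bigr)g = -g\,\p_x u - \t g\,\p_y u + R,
\end{equation*}
where $R$ collects remainders of type $\angt^{-1}\p_z u$, $\f{t}{\angt^3}zu$, $\f{w}{\angt}u$ and genuinely quadratic terms in $(u,\p_z u)$, \emph{none of which loses a tangential derivative}. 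The analogous equation holds for $\t g$.

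Next I would apply $M_j\p^\al_h$ with $|\al|=j$ and take the inner product with $g_{\al,\kappa_0}\th_2$, performing the same manipulation as in \eqref{auxilh4}: integration by parts on $\p_z^2$ produces $\|\p_zg_j\|_{L^2(\th_2)}^2$; the time derivative of $\th_2$ gives the harmless $\f{z^2}{4\angt^2}g_j^2$ contribution and a Gaussian-convection term $\langle\f{z}{2\angt}w,g_j^2\rangle_{\th_2}$ handled by the $L^\infty$ bound \eqref{lowpoint}; finally the Gevrey radius decay $\tau'(t)/\tau(t)$ supplies the gain $\la\sqrt\e(j+1)\eta(t)\|g_j\|_{L^2(\th_2)}^2$ that upgrades the norm by half a derivative. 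Multiplying by $\angt^{(5-\dl)/2}$ and integrating in $t$, the extra weight produces an internal term of order $\angt^{(3-\dl)/2}\|g_j\|^2_{L^2(\th_2)}$ that must be absorbed: this is where the refined Poincar\'e estimate \eqref{poincare1} in Lemma \ref{lpoincare} is essential, because the standard bound $\f{1}{2\angt}\|f\|^2_{L^2(\th_2)}\le \|\p_zf\|^2_{L^2(\th_2)}$ alone is too weak; the refined inequality provides the additional $\f{1}{16\angt^2}\|zg_j\|^2_{L^2(\th_2)}$ term that compensates for the oversized time weight, and one keeps $\dl\|\p_zg_j\|^2_{L^2(\th_2)}$ on the left as in the statement.

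Then I would bound the right-hand side. The commutator $M_j[u\p_x+v\p_y+w\p_z,\p^\al_h]g$ is treated exactly as $I^1,I^2,I^3$ in Lemma \ref{lnonlinearmH}: the $\p^k_h w$ contribution is rewritten via incompressibility so as to distribute the derivative. The bilinear terms $g\p_xu$, $\t g\p_yu$ and the quadratic part of $R$ are estimated by the product lemma \ref{lemproduct} (essentially \eqref{product2} and \eqref{product3}) and the a priori bounds in \eqref{solutiongevrey}--\eqref{lowpoint}. The only linear-in-$\p_z u$ remainder $\f{1}{\angt}\p_z u$ is absorbed after Cauchy--Schwarz into the $\f{C_\dl C_\ast^2}{\la}\e^{3/2}\int\angt^{(1-\dl)/2}\|\p_z\bl u\|^2_{X_{\tau,\kappa+2}}\,dt$ piece displayed in \eqref{goodg}, using the a priori decay of $(u,v)$ and $\angt^{\dl}\sqrt\e\ls_\dl 1$ coming from \eqref{timelifespan0}, together with the scaling $\la=C_\ast^4$. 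Finally one sums over $j\in\bN$ and passes to the norm $\|\cdot\|_{X_{\tau,\kappa_0}}$ on the left, and to $\|\cdot\|_{X_{\tau,\kappa_0+1/2}}$ on the right via the $\la\sqrt\e\eta(j+1)$ gain, yielding exactly \eqref{goodg}.

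The main obstacle is the interplay at Step~3 between the large time weight $\angt^{(5-\dl)/2}$ and the diffusion: a naive use of \eqref{poincare} only affords $\angt^{(3-\dl)/2}$ of weight, and it is precisely the Gaussian tail encoded in the good unknown $g=\p_zu+\f{z}{2\angt}u$ (which makes the remainder $R$ carry a favorable extra $z/\angt$) together with the refined Poincar\'e \eqref{poincare1} that allows the closure at weight $\angt^{(5-\dl)/2}$. A secondary but routine difficulty is keeping track of the derivative counting so that the $(\kappa_0+\f12,\kappa_0,\kappa+2)$ indices on the right-hand side of \eqref{goodg} are respected; this is achieved by the same bookkeeping already used in Sections \ref{secmh}--\ref{secu}.
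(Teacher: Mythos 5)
There is a genuine gap in your proposal, centred on the mechanism that produces the extra time weight. You write the evolution equation for $g$ in the schematic form $(\p_t+u\p_x+v\p_y+w\p_z-\p_z^2)g=-g\,\p_xu-\t g\,\p_yu+R$ with $R$ collecting a ``linear-in-$\p_zu$ remainder $\langle t\rangle^{-1}\p_zu$'' that you then propose to absorb by Cauchy--Schwarz into the $\e^{3/2}\int\angt^{(1-\dl)/2}\|\p_z\bl u\|^2_{X_{\tau,\kappa+2}}$ piece of \eqref{goodg}. This cannot work: that term is \emph{linear} in the unknown, carries no $\e$ smallness, and after Cauchy--Schwarz produces a contribution of order $\angt^{(3-\dl)/2}\|\p_zu_{j}\|\,\|g_{j}\|$, which under the a priori bounds \eqref{gassump} is the same size as the dissipation itself and cannot be pushed into an $\e^{3/2}$-weighted $\|\p_z u\|^2$ term. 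The correct observation, which the paper exploits in deriving equation \eqref{3dgun1}, is that once you substitute $\p_zu=g-\f{z}{2\angt}u$ the term $\f{1}{\angt}\p_zu$ splits as $\f{g}{\angt}-\f{z}{2\angt^2}u$, and the $\f{z}{2\angt^2}u$ piece cancels exactly against the $\p_t\bigl(\f{z}{2\angt}\bigr)u$ coming from $\p_tg$; what survives is precisely the damping term $+\f{1}{\angt}g$ on the \emph{left-hand side} of \eqref{3dgun1}. In the paper the resulting right-hand side $R=\sum_i^6 R^i$ is purely quadratic, with no linear remainder at all. It is this $+\f{1}{\angt}g$ damping, added to the usual $\th_2$-weighted dissipation and the standard Poincar\'e inequality \eqref{poincare}, that raises the net coefficient of $\angt^{-1}\|g_{j}\|^2$ from about $\f{1}{4}$ to about $\f{5}{4}$, and therefore allows the time weight $\angt^{(5-\dl)/2}$ to close. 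Your substitute mechanism --- invoking the refined Poincar\'e inequality \eqref{poincare1} and its extra term $\f{\nu^2}{16}\|\f{z}{\angt}g_j\|^2_{L^2(\th_{2\nu})}$ to compensate the oversized time weight --- does not close: $\f{1}{16}\|\f{z}{\angt}g_j\|^2_{L^2(\th_2)}$ carries an unfavourable $z$-weight and is not comparable to the needed quantity $\f{1}{\angt}\|g_j\|^2_{L^2(\th_2)}$ (indeed \eqref{poincare1} gives a \emph{smaller} coefficient on $\angt^{-1}\|g_j\|^2$ than \eqref{poincare} does). In the paper \eqref{poincare1} is used only to control $z$-weighted right-hand-side contributions, not to manufacture extra left-hand-side damping.

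A secondary omission: you do not mention the Gagliardo--Nirenberg-type interpolation in the Gevrey index, Claim~\ref{claim} and \eqref{gnineq1}, which is what connects the low-index quantity $\|\bl g\|^2_{X_{\tau,\kappa_0+1/2}}$ to the high-index quantity $\|\p_z\bl u\|^2_{X_{\tau,\kappa+2}}$ on the right-hand side of \eqref{goodg}. After product estimates the quadratic terms $R^1,\dots,R^6$ naturally produce $\|\p_h g\|_{X_{\tau,\kappa_0-1/2,1/2}}$ and similar quantities that lose a tangential derivative relative to $\kappa_0$; the interpolation $\|\p_h f\|^2_{X_{\tau,\t\kappa,\nu}}\ls_m\|f\|^{2/m}_{X_{\tau,\t\kappa+2m,\nu}}\|f\|^{2-2/m}_{X_{\tau,\t\kappa,\nu}}$ with $m=[\dl^{-1}]$ and a Young inequality are what split this into the two displayed pieces with the indicated time weights $\angt^{(3+\dl)/2}$ and $\angt^{(1-\dl)/2}$. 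Without this step the derivative bookkeeping you describe does not in fact produce \eqref{goodg} as stated.
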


\pf We only show estimates of $g$, since $\t{g}$ follows the same line. By direct computation, we see that $g$ satisfies the following
\be\label{3dgun1}
\lt\{
\bali
&\partial_{t} g +\f{1}{\angt}g-\p^2_z g=-\left({u} \p_x+v \partial_y+w\p_z \right) g+\f{wu}{2\angt}+\lt(\p_zu\p_yv-\p_zv\p_y u\rt), \\
&\qq\qq\qq :=\sum^6_{i=1} R^i,\\
&\p_z g\big|_{z=0}=0, \quad \lim_{z \rightarrow+\infty} g=0,\q g\big|_{t=0}=g_{0}=\p_z u_0+\f{z}{2}u_0.
\eali
\rt.
\ee
Now applying $M_{j,\kappa_0}\p^j_x$  to the first equation of \eqref{3dgun1} and denoting $f_{j,x,\kappa_0}$ by $f_{j,\kappa_0}$ for a function $f$, we can obtain that
\bes
\bali
&\partial_{t} g_{j,\kappa_0}+\la\s{\e}\eta(t)(j+1)g_{j,\kappa_0}-\partial_{z}^{2} g_{j,\kappa_0}+\f{1}{\angt}g_{j,\kappa_0}=\sum^6_{i=1}R^i_{j,\kappa_0}.
\eali
\ees

Performing energy estimates similar as \eqref{auxilh5}, we have
{\small
\be
\bali
&\angt^{\f{5-\dl}{2}}\|g_{j,\kappa_0}(t)\|^2_{L^2(\th_2)}+\dl\int^{T_0}_0\angt^{\f{5-\dl}{2}}\|\p_z g_{j,\kappa_0}(t)\|^2_{L^2(\th_2)}dt+\la\s{\e}(j+1) \int^{T_0}_0\angt^{\f{5-\dl}{2}}{\eta}(t)\|g_{j,\kappa_0}(t)\|^2_{L^2(\th_2)}dt\nn\\
\ls&\f{1}{\la\s{\e}}\int^{T_0}_0\angt^{\f{7-3\dl}{2}}(j+1)^{-1}\sum^6_{i=1}\|R^i_{j,\kappa_0}(t)\|^2_{L^2(\th_2)}dt.\nn
\eali
\ee
}
Summing the above inequality over $j\in\bN$, we can obtain that
\begin{align}
&\angt^{\f{5-\dl}{2}} \lt\|{g} (t)\rt\|^2_{X_{\tau,\kappa_0}}+\dl\int^{T_0}_0\angt^{\f{5-\dl}{2}}\lt\|\p_z{g}(t)\rt\|^2_{X_{\tau,\kappa_0}}dt+\la\s{\e} \int^{T_0}_0\angt^{\f{5-\dl}{2}}{\eta}(t) \lt\|{g}(t)\rt\|^2_{X_{\tau,\kappa_0+1/2}}dt\nn\\
\ls_\dl & \f{1}{\la\s{\e}}\int^{T_0}_0\angt^{\f{7-3\dl}{2}}(j+1)^{-1}\sum^\i_{j=0}\sum^6_{i=1}\|R^i_{j,\kappa_0}(t)\|^2_{L^2(\th_2)}dt.\nn
\end{align}

Before we proceed to estimate the nonlinear terms, we make the following claim.

\begin{claim}\label{claim}
For $0<\t{\kappa}\in\bR$, $0\leq \nu\leq 1$ and $1<m\in\bN$, we have the following embedding in Gevrey-2 spaces.
\begin{align}
\|\p_hf\|^2_{X_{\tau,\t{\kappa},\nu }} \ls_m\|f\|^{\f{2}{m}}_{X_{\tau,\t{\kappa}+2m,\nu }}\|f\|^{2-\f{2}{m}}_{X_{\tau,\t{\kappa},\nu }}. \label{gnineq1}
\end{align}
\pf
Using Gagliardo-Nirenberg inequality in $(x,y)$ variables, we have
\begin{align}
\|\p_h f_{j,\t{\kappa}}\|_{L^2(\th_{2\nu})}\ls& \|\p^m_h f_{j,\t{\kappa}}\|^{\f{1}{m}}_{L^2(\th_{2\nu})}\|f_{j,\t{\kappa}}\|^{1-\f{1}{m}}_{L^2(\th_{2\nu})}\nn\\
       \ls_m& \lt((j+m+1)^{2m}\| f_{j+m,\t{\kappa}}\|\rt)^{\f{1}{m}}_{L^2(\th_{2\nu})}\|f_{j,\t{\kappa}}\|^{1-\f{1}{m}}_{L^2(\th_{2\nu})}.\label{gnineq}
\end{align}

Then using discrete H\"{o}lder inequality, we can obtain that
\begin{align}
\|\p_hf\|^2_{X_{\tau,\t{\kappa},\nu }}=&\sum^\i_{j=0} \|\p_h f_{j,\t{\kappa}}\|_{L^2(\th_{2\nu})}\nn\\
\ls&  \lt(\sum^\i_{j=0}(j+m+1)^{2m}\| f_{j+m,\t{\kappa}}\|^2_{L^2(\th_{2\nu})}\rt)^{\f{1}{m}}\lt(\sum^\i_{j=0}\|f_{j,\t{\kappa}}\|^2_{L^2(\th_{2\nu})}\rt)^{1-\f{1}{m}}\nn\\
\ls&\|f\|^{\f{2}{m}}_{X_{\tau,\t{\kappa}+2m,\nu }}\|f\|^{2-\f{2}{m}}_{X_{\tau,\t{\kappa},\nu }}.\nn
\end{align}
which is \eqref{gnineq}.  \qed
\end{claim}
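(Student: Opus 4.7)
The plan is to reduce the claimed Gevrey-space interpolation to the classical one-variable Gagliardo--Nirenberg (Landau--Kolmogorov) inequality, applied levelwise to the coefficient-weighted profiles $f_{j,\t\kappa}=M_{j,\t\kappa}\p^j_h f$, and then to stitch the levelwise bounds together with a discrete H\"older inequality in $j$.

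First I would observe that the weight $\th_{2\nu}(t,z)$ depends only on $z$, so at each fixed $z$ the norm $\|\cdot\|_{L^2(\th_{2\nu})}$ is a plain $L^2$ norm in $(x,y)$. The standard interpolation inequality in the tangential variables then yields, after H\"older in $z$,
\bes
\|\p_h f_{j,\t\kappa}\|_{L^2(\th_{2\nu})}\ls_m \|\p^m_h f_{j,\t\kappa}\|^{1/m}_{L^2(\th_{2\nu})}\|f_{j,\t\kappa}\|^{1-1/m}_{L^2(\th_{2\nu})}
\ees
for every $j\in\bN$. Since $\p^m_h f_{j,\t\kappa}=M_{j,\t\kappa}\p^{j+m}_h f$ while $f_{j+m,\t\kappa}=M_{j+m,\t\kappa}\p^{j+m}_h f$, and $\tau(t)\in[\tfrac12\tau_0,\tau_0]$ by \eqref{radiequi}, the ratio $M_{j,\t\kappa}/M_{j+m,\t\kappa}$ is bounded by $C_m(j+m+1)^{2m}$ (the factorial part produces the $(j+m+1)^{2m}$ and the $((j+1)/(j+m+1))^{\t\kappa}$ piece is harmless). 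Hence the first factor above is controlled by $C_m(j+m+1)^{2m}\|f_{j+m,\t\kappa}\|_{L^2(\th_{2\nu})}$.

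Next I would square the resulting levelwise bound, sum over $j\ge 0$, and apply the discrete H\"older inequality with conjugate exponents $m$ and $m/(m-1)$:
\bes
\sum_{j\ge 0} a_j^{1/m} b_j^{1-1/m}\le \Bigl(\sum_j a_j\Bigr)^{1/m}\Bigl(\sum_j b_j\Bigr)^{1-1/m}.
\ees
Taking $a_j=(j+m+1)^{4m}\|f_{j+m,\t\kappa}\|^2_{L^2(\th_{2\nu})}$ and $b_j=\|f_{j,\t\kappa}\|^2_{L^2(\th_{2\nu})}$, a shift $j\mapsto j-m$ together with the elementary comparison $(j+1)^{4m}M^2_{j,\t\kappa}\thickapprox_m M^2_{j,\t\kappa+2m}$ gives $\sum_j a_j\ls_m \|f\|^2_{X_{\tau,\t\kappa+2m,\nu}}$, while $\sum_j b_j=\|f\|^2_{X_{\tau,\t\kappa,\nu}}$. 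Combining the two factors yields \eqref{gnineq1}.

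I expect no serious obstacle here; the inequality is essentially a packaging of a 1D interpolation into the Gevrey scale. The only two points deserving care are (i) the bookkeeping of the $M_{j,\t\kappa}$-ratios against the polynomial factor $(j+m+1)^{2m}$ coming from the interpolation, and (ii) the observation that the Gaussian weight $\th_{2\nu}$ is independent of $(x,y)$, which is exactly what lets horizontal Gagliardo--Nirenberg pass through the weighted $L^2$ norm. Replacing the directional $\p_h^j$ by the supremum over $|\alpha|=j$ in the definition of $\|\cdot\|_{X_{\tau,\kappa,\nu}}$ causes no difficulty since the interpolation is applied in a single horizontal direction and both $\p_x^m$ and $\p_y^m$ estimates factor through the higher-index norm in the same way.
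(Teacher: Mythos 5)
Your proposal is correct and follows essentially the same route as the paper's proof: a levelwise Gagliardo--Nirenberg interpolation in the horizontal variables (valid through the $z$-only weight $\th_{2\nu}$), converting $\p^m_h f_{j,\t{\kappa}}$ to $f_{j+m,\t{\kappa}}$ via the ratio of the $M_{j,\t{\kappa}}$'s which produces the $(j+m+1)^{2m}$ factor, and then a discrete H\"older inequality in $j$ with exponents $m$ and $m/(m-1)$. The extra remarks you make about \eqref{radiequi}, about the weight being independent of $(x,y)$, and about the sup-over-$|\al|=j$ definition are careful clarifications of points the paper leaves implicit but do not change the argument.
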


{\noindent\bf Estimate of term $R^1_j$ and $R^2_j$.}

by using \eqref{poincare1} and the definition of $g$, we have the fact that
\bes
\|\p_h g\|^2_{X_{\tau,\kappa_0-1/2,1/2}}\ls \|\p_z \p_h u\|^2_{X_{\tau,\kappa_0-1/2,1/2}}.
\ees

Using the product estimate in \eqref{product2} and the above inequality, we can have
\begin{align}
&\sum^\i_{j=0}(j+1)^{-1}\|(R^{1},R^2_j)_{j,\kappa_0}(t)\|^2_{L^2(\th_2)}\nn\\
\ls &\angt^{1/2}\lt(\|\p_zu\|^2_{X_{\tau,5,1/2}}\|\p_h g\|^2_{X_{\tau,\kappa_0-1/2,1/2}}+\|\p_zu\|^2_{X_{\tau,\kappa_0-1/2,1/2}}\|\p_h g\|^2_{X_{\tau,5,1/2}}\rt)\label{termr1s}\\
\ls &\angt^{1/2}\lt(\|\p_zu\|^2_{X_{\tau,5,1/2}}\|\p_z\p_h u\|^2_{X_{\tau,\kappa_0-1/2,1/2}}+\|\p_zu\|^2_{X_{\tau,\kappa_0-1/2,1/2}}\| g\|^2_{X_{\tau,7,1/2}}\rt).\nn
\end{align}
From \eqref{gnineq1}, we have
\begin{align}
&\|\p_h\p_zu\|^2_{X_{\tau,\kappa_0-1/2,1/2}}\ls \|\p_zu\|^{\f{2(m-1)}{m}}_{X_{\tau,\kappa_0-1/2,1/2}}\|\p_zu\|^{\f{2}{m}}_{X_{\tau,\kappa_0-1/2+2m,1/2}}.\nn
\end{align}
Inserting the above inequality into \eqref{termr1s} and using the a priori estimates in \eqref{solutiongevrey}, we can obtain that
\begin{align}
&\sum^\i_{j=0}(j+1)^{-1}\|(R^{1},R^2_j)_{j,\kappa_0}(t)\|^2_{L^2(\th_2)}\nn\\
\ls &\angt^{1/2}\lt(\|\p_zu\|^2_{X_{\tau,5,1/2}}\|\p_zu\|^{\f{2(m-1)}{m}}_{X_{\tau,\kappa_0-1/2,1/2}}\|\p_zu\|^{\f{2}{m}}_{X_{\tau,\kappa_0-1/2+2m,1/2}}+\|\p_zu\|^2_{X_{\tau,\kappa_0-1/2,1/2}}\| g\|^2_{X_{\tau,7,1/2}}\rt) \nn\\
\ls &C^2_\ast\e^2\angt^{-\f{4-\dl}{2}}\lt(\|g\|^{\f{2(m-1)}{m}}_{X_{\tau,\kappa_0-1/2}}\|\p_zu\|^{\f{2}{m}}_{X_{\tau,\kappa_0-1/2+2m}}
+\|g\|^2_{X_{\tau,\kappa_0-1/2}}\rt).\nn
\end{align}
Here we have used the fact that
\bes
\|\p_zu\|_{X_{\tau,\kappa_0-1/2,1/2}}\ls \|g\|^2_{X_{\tau,\kappa_0-1/2}}.
\ees
Then by using Young inequality, we can obtain that
\begin{align}
&\f{1}{\la\s{\e}}\int^{T_0}_0\angt^{\f{7-3\dl}{2}}\sum^\i_{j=0}(j+1)^{-1}\|(R^{1},R^2)_{j,\kappa_0}(t)\|^2_{L^2(\th^2)}dt\nn\\
\ls & \f{C_\dl C^2_\ast\e^{3/2}}{\la} \int^{T_0}_0\angt^{\f{3-2\dl}{2}}\lt(\|g\|^{\f{2(m-1)}{m}}_{X_{\tau,\kappa_0-1/2}}\|\p_z u\|^{\f{2}{m}}_{X_{\tau,\kappa_0-1/2+2m}}
+\|g\|^2_{X_{\tau,\kappa_0-1/2}}\rt)dt\nn\\
 \ls&\f{C^2_\ast\e^{3/2}}{\la}\int^{T_0}_0\angt^{\lt(\f{3-2\dl}{2}-\f{1-\dl}{2m}\rt)\f{m}{m-1}}\|g\|^2_{X_{\tau,\kappa_0-1/2}}dt
 +\f{C^2_\ast\e^{3/2}}{\la}\int^{T_0}_0\angt^{\f{1-\dl}{2}}\|\p_zu\|^2_{X_{\tau,\kappa_0-1/2+2m}}dt\label{termr1fourth}\\
    &+\f{C^2_\ast\e^{3/2}}{\la}\int^{T_0}_0\angt^{\f{3-2\dl}{2}}\|g\|^2_{X_{\tau,\kappa_0-1/2}}dt.\nn
\end{align}
By choosing $m:=\lt[\f{1}{\dl}\rt]$, we can have
\bes
\lt(\f{3-2\dl}{2}-\f{1-\dl}{2m}\rt)\f{m}{m-1}\leq \f{3+\dl}{2}, \q \kappa_0-1/2+2m\leq \kappa+2.
\ees
Then inserting the above into \eqref{termr1fourth} implies that
\begin{align}
&\f{1}{\la\s{\e}}\int^{T_0}_0\angt^{\f{7-3\dl}{2}}\sum^\i_{j=0}(j+1)^{-1}\|(R^{1},R^2)_{j,\kappa_0}(t)\|^2_{L^2(\th^2)}dt\nn\\
 \ls&\f{C^2_\ast\e^{3/2}}{\la}\int^{T_0}_0\angt^{\f{3+\dl}{2}}\|g\|^2_{X_{\tau,\kappa_0+1/2}}dt+\f{C^2_\ast\e^{3/2}}{\la}\int^{T_0}_0\angt^{\f{1-\dl}{2}}\|\p_z u\|^2_{X_{\tau,\kappa+2}}dt.\label{termr1third}
\end{align}

{\noindent\bf Estimate of term $R^3_j$.}

Using the product estimate in \eqref{product2}, the incompressibility, we can obtain that
\begin{align}
&\sum^\i_{j=0}(j+1)^{-1}\|R^3_{j,\kappa_0}(t)\|^2_{L^2(\th_2)}\nn\\
\ls &\angt^{1/2}\lt(\|\p_zw\|^2_{X_{\tau,5,1/2}}\|\p_z g\|^2_{X_{\tau,\kappa_0-1/2,1/2}}+\|\p_zw\|^2_{X_{\tau,\kappa_0-1/2,1/2}}\|\p_z g\|^2_{X_{\tau,5,1/2}}\rt)\nn\\
\ls &\angt^{1/2}\lt(\|\bl{u}\|^2_{X_{\tau,7,1/2}}\|\p_z g\|^2_{X_{\tau,\kappa_0-1/2,1/2}}+\|\p_h\bl{u}\|^2_{X_{\tau,\kappa_0-1/2,1/2}}\| \p_zg\|^2_{X_{\tau,5,1/2}}\rt).  \label{termr3second}
\end{align}
From \eqref{gnineq1}, we have
\begin{align}
&\|\p_h\bl{u}\|^2_{X_{\tau,\kappa_0-1/2,1/2}}\ls \|\bl{u}\|^{\f{2(m-1)}{m}}_{X_{\tau,\kappa_0-1/2,1/2}}\|\bl{u}\|^{\f{2}{m}}_{X_{\tau,\kappa_0-1/2+2m,1/2}}.\nn
\end{align}
Inserting the above inequality into \eqref{termr3second} and using the a priori estimates in \eqref{solutiongevrey}, we can obtain that
{\small
\begin{align}
&\sum^\i_{j=0}(j+1)^{-1}\|R^3_{j,\kappa_0}(t)\|^2_{L^2(\th^2)}\nn\\
\ls &\angt^{1/2}\lt(\|\bl{u}\|^{\f{2(m-1)}{m}}_{X_{\tau,\kappa_0-1/2,1/2}}\|\bl{u}\|^{\f{2}{m}}_{X_{\tau,\kappa_0-1/2+2m,1/2}}\| \p_zg\|^2_{X_{\tau,5}}
+\|\bl{u}\|^2_{X_{\tau,7,1/2}}\| \p_zg\|^2_{X_{\tau,\kappa_0}}\rt)  \nn\\
\ls &C^2_\ast\e^2\lt(\angt^{-\f{4-\dl}{2}}\|g\|^{\f{2(m-1)}{m}}_{X_{\tau,\kappa_0-1/2}}\|\p_z\bl{u}\|^{\f{2}{m}}_{X_{\tau,\kappa_0-1/2+2m}}
+\angt^{-\f{2-\dl}{2}}\|\p_z g\|^2_{X_{\tau,\kappa_0}}\rt),\nn
\end{align}
}
where at the last line, we have used the fact that
\bes
\|\bl{u}\|_{X_{\tau,\kappa_0-1/2,1/2}}\ls \angt^{1/2}\|\bl{g}\|_{X_{\tau,\kappa_0-1/2}},\q \|\bl{u}\|_{X_{\tau,\kappa_0-1/2,1/2}}\ls \angt^{1/2}\|\p_z\bl{u}\|_{X_{\tau,\kappa_0-1/2,1/2}}.
\ees
Then the rest is the same as estimate of \eqref{termr1third}, we can obtain that
\begin{align}
&\f{1}{\la\s{\e}}\int^{T_0}_0\angt^{\f{7-3\dl}{2}}\sum^\i_{j=0}(j+1)^{-1}\|(R^3_{j,\kappa_0}(t)\|^2_{L^2(\th^2)}dt\nn\\ \ls&\f{C^2_\ast\e^{3/2}}{\la}\int^{T_0}_0\lt(\angt^{\f{3+\dl}{2}}\|g\|^2_{X_{\tau,\kappa_0+1/2}}
+\angt^{\f{5-\dl}{2}}\|\p_zg\|^2_{X_{\tau,\kappa_0}}\rt)dt+\f{C^2_\ast\e^{3/2}}{\la}\int^{T_0}_0\angt^{\f{1-\dl}{2}}\|\p_z u\|^2_{X_{\tau,\kappa+2}}dt.\label{termr3third}
\end{align}

{\noindent\bf Estimate of term $R^4_j$.}

Using the product estimate in \eqref{product2}, the incompressibility and \eqref{poincare}, we can obtain that
\begin{align}
&\sum^\i_{j=0}(j+1)^{-1}\|R^4_{j,\kappa_0}(t)\|^2_{L^2(\th_2)}\nn\\
\ls &\angt^{1/2}\lt(\|\p_zw\|^2_{X_{\tau,5,1/2}}\|\angt^{-1} u\|^2_{X_{\tau,\kappa_0-1/2,1/2}}+\|\p_zw\|^2_{X_{\tau,\kappa_0-1/2,1/2}}\|\angt^{-1} u\|^2_{X_{\tau,5,1/2}}\rt)\nn\\
\ls &\angt^{1/2}\lt(\|\p_zw\|^2_{X_{\tau,5,1/2}}\|\p^2_z u\|^2_{X_{\tau,\kappa_0-1/2,1/2}}+\|\p_zw\|^2_{X_{\tau,\kappa_0-1/2,1/2}}\|\p^2_zu\|^2_{X_{\tau,5,1/2}}\rt)\nn\\
\ls &\angt^{1/2}\lt(\|\bl{u}\|^2_{X_{\tau,7,1/2}}\|\p_z g\|^2_{X_{\tau,\kappa_0-1/2,3/4}}+\|\p_h\bl{u}\|^2_{X_{\tau,\kappa_0-1/2,1/2}}\| \p_zg\|^2_{X_{\tau,5}}\rt),\nn
\end{align}
where at the last line, we have used the fact that
\bes
\|\p^2_z{u}\|_{X_{\tau,\kappa_0-1/2,1/2}}\ls \|\p_z{g}\|_{X_{\tau,\kappa_0-1/2,3/4}},\q \|\p^2_z{u}\|_{X_{\tau,5,1/2}}\ls \|\p_z{g}\|_{X_{\tau,5}}.
\ees
Then the rest is the same as \eqref{termr3second}.

{\noindent\bf Estimate of term $R^5_j$ and $R^6_j$.}\\

Using the product estimate in \eqref{product2}, we can have
\begin{align}
&\sum^\i_{j=0}(j+1)^{-1}\|(R^5,R^6)_{j,\kappa_0}(t)\|^2_{L^2(\th^2)}\nn\\
\ls &\angt^{1/2}\lt(\|\p_h\bl{u}\|^2_{X_{\tau,5,1/2}}\|\p^2_z \bl{u}\|^2_{X_{\tau,\kappa_0-1/2,1/2}}+\|\p_h\bl{u}\|^2_{X_{\tau,\kappa_0-1/2,1/2}}\|\p^2_z \bl{u}\|^2_{X_{\tau,5,1/2}}\rt)\nn\\
\ls &\angt^{1/2}\lt(\|\bl{u}\|^2_{X_{\tau,7,1/2}}\|\p_z g\|^2_{X_{\tau,\kappa_0-1/2,3/4}}+\|\p_h\bl{u}\|^2_{X_{\tau,\kappa_0-1/2,1/2}}\| \p_zg\|^2_{X_{\tau,5}}\rt).\nn
\end{align}
Then the rest is the same as \eqref{termr3second}.

From \eqref{termr1third} and \eqref{termr3third}, we can obtain \eqref{goodg} in Lemma \ref{lemg}. \qed

\subsection{Estimates of $\p_z\bl{g}$}

\begin{lemma}\label{lzg}

Under the assumption of \eqref{gassump}, for sufficiently small $\e$, there exits a constant $C_\dl$ such that for any $t\in (0,T_0]$, we have the following estimate.
\begin{align}
&\angt^{\f{7-\dl}{2}} \lt\| \p_z\bl{g}(t)\rt\|^2_{X_{\tau,\kappa_1}}+\dl\int^{T_0}_0\angt^{\f{7-\dl}{2}}\lt\|\p^2_z\bl{g}(t)\rt\|^2_{X_{\tau,\kappa_1}}dt+\la\s{\e} \int^{T_0}_0\angt^{\f{7-\dl}{2}}{\eta}(t) \lt\|\p_z\bl{g}(t)\rt\|^2_{X_{\tau,\kappa_1+1/2}}dt\nn\\
\leq & C_\dl \lt\| \p_z\bl{g}(0)\rt\|^2_{X_{\tau,\kappa_1}}+\int^{T_0}_0 \angt^{\f{5-\dl}{2}}\lt\|\p_z\bl{g}(t)\rt\|^2_{X_{\tau,\kappa_1}}dt\label{goodzg}\\
      &+C_\dl\f{C^2_\ast}{\la} \e^{3/2}\int^{T_0}_0 \lt(\angt^{\f{3+\dl}{2}}\lt\|\bl{g}(t)\rt\|^2_{X_{\tau,\kappa_0+1/2}}+\angt^{\f{5-\dl}{2}}\lt\|\p_z\bl{g}(t)\rt\|^2_{X_{\tau,\kappa_0}}\rt)dt.\nn
\end{align}
\end{lemma}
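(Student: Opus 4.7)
\textbf{Proof proposal for Lemma \ref{lzg}.}

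The argument will follow the scheme of the proof of Lemma \ref{lemg} applied to the equation satisfied by $\p_z\bl{g}$ rather than $\bl{g}$, with the heavier time-weight $\angt^{(7-\dl)/2}$ replacing $\angt^{(5-\dl)/2}$. As before I treat only $\p_z g$; the estimate for $\p_z\t{g}$ is identical.

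First I will differentiate the $g$-equation \eqref{3dgun1} in $z$. Writing $h:=\p_z g$ and using the incompressibility relation $\p_z w=-(\p_x u+\p_y v)$, I obtain
\[
\p_t h+\tfrac{1}{\angt}h-\p^2_z h=-(u\p_x+v\p_y+w\p_z)h+\sum_{i}\t{R}^i,
\]
where $\t{R}^i$ collects all commutator terms produced by $\p_z$ acting on $R^1,\dots,R^6$, i.e.\ expressions of the schematic form $\p_z\bl{u}\cdot\p_h\bl{g}$, $\p^2_z\bl{u}\cdot\p_h\bl{u}$, $\p_z\bl{u}\cdot\bl{u}/\angt$, together with the single genuinely new term $(\p_xu+\p_yv)\p_z g$ coming from $\p_z w\,\p_z g$. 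The boundary condition $h|_{z=0}=\p_z g|_{z=0}=0$ is already supplied by \eqref{3dgun1}, so the $z$-boundary contributions that arise in the integration by parts vanish exactly as in Lemma \ref{lemg}.

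Next I will repeat the weighted energy estimate of the previous lemma at the Gevrey level $\kappa_1=10$, multiplied now by $\angt^{(7-\dl)/2}$. The only structural change is the time derivative of the prefactor: after expanding $\tfrac{d}{dt}\bigl[\angt^{(7-\dl)/2}\|h_{j,\kappa_1}\|^2_{L^2(\th_2)}\bigr]$, an extra term $\tfrac{7-\dl}{2}\angt^{(5-\dl)/2}\|h_{j,\kappa_1}\|^2_{L^2(\th_2)}$ appears on the right; after summation in $j$ and integration in $t$ this produces the benign remainder $\int_0^{T_0}\angt^{(5-\dl)/2}\|\p_z\bl{g}\|^2_{X_{\tau,\kappa_1}}dt$ on the right-hand side of \eqref{goodzg}, which is exactly the second summand there. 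The Poincar\'e inequality \eqref{poincare} supplies the $\dl\|\p^2_z h_{j,\kappa_1}\|^2$ dissipation, and the initial-data term accounts for the first summand.

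The nonlinear pieces $\t{R}^i$ have the same structure as the $R^i$ of Lemma \ref{lemg}, only with one derivative redistributed, so they are bounded using Lemma \ref{lemproduct} together with the Gagliardo-Nirenberg-type embedding \eqref{gnineq1} and the $L^\infty$ bounds of \eqref{lowpoint}. The step I expect to be the main obstacle is controlling the contributions involving $\p^2_z\bl{u}$; these are handled by inverting the definition of $\bl{g}$, namely $\p^2_z u=\p_z g-\tfrac{1}{2\angt}u-\tfrac{z}{2\angt}\p_z u$, so that every occurrence of $\p^2_z\bl{u}$ is re-expressed in terms of $\p_z\bl{g}$ (at level $\kappa_1$) and of quantities already controlled by the a priori estimate \eqref{solutiongevrey}, with the $\tfrac{z}{\angt}\p_z u$ piece absorbed by the weighted Poincar\'e inequality \eqref{poincare1}. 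Summing all contributions under the smallness provided by \eqref{gassump} and choosing $m=[1/\dl]$ in \eqref{gnineq1} to match the index $\kappa_1+2m$ with the base level $\kappa+2$, each nonlinear term is bounded by $\tfrac{C^2_\ast\e^{3/2}}{\la}$ times one of the two weighted time integrals appearing on the last line of \eqref{goodzg}, which closes the estimate.
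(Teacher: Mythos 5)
Your overall scheme — a weighted energy estimate for $\p_z\bl{g}$ at level $\kappa_1$ with time weight $\angt^{(7-\dl)/2}$, identifying the extra $\int\angt^{(5-\dl)/2}\|\p_z\bl{g}\|^2_{X_{\tau,\kappa_1}}dt$ as the byproduct of differentiating the prefactor, using the Poincar\'e inequality for the dissipation, and closing with the a priori bounds — matches the paper. The place where you diverge is in how the extra $z$-derivative on the nonlinearity is disposed of. You differentiate the $g$-equation in $z$ first, obtain an equation for $h=\p_z g$, and then must bound the produced source terms directly, which forces you to confront $\p^2_z\bl{u}$ factors (from $\p_z R^5,\p_z R^6$) and re-express them via $\p^2_z u=\p_z g-\tfrac{1}{2\angt}u-\tfrac{z}{2\angt}\p_z u$. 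The paper instead never differentiates the nonlinearity: it multiplies the equation for $\p_z g_{j,\kappa_1}$ by $\p_z g_{j,\kappa_1}\th_2$, and in the resulting pairing with $\p_z R^i_{j,\kappa_1}$ it integrates by parts in $z$, transferring the derivative onto the test function, so that one needs only the undifferentiated $\|R^i\|_{X_{\tau,\kappa_1}}$ controlled by Lemma \ref{lem7.4}; the boundary term vanishes because every $R^i|_{z=0}=0$. This step is what makes the right-hand side of \eqref{zgest1} depend on $\sum\|R^i\|^2_{X_{\tau,\kappa_1}}$ rather than $\sum\|\p_z R^i\|^2_{X_{\tau,\kappa_1}}$, and it avoids the $\p^2_z\bl{u}$ bookkeeping entirely. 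Your route is workable, since the inversion you cite is exactly how the $\angt^{1/2}\|\p^2_z\bl{u}\|_{X_{\tau,10,\nu}}$ control in \eqref{solutiongevrey} is derived from the $\bl{g}$ assumption, so in the end the two arguments use the same inputs; the paper's is just shorter.

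One small inaccuracy in your sketch: you invoke $m=[1/\dl]$ in \eqref{gnineq1} "to match $\kappa_1+2m$ with the base level $\kappa+2$." This is unnecessary here. In Lemma \ref{lem7.4} the loss is a flat $+2$ in the Gevrey index (one horizontal derivative costs $2$ in $\kappa$), so $\|R^i\|^2_{X_{\tau,\kappa_1}}$ is bounded by products at levels $7$ and $\kappa_1+2=\kappa_0=12$; no interpolation to $\kappa+2=12+2/\dl$ is required, because the claimed bound \eqref{goodzg} involves only $\|\bl{g}\|^2_{X_{\tau,\kappa_0+1/2}}$ and $\|\p_z\bl{g}\|^2_{X_{\tau,\kappa_0}}$. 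The $m=[1/\dl]$ device is needed in Lemma \ref{lemg} (for $\bl{g}$ itself), where the right-hand side does involve the top level $\kappa+2$, but not in this lemma.
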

\pf Now applying $M_{j,\kappa_1}\p_z\p^j_x$  to the first equation of \eqref{3dgun1} and denoting $f_{j,x,\kappa_1}$ by $f_{j,\kappa_1}$ for a function $f$, we can obtain that
\bes
\bali
&\lt[\partial_{t} +\la\s{\e}(j+1)-\partial_{z}^{2}+\f{1}{\angt} \rt] \p_z g_{j,\kappa_1}=\sum^{6}_{i=1}\p_z R^i_{j,\kappa_1}.
\eali
\ees
Performing space variable energy estimates, we can have
\begin{align}
&\f{d}{dt}\|\p_zg_{j,\kappa_1}(t)\|^2_{L^2(\th_2)}+\dl\|\p^2_zg_{j,\kappa_1}(t)\|^2_{L_2(\th_2)}+\f{5-\dl}{2\angt}\| \p_zg_{j,\kappa_1}(t)\|^2_{L^2(\th_2)}+2(j+1)\la\s{\e}{\eta}(t)\|\p_zg_{j,\kappa_1}(t)\|^2_{L^2(\th_2)}\nn\\
\leq &2 \lt| \lt\langle \sum^6_{i=1}\p_z R^i_{j,\kappa_1},\p_z g_{j,\kappa_1}\rt\rangle_{\th_2}\rt|.\nn
\end{align}
Multiplying the above equality by $\angt^{\f{7-\dl}{2}}$ and using integration by parts for the righthand of the above inequality, and then integrating the resulted equation from $0$ to $t$ for any $t\in (0,T_0]$, we can achieve that
{\small
\be
\bali
&\angt^{\f{7-\dl}{2}}\|\p_z g_{j,\kappa_1}(t)\|^2_{L^2(\th_2)}+\dl\int^{T_0}_0\angt^{\f{7-\dl}{2}}\|\p^2_z g_{j,\kappa_1}(t)\|^2_{L^2(\th_2)}dt+2\la\s{\e}(j+1) \int^{T_0}_0\angt^{\f{7-\dl}{2}}{\eta}(t)\|\p_z g_{j,\kappa_1}(t)\|^2_{L^2(\th_2)}dt\nn\\
\leq &\int^{T_0}_0\angt^{\f{5-\dl}{2}}\|\p_z g_{j,\kappa_1}(t)\|^2_{L^2(\th_2)}dt+\int^{T_0}_0 \angt^{\f{7-\dl}{2}} \lt| \lt\langle \sum^6_{i=1}R^i_{j,\kappa_1},\p^2_z g_{j,\kappa_1}+\f{z}{2\angt}\p_z g_{j,\kappa_1}\rt\rangle_{\th_2}\rt|dt.
\eali
\ee
}
By using Cauchy inequality and \eqref{poincare1} to the righthand of above inequality, and then summing the resulted equations over $j\in\bN$, we can obtain that
\be
\bali
&\angt^{\f{7-\dl}{2}}\|\p_zg(t)\|^2_{X_{\tau,\kappa_1}}+\dl\int^{T_0}_0\angt^{\f{7-\dl}{2}}\|\p^2_z g(t)\|^2_{X_{\tau,\kappa_1}}dt+\la\s{\e}(j+1) \int^{T_0}_0\angt^{\f{7-\dl}{2}}{\eta}(t)\|\p_zg(t)\|^2_{X_{\tau,\kappa_1}}dt\\
\leq &\int^{T_0}_0\angt^{\f{5-\dl}{2}}\|\p_z g(t)\|^2_{X_{\tau,\kappa_1}}dt+C_\dl\int^{T_0}_0 \angt^{\f{7-\dl}{2}} \sum^6_{i=1}\|R^i\|^2_{X_{\tau,\kappa_1}}dt.\label{zgest1}
\eali
\ee
\begin{lemma}\label{lem7.4}
We have the following estimates
\begin{align} \sum^6_{i=1}\|R^i\|^2_{X_{\tau,\kappa_1}}\ls\angt^{3/2}\lt(\|g\|^2_{X_{\tau,7}}\|\p_zg\|^2_{X_{\tau,\kappa_1+2}}+\|\p_zg\|^2_{X_{\tau,7}}\|g\|^2_{X_{\tau,\kappa_1+2}}\rt).\label{zgesti2}
\end{align}
\end{lemma}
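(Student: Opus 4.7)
The plan is to bound each of the six terms $R^1,\ldots,R^6$ appearing in \eqref{3dgun1} separately, then sum. For each term I apply one of the three product estimates of Lemma \ref{lemproduct} and convert every factor into a norm of $g$, $\tilde g$, $\p_z g$ or $\p_z\tilde g$ by means of three tools: the weighted Poincar\'e inequalities \eqref{poincare}--\eqref{poincare1}, which trade $\|u\|$ for $\angt^{1/2}\|\p_z u\|$; the identity $g=\p_z u+\tfrac{z}{2\angt}u$ from \eqref{defng}, which together with \eqref{poincare1} yields $\|\p_z u\|_{L^2(\th_{2\nu})}\sim\|g\|_{L^2(\th_{2\nu})}$ up to constants; and the incompressibility relation $\p_z w=-\p_x u-\p_y v$, which expresses $w$-norms in terms of $\bl u$-norms. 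Horizontal-derivative index shifts of the form $\|\p_h f\|^2_{X_{\tau,\kappa}}\ls \|f\|^2_{X_{\tau,\kappa+2}}$ (with $\tau_0$-dependent implicit constant) are used freely. The overall factor $\angt^{3/2}$ in the target arises as the product of the $\angt^{1/2}$ built into Lemma \ref{lemproduct} and one extra factor of $\angt$ produced by a single Poincar\'e step trading a $u$-factor for a $\p_z u$-factor.

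Term by term: for the transport pieces $R^1=-u\p_x g$ and $R^2=-v\p_y g$, apply \eqref{product1} with $u$ (resp.\ $v$) as the first factor; the conversions above immediately yield both the target $\angt^{3/2}\|g\|^2_{X_{\tau,7}}\|\p_z g\|^2_{X_{\tau,\kappa_1+2}}$ and its symmetric partner. For $R^3=-w\p_z g$ use \eqref{product3} with $f=w$; the incompressibility turns $\|\p_z w\|^2_{X_{\tau,5}}$ into $\|\p_h\bl u\|^2_{X_{\tau,5}}\ls\|\bl u\|^2_{X_{\tau,7}}\ls\angt\|\bl g\|^2_{X_{\tau,7}}$, while $\|w\|^2_{X_{\tau,\kappa_1}}$ is controlled, via $w|_{z=0}=0$ and the fundamental theorem of calculus combined with \eqref{poincare}, by $\angt\|\bl g\|^2_{X_{\tau,\kappa_1+2}}$. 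The term $R^4=\tfrac{wu}{2\angt}$ is a strict subcase, because the prefactor $\angt^{-1}$ absorbs one Poincar\'e step. Finally, $R^5=\p_z u\p_y v$ and $R^6=-\p_z v\p_y u$ are handled by \eqref{product2} with $f=\p_z u$ (resp.\ $\p_z v$) and $g=\p_y v$ (resp.\ $\p_y u$); this is where the symmetric two-term structure on the right-hand side of the lemma becomes manifest, as the two terms of \eqref{product2} directly produce the two terms of the target after an index shift absorbing $\p_y$ and one Poincar\'e trade exchanging $v$ (resp.\ $u$) for $\p_z v$ (resp.\ $\p_z u$).

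The hard part is bookkeeping rather than any individual bound. After each Poincar\'e step the weight parameter $\nu$ inside $\th_{2\nu}$ drops slightly, so one must check throughout that the weights stay within the admissible range $[\th_1,\th_2]$ on which the default $X_{\tau,\kappa}$ norm is defined. One must also verify that after every horizontal-derivative index shift the higher Gevrey index never exceeds $\kappa_1+2=12$ and the lower index never exceeds $7$; these thresholds are tight, being calibrated exactly against the Sobolev embedding index $5$ appearing in Lemma \ref{lemproduct}. Lastly, the $\angt$-power must be monitored so that it never exceeds $3/2$, which is guaranteed by applying Poincar\'e at most once per term.
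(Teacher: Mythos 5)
Your choice of tools --- Lemma \ref{lemproduct}'s product estimates, the weighted Poincar\'e inequalities of Lemma \ref{lpoincare}, the identity $g=\p_z u+\f{z}{2\angt}u$, and incompressibility --- matches what the paper indicates in its one-line proof, and your handling of $R^1,R^2,R^5,R^6$ is essentially correct. The treatment of $R^3$ (and by extension $R^4$) has a genuine gap.

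Applying \eqref{product3} with $f=w$ and second factor $\p_z g$ produces
\begin{equation*}
\|w\p_z g\|^2_{X_{\tau,\kappa_1}}\ls\angt^{1/2}\lt(\|\p_z w\|^2_{X_{\tau,5,1/2}}\|\p_z g\|^2_{X_{\tau,\kappa_1,1/2}}+\|w\|^2_{X_{\tau,\kappa_1,1/2}}\|\p^2_z g\|^2_{X_{\tau,5,1/2}}\rt).
\end{equation*}
You account for the first product, but the second carries two factors you cannot afford. First, $\|w\|^2_{X_{\tau,\kappa_1,1/2}}$ is generically infinite: $w(z)=-\int_0^z\p_h\bl{u}\,d\bar z$ does not decay as $z\to+\infty$ (it tends to a generically nonzero limit) while $\th_{2\nu}$ explodes, so the Gaussian-weighted norm is not finite; and Lemma~\ref{lpoincare} explicitly requires decay to zero at $z=+\infty$, so the ``fundamental theorem of calculus plus \eqref{poincare}'' route you invoke for $\|w\|_{X_{\tau,\kappa_1}}\ls\angt^{1/2}\|\bl{g}\|_{X_{\tau,\kappa_1+2}}$ is not valid. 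Second, $\|\p^2_z g\|^2_{X_{\tau,5,1/2}}$ is one vertical derivative beyond anything on the right of \eqref{zgesti2}, and Poincar\'e only adds $z$-derivatives, never removes them.

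The fix is to choose the factorization so that the extra $z$-derivative produced by the product lemma always lands on $w$: apply \eqref{product2} with $\p_z g$ in the first slot and $w$ in the second, yielding $\angt^{1/2}\lt(\|\p_z g\|^2_{X_{\tau,5,1/2}}\|\p_z w\|^2_{X_{\tau,\kappa_1,1/2}}+\|\p_z g\|^2_{X_{\tau,\kappa_1,1/2}}\|\p_z w\|^2_{X_{\tau,5,1/2}}\rt)$; both products now carry $\p_z w$ rather than $w$, and at most one $\p_z$ on $g$. Incompressibility, $\p_z w=-\p_h\bl u$, and the usual $u\leftrightarrow g$ trades then close the estimate. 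This is precisely the factorization the paper uses for the analogous term at the $\kappa_0$ level (see \eqref{termr3second}). For $R^4=\tfrac{wu}{2\angt}$, similarly use \eqref{product2} with $u$ in the first slot so that both products carry $\p_z w$.

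A small side correction: the weight $\nu$ drops from $1$ to $1/2$ because of the $\f{\nu+1}{4}$ shift built into Lemma~\ref{lemproduct}, not because of Poincar\'e (Lemma~\ref{lpoincare} preserves $\nu$). This is harmless since $\|\cdot\|_{X_{\tau,\kappa,\nu}}$ is nondecreasing in $\nu$, so one simply bounds back up to $\nu=1$ at the end.
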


\pf Proof of this Lemma is repeatedly use of \eqref{poincare} and \eqref{poincare1} in Lemma \ref{lpoincare}, product estimates in \eqref{product1} to \eqref{product3} in Lemma \ref{lemproduct} and the relation between $\bl{u}$ and $\bl{g}$. Since it is a routing estimate, we omit the details. \qed

Then by using the a priori estimates in \eqref{solutiongevrey} and \eqref{zgesti2}, we see that
\begin{align}
&\int^{T_0}_0 \angt^{\f{7-\dl}{2}} \sum^6_{i=1}\|R^i\|^2_{X_{\tau,\kappa_1}}dt\ls C^2_\ast\e^2\int^{T_0}_0 \lt(\angt^{\f{3+\dl}{2}}\|g\|^2_{X_{\tau,\kappa_0+1/2}}+\angt^{\f{5-\dl}{2}}\|\p_zg\|^2_{X_{\tau,\kappa_0}}\rt)dt.\nn
\end{align}

Inserting the above inequality into \eqref{zgest1}, we can obtain \eqref{goodzg} in Lemma \ref{lzg}. \qed

\subsection{Estimates of $\p^2_z\bl{g}$}

\begin{lemma}\label{lzzg}

Under the assumption of \eqref{gassump}, for sufficiently small $\e$, there exits a constant $C_\dl$ such that for any $t\in (0,T_0]$, we have the following estimate.
\begin{align}
&\angt^{\f{9-\dl}{2}} \lt\| \p^2_z\bl{g}(t)\rt\|^2_{X_{\tau,\kappa_2}}+\dl\int^{T_0}_0\angt^{\f{9-\dl}{2}}\lt\|\p^3_z\bl{g}(t)\rt\|^2_{X_{\tau,\kappa_2}}dt+\la\s{\e} \int^{T_0}_0\angt^{\f{9-\dl}{2}}{\eta}(t) \lt\|\p^2_z\bl{g}(t)\rt\|^2_{X_{\tau,\kappa_3+1/2}}dt\nn\\
\leq &C_\dl \lt\| \p^2_z\bl{g}(0)\rt\|^2_{X_{\tau_0,\kappa_2}}+\int^{T_0}_0 \angt^{\f{7-\dl}{2}}\lt\|\p^2_z\bl{g}(t)\rt\|^2_{X_{\tau,\kappa_2}}dt.\label{goodzzg}\\
    &+C_\dl\f{C^2_\ast}{\la} \e^{3/2}\int^{T_0}_0 \lt(\angt^{\f{3+\dl}{2}}\lt\|\bl{g}(t)\rt\|^2_{X_{\tau,\kappa_0+1/2}}+\angt^{\f{5-\dl}{2}}\lt\|\p_z\bl{g}(t)\rt\|^2_{X_{\tau,\kappa_0}}++\angt^{\f{7-\dl}{2}}\lt\|\p^2_z\bl{g}(t)\rt\|^2_{X_{\tau,\kappa_1}}\rt)dt.\nn
\end{align}

\end{lemma}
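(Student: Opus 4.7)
The plan is to mimic the proof of Lemma \ref{lzg} (one $z$-derivative case) with one extra $\p_z$ applied to the equation of $g$. First I would apply $M_{j,\kappa_2}\p_z^2\p_x^j$ (and the analogous $\p_y^j$) to the first equation in \eqref{3dgun1} to obtain
\begin{equation*}
\lt[\partial_t+\la\s{\e}\eta(t)(j+1)-\partial_z^2+\tfrac{1}{\angt}\rt]\p_z^2 g_{j,\kappa_2}=\sum_{i=1}^{6}\p_z^2 R^i_{j,\kappa_2}.
\end{equation*}
Testing against $\p_z^2 g_{j,\kappa_2}\th_2$ and carrying out the same computation as in \eqref{hj5}, the heat dissipation produces $\|\p_z^3 g_{j,\kappa_2}\|_{L^2(\th_2)}^2$, the weight $\th_2$ contributes $-\f{1}{4\angt}$ (absorbable by \eqref{poincare}), and the transport term is handled by \eqref{lowpoint} exactly as before.

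Next, I would multiply the resulting inequality by $\angt^{(9-\dl)/2}$, integrate in time on $[0,T_0]$, and integrate by parts in $z$ on the right-hand-side pairing $\langle\p_z^2 R^i_{j,\kappa_2},\p_z^2 g_{j,\kappa_2}\rangle_{\th_2}$ to throw one $\p_z$ back onto the test function (turning it into $\p_z^3 g_{j,\kappa_2}+\f{z}{2\angt}\p_z^2 g_{j,\kappa_2}$, which are controlled by Cauchy--Schwarz and \eqref{poincare1}). The time weight derivative $\p_t\angt^{(9-\dl)/2}$ produces the harmless leftover $\int_0^{T_0}\angt^{(7-\dl)/2}\|\p_z^2 g\|^2_{X_{\tau,\kappa_2}}dt$ that already appears in \eqref{goodzzg}. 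Summing over $j\in\bN$ then yields
\begin{equation*}
\bali
&\angt^{\f{9-\dl}{2}}\|\p_z^2 g(t)\|^2_{X_{\tau,\kappa_2}}+\dl\!\int_0^{T_0}\!\!\angt^{\f{9-\dl}{2}}\|\p_z^3 g\|^2_{X_{\tau,\kappa_2}}dt+\la\s{\e}\!\int_0^{T_0}\!\!\angt^{\f{9-\dl}{2}}\eta\|\p_z^2 g\|^2_{X_{\tau,\kappa_2+1/2}}dt\\
\ls_\dl&\ \|\p_z^2 g(0)\|^2_{X_{\tau_0,\kappa_2}}+\int_0^{T_0}\angt^{\f{7-\dl}{2}}\|\p_z^2 g\|^2_{X_{\tau,\kappa_2}}dt+C_\dl\!\int_0^{T_0}\angt^{\f{9-\dl}{2}}\|\p_z R\|^2_{X_{\tau,\kappa_2}}dt,
\eali
\end{equation*}
reducing matters to an analogue of Lemma \ref{lem7.4} for $\p_z R^i$ (one derivative, since the other was absorbed).

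The remaining task is the product estimate. Applying the Leibniz rule and the bilinear bounds \eqref{product1}--\eqref{product3} of Lemma \ref{lemproduct}, together with the identities $\p_z u=g-\f{z}{2\angt}u$, $\p_z^2 u=\p_z g-\f{1}{2\angt}u-\f{z}{2\angt}\p_z u$ and the incompressibility $\p_z w=-(\p_x u+\p_y v)$, each nonlinear term $\p_z R^i$ is bounded by a sum of products $\|\p_z^k g\|^2_{X_{\tau,7}}\|\p_z^l g\|^2_{X_{\tau,\kappa_2+2}}$ with $k+l\leq 2$, modulo weighted Poincaré corrections from \eqref{poincare}, \eqref{poincare1}. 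Plugging in the a priori estimates \eqref{solutiongevrey} on $\|\p_z^k g\|_{X_{\tau,7}}$ (which decay at rates $\angt^{-(5-\dl)/4}, \angt^{-(7-\dl)/4}, \angt^{-(9-\dl)/4}$ for $k=0,1,2$) then produces exactly the three terms $\angt^{(3+\dl)/2}\|g\|^2_{X_{\tau,\kappa_0+1/2}}$, $\angt^{(5-\dl)/2}\|\p_z g\|^2_{X_{\tau,\kappa_0}}$, $\angt^{(7-\dl)/2}\|\p_z^2 g\|^2_{X_{\tau,\kappa_1}}$ on the right of \eqref{goodzzg}, with gain $C_\ast^2\e^2/(\la\s{\e})=C_\ast^2\e^{3/2}/\la$ coming from the $\la\s{\e}$ factor absorbed by the dissipation gradient term (equivalently from our use of Young's inequality as in \eqref{termr1fourth}).

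The main obstacle I expect is bookkeeping rather than novelty: one must carefully track which weighted Gevrey norm each factor lives in so that the product estimates, Poincaré inequalities, and the relations between $(u,v,w)$ and $(g,\t g)$ all match the indices $\kappa_0,\kappa_1,\kappa_2$ with the correct number of spare derivatives. A secondary subtlety is the boundary condition: $\p_z^2 g\big|_{z=0}$ is not zero, so the integration by parts producing $\lt\langle \p_z R^i_{j,\kappa_2},\p_z^3 g_{j,\kappa_2}\rt\rangle_{\th_2}$ generates no boundary term only because the weight $\th_2$ and the factor $\p_z R^i_{j,\kappa_2}$ combine to vanish in the limit $z\to\i$ and $\p_z g_{j,\kappa_2}\big|_{z=0}=0$ kills the trace --- verifying this compatibility using \eqref{compa} is the only delicate point beyond the Gevrey product bookkeeping.
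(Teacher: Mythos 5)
Your plan diverges from the paper's proof at the key energy step, and this divergence introduces a genuine gap. The paper does \emph{not} integrate by parts in $z$ on the pairing $\langle\p^2_z R^i_{j,\kappa_2},\p^2_z g_{j,\kappa_2}\rangle_{\th_2}$. Instead, it applies Cauchy--Schwarz directly, using the linear damping $\f{5-\dl}{2\angt}\|\p^2_z g_{j,\kappa_2}\|^2_{L^2(\th_2)}$ on the left to absorb the $\angt^{-1}\|\p^2_z g_{j,\kappa_2}\|^2$ penalty, which produces the factor $\angt^{\f{11-\dl}{2}}\|\p^2_z R^i\|^2_{X_{\tau,\kappa_2}}$ that is then bounded by the unnumbered product lemma (the analogue of \eqref{zgesti2}); the $\|\p^3_z g\|^2_{X_{\tau,\kappa_2}}$ term that inevitably appears in $\|\p^2_z R^i\|^2$ is absorbed into $\dl\int\angt^{\f{9-\dl}{2}}\|\p^3_z g\|^2_{X_{\tau,\kappa_2}}dt$ on the left for $\e$ small. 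The paper is careful to integrate by parts only in the $\p_z g$ case (Lemma \ref{lzg}), where the boundary trace involves $R^i_{j,\kappa_1}\p_z g_{j,\kappa_1}$ at $z=0$, which vanishes because $R^i\big|_{z=0}=0$ and $\p_z g\big|_{z=0}=0$.

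For $\p^2_z g$, your integration by parts produces the boundary term $\lt[\p_z R^i_{j,\kappa_2}\,\p^2_z g_{j,\kappa_2}\,\th_2\rt]_{z=0}$, and \emph{neither factor vanishes at $z=0$}. Indeed, $\p^2_z g\big|_{z=0}=\p^3_z u\big|_{z=0}+\f{1}{\angt}\p_z u\big|_{z=0}$, and from the $u$-equation restricted to $z=0$ one finds $\p^3_z u(0)=\p_t\p_z u(0)$, which is not generically zero; similarly $\p_z R^1\big|_{z=0}=-\p_z u(0)\,\p_x\p_z u(0)\neq 0$ in general (the compatibility conditions \eqref{compa} only kill even-order traces of $u$, not this product). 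You in fact wrote that ``$\p_z g_{j,\kappa_2}\big|_{z=0}=0$ kills the trace'', but after one integration by parts the $z$-trace one must kill is that of $\p^2_z g$, not of $\p_z g$. This is where your argument breaks, and it is precisely why the paper forgoes integration by parts at this order of $z$-regularity.

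A secondary imprecision: the claim that $\p_z R^i$ is controlled by products $\|\p^k_z g\|^2_{X_{\tau,7}}\|\p^l_z g\|^2_{X_{\tau,\kappa_2+2}}$ with $k+l\leq 2$ only holds if one always places $w$ (or $u,v$) in the slot of \eqref{product2} that receives the extra $\p_z$; otherwise terms like $\|w\,\p^2_z g\|_{X_{\tau,\kappa_2}}$ generate $\|\p^3_z g\|$. Whichever careful choice one makes, one cannot avoid $\p^3_z g$ in the final product bound once two $z$-derivatives are on $R^i$, which is why the paper's product lemma for $\p^2_z R^i$ explicitly keeps the term $\|g\|^2_{X_{\tau,7}}\|\p^3_z g\|^2_{X_{\tau,\kappa_2}}$ and relies on absorption rather than on a ``$k+l\leq 2$'' combinatorial bound.
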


Now applying $M_{j,\kappa_2}\p^2_z\p^j_x$  to the first equation of \eqref{3dgun1} and denoting $f_{j,x,\kappa_2}$ by $f_{j,\kappa_2}$ for a function $f$, we can obtain that
\be\label{zzgoodfirst}
\bali
&\lt[\partial_{t} +\la\s{\e}(j+1)-\partial_{z}^{2}+\f{1}{\angt} \rt] \p^2_z g_{j,\kappa_2}=\sum^{6}_{i=1}\p^2_z R^i_{j,\kappa_2}.\nn
\eali
\ee
Performing spacial energy estimates, we can have
\begin{align}
&\f{d}{dt}\|\p^2_zg_{j,\kappa_2}(t)\|^2_{L^2(\th_2)}+\dl\|\p^3_zg_{j,\kappa_2}(t)\|^2_{L^2(\th_2)}+\f{5-\dl}{2\angt}\| \p^2_zg_{j,\kappa_2}(t)\|^2_{L^2(\th_2)}+2(j+1)\la\s{\e}{\eta}(t)\|\p^2_zg_{j,\kappa_2}(t)\|^2_{L^2(\th_2)}\nn\\
\leq &2 \lt| \lt\langle \sum^6_{i=1}\p^2_z R^i_{j,\kappa_2},\p^2_z g_{j,\kappa_2}\rt\rangle_{\th_2}\rt|.\nn
\end{align}
Multiplying the above equality by $\angt^{\f{9-\dl}{2}}$ and then integrating the resulted equation from $0$ to $t$ for any $t\in (0,T_0]$, we can achieve that
\bes
\bali
&\angt^{\f{9-\dl}{2}}\|\p^2_zg_{j,\kappa_2}(t)\|^2_{L^2(\th_2)}+\dl\int^{T_0}_0\angt^{\f{9-\dl}{2}}\|\p^3_z g_{j,\kappa_2}(t)\|^2_{L^2(\th_2)}dt+2\la\s{\e}(j+1) \int^{T_0}_0\angt^{\f{9-\dl}{2}}{\eta}(t)\|\p^2_zg_{j,\kappa_2}(t)\|^2_{L^2(\th_2)}dt\\
\leq &\int^{T_0}_0\angt^{\f{7-\dl}{2}}\|\p^2_z g_{j,\kappa_2}(t)\|^2_{L^2(\th_2)}dt+\int^{T_0}_0 \angt^{\f{9-\dl}{2}} \lt| \lt\langle \sum^6_{i=1}\p^2_zR^i_{j,\kappa_2},\p^2_z g_{j,\kappa_2}\rt\rangle_{\th_2}\rt|dt.
\eali
\ees
By using Cauchy inequality and \eqref{poincare1} to the righthand of above inequality, and then summing the resulted equations over $j\in\bN$, we can obtain that
\be
\bali
&\angt^{\f{9-\dl}{2}}\|\p^2_zg(t)\|^2_{X_{\tau,\kappa_2}}+\dl\int^{T_0}_0\angt^{\f{9-\dl}{2}}\|\p^3_z g(t)\|^2_{X_{\tau,\kappa_2}}dt+\la\s{\e}(j+1) \int^{T_0}_0\angt^{\f{9-\dl}{2}}{\eta}(t)\|\p^2_zg(t)\|^2_{X_{\tau,\kappa_2}}dt\\
\leq &\int^{T_0}_0\angt^{\f{7-\dl}{2}}\|\p^2_z g(t)\|^2_{X_{\tau,\kappa_2}}dt+C_\dl\int^{T_0}_0 \angt^{\f{11-\dl}{2}}\sum^6_{i=1}\|\p^2_zR^i\|^2_{X_{\tau,\kappa_2}}dt.
\eali
\ee
Similar as Lemma \ref{lem7.4}, we have the following lemma.
\begin{lemma}
We have the following estimates
\begin{align}
 &\sum^6_{i=1}\|\p^2_zR^i\|^2_{X_{\tau,\kappa_2}}\ls\angt^{3/2}\lt(\|g\|^2_{X_{\tau,7}}\|\p^3_zg\|^2_{X_{\tau,\kappa_2}}+\|\p_zg\|^2_{X_{\tau,7}}\|\p^2_zg\|^2_{X_{\tau,\kappa_2+2}}
+\|\p^2_zg\|^2_{X_{\tau,7}}\|\p_zg\|^2_{X_{\tau,\kappa_2+2}}\rt). \label{zzgest1}
\end{align}
\end{lemma}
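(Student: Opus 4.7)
The proof follows the template of Lemma \ref{lem7.4}, but with the Leibniz rule applied to two additional $z$-derivatives. My plan is to expand $\p_z^2 R^i$ for each $i=1,\dots,6$, bound each resulting product via Lemma \ref{lemproduct}, and use the weighted Poincar\'e inequalities of Lemma \ref{lpoincare} together with the incompressibility $\p_z w=-(\p_x u+\p_y v)$ and the identity $g=\p_z u+\f{z}{2\angt}u$ to convert every factor into a norm of $\bl{g}$ or one of its $z$-derivatives.

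I would first handle the horizontal transport terms $R^1=-u\p_x g$ and $R^2=-v\p_y g$. By Leibniz,
\begin{equation*}
\p_z^2 R^1=-u\,\p_x\p_z^2 g-2\,\p_z u\,\p_x\p_z g-\p_z^2 u\,\p_x g,
\end{equation*}
and similarly for $R^2$. I would apply \eqref{product2} (or \eqref{product3}) in Lemma \ref{lemproduct} to each product in $X_{\tau,\kappa_2}$, splitting the $\kappa_2$ weight between a low ($5$ or $7$) factor and a high ($\kappa_2$ or $\kappa_2+2$) factor. The Poincar\'e inequality \eqref{poincare} with $\nu=1/2$ turns the $u$-factor into $\angt^{1/2}\p_z u\approx\angt^{1/2}g$, and $\p_z u$, $\p_z^2u$ are absorbed directly into $g$, $\p_z g$ modulo the Poincar\'e-controlled remainder $\f{z}{2\angt}u$ from the definition of $g$. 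This yields contributions of the form $\angt^{3/2}\|g\|^2_{X_{\tau,7}}\|\p_z^3 g\|^2_{X_{\tau,\kappa_2}}$ (from the top-order piece $u\,\p_x\p_z^2 g$, after another Poincar\'e step) and $\angt^{3/2}\|\p_z g\|^2_{X_{\tau,7}}\|\p_z^2 g\|^2_{X_{\tau,\kappa_2+2}}$ or $\angt^{3/2}\|\p_z^2 g\|^2_{X_{\tau,7}}\|\p_z g\|^2_{X_{\tau,\kappa_2+2}}$ from the middle and low pieces.

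For the vertical transport term $R^3=-w\p_z g$, I would use
\begin{equation*}
\p_z^2 R^3=-w\,\p_z^3 g-2\,\p_z w\,\p_z^2 g-\p_z^2 w\,\p_z g,
\end{equation*}
together with $\p_z w=-(\p_x u+\p_y v)$ and $\p_z^2 w=-(\p_x\p_z u+\p_y\p_z v)$. The troublesome factor is $w$, which I would write as $w=-\int_0^z(\p_x u+\p_y v)d\bar z$; the same $\angt^{1/4}$ argument used in \eqref{sob2} gives $\|\th_{\nu/2} w_k\|_{L_h^2 L_z^\infty}\lesssim\angt^{1/4}\|\p_h \bl u\|$ (or better, a full $\angt^{1/2}\|g\|$ bound after Poincar\'e on $\p_h u\approx \p_h\p_z u\cdot\angt^{1/2}$). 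The remaining products $\p_z w\,\p_z^2 g$ and $\p_z^2 w\,\p_z g$ are handled as in Lemma \ref{lem7.4}, producing the remaining two terms of the desired bound. The lower-order terms $R^4=\tfrac{wu}{2\angt}$, $R^5=\p_z u\,\p_y v$, $R^6=-\p_z v\,\p_y u$ are strictly easier: their second $z$-derivatives expand into at most three factors each involving $\bl g$, $\p_z\bl g$ or $\p_z^2\bl g$ (after Poincar\'e on $u,v$ and incompressibility on $w$), all of which Lemma \ref{lemproduct} dominates by the right-hand side of \eqref{zzgest1}.

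The main obstacle I expect is bookkeeping around the term $w\,\p_z^3 g$: it contains the highest-order unknown in the lemma, and to bound it without a derivative loss I need the $\angt^{1/2}$ gain coming from Poincar\'e on $w$ (equivalently, from writing $w$ as a $z$-integral of $\p_h\bl u$ and using \eqref{sob2}), so that the weight $\angt^{3/2}$ on the right-hand side is exactly recovered. Once that accounting is done, summing over $i=1,\dots,6$ yields \eqref{zzgest1}, and the proof of Lemma \ref{lzzg}, hence of Proposition \ref{pgoodunknown}, can be completed by inserting this estimate into the energy inequality preceding \eqref{zzgest1} and applying the a priori estimates \eqref{solutiongevrey} exactly as in the previous two subsections.
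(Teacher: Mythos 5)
Your proposal follows the paper's own (omitted) argument: expand $\p_z^2 R^i$ by Leibniz, bound each product by the method of Lemma~\ref{lemproduct} while routing the Sobolev--embedding $\p_z$ onto the $\bl{u}$- or $w$-factor, then use incompressibility, Lemma~\ref{lpoincare} and the relation $g=\p_z u+\tfrac{z}{2\angt}u$ to express everything through $\bl g,\p_z\bl g,\p_z^2\bl g,\p_z^3\bl g$; this is exactly what the authors mean by ``routine.'' One correction to your bookkeeping: the term $\angt^{3/2}\|g\|^2_{X_{\tau,7}}\|\p_z^3 g\|^2_{X_{\tau,\kappa_2}}$ does \emph{not} come from $u\,\p_x\p_z^2 g$ in $\p_z^2R^1$. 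With the Sobolev $\p_z$ placed on $u$ (the only safe option), that piece produces $\angt^{3/2}\bigl(\|\p_zg\|^2_{X_{\tau,7}}\|\p^2_zg\|^2_{X_{\tau,\kappa_2+2}}+\|\p_zg\|^2_{X_{\tau,\kappa_2+2}}\|\p^2_zg\|^2_{X_{\tau,7}}\bigr)$, while placing $\p_z$ on $\p_x\p_z^2 g$ would instead give $\|\p_z^3 g\|^2_{X_{\tau,\kappa_2+2}}$, two index powers higher than the dissipation term in \eqref{goodzzg} can absorb. The only genuine source of $\|\p_z^3 g\|^2_{X_{\tau,\kappa_2}}$ is $w\,\p_z^3 g$ in $\p_z^2R^3$, which you correctly flag later as the main obstacle, handled with $\p_z$ falling on $w$, i.e.\ on $\p_z w=-(\p_xu+\p_yv)$.
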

\pf Proof of this Lemma is also repeatedly use of \eqref{poincare} and \eqref{poincare1} in Lemma \ref{lpoincare}, product estimates in \eqref{product1} to \eqref{product3} in Lemma \ref{lemproduct} and the relation between $\bl{u}$ and $\bl{g}$. Since it is a routing estimate, we omit the details. \qed

Then by using the a priori estimates in \eqref{solutiongevrey} and \eqref{zzgest1}, we see that
\begin{align}
&\int^{T_0}_0 \angt^{\f{11-\dl}{2}}\sum^6_{i=1}\|\p^2_zR^i\|^2_{X_{\tau,\kappa_1}}dt\ls C^2_\ast\e^2\int^{T_0}_0 \lt(\angt^{\f{9-\dl}{2}}\|\p^3_zg\|^2_{X_{\tau,\kappa_2}}+\angt^{\f{5-\dl}{2}}\|\p_zg\|^2_{X_{\tau,\kappa_0}}+\angt^{\f{7-\dl}{2}}\|\p^2_zg\|^2_{X_{\tau,\kappa_1}}\rt)dt.\nn
\end{align}
Inserting the above inequality into \eqref{zzgoodfirst}, we can obtain \eqref{goodzzg} in Lemma \ref{lzzg}. \qed

\section{Estimates of the good unknowns $\bl{\fH}$} \label{secfh}

After we obtain the faster decay rate for low order Gevrey-2 energy of the unknowns $\bl{u}$ through the linearly good unknowns $\bl{g}$. In this section, we focus on the Gevrey-2 estimates of the linearly good unknowns $\bl{\fH}$ and its $z-$ derivative. It will induce faster decay rate for low order Gevrey-2 energy of the auxiliary functions $\bl{\mH}$ and $\bl{\mG}$ as displayed in \eqref{solutiongevrey}.

Below we set
\bes
\kappa_3=9,\q \kappa_4=7.
\ees

\subsection{Estimates of $\bl{\fH}$}
\begin{lemma}\label{lemfh}
Under the assumption of \eqref{gassump}, for sufficiently small $\e$, there exits a constant $C_\dl$ such that for any $t\in (0,T_0]$, we have the following estimate.
\begin{align}
&\angt^{\f{3-\dl}{2}} \lt\| \bl{\fH} (t)\rt\|^2_{X_{\tau,\kappa_3,7/8}}+\int^{T_0}_0\angt^{\f{3-\dl}{2}}\lt\|\p_z\bl{\fH}(t)\rt\|^2_{X_{\tau,\kappa_3,7/8}}dt+\la\s{\e} \int^{T_0}_0\angt^{\f{3-\dl}{2}}{\eta}(t) \lt\|\bl{\fH}(t)\rt\|^2_{X_{\tau,\kappa_3+1/2,7/8}}dt\nn\\
\leq  &C_\dl \f{C^2_\ast}{\la} \e^{3/2}\int^{T_0}_0 \lt(\angt^{\f{1+\dl}{2}}\lt\|\bl{\fH}(t)\rt\|^2_{X_{\tau,\kappa_3+1/2,7/8}}+\angt^{\f{3-\dl}{2}}\lt\|\p_z\bl{\fH}(t)\rt\|^2_{X_{\tau,\kappa_3,7/8}}
+\angt^{-\f{1-\dl}{2}}\lt\|\bl{\mH}(t)\rt\|^2_{X_{\tau,\kappa}}\rt)dt\label{goodfh}\\
        &+\f{\s{\e}}{\la}\int^{T_0}_0 \angt^{\f{3+\dl}{2}}\lt\|\bl{g}(t)\rt\|^2_{X_{\tau,\kappa_0+1/2}}dt.\nn
\end{align}
\end{lemma}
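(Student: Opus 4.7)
The plan is to derive an evolution equation for $\fH$ that, unlike the equation for $\mathcal{H}$, contains an additional zeroth-order absorption term of the form $\f{c}{\angt}\fH$, analogous to the $\f{1}{\angt}g$ term that appears in \eqref{3dgun1}. This extra coercive contribution is precisely what will enable the $\angt^{\f{3-\dl}{2}}$ time weight in the basic energy identity. Concretely, writing $\Phi:=\int_z^{+\infty}\mathcal{H}\,d\bar z$ so that $\fH=\mathcal{H}-\f{z}{2\angt}\Phi$ and $\p_z\Phi=-\mathcal{H}$, I would combine the equation for $\mathcal{H}$ derived in \eqref{auxih1} with the original equation \eqref{auxih} for $\Phi$ to produce
\bes
\lt[\p_t+u\p_x+v\p_y+w\p_z-\p_z^2\rt]\fH+\f{1}{\angt}\fH=\s{\e}\angt^{\dl-1}(\p_x\mathcal{G}+\p_y\mathcal{K})-\s{\e}\angt^{\dl-1}\f{z}{2\angt}\p_xw+\Sigma,
\ees
where $\Sigma$ collects the original nonlinear term $H$ from \eqref{auxih1} and the commutator of the transport/diffusion with the multiplier $\f{z}{2\angt}$ acting on $\Phi$. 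The key algebraic cancellation is that $-\p_z^2\bigl(\f{z}{2\angt}\Phi\bigr)=-\f{z}{2\angt}\p_z^2\Phi+\f{1}{\angt}\mathcal{H}+\f{z}{2\angt}\p_z\mathcal{H}$, whose second piece combines with $\mathcal{H}=\fH+\f{z}{2\angt}\Phi$ to furnish the desired $\f{1}{\angt}\fH$ plus tame remainders absorbed into $\Sigma$.

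Next, I would apply $M_{j,\kappa_3}\p^\alpha_h$ with $|\alpha|=j$ and perform the weighted $L^2$ inner product against $\fH_{j,\kappa_3}\th_{7/4}$, following exactly the scheme of \eqref{auxilh3}--\eqref{auxilh5} but now with $\nu=7/8$. The Poincar\'e inequality \eqref{poincare1} with $\nu=7/8$ together with the identity $-\p_t\th_{7/4}/\th_{7/4}=\f{7z^2}{16\angt^2}$ and the new $\f{1}{\angt}\fH$ absorption contribute a factor $\f{3-\dl}{2\angt}\|\fH\|^2$ on the left-hand side. Multiplying by $\angt^{(3-\dl)/2}$, summing in $j$, and integrating from $0$ to $t\leq T_0$ (using $\fH|_{t=0}=0$) leaves a right-hand side consisting of the transport commutator $[u\p_x+v\p_y+w\p_z,\p^\alpha_h]\fH$, the Gevrey source $\s{\e}\angt^{\dl-1}\p_h(\mathcal{G},\mathcal{K})$, the $z$-weighted source $\s{\e}\angt^{\dl-2}z\,\p_xw$, and $\Sigma$.

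Each of these is handled by the toolkit developed in Sections \ref{secmh}--\ref{secg}. The transport commutator is bounded via Lemma \ref{lemproduct} and the a priori estimates in \eqref{solutiongevrey}--\eqref{lowpoint}, yielding $\f{C_\ast^2}{\la}\s{\e}$ times the two $\fH$-integrals on the right of \eqref{goodfh}. The source $\s{\e}\angt^{\dl-1}\p_h(\mathcal{G},\mathcal{K})$ is dualized against $\fH$ and absorbed via Cauchy's inequality into a fraction of the gain term $\la\s{\e}\eta(t)\|\fH\|^2_{X_{\tau,\kappa_3+1/2,7/8}}$ plus $\f{C_\ast^2}{\la}\s{\e}\int\angt^{-(1-\dl)/2}\|\bl{\mathcal{G}}\|^2$, which is then controlled by $\|\bl{\mathcal{H}}\|$ via \eqref{solutiongevrey}, producing the $\bl{\mathcal H}$-term in \eqref{goodfh}. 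The remainder $\Sigma$ is of the same structural type as $H$ in Lemma \ref{lnonlinearmH} and is treated identically. For the critical $z$-weighted source, I would write $\p_xw=-\int_0^z(\p_x^2u+\p_x\p_yv)\,d\bar z$, use $u=2\angt\bigl(g-\p_zu\bigr)/z$-type relations and \eqref{poincare1} to trade a factor $\angt$ for one $z/\angt$ against the Gaussian weight, and apply Young's inequality to extract $\f{\s\e}{\la}\int\angt^{(3+\dl)/2}\|\bl g\|^2_{X_{\tau,\kappa_0+1/2}}dt$, which is the last term in \eqref{goodfh}. The main obstacle is Step~1: carrying out the algebra cleanly enough that the $\f{1}{\angt}\fH$ absorption emerges with no slow-decaying remainder, and then pairing the $\f{z}{\angt^{2-\dl}}\p_xw$ source with $\bl g$ at precisely the exponent $(3+\dl)/2$ prescribed by \eqref{goodfh}.
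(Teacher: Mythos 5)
Your algebraic Step~1 is the right idea and is exactly the paper's first step: multiply \eqref{auxih} by $\frac{z}{2\angt}$, let $\p_t$ and $\p_z^2$ hit the multiplier against $\Phi:=\int_z^{\infty}\mathcal H\,d\bar z$ (using $\p_z\Phi=-\mathcal H$), and subtract from \eqref{auxih1}; the commuted pieces $-\frac{z}{2\angt^2}\Phi$ and $+\frac{1}{\angt}\mathcal H$ combine to the absorption $\frac{1}{\angt}\fH$. Retaining the transport on the left is a harmless variant (the paper instead moves it into the right-hand side $Q^i$'s), and the commutator $[u\p_x+v\p_y+w\p_z,\p^\al_h]\fH$ can indeed be bounded as in Lemma~\ref{lnonlinearmH}.

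However, your handling of the source and remainder contains two genuine gaps. First, dualizing $\s{\e}\angt^{\dl-1}\p_h(\mathcal G,\mathcal K)$ against $\fH_j$ produces $\|\bl{\mathcal G}\|^2_{X_{\tau,\kappa_3+3/2}}$ at tangential index $10.5$, but \eqref{solutiongevrey} only gives an a priori bound on $\bl{\mathcal G}$ at index $9$, and nothing in \eqref{solutiongevrey} relates $\bl{\mathcal G}$ to $\bl{\mathcal H}$, so the claim that this is ``controlled by $\|\bl{\mathcal H}\|$ via \eqref{solutiongevrey}'' is incorrect; accordingly, no $\bl{\mathcal G}$-norm appears on the right of \eqref{goodfh}. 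The paper sidesteps this cleanly: because $\kappa_3=9$ is low, $\|\p_h^2\bl u\|_{X_{\tau,\kappa_3-1/2,7/8}}=\|\bl u\|_{X_{\tau,\kappa_0+1/2,7/8}}\lesssim\angt^{1/2}\|\bl g\|_{X_{\tau,\kappa_0+1/2}}$, so $Q^1$ goes straight to the $\bl g$-term without ever invoking $\bl{\mathcal G}$ or $\bl{\mathcal H}$. Second, the statement that $\Sigma$ is ``of the same structural type as $H$ in Lemma \ref{lnonlinearmH} and is treated identically'' is wrong in a way that would break the proof: $\Sigma$ contains $\frac{z}{2\angt}(u\p_x+v\p_y)\Phi$, where a tangential derivative falls on $\Phi$, which has the same tangential order as $\mathcal H$; this is a one-derivative loss that $H$ does not exhibit and that product estimates alone cannot close. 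The paper handles it with the Gagliardo--Nirenberg interpolation \eqref{gnineq1} with $m=[1/\dl]$, which trades regularity for time decay and ends by invoking $\|\bl{\mathcal H}\|^2_{X_{\tau,\kappa}}$ at the very high index $\kappa=10+2/\dl$. That interpolation, not your Gevrey-source step, is the actual origin of the $\angt^{-\frac{1-\dl}{2}}\|\bl{\mathcal H}(t)\|^2_{X_{\tau,\kappa}}$ term in \eqref{goodfh}, and without it your estimate does not close.
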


We only show estimates for $\fH$, since that of $\wt{\fH}$ follows the same line.
First we derive of the equation satisfied by $\fH$. By multiplying $\f{z}{2\angt}$ to the first equation of \eqref{auxih}, we see that
\begin{align}
&\lt[\partial_{t}-\partial_{z}^{2} \rt]\f{z}{2\angt}\int^{+\i}_z \mathcal{H} d\bar{z}-\f{1}{\angt}\fH=-\f{z}{2\angt}\left({u} \p_x+v \partial_y+w\p_z \right)\int^{+\i}_z \mathcal{H} d\bar{z}+\f{z\s{\e}}{2\angt} \angt^{\dl-1} \p_x w. \label{fh1}
\end{align}
Then subtracting \eqref{fh1} from \eqref{auxih1}, we have
\begin{align}
\lt[\partial_{t}-\partial_{z}^{2}+\f{1}{\angt}\rt]\fH=&\s{\e}\angt^{\dl-1}(\p^2_xu+\p^2_{xy}v)-\f{z\s{\e}}{2\angt} \angt^{\dl-1} \p_x w\nn\\
 &-\f{z}{2\angt}w \mathcal{H}+(\p_xu+\p_yv)\mH\nn\\
 &+(\p_zu\p_x+\p_zv\p_y)\int^{+\i}_z \mathcal{H} d\bar{z}+\f{z}{2\angt}\left({u} \p_x+v \partial_y\right)\int^{+\i}_z \mathcal{H} d\bar{z}\nn\\
:=&Q^1+Q^2+Q^3.\label{fhest1}
\end{align}
with
\bes
\p_z\fH\big|_{z=0}=0, \quad \lim_{z \rightarrow+\infty} \fH=0.
\ees

Now applying $M_{j,\kappa_3}\p^j_x$  to \eqref{fhest1} and denoting $f_{j,x,\kappa_3}$ by $f_{j,\kappa_3}$ for a function $f$, we can obtain that
\bes
\bali
&\partial_{t} \fH_{j,\kappa_3}+\la\s{\e}\eta(t)(j+1)\fH_{j,\kappa_3}-\partial_{z}^{2} \fH_{j,\kappa_3}+\f{1}{\angt}\fH_{j,\kappa_3}=\sum^3_{i=1} Q^i_{j,\kappa_3}.
\eali
\ees
Similar as \eqref{hj5}, for $0<\nu<1$, we can have
\begin{align}
&\lt\langle \lt[\partial_{t} +\la\s{\e}{\eta}(t)(j+1) -\partial_{z}^{2} \rt]\fH_{j,\kappa_3}, \fH_{j,\kappa_3}(t)\rt\rangle_{\th_{2\nu}}\nn\\
=&\f{1}{2}\f{d}{dt}\|{\fH_{j,\kappa_3}}(t)\|^2_{L^2(\th_{2\nu})}+\|\p_z \fH_{j,\kappa_3}(t)\|^2_{L^2(\th_{2\nu})}+\f{4-\nu}{4\angt}\| \fH_{j,\kappa_3}(t)\|^2_{L^2(\th_{2\nu})}+\f{\nu-\nu^2}{8}\lt\|\f{z}{\angt} \fH_{j,\kappa_3}(t)\rt\|^2_{L^2(\th_{2\nu})}\nn\\
 &+(j+1)\la\s{\e}{\eta}(t)\|{\fH_{j,\kappa_3}}(t)\|^2_{L^2(\th_{2\nu})}.\nn
\end{align}
Using \eqref{poincare} in Lemma \ref{lpoincare}, we have
\begin{align}
&2\lt\langle \lt[\partial_{t} +\la\s{\e}{\eta}(t)(j+1)-\partial_{z}^{2} \rt]\fH_{j,\kappa_3}, \fH_{j,\kappa_3}(t)\rt\rangle_{\th_{2\nu}}\nn\\
\geq &\f{d}{dt}\|\fH_{j,\kappa_3}(t)\|^2_{L^2(\th_{2\nu})}+\|\p_z \fH_{j,\kappa_3}(t)\|^2_{L^2(\th_{2\nu})}+\f{2}{\angt}\| \fH_{j,\kappa_3}(t)\|^2_{L^2(\th_{2\nu})}+2(j+1)\la\s{\e}\eta(t)\|\fH_{j,\kappa_3}(t)\|^2_{L^2(\th_{2\nu})}.\nn
\end{align}
Performing energy estimates as before and using Cauchy inequality, we have
\bes
\bali
&\angt^{\f{3-\dl}{2}}\|\fH_{j,\kappa_3}(t)\|^2_{L^2(\th_{2\nu})}+\int^{T_0}_0\angt^{\f{3-\dl}{2}}\|\p_z \fH_{j,\kappa_3}(t)\|^2_{L^2(\th_{2\nu})}dt\\
&+\la\s{\e}(j+1) \int^{T_0}_0\angt^{\f{3-\dl}{2}}{\eta}(t)\|\fH_{j,\kappa_3}(t)\|^2_{L^2(\th_{2\nu})}dt\ls\f{1}{\la\s{\e}}\int^{T_0}_0\angt^{\f{5-3\dl}{2}}(j+1)^{-1}\sum^3_{i=1}\|Q^i_{j,\kappa_3}\|^2_{L^2(\th_{2\nu})}dt.
\eali
\ees
Letting $\nu=7/8$ and summing the above inequality over $j\in\bN$, we can achieve that
\begin{align}
&\angt^{\f{3-\dl}{2}} \lt\|{\fH} (t)\rt\|^2_{X_{\tau,\kappa_3,7/8}}+\int^{T_0}_0\angt^{\f{3-\dl}{2}}\lt\|\p_z{\fH}(t)\rt\|^2_{X_{\tau,\kappa_3,7/8}}dt+\la\s{\e} \int^{T_0}_0\angt^{\f{3-\dl}{2}}{\eta}(t) \lt\|{\fH}(t)\rt\|^2_{X_{\tau,\kappa_3+1/2,7/8}}dt\nn\\
\leq  &\f{1}{\la\s{\e}}\int^{T_0}_0\angt^{\f{5-3\dl}{2}}\sum^3_{i=1}\|Q^i\|^2_{X_{\tau,\kappa_3-1/2,7/8}}dt.\label{fhest2}
\end{align}
{\noindent\bf Estimates of $Q^1$}

This is direct. By using incompressibility and \eqref{poincare1}, we have
\begin{align}
&\|Q^1\|^2_{X_{\tau,\kappa_3-1/2,7/8}}\ls \e \angt^{2\dl-2}\|\p^2_h\bl{u}\|^2_{X_{\tau,\kappa_3-1/2,7/8}}\ls  \e \angt^{2\dl-2}\|\bl{u}\|^2_{X_{\tau,\kappa_3+7/2,7/8}}\ls \e \angt^{2\dl-1}\|g\|^2_{X_{\tau,\kappa_0+1/2}}.\label{termq1s}
\end{align}
From this, we can obtain
\begin{align}
&\f{1}{\la\s{\e}}\int^{T_0}_0\angt^{\f{5-3\dl}{2}}\|Q^1\|^2_{X_{\tau,\kappa_3-1/2,7/8}}dt\ls \f{\s{\e}}{\la}\int^{T_0}_0\angt^{\f{3+\dl}{2}}\|\bl{g}\|^2_{X_{\tau,\kappa_0+1/2}}dt. \label{termq1f}
\end{align}
{\noindent\bf Estimates of $Q^2$}

 By using \eqref{poincare1} in Lemma \ref{lpoincare}, \eqref{product1} to \eqref{product3} in Lemma \ref{lemproduct}, incompressibility, and the a priori estimates in Lemma \ref{llowpoint}, we have
\begin{align}
\|Q^2\|^2_{X_{\tau,\kappa_3-1/2,\nu}}\ls &\lt\|\p_z(w\mH)\rt\|^2_{X_{\tau,\kappa_3-1/2,\nu}}+\lt\|\p_h\bl{u}\mH \rt\|^2_{X_{\tau,\kappa_3-1/2,\nu}}\nn\\
\ls & \angt^{1/2}\lt(\|\p_h\bl{u}\|^2_{X_{\tau,5,\f{\nu+1}{4}}}\|\p_z\mH\|^2_{X_{\tau,\kappa_3-1/2,\f{\nu+1}{4}}}
+\|\p_h\bl{u}\|^2_{X_{\tau,\kappa_3-1/2,\f{\nu+1}{4}}}\|\p_z\mH\|^2_{X_{\tau,5,\f{\nu+1}{4}}}\rt)\label{termq2second}\\
\ls & C^2_\ast\e^2\lt(\angt^{-\f{2-\dl}{2}}\|\p_z\fH\|^2_{X_{\tau,\kappa_3,\nu}}
+\angt^{-\f{2-\dl}{2}}\|g\|^2_{X_{\tau,\kappa_0+1/2}}\rt).\nn
\end{align}
Here at the last line of the above inequality, we have used the fact
\bes
\|\p_z\mH\|_{X_{\tau,\kappa_3-1/2,\f{\nu+1}{4}}}\ls\|\p_z\fH\|^2_{X_{\tau,\kappa_3-1/2,\nu}},\q \|\p_h\bl{u}\|^2_{X_{\tau,\kappa_3-1/2,\f{\nu+1}{4}}}\ls \|\bl{g}\|^2_{X_{\tau,\kappa_3+3/2}}.
\ees
From this, we can obtain
\begin{align}
&\f{1}{\la\s{\e}}\int^{T_0}_0\angt^{\f{5-3\dl}{2}}\|Q^2\|^2_{X_{\tau,\kappa_3-1/2,7/8}}dt\nn\\
\ls&C_\dl \f{C^2_\ast}{\la} \e^{3/2}\int^{T_0}_0 \lt(\angt^{\f{3-\dl}{2}}\lt\|\p_z\bl{\fH}(t)\rt\|^2_{X_{\tau,\kappa_3,7/8}}
+\angt^{\f{3+\dl}{2}}\lt\|\bl{g}(t)\rt\|^2_{X_{\tau,\kappa_0+1/2}}\rt)dt.\label{termq2f}
\end{align}

{\noindent\bf Estimates of $Q^3$}

Using \eqref{poincare1} in Lemma \ref{lpoincare} and product estimates \eqref{product1} to \eqref{product3} in Lemma \ref{lemproduct}, we have
\begin{align}
\|Q^3\|^2_{X_{\tau,\kappa_3-1/2,\nu}}\ls &\lt\|\p_z(u,v)\int^\i_z \p_h\mH d\bar{z}\rt\|^2_{X_{\tau,\kappa_3-1/2,\nu}}+\lt\|\p_z\lt((u,v)\int^\i_z \p_h\mH d\bar{z}\rt)\rt\|^2_{X_{\tau,\kappa_3-1/2,\nu}}\nn\\
\ls & \angt^{1/2}\lt(\|\p_zu\|^2_{X_{\tau,5,\f{\nu+1}{4}}}\|\p_h\mH\|^2_{X_{\tau,\kappa_3-1/2,\f{\nu+1}{4}}}
+\|\p_zu\|^2_{X_{\tau,\kappa_3-1/2,\f{\nu+1}{4}}}\|\p_h\mH\|^2_{X_{\tau,5,\f{\nu+1}{4}}}\rt).\label{termq3third}
\end{align}
Then using the a priori estimates in \eqref{solutiongevrey} in Lemma \ref{llowpoint} and \eqref{gnineq1} in Claim \ref{claim}, we can obtain that
\begin{align}
\|Q^3\|^2_{X_{\tau,\kappa_3-1/2,\nu}}\ls&  \angt^{1/2}\lt(\|g\|^2_{X_{\tau,5}}\|\mH\|^{\f{2(m-1)}{m}}_{X_{\tau,\kappa_3-1/2,\f{\nu+1}{4}}}
\|\mH\|^{\f{2}{m}}_{X_{\tau,\kappa_3-1/2+2m,\f{\nu+1}{4}}}
+\|g\|^2_{X_{\tau,\kappa_0}}\|\fH\|^2_{X_{\tau,7,\nu}}\rt)\label{termq3f}\\
\ls & C^2_\ast\e^2\lt(\angt^{-\f{4-\dl}{2}}\|\fH\|^{\f{2(m-1)}{m}}_{X_{\tau,\kappa_3,\nu}}
\|\mH\|^{\f{2}{m}}_{X_{\tau,\kappa}}
+\angt^{-\f{2-\dl}{2}}\|g\|^2_{X_{\tau,\kappa_0}}\rt).\nn
\end{align}
Here at the last line of the above inequality, we have used the fact
\bes
\|\mH\|_{X_{\tau,\kappa_3-1/2,\f{\nu+1}{4}}}\ls\|\fH\|^2_{X_{\tau,\kappa_3-1/2,\nu}},\q \kappa_3-1/2+2m\leq \kappa,
\ees
by our choice of $m=\lt[\dl^{-1}\rt]$.

Then by using Young inequality and \eqref{termq3f}, we can obtain that
\begin{align}
&\f{1}{\la\s{\e}}\int^{T_0}_0\angt^{\f{5-3\dl}{2}}\|Q^3\|^2_{X_{\tau,\kappa_3-1/2,7/8}}dt\nn\\
\ls &\f{C^2_\ast\e^{3/2}}{\la}\int^{T_0}_0\lt(\angt^{\lt(\f{1-2\dl}{2}+\f{1-\dl}{2m}\rt)\f{m}{m-1}}\|\fH\|^2_{X_{\tau,\kappa_3,7/8}}+
\angt^{-\f{1-\dl}{2}}\|\mH\|^2_{X_{\tau,\kappa}}+\angt^{\f{3+\dl}{2}}\|g\|^2_{X_{\tau,\kappa_0}}\rt)dt\label{termq3s}\\
\ls &\f{C^2_\ast\e^{3/2}}{\la}\int^{T_0}_0\lt(\angt^{\f{1+\dl}{2}}\|\fH\|^2_{X_{\tau,\kappa_3,7/8}}+
\angt^{-\f{1-\dl}{2}}\|\mH\|^2_{X_{\tau,\kappa}}+\angt^{\f{3+\dl}{2}}\|g\|^2_{X_{\tau,\kappa_0}}\rt)dt.\nn
\end{align}
Here at the last line of \eqref{termq3s}, by our choice of $m$, we have
\bes
\lt(\f{1-2\dl}{2}+\f{1-\dl}{2m}\rt)\f{m}{m-1}\leq \f{1+\dl}{2}.
\ees
Inserting \eqref{termq1f}, \eqref{termq2f} and \eqref{termq3s} into \eqref{fhest2}, we can obtain \eqref{goodfh} in Lemma \ref{lemfh}. \qed

\subsection{Estimates of $\p_z\bl{\fH}$}

\begin{lemma}\label{lemzfh}
Under the assumption of \eqref{gassump}, for sufficiently small $\e$, there exits a constant $C_\dl$ such that for any $t\in (0,T_0]$, we have the following estimate.
\begin{align}
&\angt^{\f{5-\dl}{2}} \lt\| \p_z\bl{\fH}(t)\rt\|^2_{X_{\tau,\kappa_4,7/8}}+\int^{T_0}_0\angt^{\f{5-\dl}{2}}\lt\|\p^2_z\bl{\fH}(t)\rt\|^2_{X_{\tau,\kappa_4,7/8}}dt+\la\s{\e} \int^{T_0}_0\angt^{\f{5-\dl}{2}}{\eta}(t) \lt\|\p_z\bl{\fH}(t)\rt\|^2_{X_{\tau,\kappa_4+1/2,7/8}}dt\nn\\
\leq  &C_\dl \f{C^2_\ast}{\la} \e^{3/2}\int^{T_0}_0 \lt(\angt^{\f{1+\dl}{2}}\lt\|\bl{\fH}(t)\rt\|^2_{X_{\tau,\kappa_3+1/2,7/8}}+\angt^{\f{3-\dl}{2}}\lt\|\p_z\bl{\fH}(t)\rt\|^2_{X_{\tau,\kappa_3,7/8}}\rt)dt\label{goodzfh}\\
        &+\f{\s{\e}}{\la}\int^{T_0}_0 \angt^{\f{3+\dl}{2}}\lt\|\bl{g}(t)\rt\|^2_{X_{\tau,\kappa_0+1/2}}dt+\int^{T_0}_0\angt^{\f{3-\dl}{2}}\lt\|\p_z\bl{\fH}(t)\rt\|^2_{X_{\tau,\kappa_4,7/8}}dt.\nn
\end{align}
\end{lemma}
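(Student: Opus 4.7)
The plan follows closely the derivation of Lemma \ref{lemfh} for $\bl{\fH}$, one $z$-derivative up, mirroring the jump from Lemma \ref{lemg} to Lemma \ref{lzg}. First I would apply $\p_z$ to the $\fH$ equation \eqref{fhest1} and then $M_{j,\kappa_4}\p^j_x$, obtaining
\begin{equation*}
\lt[\p_t + \la\s{\e}\eta(t)(j+1) - \p^2_z + \f{1}{\angt}\rt]\p_z\fH_{j,\kappa_4} = \sum_{i=1}^3 \p_z Q^i_{j,\kappa_4}.
\end{equation*}
Taking the weighted $L^2(\th_{7/4})$ inner product with $\p_z\fH_{j,\kappa_4}$, the $-\p^2_z$ produces (after integration by parts against $\th_{7/4}$) the dissipation $\|\p^2_z\fH_{j,\kappa_4}\|^2_{L^2(\th_{7/4})}$ plus a $z$-weighted lower-order term absorbed via \eqref{poincare}; the boundary contribution at $z=0$ vanishes since $\p_z\fH|_{z=0}=0$. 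The coefficient $1/\angt$ combined with $\p_t\th_{7/4}$ and the prescribed time weight $\angt^{(5-\dl)/2}$ leaves exactly the extra lower-order term $\int^{T_0}_0\angt^{(3-\dl)/2}\|\p_z\bl{\fH}\|^2_{X_{\tau,\kappa_4,7/8}}dt$ on the right of \eqref{goodzfh}, just as the analogous step in the proof of Lemma \ref{lzg} produces $\int\angt^{(5-\dl)/2}\|\p_z\bl g\|^2_{X_{\tau,\kappa_1}}dt$ there.

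To avoid transferring a $\p_z$ onto $Q^i$ (which would be problematic for $Q^1$ because of the explicit factor of $z$, and for $Q^3$ because of the antiderivative $\int^\i_z\mH\,d\bar z$), I would integrate by parts in $z$ inside $\langle \p_z Q^i_{j,\kappa_4}, \p_z\fH_{j,\kappa_4}\rangle_{\th_{7/4}}$, pushing $\p_z$ onto the test. This yields
\begin{equation*}
-\langle Q^i_{j,\kappa_4}, \p^2_z\fH_{j,\kappa_4}\rangle_{\th_{7/4}} - \lt\langle Q^i_{j,\kappa_4}, \tfrac{7z}{16\angt}\p_z\fH_{j,\kappa_4}\rt\rangle_{\th_{7/4}},
\end{equation*}
where the first bracket is absorbed into the parabolic dissipation by Young's inequality, and the second is controlled by $\|Q^i_{j,\kappa_4}\|_{L^2(\th_{7/4})}\|\tfrac{z}{\angt}\p_z\fH_{j,\kappa_4}\|_{L^2(\th_{7/4})}$, whose second factor is also bounded by $\|\p^2_z\fH_{j,\kappa_4}\|_{L^2(\th_{7/4})}$ through \eqref{poincare1}. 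Summing over $j$ reduces everything to estimating $\|Q^i\|^2_{X_{\tau,\kappa_4,7/8}}$ with the time weight $\angt^{(5-3\dl)/2}/(\la\s\e)$.

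The remaining work is to bound the three $Q^i$ at the reduced index $\kappa_4=7$, which repeats verbatim the arguments \eqref{termq1s}, \eqref{termq2second}, \eqref{termq3third}--\eqref{termq3s}, since $\kappa_4 + 1/2 \le \kappa_3 + 1/2$ so the outputs sit in the $\kappa_3$-level norms that appear on the right-hand side of \eqref{goodzfh}. Concretely, the $Q^1$ contribution produces $\f{\s\e}{\la}\int\angt^{(3+\dl)/2}\|\bl g\|^2_{X_{\tau,\kappa_0+1/2}}dt$ from \eqref{termq1s}; the $Q^2$ contribution produces $\f{C^2_\ast}{\la}\e^{3/2}\int\bigl(\angt^{(3-\dl)/2}\|\p_z\bl{\fH}\|^2_{X_{\tau,\kappa_3,7/8}}+\angt^{(3+\dl)/2}\|\bl g\|^2_{X_{\tau,\kappa_0+1/2}}\bigr)dt$ from \eqref{termq2second}; and $Q^3$ produces the $\angt^{(1+\dl)/2}\|\bl{\fH}\|^2_{X_{\tau,\kappa_3+1/2,7/8}}$ term by invoking Gagliardo-Nirenberg interpolation \eqref{gnineq1} with $m=[\dl^{-1}]$ exactly as in \eqref{termq3f}--\eqref{termq3s}, using that the associated $\|\bl{\mH}\|^2_{X_{\tau,\kappa}}$ piece is already absorbed into the right-hand side of \eqref{goodzfh}.

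The main obstacle is the $Q^3$ estimate. This is the only term that couples the low-regularity object $\mH$ (living at Gevrey level $\kappa=10+2/\dl$) to the higher-regularity output $\|\bl{\fH}\|_{X_{\tau,\kappa_3+1/2,7/8}}$; closing it forces the Gagliardo-Nirenberg trade between the two levels, and the exponent $m=[\dl^{-1}]$ is pinned by the simultaneous requirements $\kappa_4-1/2+2m\le \kappa$ (to keep the interpolation inside the available regularity) and $\bigl(\f{1-2\dl}{2}+\f{1-\dl}{2m}\bigr)\f{m}{m-1}\le \f{1+\dl}{2}$ (to achieve the time-weight alignment with $\angt^{(1+\dl)/2}$). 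Because the base index at one $z$-derivative is smaller ($\kappa_4=7$ versus $\kappa_3=9$), verifying $\kappa_4-1/2+2m\le\kappa$ uses the full size of $\kappa$, and it is precisely here that the structure of the whole hierarchy $\kappa_0>\kappa_1>\kappa_2$, $\kappa_3>\kappa_4$, $\kappa$ is exploited.
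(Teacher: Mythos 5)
Your setup — applying $M_{j,\kappa_4}\p_z\p^j_x$, the weighted energy identity with $\th_{2\nu}$, $\nu=7/8$, the integration by parts on $\langle \p_z Q^i_{j,\kappa_4},\p_z\fH_{j,\kappa_4}\rangle_{\th_{7/4}}$ to avoid differentiating $Q^i$, and the estimates for $Q^1$, $Q^2$ — all match the paper's proof. But your handling of $Q^3$ is wrong at a concrete point, and as written your argument would not prove the stated inequality.

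You claim that closing the $Q^3$ term forces the Gagliardo--Nirenberg interpolation of Claim \ref{claim} with $m=\lt[\dl^{-1}\rt]$, ``exactly as in \eqref{termq3f}--\eqref{termq3s},'' and that the resulting $\|\bl{\mH}\|^2_{X_{\tau,\kappa}}$ piece ``is already absorbed into the right-hand side of \eqref{goodzfh}.'' But \eqref{goodzfh} has no $\|\bl{\mH}\|^2_{X_{\tau,\kappa}}$ term on its right-hand side — that term appears only in \eqref{goodfh} for $\bl{\fH}$ at index $\kappa_3$. The actual reason no such term arises here is that at the reduced index $\kappa_4=7$ the interpolation is unnecessary: $\|\p_h\mH\|_{X_{\tau,\kappa_4,\cdot}}\ls\|\mH\|_{X_{\tau,\kappa_4+2,\cdot}}=\|\mH\|_{X_{\tau,\kappa_3,\cdot}}$ already lands in the $\kappa_3$-level norm that appears on the right of \eqref{goodzfh}, whereas at $\kappa_3=9$ (Lemma \ref{lemfh}) the analogous $\p_h$ loss pushes one to $\kappa_3+2=11$, which does not fit, so there the Gagliardo--Nirenberg trade against $\|\mH\|_{X_{\tau,\kappa}}$ is unavoidable. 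This index gap $\kappa_3-\kappa_4=2$ is exactly the room reserved so the $z$-derivative level of $\fH$ closes without borrowing regularity from $\kappa$. Your proposed route would only prove a weaker estimate containing an extra $\|\bl{\mH}\|^2_{X_{\tau,\kappa}}$ term, and the sentence asserting that term is already present is simply false; correct the $Q^3$ step by observing directly $\|\p_h\mH\|_{X_{\tau,\kappa_4,\cdot}}\ls\|\mH\|_{X_{\tau,\kappa_3,\cdot}}\ls\|\fH\|_{X_{\tau,\kappa_3,\cdot}}$ and proceeding as in \eqref{termzq3f}.
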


\pf Now applying $M_{j,\kappa_4}\p_z\p^j_x$  to \eqref{fhest1} and denoting $f_{j,x,\kappa_4}$ by $f_{j,\kappa_4}$ for a function $f$, we can obtain that
\be\label{zfhfirst}
\bali
&\partial_{t} \p_z\fH_{j,\kappa_4}+\la\s{\e}\eta(t)(j+1)\p_z\fH_{j,\kappa_4}-\partial_{z}^{2} \p_z\fH_{j,\kappa_4}+\f{1}{\angt}\p_z\fH_{j,\kappa_4}=\sum^3_{i=1} \p_zQ^i_{j,\kappa_4}.
\eali
\ee
Similar as \eqref{hj5}, we can have
{\small
\begin{align}
&\lt\langle \lt[\partial_{t} +\la\dl\s{\e}{\eta}(t)(j+1) -\partial_{z}^{2} \rt]\p_z\fH_{j,\kappa_4}, \p_z\fH_{j,\kappa_4}(t)\rt\rangle_{\th_{2\nu}}\nn\\
=&\f{1}{2}\f{d}{dt}\|\p_z{\fH_{j,\kappa_4}}(t)\|^2_{L^2(\th_{2\nu})}+\|\p^2_z \fH_{j,\kappa_4}(t)\|^2_{L^2(\th_{2\nu})}+\f{4-\nu}{4\angt}\|\p_z \fH_{j,\kappa_4}(t)\|^2_{L^2(\th_{2\nu})}+\f{\nu-\nu^2}{8}\lt\|\f{z}{\angt} \p_z\fH_{j,\kappa_4}(t)\rt\|^2_{L^2(\th_{2\nu})}\nn\\
 &+(j+1)\la\s{\e}{\eta}(t)\|\p_z{\fH_{j,\kappa_4}}(t)\|^2_{L^2(\th_{2\nu})}.\nn
\end{align}
}
Using the inequality \eqref{poincare} in Lemma \ref{lpoincare}, we have
{\small
\begin{align}
&2\lt\langle \lt[\partial_{t} +\la\dl\s{\e}{\eta}(t)(j+1)-\partial_{z}^{2} \rt]\p_z\fH_{j,\kappa_4}, \p_z\fH_{j,\kappa_4}(t)\rt\rangle_{\th_{2\nu}}\nn\\
\geq &\f{d}{dt}\|\p_z\fH_{j,\kappa_4}(t)\|^2_{L^2(\th_{2\nu})}+\|\p^2_z \fH_{j,\kappa_4}(t)\|^2_{L^2(\th_{2\nu})}+\f{2}{\angt}\| \p_z\fH_{j,\kappa_4}(t)\|^2_{L^2(\th_{2\nu})}+2(j+1)\la\s{\e}\eta(t)\|\p_z\fH_{j,\kappa_4}(t)\|^2_{L^2(\th_{2\nu})}.\nn
\end{align}
}
Performing space and time energy estimates to \eqref{zfhfirst} as before and using integration by parts for $\p_z Q^i$ with $z-$ variable an then Cauchy inequality, we have
{\small
\bes
\bali
&\angt^{\f{5-\dl}{2}}\|\p_z\fH_{j,\kappa_4}(t)\|^2_{L^2(\th_{2\nu})}+\int^{T_0}_0\angt^{\f{5-\dl}{2}}\|\p^2_z \fH_{j,\kappa_4}(t)\|^2_{L^2(\th_{2\nu})}dt\\
&+\la\s{\e}(j+1) \int^{T_0}_0\angt^{\f{5-\dl}{2}}{\eta}(t)\|\p_z\fH_{j,\kappa_4}(t)\|^2_{L^2(\th_{2\nu})}dt\\
\ls&\int^{T_0}_0\angt^{\f{5-\dl}{2}}\sum^3_{i=1}\|Q^i_{j,\kappa_4}\|^2_{L^2(\th_{2\nu})}dt+\int^{T_0}_0\angt^{\f{3-\dl}{2}}\|\p_z \fH_{j,\kappa_4}(t)\|^2_{L^2(\th_{2\nu})}dt.
\eali
\ees
}
Letting $\nu=7/8$ and summing the above inequality over $j\in\bN$, we can achieve that
\begin{align}
&\angt^{\f{5-\dl}{2}}\|\p_z\fH(t)\|^2_{X_{\tau,\kappa_4,7/8}}+\int^{T_0}_0\angt^{\f{5-\dl}{2}}\|\p^2_z \fH(t)\|^2_{X_{\tau,\kappa_4,7/8}}dt+\la\s{\e}\int^{T_0}_0\angt^{\f{5-\dl}{2}}{\eta}(t)\|\p^2_z\fH(t)\|^2_{X_{\tau,\kappa_4,7/8}}dt\nn\\
\ls&\int^{T_0}_0\angt^{\f{5-\dl}{2}}\sum^3_{i=1}\|Q^i\|^2_{X_{\tau,\kappa_4,7/8}}dt+\int^{T_0}_0\angt^{\f{3-\dl}{2}}\|\p_z \fH(t)\|^2_{X_{\tau,\kappa_4,7/8}}dt.\label{zfhs}
\end{align}

{\noindent\bf Estimates of $Q^1$} This is direct. Same as \eqref{termq1s}, we have
\begin{align}
&\|Q^1\|^2_{X_{\tau,\kappa_4,7/8}}\ls \e \angt^{2\dl-2}\|\p^2_h\bl{u}\|^2_{X_{\tau,\kappa_4,7/8}}\ls  \e \angt^{2\dl-2}\|\bl{u}\|^2_{X_{\tau,\kappa_4+4,7/8}}\ls \e \angt^{2\dl-1}\|\bl{g}\|^2_{X_{\tau,\kappa_0+1/2}}.\nn
\end{align}
From this, we can obtain
\begin{align}
\int^{T_0}_0\angt^{\f{5-\dl}{2}}\|Q^1\|^2_{X_{\tau,\kappa_4,7/8}}dt\ls& \e\int^{T_0}_0\angt^{\f{3+\dl}{2}}\angt^\dl\|\bl{g}\|^2_{X_{\tau,\kappa_0+1/2}}dt\ls\f{\s{\e}}{\la}\int^{T_0}_0\angt^{\f{3+\dl}{2}}\|\bl{g}\|^2_{X_{\tau,\kappa_0+1/2}}dt. \label{termzq1s}
\end{align}
Here at the last line, we have used \eqref{timelifespan0}.

{\noindent\bf Estimates of $Q^2$}
 Similar as estimate in \eqref{termq2second}, we can obtain that
\begin{align}
&\|Q^2\|^2_{X_{\tau,\kappa_3,7/8}}\ls  C^2_\ast\e^2\lt(\angt^{-\f{2-\dl}{2}}\|\p_z\fH\|^2_{X_{\tau,\kappa_4,7/8}}
+\angt^{-\f{2-\dl}{2}}\|g\|^2_{X_{\tau,\kappa_0+1/2}}\rt).\nn
\end{align}
From this, we can obtain
\begin{align}
&\int^{T_0}_0\angt^{\f{5-\dl}{2}}\|Q^2\|^2_{X_{\tau,\kappa_4,7/8}}dt\nn\\
\ls&C^2_\ast\e^2\int^{T_0}_0 \lt(\angt^{\f{3}{2}}\lt\|\p_z\bl{\fH}(t)\rt\|^2_{X_{\tau,\kappa_4,7/8}}
+\angt^{\f{3}{2}}\lt\|\bl{g}(t)\rt\|^2_{X_{\tau,\kappa_0+1/2}}\rt)dt\label{termzq2f}\\
\ls&\f{C^2_\ast}{\la}\e^{3/2}\int^{T_0}_0 \lt(\angt^{\f{3-\dl}{2}}\lt\|\p_z\bl{\fH}(t)\rt\|^2_{X_{\tau,\kappa_4,7/8}}
+\angt^{\f{3+\dl}{2}}\lt\|\bl{g}(t)\rt\|^2_{X_{\tau,\kappa_0+1/2}}\rt)dt.\nn
\end{align}
{\noindent\bf Estimates of $Q^3$} Similar as estimate in \eqref{termq3third} and using the a priori estimate in \eqref{solutiongevrey}, we can obtain that
\begin{align}
\|Q^3\|^2_{X_{\tau,\kappa_4,7/8}}\ls & \angt^{1/2}\lt(\|\p_zu\|^2_{X_{\tau,5,\f{15}{32}}}\|\p_h\mH\|^2_{X_{\tau,\kappa_4,\f{15}{32}}}
+\|\p_zu\|^2_{X_{\tau,\kappa_4,\f{15}{32}}}\|\p_h\mH\|^2_{X_{\tau,5,\f{15}{32}}}\rt)\nn\\
\ls &\angt^{1/2}\lt(\|\p_zu\|^2_{X_{\tau,5,\f{15}{32}}}\|\mH\|^2_{X_{\tau,\kappa_3,\f{15}{32}}}
+\|\p_zu\|^2_{X_{\tau,\kappa_4,\f{15}{32}}}\|\mH\|^2_{X_{\tau,7,\f{15}{32}}}\rt)\nn\\
\ls &C^2_\ast\e^2\lt(\angt^{-\f{4-\dl}{2}}\|\fH\|^2_{X_{\tau,\kappa_3}}+\angt^{-\f{2-\dl}{2}}\|\bl{g}\|^2_{X_{\tau,\kappa_0+1/2}}\rt).\nn
\end{align}

From this, we can obtain
\begin{align}
&\int^{T_0}_0\angt^{\f{5-\dl}{2}}\|Q^3\|^2_{X_{\tau,\kappa_4,7/8}}dt\nn\\
\ls&C^2_\ast\e^2\int^{T_0}_0 \lt(\angt^{\f{1}{2}}\lt\|\bl{\fH}(t)\rt\|^2_{X_{\tau,\kappa_3,7/8}}
+\angt^{\f{3}{2}}\lt\|\bl{g}(t)\rt\|^2_{X_{\tau,\kappa_0+1/2}}\rt)dt\label{termzq3f}\\
\ls&\f{C^2_\ast}{\la}\e^{3/2}\int^{T_0}_0 \lt(\angt^{\f{1+\dl}{2}}\lt\|\bl{\fH}(t)\rt\|^2_{X_{\tau,\kappa_3,7/8}}
+\angt^{\f{3+\dl}{2}}\lt\|\bl{g}(t)\rt\|^2_{X_{\tau,\kappa_0+1/2}}\rt)dt.\nn
\end{align}
Inserting \eqref{termzq1s}, \eqref{termzq2f} and \eqref{termzq3f} into \eqref{zfhs}, we can obtain \eqref{goodzfh} in Lemma \ref{lemzfh}. \qed

\begin{appendix}
\begin{center} {\Large\sc Appendix}  \end{center}
\titleformat{\section}[block]{\large\bf}{\Alph{section}}{0.5em}{}[]

\section{ Estimates in Lemma \ref{llowpoint}}

\pf  We only show estimates for $u$, $\mH$ and $\mG$ since the others are complete the same.

By the definition of $g$  and boundary condition for $u$, we have
\be\label{phig}
\lt\{
\bali
&\p_zu+\f{ z}{2\langle t\rangle} u=g,\\
&u \big|_{z=0}=0.
\eali
\rt.
\ee
Solving this ODE, we get
\bes
u(t,x_h,z)=\exp\lt(-\f{z^2}{4\angt}\rt)\int^z_0 g(t,x_h,\bar{z})\exp\lt(\f{\bar{z}^2}{4\angt}\rt) d\bar{z}.
\ees
For any $0\leq\nu<1$, by multiplying the above equality with $\th_{\nu}M_{k,\kappa}$ and taking $\p^k_h$, we have
\be\label{ethetaphi}
\bali
\th_{\la} M_{k,\kappa}\p^k_h u= \th_{\la-1}(z) \int^z_0 \th(\bar{z})M_{k,\kappa}\p^k_h g(\bar{z})\exp\lt(\f{1}{8\angt}(\bar{z}^2-z^2)\rt) d\bar{z}.
\eali
\ee

Using the fact that for any $\beta\geq 0$,
\bes
\sup_{z\geqq 0} \zeta^\beta e^{-\zeta^2}\leq C_\beta,
\ees
we have
\bes
\lt| \lt(\f{z}{\s{\angt}}\rt)^\beta \th_{\nu-1}\rt| \leq C_{\nu,\beta}.
\ees
Moreover, by considering $0\leq\zeta\leq 1$ and $\zeta>1$, it is not hard to check that
\bes
e^{-\zeta^2}\int^\zeta_0 e^{\bar{\zeta}^2}d\bar{\zeta}\leq \f{2}{1+\zeta}.
\ees
Then a change of variable indicates that
\bes
 \int^z_0 \exp\lt(\f{1}{4\angt}(\bar{z}^2-z^2)\rt) d\bar{z}\leq \f{C}{1+\zeta} \s{\angt}.
\ees
In \eqref{ethetaphi}, by using H\"{o}lder inequality on $z$, we have

\be\label{ethetaphi1}
\bali
\lt|\th_\nu M_{k,\kappa}\p^k_h u\rt|\leq& \th_{\nu-1}\|\th M_{k,\kappa}\p^k_hg\|_{L^{2}_z} \lt( \int^z_0 \exp\lt(\f{1}{4\angt}(\bar{z}^2-z^2)\rt) d\bar{z} \rt)^{1/2}\\
           \ls & \th_{\nu-1}\|\th M_{k,\kappa}\p^k_h g\|_{L^{2}_z}\angt^{1/4} (1+\zeta)^{-1/2}\\
           \ls & \th_{\nu-1}\|\th M_{k,\kappa}\p^k_h  g\|_{L^{2}_z}\angt^{1/4}.
\eali
\ee
Also, from the definition of $\fH$ in \eqref{defng} and the fact in \eqref{antimh}, we see that
\be\label{fhmh}
\lt\{
\bali
&\p_z\lt(\int^\i_z\mH\bar{z}\rt)+\f{ z}{2\langle t\rangle} \lt(\int^\i_z\mH\bar{z}\rt)=-\fH,\\
& \int^\i_z\mH\bar{z}\Big|_{z=0}=0.
\eali
\rt.
\ee
Similar as \eqref{ethetaphi1}, we can have for $0<\nu<7/8$
\be\label{fhmh1}
\lt|\th_\nu M_{k,\kappa}\p^k_h \int^\i_z\mH d\bar{z}\rt|\ls \th_{\nu-7/8}\|\th_{7/8} M_{k,\kappa}\p^k_h  \fH\|_{L^{2}_z}\angt^{1/4}.
\ee
From \eqref{ethetaphi1}, by using the a priori assumption \eqref{gassump}, it is easy to see that
\be\label{eug1}
\bali
\angt^{-1/2}\lt\|\th_\nu M_{k,\kappa}\p^k_h u\rt\|_{L^2}  \ls& \angt^{-1/2} \|\th_{\nu-1}\|_{L^2_z}\|\th M_{k,\kappa}\p^k_h g\|_{L^2}\angt^{1/4}\\
                       \ls& \|\th M_{k,\kappa}\p^k_h g\|_{L^2}\ls C_\ast\e \angt^{-\f{5-\dl}{4}}.
\eali
\ee
By squaring \eqref{eug1} and summing the resulted equation over $k\in \bN$, we obtain that
\be\label{egu1p}
\angt^{-1}\|u\|^2_{X_{\tau,12,\nu}}\ls \|g\|^2_{X_{\tau,12}}\ls C_\ast\e \angt^{-\f{5-\dl}{2}}.
\ee
By using \eqref{fhmh1} and same as \eqref{egu1p}, we have, for $0<\nu<7/8$
\be\label{fhmh2}
\angt^{-1}\lt\|\int^\i_z\mH d\bar{z}\rt\|^2_{X_{\tau,9,\nu}}\ls \|\fH\|^2_{X_{\tau,9,7/8}}\ls C_\ast\e \angt^{-\f{3-\dl}{2}}.
\ee
Applying $\th_\nu \p^k_h$, $\th_\nu\p_z\p^k_h$ and $\th_\nu\p^2_z\p^k_h$ to the first equation of \eqref{phig} respectively give that
\bes
\bali
&\th_{\nu}\p_z \p^k_hu=\th_{\nu}\p^k_hg-\th_{\nu}\f{z}{2\langle t\rangle} \p^k_h u,\\
&\th_{\nu}\p^2_z \p^k_h u =\th_{\nu}\p_z \p^k_hg-\th_{\nu}\f{1}{2\angt}\p^k_h u-\th_{\nu}\f{z}{2\angt}\p_z\p^k_h u,\\
&\th_{\nu}\p^3_z \p^k_h u =\th_{\nu}\p^2_z \p^k_h g-\th_{\nu}\f{1}{\angt}\p^k_h \p_z u-\th_{\nu}\f{z}{2\angt}\p^2_z\p^k_h u.
\eali
\ees
From the above equalities, we can easily deduce that
\begin{align}
&\|\p_z u\|_{X_{\tau,12,\nu}}\ls \|g\|_{X_{\tau,12,\nu}}+\angt^{-1/2}\| u\|_{X_{\tau,12,\f{\nu+1}{2}}}\ls_\nu C_\ast\e \angt^{-\f{5-\dl}{4}},\label{ug2}\\
&\s{\dl}\angt^{1/2}\|\p^2_z u\|_{X_{\tau,10,\nu}}\ls \s{\dl}\angt^{1/2} \|\p_zg\|_{X_{\tau,10,\nu}}+\s{\dl}\angt^{-1/2}\| u\|_{X_{\tau,10,\nu}}+\s{\dl}\| \p_z u\|_{X_{\tau,10,\f{\nu+1}{2}}}\nn\\
&\ls_\nu C_\ast\e \angt^{-\f{5-\dl}{4}}, \label{ug3}\\
&\dl\angt\|\p^3_z u\|_{X_{\tau,8,\nu}}\ls \dl\angt\|\p^2_zg\|_{X_{\tau,8,\nu}}+\dl\| \p_zu\|_{X_{\tau,8,\nu}}+\dl\angt^{1/2}\| \p^2_z u\|_{X_{\tau,8,\f{\nu+1}{2}}}\nn\\
&\ls_\nu C_\ast\e \angt^{-\f{5-\dl}{4}}. \label{ug4}
\end{align}
The above estimates in \eqref{egu1p}, \eqref{ug2}, \eqref{ug3} and \eqref{ug4} indicates the first one in \eqref{solutiongevrey}.

Applying $\th_\nu \p^k_h$ and $\th_\nu\p_z\p^k_h$ to the first equation of \eqref{fhmh} respectively give that
\bes
\bali
&\th_{\nu} \p^k_h \mH=\th_{\nu}\p^k_h \fH+\th_{\nu}\f{z}{2\langle t\rangle} \p^k_h \int^\i_z\mH d\bar{z},\\
&\th_{\nu}\p_z \p^k_h \mH =\th_{\nu}\p^k_h \p_z\fH+\f{\th_{\nu}}{2\langle t\rangle} \p^k_h \int^\i_z\mH d\bar{z}-\f{z}{2\angt}\th_\nu\p^k_h\mH.
\eali
\ees
From the above equalities and by using \eqref{fhmh2}, we can easily deduce that
\begin{align}
&\|\mH\|_{X_{\tau,\kappa,3/4}}\ls \|\fH\|_{X_{\tau,\kappa,3/4}}+\angt^{-1/2}\lt\|\int^\i_z\mH d\bar{z}\rt\|_{X_{\tau,\kappa,\f{13}{16}}}\ls_\nu C_\ast\e \angt^{-\f{3-\dl}{4}},\label{fhmh3}\\
&\angt^{1/2}\|\p_z \mH\|_{X_{\tau,\kappa,3/4}}\ls \angt^{1/2} \|\p_z\fH\|_{X_{\tau,\kappa,3/4}}+\angt^{-1/2}\lt\|\int^\i_z\mH d\bar{z}\rt\|_{X_{\tau,\kappa,3/4}}+\lt\|\mH\rt\|_{X_{\tau,\kappa,\f{13}{16}}}\nn\\
&\ls_\nu C_\ast\e \angt^{-\f{3-\dl}{4}}.\label{fhmh4}
\end{align}
The above estimates in \eqref{fhmh3} and \eqref{fhmh4} indicates the second one in \eqref{solutiongevrey}.

For the estimate of $\mG$, we need to use a product estimate in \eqref{product2} in Lemma \eqref{lemproduct}. From the representation of $\mG$, we have that
\begin{align}
\|\mG\|_{X_{\tau,9,3/4}}\ls& \|\p_x u\|_{X_{\tau,9,3/4}}+\f{\angt^{1-\dl}}{\s{\e}}\lt\|\p_z u\int^\i_z\mH d\bar{z}\rt\|_{X_{\tau,9,3/4}}\nn\\
\ls&  \|u\|_{X_{\tau,11,3/4}}+\f{\angt^{5/4-\dl}}{\s{\e}}\lt(\lt\|\p_z u\rt\|_{X_{\tau,5,3/4}}\lt\|\mH \rt\|_{X_{\tau,9,3/4}}+\lt\|\p_z u\rt\|_{X_{\tau,9,3/4}}\lt\|\mH\rt\|_{X_{\tau,5,3/4}}\rt)\nn\\
\ls& C_\ast \e \angt^{-\f{3-\dl}{4}}+C^2_\ast \e^{3/2} \angt^{-\f{3+2\dl}{4}}\ls C_\ast \e \angt^{-\f{3-\dl}{4}}, \label{mgest1}
\end{align}
by letting $\e$ is small such that $C_\ast\s{\e}\leq 1$.  Also by using \eqref{product2}, we can see that
\begin{align}
&\s{\dl}\angt^{1/2}\|\mG\|_{X_{\tau,7,3/4}}\nn\\
\ls& \s{\dl}\angt^{1/2}\|\p_x \p_z u\|_{X_{\tau,7,3/4}}+\s{\dl}\f{\angt^{3/2-\dl}}{\s{\e}}\lt(\lt\|\p^2_z u\int^\i_z\mH d\bar{z}\rt\|_{X_{\tau,7,3/4}}+\lt\|\p_z u\mH\rt\|_{X_{\tau,7,3/4}}\rt)\nn\\
\ls& \s{\dl} \angt^{1/2}\|\p_zu\|_{X_{\tau,9,3/4}}+\s{\dl}\f{\angt^{7/4-\dl}}{\s{\e}}\lt(\lt\|\p^2_z u\rt\|_{X_{\tau,5,3/4}}\lt\|\mH \rt\|_{X_{\tau,7,3/4}}+\lt\|\p^2_z u\rt\|_{X_{\tau,7,3/4}}\lt\|\mH\rt\|_{X_{\tau,5,3/4}}\rt)\nn\\
\ls& C_\ast \e \angt^{-\f{3-\dl}{4}}.  \label{mgesti2}
\end{align}
Combining estimates in \eqref{mgest1} and \eqref{mgesti2}, we obtain the third line of \eqref{solutiongevrey}.

%
%

Now, we using Sobolev embedding and \eqref{solutiongevrey} to show the low order estimates in \eqref{lowpoint}.  First by using Sobolev embedding in $x,y$ directions and \eqref{solutiongevrey}, we can easily obtain that for $0\leq k\leq 51$ and $0\leq \nu<1$, we have

\be\label{lowpoint1}
\bali
&\angt^{-\f{1}{2}}\lt\|\th_\nu\p^k_h (u,v)\rt\|_{L^\i_h L^2_z}+\lt\|\th_\nu\p^k_h\p_z (u,v)\rt\|_{L^\i_h L^2_z}\\
&+\s{\dl} \angt^{\f{1}{2}}\lt\|\th_\nu\p^2_z\p^k_h(u,v)\rt\|_{L^\i_h L^2_z}
+\dl \angt\lt\|\th_\nu\p_z^3\p^k_h(u,v)\rt\|_{L^\i_h L^2_z}\\
\ls& C_\ast\e \angt^{-\f{5-\dl}{4}}.
\eali
\ee
Also for any $f$ which decay fast enough at $z$ infinity, we have
\begin{align}
|\th_\nu(z) f(z)|=&\th_\nu(z) \lt|\int^\i_z \p_zfd\bar{z}\rt|\leq \lt | \int^\i_z \th_{\f{\nu-1}{2}}(\bar{z})\th_{\f{\nu+1}{2}}(\bar{z})\p_zfd\bar{z}\rt|\nn\\
  \leq & \lt\| \th_{\f{\nu-1}{2}}\rt\|_{L^2_z} \lt\|\th_{\f{\nu+1}{2}}(z)\p_z f\rt\|_{L^2_z}\ls_\nu  \angt^{1/4}  \lt\|\th_{\f{\nu+1}{2}}(z)\p_zf\rt\|_{L^2_z}.\nn
\end{align}
This indicate that for $0\leq\nu<1$, we have
\be\label{lowpoint3}
\|\th_\nu(z) f(z)\|_{L^\i}\ls \angt^{1/4}  \lt\|\th_{\f{\nu+1}{2}}(z)\p_zf\rt\|_{L^\i_h L^2_z}.
\ee

Combining \eqref{lowpoint1} and \eqref{lowpoint3}, we see that
\be\label{lowpoint4}
\bali
\angt^{-\f{1}{4}}\lt\|\th_\nu\p^k_h (u,v)\rt\|_{L^\i}+\s{\dl} \angt^{\f{1}{4}}\lt\|\th_\nu\p_z\p^k_h(u,v)\rt\|_{L^\i}
+\dl \angt^{\f{3}{4}}\lt\|\th_\nu\p_z^2\p^k_h(u,v)\rt\|_{L^\i}\ls C_\ast\e \angt^{-\f{5-\dl}{4}}.
\eali
\ee
Also from \eqref{lowpoint3} and incompressibility, we have
\begin{align}
\angt^{-3/4}\|\th_\nu \p^k_hw\|_{L^\i}\ls \angt^{-1/2}  \lt\|\th_{\f{\nu+1}{2}}(z)\p_z\p^k_hw\rt\|_{L^\i_h L^2_z}\ls \angt^{-1/2}\lt\|\th_{\f{\nu+1}{2}}(z)\p^{k+1}_h\bl{u}\rt\|_{L^\i_h L^2_z}\ls C_\ast\e \angt^{-\f{5-\dl}{4}}. \label{lowpoint5}
\end{align}
Combining \eqref{lowpoint4} and \eqref{lowpoint5}, we can obtain \eqref{lowpoint} in Lemma \ref{llowpoint}.
\qed
\end{appendix}

\section*{Acknowledgments}
\addcontentsline{toc}{section}{Acknowledgments}

\q  X. Pan is supported by National Natural Science Foundation of China (No. 11801268, No. 12031006) and C. J. Xu is supported by National Natural Science Foundation of China (No. 12031006) and the Fundamental Research Funds for the Central Universities of China.

\medskip

 {\footnotesize

 {\sc X. Pan:  School of Mathematics and Key Laboratory of MIIT, Nanjing University of Aeronautics and Astronautics, Nanjing 211106, China}

  {\it E-mail address:}  xinghong\_87@nuaa.edu.cn

  \medskip

{\sc C. J. Xu:  School of Mathematics and Key Laboratory of MIIT, Nanjing University of Aeronautics and Astronautics, Nanjing 211106, China}

  {\it E-mail address:}  xuchaojiang@nuaa.edu.cn

}
\end{document}